\documentclass{article}

\usepackage[a4paper, top=2.5cm, bottom=2.5cm, left=2.5cm, right=2.5cm]{geometry}

\usepackage{amsthm}
\usepackage{tikz-cd} 
\usepackage{aliascnt} 

\newtheorem{Thm}{Theorem}[section]

\newaliascnt{Def}{Thm}
\newtheorem{Def}[Def]{Definition}
\aliascntresetthe{Def} 

\newaliascnt{Rem}{Thm}
\newtheorem{Rem}[Rem]{Remark}
\aliascntresetthe{Rem}

\newaliascnt{Lem}{Thm}
\newtheorem{Lem}[Lem]{Lemma}
\aliascntresetthe{Lem}

\newaliascnt{Cor}{Thm}
\newtheorem{Cor}[Cor]{Corollary}
\aliascntresetthe{Cor}

\newaliascnt{Prop}{Thm}
\newtheorem{Prop}[Prop]{Proposition}
\aliascntresetthe{Prop}

\usepackage{amsmath,amssymb,mathtools}
\usepackage{stmaryrd}  

\usepackage{tikz} 

\usepackage{xcolor} 

\usepackage[numbers]{natbib}
\newcommand{\thmref}[2]{\cite[Thm. #2]{#1}}  
\newcommand{\propref}[2]{\cite[Prop. #2]{#1}}  
\newcommand{\defref}[2]{\cite[Def. #2]{#1}}   
\newcommand{\chapref}[2]{\cite[Ch. #2]{#1}}   

\usepackage[colorlinks=true, citecolor=blue]{hyperref} 
\usepackage[capitalize, noabbrev]{cleveref} 

\crefname{Def}{Definition}{Definitions}
\crefname{Thm}{Theorem}{Theorems}
\crefname{Lem}{Lemma}{Lemmas}
\crefname{Rem}{Remark}{Remarks}
\crefname{Prop}{Proposition}{Propositions}
\crefname{Cor}{Corollary}{Corollaries}

\renewcommand{\eqref}[1]{\textup{\textcolor{red}{(\ref{#1})}}} 

\makeatletter
\newcommand{\pcite}[2][]{%
  \if\relax\detokenize{#1}\relax%
    \cite{#2}%
  \else%
    \cite[\protect{\hyperlink{page.#1}{#1}}]{#2}%
  \fi%
}

\newcommand{\smartcite}[2][]{%
  \if\relax\detokenize{#1}\relax%
    \cite{#2}%
  \else%
    \ifnum\pdf@strcmp{#1}{#1}=\z@
      \ifnum\pdf@strcmp{\detokenize{-}}{\detokenize{#1}}>0%
        \cite[pp.~#1]{#2}%
      \else%
        \cite[p.~#1]{#2}%
      \fi%
    \fi%
  \fi%
}
\makeatother

\usepackage{enumitem}

\newlist{Cenum}{enumerate}{1}
\setlist[Cenum,1]{
  label=(C\arabic*),
  leftmargin=*,
  labelindent=\parindent,   
  itemindent=!,             
  align=left,               
  widest=(C99),            
  start=1                   
}

\newlist{CRenum}{enumerate}{1}
\setlist[CRenum,1]{
  label=(CR\arabic*),
  leftmargin=*,
  labelindent=\parindent,   
  itemindent=!,             
  align=left,               
  widest=(CR99),            
  start=1                   
}

\newlist{CFenum}{enumerate}{1}
\setlist[CFenum,1]{
  label=(CF\arabic*),
  leftmargin=*,
  labelindent=\parindent,   
  itemindent=!,             
  align=left,               
  widest=(CF99),            
  start=1                   
}

\newlist{CMenum}{enumerate}{1}
\setlist[CMenum,1]{
  label=(CM\arabic*),
  leftmargin=*,
  labelindent=\parindent,   
  itemindent=!,             
  align=left,               
  widest=(CM99),            
  start=1                   
}

\newlist{tPenum}{enumerate}{1}
\setlist[tPenum,1]{
  label=(tP\arabic*),
  leftmargin=*,
  labelindent=\parindent,   
  itemindent=!,             
  align=left,               
  widest=(tP99),            
  start=0                   
}

\newlist{tPRenum}{enumerate}{1}
\setlist[tPRenum,1]{
  label=(tPR\arabic*),
  leftmargin=*,
  labelindent=\parindent,   
  itemindent=!,             
  align=left,               
  widest=(tPR99),            
  start=0                   
}

\newlist{tPFenum}{enumerate}{1}
\setlist[tPFenum,1]{
  label=(tPF\arabic*),
  leftmargin=*,
  labelindent=\parindent,   
  itemindent=!,             
  align=left,               
  widest=(tPF99),            
  start=0                   
}

\newlist{tPMenum}{enumerate}{1}
\setlist[tPMenum,1]{
  label=(tPM\arabic*),
  leftmargin=*,
  labelindent=\parindent,   
  itemindent=!,             
  align=left,               
  widest=(tPM99),            
  start=1                   
}

\newlist{tPDenum}{enumerate}{1}
\setlist[tPDenum,1]{
  label=(tPD\arabic*),
  leftmargin=*,
  labelindent=\parindent,   
  itemindent=!,             
  align=left,               
  widest=(tPD99),            
  start=1                   
}

\newlist{DCenum}{enumerate}{1}
\setlist[DCenum,1]{
  label=(DC\arabic*),
  leftmargin=*,
  labelindent=\parindent,   
  itemindent=!,             
  align=left,               
  widest=(DC99),            
  start=0                   
}

\newlist{DCRenum}{enumerate}{1}
\setlist[DCRenum,1]{
  label=(DCR\arabic*),
  leftmargin=*,
  labelindent=\parindent,
  itemindent=!,
  align=left,
  widest=(DCR99),
  start=1
}

\newlist{DCFenum}{enumerate}{1}
\setlist[DCFenum,1]{
  label=(DCF\arabic*),
  leftmargin=*,
  labelindent=\parindent,
  itemindent=!,
  align=left,
  widest=(DCF99),
  start=1
}

\newlist{DCMenum}{enumerate}{1}
\setlist[DCMenum,1]{
  label=(DCM\arabic*),
  leftmargin=*,
  labelindent=\parindent,
  itemindent=!,
  align=left,
  widest=(DCM99),
  start=1
}

\newlist{PLPenum}{enumerate}{1}
\setlist[PLPenum,1]{
  label=(PLP\arabic*),
  leftmargin=*,
  labelindent=\parindent,   
  itemindent=!,             
  align=left,               
  widest=(PLP99),            
  start=1                   
}

\newlist{PLPRenum}{enumerate}{1}
\setlist[PLPRenum,1]{
  label=(PLPR\arabic*),
  leftmargin=*,
  labelindent=\parindent,
  itemindent=!,
  align=left,
  widest=(PLPR99),
  start=1
}

\newlist{PLPMenum}{enumerate}{1} 
\setlist[PLPMenum,1]{
  label=(PLPM\arabic*),
  leftmargin=*,
  labelindent=\parindent,
  itemindent=!,
  align=left,
  widest=(PLPM99),
  start=1
}

\usepackage{mathtools}

\usepackage{graphicx}
\graphicspath{{images/}}

\newcommand{\bbK}{\mathbb{K}}

\newcommand{\bbN}{\mathbb{N}}

\newcommand{\calF}{\mathcal{F}}
\newcommand{\calA}{\mathcal{A}}
\newcommand{\calV}{\mathcal{V}}
\newcommand{\calB}{\mathcal{B}}
\newcommand{\calE}{\mathcal{E}}
\newcommand{\calS}{\mathcal{S}}
\newcommand{\calP}{\mathcal{P}}
\newcommand{\calC}{\mathcal{C}}

\newcommand{\calL}{\mathcal{L}}

\newcommand{\rmId}{\mathrm{Id}}

\newcommand{\Extd}{\mathrm{\mathbf{Extd}}}

\newcommand{\Fact}{\mathrm{\mathbf{Fact}}}
\newcommand{\Comp}{\mathrm{\mathbf{Comp}}}

\newcommand{\lh}{\leftharpoonup} 
\newcommand{\rh}{\rightharpoonup}
\newcommand{\lt}{\triangleleft}
\newcommand{\rt}{\triangleright}
\newcommand{\blt}{\blacktriangleleft}
\newcommand{\brt}{\blacktriangleright}

\newcommand{\olOmega}{\overline{\Omega}}

\newcommand{\CE}{\mathcal{CE}}

\newcommand{\TE}{\mathcal{TE}}
\newcommand{\DCE}{\mathcal{DCE}}
\newcommand{\DCH}{\mathcal{DCH}}
\newcommand{\DCF}{\mathcal{DCF}}
\newcommand{\DDM}{\mathcal{DDM}}

\newcommand{\CH}{\mathcal{CH}}
\newcommand{\TTH}{\mathcal{TH}}

\newcommand{\CF}{\mathcal{CF}}
\newcommand{\TF}{\mathcal{TF}}

\newcommand{\M}{\mathcal{M}}
\newcommand{\CDM}{\mathcal{CDM}}
\newcommand{\TDM}{\mathcal{TDM}}

\newcommand{\MP}{\mathcal{MP}}

\newcommand{\address}[1]{\def\theaddress{#1}}
\newcommand{\email}[1]{\def\theemail{#1}}
\newcommand{\mail}[1]{\href{mailto:#1}{\textcolor{black}{#1}}}

\title{Extending structures for the linear framework of Universal Algebra with applications to transposed Poisson  algebras}
\author{Changhuan Liu, Tingfeng Wan and Guodong Zhou}

\address{School of Mathematical Sciences, Key Laboratory of MEA (Ministry of Education), Shanghai Key Laboratory of PMMP,
  East China Normal University,
 Shanghai 200241,
   China}

\email{
  \mail{(Changhuan Liu) 51265500028@stu.ecnu.edu.cn}\\
  \mail{(Tingfeng Wan) 3186988636@qq.com}\\
  \mail{(Guodong Zhou) gdzhou@math.ecnu.edu.cn}
}

\date{\today}

\begin{document}

\maketitle

\begin{center}
\small
\theaddress\\
\texttt{\theemail}
\end{center}

	\renewcommand{\thefootnote}{\alph{footnote}}
	\setcounter{footnote}{-1} \footnote{\it{Mathematics Subject
			Classification(2020)}: 
17B63,
17B05, 
17B60, 
17A30, 
17B40, 
18G90 
}
	\renewcommand{\thefootnote}{\alph{footnote}}
	\setcounter{footnote}{-1} \footnote{ \it{Keywords}: Classifying Complements Problem,   Extending Structures Problem, Factorization Problem, transposed Poisson algebra, Universal Algebra }

\begin{abstract}
  We present a general study of extending structures in the linear framework of Universal Algebra,  which unifies many existing results and contains  applications to various  Poisson type  algebras including  transposed Poisson algebras. 
\end{abstract}

\tableofcontents

\section{Introduction}

Poisson brackets, differential operators and compatible nonassociative
products connect algebraic structures with Hamiltonian dynamics and
infinite-dimensional symmetry algebras. For an algebra of classical
observables, the Leibniz identity
\[
\{f,gh\}=\{f,g\}h+g\{f,h\}
\]
expresses the compatibility between multiplication and Hamiltonian
evolution. In the theory of Hamiltonian operators and Poisson brackets of
hydrodynamic type, compatibility conditions also lead to the products now
known as Novikov products
\cite{GD79,BN85}. These examples motivate the study of algebras carrying
several operations, together with methods for enlarging such algebras
while preserving their defining identities.

Transposed Poisson algebras, introduced by C. Bai, R. Bai, L. Guo and
Y. Wu \cite{BBGW23}, provide a natural setting for a related compatibility
between a commutative associative product and a Lie bracket. Their
defining relation is
\[
2c\cdot[a,b]=[c\cdot a,b]+[a,c\cdot b].
\]
This identity arises when one takes the commutator of the Novikov product
in a Novikov--Poisson algebra, and more generally of the pre-Lie product
in a pre-Lie Poisson algebra \cite{BBGW23}. Over a field of characteristic
zero, it says that every multiplication operator $M_c(a)=c\cdot a$ is a
$\tfrac12$-derivation of the underlying Lie algebra:
\[
M_c([a,b])=\tfrac12\bigl([M_c(a),b]+[a,M_c(b)]\bigr).
\]
Thus the multiplication constrains how the Lie algebra can be enlarged.
The transposed identity is distinct from the ordinary Poisson Leibniz
identity; its connection with Hamiltonian algebra comes through these
compatible products.

A concrete link with differential operators is supplied by a commutative
associative algebra $C$ with a derivation $d$. The operations
\[
f\circ g=f\cdot d(g),\qquad
[f,g]_d=f\cdot d(g)-g\cdot d(f)
\]
give a Novikov--Poisson algebra and a transposed Poisson algebra,
respectively \cite{BBGW23}. For example, take $C=\bbK[t,t^{-1}]$ over a
field of characteristic zero and $d=d/dt$. Direct computation gives
\[
[f\partial_t,g\partial_t]
   =(fg'-gf')\partial_t=[f,g]_d\partial_t,
\]
so the resulting Lie algebra is the algebra of Laurent polynomial vector
fields in one variable. This example places the four classes studied
below---commutative, differential commutative, pre-Lie Poisson and
transposed Poisson algebras---in a common setting: the additional product
and bracket encode information obtained from a derivation, while each
construction retains the original commutative multiplication.

Factorization gives a second motivation from mathematical physics.
Matched pairs describe mutual actions between two algebraic components,
and bicrossproduct constructions have been used to construct quantum
groups \cite{S90}. At the classical level, the work of Lu and Weinstein
\cite{LW90} relates double Lie groups, dressing transformations and
Poisson--Lie geometry. These constructions suggest studying compatible
mixed operations on a direct sum, together with the dependence of those
operations on a chosen decomposition. Our results address this problem
for multilinear algebraic operations; additional geometric or
coalgebraic structures must be specified separately in such applications.

The questions considered here can be formulated without choosing a
particular family of products or brackets. Given an algebra $A$ and a
vector space $V$, which algebra structures on $E=A\oplus V$ preserve
$A$ as a subalgebra? If both summands carry prescribed algebra structures,
which mixed operations make $E$ an algebra containing both as
subalgebras? Finally, once $E$ and $A$ are fixed, how can one describe and
classify all subalgebras complementary to $A$? These are, respectively,
the \textbf{Extending Structures Problem (ESP)}, the
\textbf{Factorization Problem (FP)} and the
\textbf{Classifying Complements Problem (CCP)}. For algebras with several
operations, the mixed terms must satisfy all the compatibility identities
simultaneously. In the differential and Poisson type examples, this
requires keeping track of the product, the derivation or bracket, and
the relations among them throughout the construction.

Agore and Militaru introduced the \textbf{ESP} as a common framework for
extension and factorization problems and developed unified products in
several algebraic settings; see
\cite{AM14a,AM11,AM14,AM14b,AM16,AM15,AM15g,AM19}.
Subsequent studies include Leibniz algebras \cite{AM13}, Lie and
associative conformal algebras \cite{HS17,H19a}, left-symmetric algebras
\cite{H19b}, Lie bialgebras \cite{H23}, $3$-Lie algebras \cite{Z22},
Rota--Baxter Lie algebras \cite{PZ24}, Gel'fand--Dorfman bialgebras
\cite{WH24}, Novikov--Poisson algebras \cite{BH22}, dendriform algebras
\cite{ZW25} and pre-Poisson algebras \cite{ZLS25}. Across these settings,
a recurring procedure is to resolve the operations into components,
translate the defining identities into compatibility equations, and
identify the changes of components induced by algebra isomorphisms.
The aim of the present paper is to formulate this procedure once for
multilinear operations of arbitrary finite arity and to apply it
explicitly to the Poisson type algebras described above.

We work in the linear framework of Universal Algebra. The underlying
$\bbK$-vector space is fixed, the additional operations of positive
arity are multilinear, and the defining identities are multilinear in
their variables. Distinguished constants are allowed and remain fixed
when a given subalgebra is extended. We use only the elementary language
of types, identities and varieties; a general reference is
\cite{BS81}. A related approach was developed by Wires
\cite{W23a,W23b} for extensions of multilinear module expansions.
There the starting point is a classical extension
$0\to I\to E\to Q\to0$, with a prescribed ideal and quotient.
Here the prescribed algebra is a subalgebra, and its chosen vector space
complement need not be an ideal or a subalgebra. The resulting
classification therefore also includes structures beyond classical
ideal extensions.

Our general results provide a common description and classification
for the three problems. For the \textbf{ESP}, fix
$\calA=(A,G)\in\calV$ and $E=A\oplus V$. Decomposing each operation by
its input and output summands produces an extending datum
$\Omega(\calA,V)$. We give the compatibility conditions under which
this datum defines a unified product $\calA\ltimes V$, and show that
every algebra structure extending $\calA$ is obtained in this way
(\cref{thm:universal-description}). We then determine the transformation
relations induced by isomorphisms fixing $A$ pointwise. Allowing arbitrary
induced automorphisms of $E/A\cong V$, or requiring the induced map to
be the identity, gives the two nonabelian second cohomology sets
$\mathcal H^2_{\sim}(\calA,V)$ and
$\mathcal H^2_{\approx}(\calA,V)$
(\cref{thm:universal-classification for sim,thm:universal-classification-equiv}).
These are classification sets; a group structure is not part of the
construction.

For the \textbf{FP}, requiring both summands to be prescribed subalgebras
leads to matched pairs and bicrossed products $\calA\bowtie\calB$
(\cref{thm:universal-factorization}). For the \textbf{CCP}, start from a
fixed factorization $E=A\oplus B$. Every vector space complement of $A$
is the graph of a unique linear map $r:B\to A$. We determine precisely
when this graph is a subalgebra and when two such subalgebras are
isomorphic. This gives the classification by deformation maps and
identifies the factorization index with the cardinality of the resulting
quotient set
(\cref{Thm:ccp-deformation-correspondence,Thm:ccp-classification}).
In this formulation, the mixed operations and the choice of complementary
subalgebra are both accessible through explicit equations.

The main concrete application is to transposed Poisson algebras. We
obtain the eight bilinear component maps of a unified product, their
compatibility and transformation relations, flag extending structures,
matched pairs, and the classification of complements. The commutative
case supplies the common multiplication data used in all four
applications. For differential commutative algebras, two additional linear
maps describe the extension of the derivation; for pre-Lie Poisson
algebras, six additional bilinear maps describe the pre-Lie product.
Moreover, the constructions $f\circ g=f\cdot d(g)$ and
$[f,g]=f\circ g-g\circ f$ lift explicitly to extending data
(\cref{Rem: PLP from DCA,Rem: tP from PLP specialization}). Thus the
construction of an enlarged algebra is compatible with passage from
differential data to the associated products and brackets. This
compatibility is the specific link between our general classification
method and the differential algebra examples motivating the paper.

The following diagram records these constructions. The commutator arrow
is taken in characteristic zero, and both paths preserve the underlying
commutative extending structure:
\begin{center}
\begin{tikzcd}[column sep=5em, row sep=3em]
\text{\shortstack{differential commutative\\extending structures}}
  \arrow[r, "{f\circ g=f\cdot d(g)}"]
  \arrow[d, "{\text{forget }d}"']
& \text{\shortstack{pre-Lie Poisson\\extending structures}}
  \arrow[d, "{[f,g]=f\circ g-g\circ f}"] \\
\text{\shortstack{commutative\\extending structures}}
& \text{\shortstack{transposed Poisson\\extending structures}}
  \arrow[l, "{\text{forget the bracket}}"]
\end{tikzcd}
\end{center}

The paper is organized as follows.
\Cref{sec: framework} sets up the linear framework and the three
classification problems.
\Cref{sec: Universal Algebra ESP} develops extending data, unified
products and their classification.
\Cref{sec: Universal Algebra FP} treats matched pairs and factorization,
and \cref{sec: Universal Algebra CCP} classifies complements by
deformation maps.
\Crefrange{sec: commutative algebras}{sec: pre-Lie Poisson alg} give the
applications to commutative, transposed Poisson, differential commutative
and pre-Lie Poisson algebras, respectively.

Throughout this paper, $\bbK$ denotes a field.
All vector spaces, linear maps, bilinear maps and related structures are
taken over $\bbK$. We write $\bbK^{\times}$ for its set of nonzero elements
and
$\overline{\{0,1\}^n}\coloneqq\{0,1\}^n\setminus\{(0,\dots,0)\}$.
Additional characteristic assumptions are stated where required.

\section{The linear framework of Universal Algebra}\label{sec: framework}

Let
$$
\calF=\bigsqcup_{n\in\bbN}\calF_n
$$
be a type, where elements of $\calF_n$ are understood as operation symbols of arity $n$, say, operations with $n$ inputs. 
An algebra of type $\calF$ is a pair
$$
\calA=(A,G),
$$
where $A$ is a set and
$$
G=\{\omega^\calA\mid \omega\in\calF\}
$$
is a family of operations on $A$.
For each $\omega\in\calF_n$ with $n\in\bbN_+$, the realization of $\omega$ is given by a map
$$
\omega^\calA\colon A^n\longrightarrow A,
$$
which is interpreted as an $n$-ary operation, 
whereas for $\omega\in\calF_0$, the realization $\omega^\calA$ is interpreted as a distinguished element of $A$.

Let $\calV$ be a variety of algebras of type $\calF$. 
By Birkhoff's theorem, $\calV$ is determined by a set $\Sigma$ of identities. 
Thus, if $\calA=(A,G)$ is an algebra of type $\calF$, then $\calA\in\calV$ means that, for every identity
$$
p(x_1,\dots,x_m)\approx q(x_1,\dots,x_m)
$$
in $\Sigma$, we have 
$$
p^\calA(a_1,\dots,a_m)=q^\calA(a_1,\dots,a_m)
$$
for all $a_1,\dots,a_m\in A$.

The \textbf{Extending Structures Problem (ESP)} can be stated in the following general form. 
Given an algebra $\calA=(A,G)\in \calV$ and a set $E\supseteq A$, describe and classify all families $H$ of operations on $E$ such that $ \calE=(E,H)\in \calV $ and $\calA$ is a subalgebra of $\calE$.

At this level of set-theoretical generality, however, the \textbf{ESP} is mainly a conceptual framework rather than an effective construction method.
Indeed, in an arbitrary variety, the operation symbols may have different arities, and requiring all identities in $\Sigma$ to hold on $E$ usually leads to a large and unwieldy system of compatibility conditions. 
Therefore, the above formulation does not, in general, yield a concrete procedure for constructing and classifying extending structures.

The problem will therefore be considered in a more tractable setting. 
From now on, the discussion takes place in a linear framework of Universal Algebra.
The underlying $\bbK$-vector space structure is regarded as fixed and is not included among the additional operation symbols in $\calF$. 
Thus $\calF$ records only the extra algebraic operations, such as products, brackets, derivations, or distinguished elements.

More precisely, if $n\in\bbN$, every $\omega\in\calF_n$ is interpreted on a $\bbK$-vector space $A$ as an $n$-linear map
$$
\omega^\calA:A^n\longrightarrow A,
$$
where by convention $A^0=\bbK$. 
Under this convention, a distinguished element $a_0\in A$ is identified with the linear map $\bbK\longrightarrow A$, $\lambda\longmapsto\lambda a_0$.
The identities in $\Sigma$ are assumed to be multilinear in their variables and may involve both operation symbols of positive arity and constant symbols.

In this restricted linear setting, the \textbf{ESP} can be reformulated as follows. 
Given an algebra
$$
\calA=(A,G)\in\calV,\quad G=\{\omega^\calA\mid \omega\in\calF\},
$$
and a vector space $E$ containing $A$ as a subspace, describe and classify all families $H$ of multilinear operations on $E$ such that
$$ \calE=(E,H)\in\calV,\quad H=\{\omega^{\calE}\mid \omega\in\calF\}, $$
and $\calA$ is a subalgebra of $\calE$.

To make the description explicit, a vector space complement $V$ of $A$ in $E$ is fixed, so that
$$ E=A\oplus V. $$
Equivalently, every element $e\in E$ can be written uniquely as
$$ e=a+x,\qquad a\in A,\ x\in V. $$
This choice does not change the ambient vector space $E$; it only provides coordinates in which the values of the operations can be decomposed into their $A$- and $V$-components. 
With respect to this decomposition, denote by 
$$ \Extd(E,\calA)_V $$ 
the set of all families 
$H$ of multilinear operations on $E=A\oplus V$ such that $\calE=(A\oplus V,H)\in\calV$ and $\calA$ is a subalgebra of $\calE$.

In order to classify the families $H$ of operations on $E=A\oplus V$, it remains to compare different elements of $\Extd(E,\calA)_V$ by means of suitable equivalence relations.
We first introduce the following terminology in an abstract form, so that it can be used for any $\calF$-algebras in $\calV$ whose underlying vector space is equipped with a fixed decomposition.

\begin{Def}\label{Def:stabilize-costabilize}
Let $E=A\oplus V$ be a direct sum of vector spaces.
Denote by $i:A\to E$ the canonical inclusion and by $\pi:E\to V$ the canonical projection associated to this decomposition. 
A linear map $\varphi:E\to E$ is said to \textbf{stabilize} $A$ if the left square
commutes, and to \textbf{costabilize} $V$ if the right square commutes in the following diagram:
$$
\begin{tikzcd}
A \arrow[r,"i"] \arrow[d,"\operatorname{Id}_A"'] &
E \arrow[r,"\pi"] \arrow[d,"\varphi"] &
V \arrow[d,"\operatorname{Id}_V"] \\
A \arrow[r,"i"'] &
E \arrow[r,"\pi"'] &
V .
\end{tikzcd}
$$
Equivalently, $\varphi$ stabilizes $A$ if $\varphi(a)=a$ for all $a\in A$, and costabilizes $V$ if $\pi\circ\varphi=\pi$.
\end{Def}

\begin{Def}\label{Def:equivalent-algebras-over-decomposition}
Let $E=A\oplus V$ be a direct sum of vector spaces, $\calA=(A,G)\in\calV$,  and $H_1,H_2\in \Extd(E,\calA)_V$. 
Denote
$$
\calE_1\coloneqq (E,H_1),\qquad \calE_2\coloneqq(E,H_2).
$$
We say that $H_1$ and $H_2$ are \textbf{equivalent}, denoted by
$$ H_1\sim H_2, $$
if there exists an isomorphism of $\calF$-algebras $ \varphi:\calE_1\to \calE_2 $ stabilizing $A$. 
If, in addition, $\varphi$ costabilizes $V$, then $H_1$ and $H_2$ are called \textbf{cohomologous}, denoted by 
$$ H_1\approx H_2. $$
\end{Def}

It is clear that the relations $\sim$ and $\approx$ are equivalence relations on 
$\Extd(E,\calA)_V$. 
The corresponding quotient sets are denoted by
$$
\Extd'(E,\calA)_V\coloneqq\Extd(E,\calA)_V/\sim,
\qquad
\Extd''(E,\calA)_V\coloneqq\Extd(E,\calA)_V/\approx.
$$

We now state two other closely related problems.

\textbf{The Factorization Problem (FP)}: 
Given two $\calF$-algebras $\calA=(A,G)$, $\calB=(B,Q)$ in $\calV$ and a vector space $E=A\oplus B$, describe and classify all families $H$ of operations on $E$ such that $\calE=(E,H)\in \calV$ and $\calA$, $\calB$ are both subalgebras of $\calE$.

\textbf{The Classifying Complements Problem (CCP)}: 
Given a subalgebra $\calA=(A,G)$ of a $\calF$-algebra $\calE=(E,H)$ in $\calV$, describe and classify all subalgebras $\calB=(B,Q)$ of $\calE$ such that $E=A\oplus B$, 
and determine the number of their isomorphism classes.

\section{Extending Structures Problem (ESP)}\label{sec: Universal Algebra ESP}

We now turn to the description part of \textbf{ESP} in the fixed decomposition $E=A\oplus V$. The guiding idea is to decompose the value of each operation according to the input pattern and the output component. 
In this way, an algebra structure on $E$ extending the given algebra $\calA$ can be encoded by a system of component maps between $A$ and $V$.

We  first add an observation concerning  nullary operation symbols.

In fact, when dealing with extending structures, if $\omega\in\calF_0$, the condition that $\calA$ is a subalgebra of $\calE$ forces
$$
\omega^\calE(1_\bbK)=\omega^\calA(1_\bbK).
$$
Thus nullary operations give no additional  information  in the extending datum. 
Hence, we can dispose of them.

\subsection{Extending structures and unified products}

Let $\calA=(A,G)$ be an $\calF$-algebra in $\calV$ and let $V$ be a $\bbK$-linear space. 

Put 
$$M_0\coloneqq A,\qquad M_1\coloneqq V.$$ 
For $n\geq 1$ and  each $ \varepsilon=(\varepsilon_1,\dots,\varepsilon_n)\in\overline{\{0,1\}^n}$, 
denote 
$$
M_\varepsilon\coloneqq M_{\varepsilon_1}\times \dots \times M_{\varepsilon_n}; 
$$
choose arbitrary $n$-linear maps for  $ \omega\in\calF_n $,
$$
\omega_{0,\varepsilon}:
M_\varepsilon=M_{\varepsilon_1}\times\cdots\times M_{\varepsilon_n}\longrightarrow A,
\qquad
\omega_{1,\varepsilon}:
M_\varepsilon=M_{\varepsilon_1}\times\cdots\times M_{\varepsilon_n}\longrightarrow V.
$$
This collection of multilinear maps $\omega_{0,\varepsilon}, \omega_{1,\varepsilon}$ with $ \omega\in\calF_n,\ \varepsilon\in\overline{\{0,1\}^n},\ n\in\bbN_+$ is called an \textbf{extending datum} of $\calA$ through $V$, denoted by
$$
\Omega(\calA,V).
$$

Starting from an extending datum $\Omega(\calA,V)$, we define the
corresponding operations on $A\times V$ as follows: for each $\omega\in\calF_0$, let
 $$
 \omega^{\calA\ltimes V}(1_\bbK)=(\omega^\calA(1_\bbK),0);
 $$
for each $\omega\in\calF_n$, $n\in\bbN_+$, we define an $n$-linear map on $A\times V$ by the following formula:
\begin{equation*}
\begin{aligned}
\omega^{\calA\ltimes V}\bigl((a_1,x_1),\dots,(a_n,x_n)\bigr)
=
\Bigg(
\omega^{\calA}(a_1,\dots,a_n)
+
\sum_{\varepsilon\in\overline{\{0,1\}^n}}
\omega_{0,\varepsilon}(z_{\varepsilon_1},\dots,z_{\varepsilon_n}),
\sum_{\varepsilon\in\overline{\{0,1\}^n}}
\omega_{1,\varepsilon}(z_{\varepsilon_1},\dots,z_{\varepsilon_n})
\Bigg),
\end{aligned}
\end{equation*}
where $a_1, \dots, a_n\in A, x_1, \dots, x_n\in V,$ and for each $i=1,\dots,n$,
$$
z_{\varepsilon_i}=
\begin{cases}
a_i, & \varepsilon_i=0,\\
x_i, & \varepsilon_i=1.
\end{cases}
$$
For a more compact notation, let
$$
p_\xi:A\times V\to M_\xi,\qquad \iota_\xi:M_\xi\to A\times V,\qquad \xi\in \{0,1\},
$$
be the canonical projections and inclusions. 
For $n\geq 1$, each $ \varepsilon=(\varepsilon_1,\dots,\varepsilon_n)\in\{0,1\}^n$ and $ \omega\in\calF_n $,
put  
$$
p_\varepsilon\coloneqq p_{\varepsilon_1} \times \cdots \times p_{\varepsilon_n} : (A\times V)^n\longrightarrow M_\varepsilon=M_{\varepsilon_1}\times \dots \times M_{\varepsilon_n}.
$$
For $\xi\in\{0,1\}$ and $\varepsilon\in\overline{\{0,1\}^n}$, set
$$
\widehat{\omega}_{\xi,\varepsilon}
\coloneqq
\iota_\xi\circ\omega_{\xi,\varepsilon}\circ p_\varepsilon.
$$
Also for $ \mathbf 0^n\coloneqq (0,\dots,0)\in\{0,1\}^n $,
set
$$
\widehat{\omega}^{\calA}
\coloneqq
\iota_0\circ\omega^\calA\circ p_{\mathbf 0^n}.
$$
Then, as an $n$-linear map from $(A\times V)^n$ to $A\times V$, the
operation defined above can be written as
$$
\omega^{\calA\ltimes V}
=
\widehat{\omega}^{\calA}
+
\sum_{\varepsilon\in\overline{\{0,1\}^n}}
\sum_{\xi\in\{0,1\}}
\widehat{\omega}_{\xi,\varepsilon}.
$$
In what follows, whenever no confusion can arise, we suppress the canonical
projections and inclusions and write 
$$
\omega^{\calA\ltimes V}
=
\omega^\calA
+
\sum_{\varepsilon\in\overline{\{0,1\}^n}}
\sum_{\xi\in\{0,1\}}
\omega_{\xi,\varepsilon}.
$$

Put
$$
K=K_\Omega(\calA,V)\coloneqq\{\omega^{\calA\ltimes V}\mid \omega\in\calF_n,\ n\in \bbN\}.
$$
Thus the extending datum $\Omega(\calA,V)$ gives a family $K$ of multilinear operations on the vector space $A\times V$. 
Note that for any $\omega\in\calF_n,\ n \geq 1$,  $\omega^{\calA\ltimes V}|_{A^n}=\omega^\calA$, 
where $A^n$ is considered as a canonical subspace of $(A\times V)^n$.  

Conversely, given  a family $K=\{\omega^{\calA\ltimes V}\mid \omega\in\calF_n,\ n\in \bbN\}$ of multilinear operations on the vector space $A\times V$ such that for any $\omega\in\calF_n,\ n\in \bbN$,  $\omega^{\calA\ltimes V}|_{A^n}=\omega^\calA$, one obtains an extending datum $\Omega_K(\calA,V)=\{\omega_{\xi, \varepsilon} \mid  \varepsilon\in\overline{\{0,1\}^n}, \xi \in \{0, 1\},  \omega\in\calF_n,\ n\in \bbN_+\}$.
Obviously, these two sets are in bijection via the above correspondences. 

However, an extending datum $\Omega(\calA,V)$ does not necessarily define an $\calF$-algebra structure on $A\times V$ belonging to $\calV$. Substituting the operations in $K$ into the identities in $\Sigma$ and decomposing the resulting identities into their $A$- and $V$-components yields a system of conditions on the multilinear maps contained in $\Omega(\calA,V)$. We denote this system by 
$$\Gamma(\calA,V)$$
 and call its members the \textbf{compatibility conditions}.

The extending datum $\Omega(\calA,V)$ is called an \textbf{extending structure} of $\calA$ through $V$ if it satisfies the compatibility conditions $\Gamma(\calA,V)$, or equivalently, if
$$
(A\times V,K)\in\calV.
$$
In this case, the algebra
$$
\calA\ltimes V=\calA\ltimes_{\Omega(\calA,V)}V\coloneqq(A\times V,K)
$$
is called the \textbf{unified product} of $\calA$ and $V$ associated to $\Omega(\calA,V)$.

Denote by 
$$\calS(\calA,V)$$
the set of all extending structures of $\calA$ through $V$ and by 
$$\calP(\calA, V)$$  
the set of all unified products of $\calA$ and $V$.

\begin{Thm}\label{thm:universal-description}
Let $\calA=(A,G)\in\calV$ and let $E$ be a vector space containing $A$ as a subspace. Fix a vector space complement $V$ of $A$ in $E$, so that $E=A\oplus V$. Then there exist    bijections
among $ 
\Extd(E,\calA)_V$,  $\calS(\calA,V)$  and $\calP(\calA, V)$. 
\end{Thm}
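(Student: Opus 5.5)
The plan is to construct explicit maps and check that they are mutually inverse. The bijection $\calS(\calA,V)\to\calP(\calA,V)$, $\Omega(\calA,V)\mapsto \calA\ltimes_{\Omega(\calA,V)}V$, is essentially tautological. A unified product is by definition the pair $(A\times V,K_\Omega)$ attached to an extending structure. Conversely, the family $K$ recovers $\Omega_K(\calA,V)$, via the bijection between extending data and families $K$ restricting to $\calA$ on $A^n$ noted before the theorem. Hence this map is surjective by definition, and injective because $\Omega\mapsto K_\Omega$ is injective.

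The substantive step is a bijection $\Extd(E,\calA)_V\to\calS(\calA,V)$. I will use the linear isomorphism
$$\theta\colon A\times V\to E=A\oplus V,\qquad (a,x)\mapsto a+x.$$
Given $H=\{\omega^\calE\}\in\Extd(E,\calA)_V$, I transport each operation along $\theta$. For $\omega\in\calF_n$ with $n\ge1$ and $\varepsilon\in\overline{\{0,1\}^n}$, I restrict $\omega^\calE$ to $M_\varepsilon$ (viewed inside $E^n$ via $\theta$) and compose with the two projections of $E=A\oplus V$. This defines
$$\omega_{0,\varepsilon}\coloneqq p_0\circ\theta^{-1}\circ\omega^\calE|_{M_\varepsilon},\qquad \omega_{1,\varepsilon}\coloneqq p_1\circ\theta^{-1}\circ\omega^\calE|_{M_\varepsilon}.$$
Multilinearity of $\omega^\calE$ lets us expand $\omega^\calE(a_1+x_1,\dots,a_n+x_n)$ as a sum over all $\varepsilon\in\{0,1\}^n$. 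The term for $\varepsilon=\mathbf 0^n$ equals $\omega^\calA(a_1,\dots,a_n)$ because $\calA$ is a subalgebra. Nullary operations agree with $\omega^\calA(1_\bbK)$ for the same reason. Therefore $\theta$ intertwines $\omega^{\calA\ltimes V}$ and $\omega^\calE$, that is, $\theta\circ\omega^{\calA\ltimes V}=\omega^\calE\circ\theta^{\times n}$.

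It follows that $\theta\colon(A\times V,K)\to(E,H)$ is an isomorphism of $\calF$-algebras. Since varieties are closed under isomorphism (identities transport along bijective homomorphisms), $(A\times V,K)\in\calV$. Hence $\Omega$ satisfies $\Gamma(\calA,V)$ and is an extending structure.

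Conversely, given an extending structure, I define $H$ by $\omega^\calE\coloneqq\theta\circ\omega^{\calA\ltimes V}\circ(\theta^{-1})^{\times n}$. Again $(E,H)\cong(A\times V,K)\in\calV$. Moreover, $\calA$ is a subalgebra, because $\omega^{\calA\ltimes V}|_{A^n}=\omega^\calA$ and the constants are $(\omega^\calA(1_\bbK),0)$. The two constructions are inverse to each other: a multilinear map on $E^n$ is determined by its restrictions to the summands $M_\varepsilon$ together with the two output components, and both constructions use exactly this decomposition.

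I expect the main obstacle to be bookkeeping rather than conceptual. One point to verify is that the expansion over $\varepsilon$ correctly accounts for every mixed term. The other is to state precisely why "satisfies $\Gamma(\calA,V)$" is equivalent to membership in $\calV$ after transport. This follows from the definition of $\Gamma$ as the $A$- and $V$-components of the identities in $\Sigma$, so the equivalence is exact, using that the decomposition into components is unique. Composing the two bijections yields the third.
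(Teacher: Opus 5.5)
Your proof is correct and is essentially the paper's argument. The paper also treats $\calS(\calA,V)\leftrightarrow\calP(\calA,V)$ as the tautological correspondence between extending data and operation families on $A\times V$ restricting to $\calA$, and gets $\Extd(E,\calA)_V\leftrightarrow\calP(\calA,V)$ by transport along $A\times V\cong A\oplus V$; your direct map $\Extd(E,\calA)_V\to\calS(\calA,V)$ is that transport composed with the component decomposition, and you supply details the paper leaves implicit (the multilinear expansion, the $\mathbf 0^n$ and nullary terms, and closure of $\calV$ under isomorphism).
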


\begin{proof} The bijection between  $\calS(\calA,V)$  and $\calP(\calA, V)$ is already explained above.

The bijection between  $\Extd(E,\calA)_V$  and $\calP(\calA, V)$ follows easily from the correspondence between the internal direct sum $E=A\oplus V$ and the external direct sum $A\times V$. 
 
\end{proof}

Therefore, the description part of the \textbf{ESP} is reduced to describing
$\calS(\calA,V)$, or equivalently, to determining the explicit form of the compatibility conditions $\Gamma(\calA,V)$ on the multilinear maps contained in $\Omega(\calA,V)$. The precise form of $\Omega(\calA,V)$ and $\Gamma(\calA,V)$ depends on the particular type $\calF$ and the identities $\Sigma$, but the above procedure provides a common framework for the description part of the \textbf{ESP} in many familiar algebraic settings, as illustrated by the examples in this paper.


\subsection{The classification part of \textbf{ESP}}\label{subsec: UA-ESP-classification}

We now turn to the classification part of the \textbf{ESP} by means of transferring the two equivalence relations on $\Extd(E,\calA)_V$ to $\calS(\calA,V)$. 

By the bijections in \cref{thm:universal-description}, the equivalence relations $\sim$ and $\approx$ on $\Extd(E,\calA)_V$ induce equivalence relations on $\calS(\calA,V)$ as follows.

For $\Omega^1(\calA,V),\Omega^2(\calA,V)\in \calS(\calA,V)$, write $H_{\Omega^i}\in \Extd(E,\calA)_V,\ i=1,2$ the corresponding elements.  Write
$$
\calA\ltimes_i V\coloneqq \calA\ltimes_{\Omega^i(\calA,V)}V,
\quad
\calE_{\Omega^i}\coloneqq(E,H_{\Omega^i}),
\quad i=1,2.
$$

\begin{Def}
    We say that $\Omega^1(\calA,V)$ and $\Omega^2(\calA,V)$ are \textbf{equivalent}, and write
$$
\Omega^1(\calA,V)\sim\Omega^2(\calA,V)
$$
if and only if
$$
H_{\Omega^1}\sim H_{\Omega^2}.
$$
Similarly, we say that
$\Omega^1(\calA,V)$ and $\Omega^2(\calA,V)$ are \textbf{cohomologous}, and write
$$
\Omega^1(\calA,V)\approx\Omega^2(\calA,V)
$$
if and only if
$$
H_{\Omega^1}\approx H_{\Omega^2}.
$$
\end{Def}

By Definition~\ref{Def:equivalent-algebras-over-decomposition}, 
$H_{\Omega^1} \sim H_{\Omega^2}$ if and only if there is an isomorphism of $\calF$-algebras from $\calE_{\Omega^1}$ to $\calE_{\Omega^2}$ stabilizing $A$, 
which, by Theorem~\ref{thm:universal-description}, 
is equivalent to saying that there is an isomorphism of $\calF$-algebras from $\calA\ltimes_1 V$ to $\calA\ltimes_2 V$ stabilizing $A$.
Hence, to describe $\Omega^1(\calA,V)\sim\Omega^2(\calA,V)$, it suffices to describe all isomorphisms of $\calF$-algebras
$\psi:A\ltimes_1 V\to A\ltimes_2 V$  which stabilize $A$.

Similarly, $\Omega^1(\calA,V)\approx\Omega^2(\calA,V)$ if and only if 
there is an isomorphism of $\calF$-algebras from $\calA\ltimes_1 V$ to $\calA\ltimes_2 V$ stabilizing $A$ and costabilizing $V$. 

To describe these equivalence relations explicitly, we proceed in three steps: 
We firstly describe   linear maps 
$$\psi:A\times V\to A\times V $$ componentwisely,   then 
homomorphisms of $\calF$-algebras  $$\psi:A\ltimes_1 V\to A\ltimes_2 V $$ componentwisely,   and finally 
isomorphisms of $\calF$-algebras  $$\psi:A\ltimes_1 V\to A\ltimes_2 V $$ componentwisely.

For the first step, recall that  we   have already put
$$
M_0\coloneqq A,\qquad M_1\coloneqq V,
$$
and let
$$
p_\xi:A\times V\longrightarrow M_\xi,
\qquad
\iota_\xi:M_\xi\longrightarrow A\times V 
$$
be the canonical projections and inclusions, respectively for  
$\xi\in\{0,1\}$. For a linear map
$$
\psi:A\times V\longrightarrow A\times V,
$$
denote its block components by
$$
\psi_{\zeta,\xi}\coloneqq
p_\zeta\circ\psi\circ\iota_\xi
\in \operatorname{Hom}_{\bbK}(M_\xi,M_\zeta),
\qquad \xi,\zeta\in\{0,1\}.
$$

\begin{Lem}\label{Lem:linear-maps-correspondence}
There is a bijection between the set of all linear maps
$$
\psi:A\times V\longrightarrow A\times V
$$
which stabilize $A$ and the set of all systems
$$
(\psi_{0,0},\psi_{1,0},\psi_{0,1},\psi_{1,1}),
$$
where
$$
\psi_{0,0}=\rmId_A:A\longrightarrow A,\qquad
\psi_{1,0}=0:A\longrightarrow V,
$$
and
$$
\psi_{0,1}:V\longrightarrow A,\qquad
\psi_{1,1}:V\longrightarrow V
$$
are linear maps. 
Moreover, $\psi$ is a linear isomorphism if and only if $\psi_{1,1}$ is a linear
isomorphism, and $\psi$ costabilizes $V$ if and only if
$\psi_{1,1}=\rmId_V$.
\end{Lem}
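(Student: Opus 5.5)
The plan is to treat $\psi$ as a $2\times 2$ block matrix. First I will recall the standard bijection between linear maps $\psi:A\times V\to A\times V$ and quadruples $(\psi_{\zeta,\xi})_{\xi,\zeta\in\{0,1\}}$ with $\psi_{\zeta,\xi}\in\operatorname{Hom}_\bbK(M_\xi,M_\zeta)$. In one direction $\psi$ is sent to its blocks $\psi_{\zeta,\xi}=p_\zeta\circ\psi\circ\iota_\xi$. In the other direction we set
$$\psi=\sum_{\xi,\zeta}\iota_\zeta\circ\psi_{\zeta,\xi}\circ p_\xi,$$
that is,
$$\psi(a,x)=\bigl(\psi_{0,0}(a)+\psi_{0,1}(x),\ \psi_{1,0}(a)+\psi_{1,1}(x)\bigr).$$
The relations $\sum_\xi\iota_\xi p_\xi=\rmId$ and $p_\zeta\iota_\xi=\delta_{\zeta\xi}\rmId$ show that these two assignments are mutually inverse.

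Next I will restrict this bijection. The map $\psi$ stabilizes $A$ exactly when $\psi\circ\iota_0=\iota_0$. Applying $p_0$ and $p_1$ to both sides, this is equivalent to $\psi_{0,0}=\rmId_A$ and $\psi_{1,0}=0$. This yields the claimed bijection, with $\psi_{0,1}$ and $\psi_{1,1}$ arbitrary. For costabilization, note that $p_1$ plays the role of $\pi$, so $\psi$ costabilizes $V$ iff $p_1\circ\psi=p_1$. Precomposing with $\iota_0$ and $\iota_1$, this reads $\psi_{1,0}=0$ and $\psi_{1,1}=\rmId_V$. Since $\psi_{1,0}=0$ is already given, the condition reduces to $\psi_{1,1}=\rmId_V$.

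The only step with some content is the invertibility claim, since $V$ may be infinite-dimensional and determinants are unavailable. For a stabilizing map we have $\psi(a,x)=(a+\psi_{0,1}(x),\psi_{1,1}(x))$. If $\psi_{1,1}$ is invertible, I will write down the inverse explicitly:
$$\psi^{-1}(b,y)=\bigl(b-\psi_{0,1}\psi_{1,1}^{-1}(y),\ \psi_{1,1}^{-1}(y)\bigr),$$
and check both compositions directly.

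Conversely, suppose $\psi$ is bijective.
\begin{itemize}
\item Injectivity of $\psi_{1,1}$: if $\psi_{1,1}(x)=0$, then $\psi(-\psi_{0,1}(x),x)=(0,0)$. Hence $x=0$.
\item Surjectivity of $\psi_{1,1}$: for $y\in V$, choose $(a,x)$ with $\psi(a,x)=(0,y)$. Then $\psi_{1,1}(x)=y$.
\end{itemize}
Equivalently, $\psi$ is block upper triangular with identity upper-left block, so $\psi$ induces $\psi_{1,1}$ on the quotient $(A\times V)/A\cong V$, and the five lemma gives the equivalence.
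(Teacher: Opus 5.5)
Your proof is correct and follows the paper's approach. The paper states this lemma without proof, immediately after introducing the blocks $\psi_{\zeta,\xi}=p_\zeta\circ\psi\circ\iota_\xi$, and treats it as evident from that block decomposition; your explicit inverse $\psi^{-1}(b,y)=\bigl(b-\psi_{0,1}\psi_{1,1}^{-1}(y),\ \psi_{1,1}^{-1}(y)\bigr)$ and your direct injectivity/surjectivity check for $\psi_{1,1}$ supply the one step with real content, with no finite-dimensionality assumption on $V$.
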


Thus the second step reduces to determining when the pair $(\psi_{0,1},\psi_{1,1})$ is compatible with the algebra structures of $\calA\ltimes_1 V$ and $\calA\ltimes_2 V$.

For a linear map $\psi:A\times V\to A\times V$ stabilizing A, imposing that $\psi$ be a homomorphism of $\calF$-algebras
$$
\psi:\calA\ltimes_1 V\longrightarrow \calA\ltimes_2 V
$$
is equivalent to requiring that, for any $\omega \in \calF_n$, $n\in \bbN$,
\begin{align}\label{Eq: psi is a map of F algebras}
\psi\circ\omega^{\calA\ltimes_1 V}
=
\omega^{\calA\ltimes_2 V}\circ\psi^n: (\calA\ltimes_1 V)^n\longrightarrow \calA\ltimes_2 V,
\end{align}
with the convention that $\psi^0=\rmId_\bbK$.

Next we describe Equation~\eqref{Eq: psi is a map of F algebras} componentwisely.

To this end, we need an explicit construction of the extending structures $
\Omega^i(\calA,V), i=1,2$ via Theorem~\ref{thm:universal-description}.

For each $\omega\in\calF_0$, this condition is automatic, since both nullary
operations are fixed as $(\omega^\calA,0)$ and $\psi$ stabilizes $A$.

For $n\in\bbN_+$, each $\delta=(\delta_1,\ldots,\delta_n)\in\{0,1\}^n$ and $\omega\in\calF_n$,
put
$$
M_\delta\coloneqq M_{\delta_1}\times\cdots\times M_{\delta_n},
\qquad
\iota_\delta
\coloneqq
\iota_{\delta_1}\times\cdots\times\iota_{\delta_n}
:
M_\delta\longrightarrow (A\times V)^n.
$$
and define
$$
\omega^i_{\xi,\delta}
\coloneqq
p_\xi\circ \omega^{\calA\ltimes_i V} \circ\iota_\delta
:
M_\delta\longrightarrow M_\xi,
\qquad \xi\in\{0,1\},
$$
Hence, 
$$
\Omega^i(\calA,V)=\{\omega^i_{\xi, \delta} \mid  \delta\in\overline{\{0,1\}^n}, \xi \in \{0, 1\},  \omega\in\calF_n,\ n\in \bbN_+\}. 
$$
Note that for the pure $A$-input word $\mathbf 0^n=(0,\ldots,0)$, we have
$$
\omega^i_{0,\mathbf 0^n}=\omega^\calA:A^n\to A,
\qquad
\omega^i_{1,\mathbf 0^n}=0:A^n\to V.
$$

Now we express Equation~\eqref{Eq: psi is a map of F algebras} by using the maps $\omega^i_{\xi, \delta}$'s.

Components of $\psi^n$ can be made explicit. 
For $\varepsilon,\delta\in\{0,1\}^n$, define
$$
\psi_{\varepsilon,\delta}
\coloneqq
\psi_{\varepsilon_1,\delta_1}
\times\cdots\times
\psi_{\varepsilon_n,\delta_n}
:
M_\delta\longrightarrow M_\varepsilon.
$$
Then for $\varepsilon,\delta\in\{0,1\}^n$,  $$\psi_{\varepsilon,\delta}=  p_\varepsilon\circ\psi^n \circ  \iota_\delta.$$  
 
Since $\psi_{1,0}=0$, only the terms  $\psi_{\varepsilon,\delta}$ with
$$
\varepsilon\leq\delta
$$
can contribute, where  $\varepsilon\leq\delta$ means coordinatewise inequalities:
$$
\varepsilon_j\leq\delta_j,\qquad j=1,\ldots,n.
$$

Now for $n\in\bbN_+$ and   $\omega\in\calF_n$, the  condition \eqref{Eq: psi is a map of F algebras} 
$$\psi\circ\omega^{\calA\ltimes_1 V}
=
\omega^{\calA\ltimes_2 V}\circ\psi^n$$
is equivalent to  
$$p_\zeta \circ \psi\circ\omega^{\calA\ltimes_1 V}\circ \iota_\delta
=
p_\zeta \circ\omega^{\calA\ltimes_2 V}\circ\psi^n \circ \iota_\delta, \quad  \delta\in\{0,1\}^n, \zeta\in\{0,1\}$$
which can be written as
$$\sum_{\xi\geq \zeta} p_\zeta \circ \psi\circ \iota_{\xi}\circ p_\xi\circ \omega^{\calA\ltimes_1 V}\circ \iota_\delta
=\sum_{\varepsilon\leq\delta}
p_\zeta \circ\omega^{\calA\ltimes_2 V}\circ \iota_{\varepsilon}\circ p_\varepsilon\circ\psi^n \circ  \iota_\delta, \quad  \delta\in\{0,1\}^n, \zeta\in\{0,1\}$$
Finally the  condition \eqref{Eq: psi is a map of F algebras}   is equivalent to 
the family of identities: 
\begin{equation}\label{Eq: F algebra map componentwisely}
\sum_{\xi\geq \zeta}
\psi_{\zeta,\xi}\circ\omega^1_{\xi,\delta}
=
\sum_{\varepsilon\leq\delta}
\omega^2_{\zeta,\varepsilon}\circ\psi_{\varepsilon,\delta}: M_\delta\longrightarrow M_\zeta.
 \end{equation}
These  relations \eqref{Eq: F algebra map componentwisely} are called \textbf{transformation relations}. 
This finishes the second step.

For the third step,  it suffices to ask, furthermore, that $\psi_{1,1}$ is a linear isomorphism.

We have shown that  $$
\Omega^1(\calA,V)\sim\Omega^2(\calA,V)
$$ if and only if  there exist linear maps $$
\psi_{0,1}:V\to A,\qquad
\psi_{1,1}:V\to V
$$ such that $\psi_{1,1}$ is a linear isomorphism and that 
for  $n\in\bbN_+$, $\omega\in\calF_n$, $\delta\in\{0,1\}^n$ and $\zeta\in\{0,1\}$, 
the  transformation relation   \eqref{Eq: F algebra map componentwisely}  
holds (where $\psi_{0,0}$ is understood as $\rmId_A$). 
Denote the corresponding quotient set  by
$$
\mathcal{H}^2_{\sim}(\calA,V)\coloneqq \calS(\calA,V)/\sim.
$$

We have shown the following result: 
\begin{Thm}\label{thm:universal-classification for sim}
Let $\calA=(A,G)\in\calV$ and let $E$ be a vector space containing $A$ as a subspace. 
Fix a vector space complement $V$ of $A$ in $E$, so that $E=A\oplus V$. 
Then there exists a bijection
$$
\mathcal{H}^2_{\sim}(\calA,V)\longleftrightarrow \Extd'(E,\calA)_V.
$$
 
\end{Thm}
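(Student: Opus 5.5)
The bijection will be induced by the bijection $\Phi\colon\calS(\calA,V)\to\Extd(E,\calA)_V$, $\Omega\mapsto H_\Omega$, of \cref{thm:universal-description}. The claim is that $\Phi$ descends to a well-defined bijection
$$\overline{\Phi}\colon\mathcal{H}^2_{\sim}(\calA,V)=\calS(\calA,V)/\sim\;\longrightarrow\;\Extd(E,\calA)_V/\sim=\Extd'(E,\calA)_V .$$
By the definition of $\sim$ on $\calS(\calA,V)$, we have $\Omega^1\sim\Omega^2$ if and only if $H_{\Omega^1}\sim H_{\Omega^2}$. So $\sim$ on $\calS(\calA,V)$ is exactly the pullback of $\sim$ on $\Extd(E,\calA)_V$ along $\Phi$.

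For any bijection $\Phi\colon X\to Y$ and equivalence relation $R$ on $Y$, the pulled-back relation makes $[x]\mapsto[\Phi(x)]$ well defined, by the "if" direction, and injective, by the "only if" direction. Surjectivity of $\Phi$ makes it surjective. This gives the bijection in the statement.

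It remains to justify that the relation called $\sim$ in the definition of $\mathcal{H}^2_{\sim}(\calA,V)$ coincides with the explicit description in terms of transformation relations. The steps are as follows.

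First, by \cref{thm:universal-description}, the identity map $E=A\oplus V\cong A\times V$ transports an isomorphism $\calE_{\Omega^1}\to\calE_{\Omega^2}$ stabilizing $A$ to an isomorphism $\psi\colon\calA\ltimes_1V\to\calA\ltimes_2V$ stabilizing $A$, and conversely.

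Second, \cref{Lem:linear-maps-correspondence} encodes such a linear $\psi$ by a pair $(\psi_{0,1},\psi_{1,1})$. Under this encoding, $\psi$ is bijective if and only if $\psi_{1,1}$ is. The proof of the "$\Leftarrow$" direction needs the explicit inverse, with block components $\psi_{1,1}^{-1}$ and $-\psi_{0,1}\psi_{1,1}^{-1}$.

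Third, $\psi$ is an $\calF$-homomorphism if and only if \eqref{Eq: F algebra map componentwisely} holds for all $\omega,\delta,\zeta$. This follows by composing \eqref{Eq: psi is a map of F algebras} with $p_\zeta$ on the left and $\iota_\delta$ on the right. We use $\sum_\xi\iota_\xi p_\xi=\rmId$ and the vanishing of $\psi_{1,0}$, which kills the terms with $\varepsilon\not\leq\delta$ and with $\xi<\zeta$. Nullary symbols are automatic.

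Because an inverse of a bijective homomorphism is automatically a homomorphism for multilinear operations, no extra condition is needed to obtain an isomorphism.

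The main point requiring care is the third step: verifying that the componentwise expansion is an equivalence and not just an implication. This holds because a multilinear map on $(A\times V)^n$ is determined by its restrictions along all $\iota_\delta$ and its projections $p_\zeta$. Once this is checked, the rest is the formal quotient argument above.
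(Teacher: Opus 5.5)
Your proposal is correct and follows the paper's argument. The relation $\sim$ on $\calS(\calA,V)$ is by definition the pullback of $\sim$ on $\Extd(E,\calA)_V$ along the bijection of \cref{thm:universal-description}, so the quotient bijection is formal, and your three steps (\cref{Lem:linear-maps-correspondence}, the componentwise transformation relations \eqref{Eq: F algebra map componentwisely}, and invertibility of $\psi_{1,1}$) are exactly the paper's derivation of the explicit form of $\sim$. One harmless labeling slip: well-definedness of $[\Omega]\mapsto[H_\Omega]$ uses the ``only if'' direction of the defining biconditional and injectivity uses the ``if'' direction, not the reverse.
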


For the cohomologous case, we further require that $\psi$ costabilizes $V$.
By \cref{Lem:linear-maps-correspondence}, this is equivalent to
$$
\psi_{1,1}=\rmId_V.
$$

We have shown that  $$
\Omega^1(\calA,V)\approx\Omega^2(\calA,V)
$$ if and only if  there exists a linear map $$
\psi_{0,1}:V\to A $$ such that 
for  $n\in\bbN_+$, $\omega\in\calF_n$, $\delta\in\{0,1\}^n$ and $\zeta\in\{0,1\}$, 
the identity \eqref{Eq: F algebra map componentwisely}  
holds (where $\psi_{0,0}$ is understood as $\rmId_A$ and $\psi_{1,1}=\rmId_V$). 
Denote the corresponding quotient set by
$$\mathcal{H}^2_{\approx}(\calA,V)\coloneqq\calS(\calA,V)/\approx.
$$

\begin{Thm}\label{thm:universal-classification-equiv}
Let $\calA=(A,G)\in\calV$ and let $E$ be a vector space containing $A$ as a subspace. 
Fix a vector space complement $V$ of $A$ in $E$, so that $E=A\oplus V$. 
Then there exists a bijection 
$$
\mathcal{H}^2_{\approx}(\calA,V)\longleftrightarrow \Extd''(E,\calA)_V.
$$
\end{Thm}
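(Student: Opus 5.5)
The plan is to transport the relation $\approx$ through the bijection of \cref{thm:universal-description}, in the same way as was done for $\sim$ in \cref{thm:universal-classification for sim}. Let $\Phi:\calS(\calA,V)\to\Extd(E,\calA)_V$ denote the bijection $\Omega\mapsto H_\Omega$ obtained from that theorem. By definition, $\Omega^1(\calA,V)\approx\Omega^2(\calA,V)$ holds exactly when $H_{\Omega^1}\approx H_{\Omega^2}$. Hence $\approx$ on $\calS(\calA,V)$ is the pullback under $\Phi$ of $\approx$ on $\Extd(E,\calA)_V$. In particular it is an equivalence relation, and a bijection that carries one relation onto the other descends to a bijection of quotient sets. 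The map $[\Omega]\mapsto[H_\Omega]$ is well defined and injective because the two relations correspond. It is surjective because $\Phi$ is surjective. This yields $\mathcal H^2_{\approx}(\calA,V)\leftrightarrow\Extd''(E,\calA)_V$.

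The statement also carries content through the explicit description of $\approx$ by the transformation relations \eqref{Eq: F algebra map componentwisely}. That description has to be justified, so I would carry out the following steps in order.

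\begin{enumerate}
\item Identify $E=A\oplus V$ with $A\times V$ via $a+x\mapsto(a,x)$. Under this identification, $\calE_{\Omega^i}$ becomes $\calA\ltimes_i V$. Stabilizing $A$ and costabilizing $V$ are preserved, because the identification commutes with $i$ and $\pi$.
\item Apply \cref{Lem:linear-maps-correspondence}. A linear map $\psi$ that stabilizes $A$ and costabilizes $V$ is exactly a pair $(\psi_{0,1},\rmId_V)$ together with $\psi_{0,0}=\rmId_A$ and $\psi_{1,0}=0$. Such a $\psi$ is automatically bijective, since $\psi_{1,1}=\rmId_V$ is invertible.
\item Show that \eqref{Eq: psi is a map of F algebras} is equivalent to the family \eqref{Eq: F algebra map componentwisely}. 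Compose with $p_\zeta$ on the left and $\iota_\delta$ on the right. Then insert $\sum_\xi\iota_\xi p_\xi=\rmId$ and $\sum_\varepsilon\iota_\varepsilon p_\varepsilon=\rmId$ in the middle, and use $\psi_{1,0}=0$ to restrict the sums to $\xi\ge\zeta$ and $\varepsilon\le\delta$.
\end{enumerate}

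Since a bijective homomorphism of $\calF$-algebras is an isomorphism (its inverse is multilinear and respects the operations), the cohomologous relation is exactly the existence of some $\psi_{0,1}$ satisfying all the transformation relations.

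I expect the main obstacle to be step 3, which is the bookkeeping showing that \eqref{Eq: psi is a map of F algebras} is equivalent to the family \eqref{Eq: F algebra map componentwisely}. Two points need care. First, $(A\times V)^n$ decomposes as the direct sum of the images of $\iota_\delta$, because the operations are multilinear, so checking the identity on each block suffices. Second, the pure-$A$ block $\delta=\mathbf 0^n$ must be included, with the conventions $\omega^i_{0,\mathbf 0^n}=\omega^\calA$ and $\omega^i_{1,\mathbf 0^n}=0$; the relation on this block holds automatically. The nullary operations impose nothing, as already observed. Everything else is formal.
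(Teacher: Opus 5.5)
Your proposal is correct and follows the paper's route: $\approx$ on $\calS(\calA,V)$ is by definition the pullback of $\approx$ on $\Extd(E,\calA)_V$ along the bijection of \cref{thm:universal-description}, so that bijection descends to the quotient sets, and your explicit description via \cref{Lem:linear-maps-correspondence} (with $\psi_{1,1}=\rmId_V$) and the transformation relations \eqref{Eq: F algebra map componentwisely} is exactly the derivation the paper gives before stating the theorem. One wording fix: $(A\times V)^n$ is not the direct sum of the images of the $\iota_\delta$ (those subspaces overlap), and $\sum_\varepsilon\iota_\varepsilon p_\varepsilon$ is not the identity; what you actually use is that every multilinear map $\omega$ on $(A\times V)^n$ satisfies $\omega=\sum_{\delta\in\{0,1\}^n}\omega\circ\iota_\delta\circ p_\delta$, so $\omega$ is determined by its restrictions to the blocks $M_\delta$.
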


Consequently, $\mathcal{H}^2_{\sim}(\calA,V)$ provides the theoretical classification of all families of operations
$$ H=\{\omega^\calE\mid \omega\in\calF\} $$
on $E=A\oplus V$ such that $\calE=(E,H)\in\calV$ and $\calA$ is a subalgebra of $\calE$, up to isomorphisms stabilizing $A$. The finer object $\mathcal{H}^2_{\approx}(\calA,V)$ gives the corresponding classification under isomorphisms which stabilize $A$ and costabilize the fixed complement $V$.

In concrete algebraic settings, the abstract compatibility conditions $\Gamma(\calA,V)$ on extending data and the transformation relations become explicit systems of equations on the corresponding component maps.

These two quotient sets $\mathcal{H}^2_{\sim}(\calA,V)$ and $\mathcal{H}^2_{\approx}(\calA,V)$ define two kinds of   nonabelian  second  cohomology in the linear setup of Universal Algebra,  
which may be investigated further. 

\section{Factorization Problem (FP)}\label{sec: Universal Algebra FP}

The Factorization Problem  (\textbf{FP}) is a special case of \textbf{ESP} in which the
complement is not merely a vector space but a prescribed algebra. Thus, for
$E=A\oplus B$, the pure $A$-part and the pure $B$-part of every
operation are already fixed. The only remaining data are the mixed components
between $A$ and $B$.

Let $\calA=(A,G)$ and $\calB=(B,Q)$ be two $\calF$-algebras in $\calV$, and let
$E$ be a vector space containing $A$ and $B$ as subspaces such that
$E=A\oplus B$.

An $\calF$-algebra $\calE=(E,H)$ in $\calV$ is said to \textbf{factorize} through $\calA$ and $\calB$ if both $\calA$ and $\calB$ are subalgebras of $\calE$. 
We denote by 
$$\Fact(\calA,\calB)$$
the set of all such families $H$ of multilinear operations on $E=A\oplus B$.

Recall that we already set 
$$ M_0=A,\qquad M_1=B, $$ and for $n\geq 1$ and 
$\varepsilon=(\varepsilon_1,\ldots,\varepsilon_n)\in\{0,1\}^n$, put
$$
M_\varepsilon=M_{\varepsilon_1}\times\cdots\times M_{\varepsilon_n}.
$$
We also denote
$$
\mathbf 0^n=(0,\ldots,0),\qquad
\mathbf 1^n=(1,\ldots,1).
$$
 
We  first add an observation concerning the restriction imposed by nullary operations.

Let $\omega\in\calF_0$. If an algebra
$\calE$ factorizes through $\calA$ and $\calB$, then
the value of $\omega^{\calE}$ belongs to both $A$ and $B$, since
both $\calA$ and $\calB$ are subalgebras of $\calE$.
Since $E=A\oplus B$, we have $A\cap B=\{0\}$, and hence
$$
\omega^{\calE}=0.
$$
Consequently,
$$
\omega^{\calA}=0=\omega^{\calB}.
$$
Thus, if some nullary operation is nonzero in either $\calA$ or
$\calB$, then
$$
\Fact(\calA,\calB)=\varnothing.
$$
In what follows, assume that, for every $\omega\in\calF_0$, one has
$$
\omega^\calA=0=\omega^\calB.
$$

\begin{Def}
A \textbf{matched pair} of  two $\calF$-algebras $\calA$ and $\calB$ in $\calV$ is a system
$$
(\calA,\calB,\olOmega(\calA,\calB)),
$$
consisting of the two prescribed $\calF$-algebras $\calA=(A,G)$ and $\calB=(B,Q)$  in $\calV$ together
with a family of mixed multilinear maps
$$
\olOmega(\calA,\calB)
=
\left\{
\omega_{\xi,\varepsilon}:M_\varepsilon\to M_\xi
\ \middle|\
\omega\in\calF_n,\ 
\varepsilon\in \overline{\{0,1\}^n}\setminus\{\mathbf 1^n\},\ 
\xi\in\{0,1\},\ 
n\geq 1
\right\}
$$
such that, after adjoining the prescribed pure parts
$$
\omega_{0,\mathbf 1^n}=0,\qquad
\omega_{1,\mathbf 1^n}=\omega^\calB,
$$
determined by the algebra structure of $\calB$,
the resulting extending datum $\Omega(\calA,B)$ is an extending structure of $\calA$ through $B$. Equivalently,
$$
\calA\ltimes_{\Omega(\calA,B)}B\in\calV.
$$
Then the associated unified product is denoted by
$$
\calA\bowtie_{\olOmega(\calA,\calB)}\calB
\coloneqq
\calA\ltimes_{\Omega(\calA,B)}B
$$
and is called the \textbf{bicrossed product} associated to the matched pair $(\calA,\calB,\olOmega(\calA,\calB))$.
The set of all matched pairs of $\calA$ and $\calB$ is denoted by
$$
\MP(\calA,\calB).
$$
\end{Def}

\begin{Thm}\label{thm:universal-factorization}
Let $\calA=(A,G)$, $\calB=(B,Q)$ be two $\calF$-algebras in $\calV$ such that $\omega^\calA=\omega^\calB=0$, $\forall \omega\in \calF_0$ and let $E$ be a vector space containing $A$ and $B$ as subspaces such that $E=A\oplus B$. Then
there exists a bijection
$$
\Fact(\calA,\calB)\longleftrightarrow \MP(\calA,\calB).
$$

\end{Thm}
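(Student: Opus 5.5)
The plan is to reduce the statement to \cref{thm:universal-description}, applied with $V=B$ regarded only as a vector space complement of $A$ in $E$. That theorem gives a bijection between $\Extd(E,\calA)_B$ and $\calS(\calA,B)$, sending a family $H$ to its components
$$\omega_{\xi,\varepsilon}=p_\xi\circ\omega^{\calE}\circ\iota_\varepsilon,\qquad \varepsilon\in\overline{\{0,1\}^n},\ \xi\in\{0,1\}.$$
By construction, $\Fact(\calA,\calB)$ is the subset of $\Extd(E,\calA)_B$ consisting of those $H$ for which $\calB$ is also a subalgebra of $(E,H)$. So it suffices to identify the image of this subset under the bijection and then match that image with $\MP(\calA,\calB)$.

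The first step is to characterize when $\calB$ is a subalgebra of $\calE=(E,H)$, with $H\in\Extd(E,\calA)_B$, in terms of the components. For $\omega\in\calF_n$ with $n\geq1$, restricting $\omega^{\calE}$ to $B^n$ is exactly evaluating on the pure word $\mathbf 1^n$, where $\omega^\calE|_{B^n}=\omega_{0,\mathbf 1^n}+\omega_{1,\mathbf 1^n}$. Hence $\calB$ is a subalgebra with the prescribed operations $Q$ if and only if
$$\omega_{0,\mathbf 1^n}=0\quad\text{and}\quad\omega_{1,\mathbf 1^n}=\omega^\calB\qquad\text{for all }\omega\in\calF_n,\ n\geq1.$$
For nullary $\omega$, the hypothesis $\omega^\calA=\omega^\calB=0$ together with the convention $\omega^{\calE}=(\omega^\calA,0)=0$ makes the condition automatic. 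This is the only place the nullary assumption enters. Without it, $\Fact(\calA,\calB)$ would be empty, as observed before the definition of matched pairs.

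The second step defines the map $\Fact(\calA,\calB)\to\MP(\calA,\calB)$. Given $H\in\Fact(\calA,\calB)$, keep only the mixed components, namely those with $\varepsilon\in\overline{\{0,1\}^n}\setminus\{\mathbf 1^n\}$. Readjoining the pure parts $\omega_{0,\mathbf 1^n}=0$ and $\omega_{1,\mathbf 1^n}=\omega^\calB$ recovers the full datum of $H$, by the first step. That datum lies in $\calS(\calA,B)$ because $(E,H)\in\calV$, so the mixed family is a matched pair.

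Conversely, take a matched pair and adjoin the pure parts. The definition of $\MP(\calA,\calB)$ says the result is an extending structure, so \cref{thm:universal-description} produces some $H\in\Extd(E,\calA)_B$. The first step then shows that $\calB$ is a subalgebra of $(E,H)$, so $H\in\Fact(\calA,\calB)$. The two constructions are mutually inverse because they are restrictions of the bijection in \cref{thm:universal-description}, together with the trivial bijection between full data with fixed $\mathbf 1^n$-components and mixed data.

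The only point requiring care is the first step: the subalgebra condition for $\calB$ must be exactly the two equations on the $\mathbf 1^n$-components. This holds because multilinearity makes the $\mathbf 1^n$-component the full restriction of $\omega^\calE$ to $B^n$. Beyond that, the argument is bookkeeping.
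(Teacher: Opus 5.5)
Your proposal is correct and follows the paper's proof. Both arguments restrict the bijection of \cref{thm:universal-description}, with $B$ viewed as a vector space complement of $A$, by noting that $\calB$ being a subalgebra is exactly the condition $\omega_{0,\mathbf 1^n}=0$, $\omega_{1,\mathbf 1^n}=\omega^\calB$, so the remaining mixed components are precisely a matched pair; your explicit handling of the nullary operations and of the inverse map only spells out details the paper leaves implicit.
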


\begin{proof}
Regard $B$ as the fixed vector space complement of $A$ in
$E=A\oplus B$. By \cref{thm:universal-description}, the algebra structures on $E$
extending $\calA$ are in bijection with the extending structures
\[
\Omega(\calA,B)\in\calS(\calA,B).
\]
Under this bijection, the additional requirement that $\calB$ be a
subalgebra of $E$ with its prescribed operations is equivalent, for every
$\omega\in\calF_n$ with $n\geq 1$, to
\[
\omega_{0,\mathbf 1^n}=0,
\qquad
\omega_{1,\mathbf 1^n}=\omega^{\calB}.
\]
Hence, after the pure $A$- and $B$-components have been fixed, the
remaining components are precisely the mixed components defining a matched
pair of $\calA$ and $\calB$. Therefore, the bijection in
\cref{thm:universal-description} restricts to the desired bijection
$$
\Fact(\calA,\calB)
\longleftrightarrow
\MP(\calA,\calB).
$$
\end{proof}

\begin{Rem}
Unlike \textbf{ESP}, the \textbf{FP} does not lead to any new quotient
classification.

In fact, in \textbf{FP}, given an $\calF$-algebra structure  $\calE=(E,H)$ in $\calV$ on the fixed vector space $E=A\oplus B$ with both factors $\calA$ and $\calB$ being  prescribed, an automorphism    of  $\calE=(E,H)$ stabilizing both factors $\calA$ and $\calB$  is necessarily the identity $\rmId_E$. 
Indeed, every $e\in E$ has a unique decomposition $e=a+x$ with $a\in A$ and $x\in B$, 
and such an automorphism $\varphi$ must satisfy
$$
\varphi(e)=\varphi(a+x)=a+x=e.
$$
Therefore, under this convention, the classification part of \textbf{FP} coincides with the description part. 
\end{Rem}

\section{Classifying Complements Problem (CCP)}\label{sec: Universal Algebra CCP}

In \textbf{ESP} and \textbf{FP}, the algebra structure on the ambient vector space is itself the object to be determined.
In the Classifying Complements Problem (\textbf{CCP}), the ambient algebra $\calE=(E,H)$ is fixed, and the unknown objects are the subalgebras of $\calE$ complementary to $\calA$.

\begin{Def}

Let $\calE=(E,H)$ be an $\calF$-algebra in $\calV$ and $\calA=(A,G)$ a subalgebra of $\calE$.
A subalgebra $\calB=(B,Q)$ of $\calE$ is called an $\calF$-\textbf{complement} of $\calA$ in $\calE$ if $E=A\oplus B$ as vector spaces.
We denote by
$$
\Comp(\calA,\calE)
$$
the set of all $\calF$-complements of $\calA$ in $\calE$.

\end{Def}

The number of isomorphism classes of $\calF$-complements of $\calA$ in
$\calE$ is called the \textbf{factorization index} of $\calA$ in $\calE$,
and is denoted by
$$
[\calE:\calA]^f
\coloneqq
\left|\Comp(\calA,\calE)/\cong\right|.
$$

The Classifying Complements Problem (\textbf{CCP}) asks to describe and
classify all $\calF$-complements of $\calA$ in $\calE$ and to compute the
factorization index $[\calE:\calA]^f$.

Note that once the algebra structure on $E$ is fixed,
a subalgebra structure on a subspace  is uniquely determined by its underlying vector space,  that is,  whether the subspace is closed under all operations from $\calE$.

Assume from now on that $\calA$ admits at least one $\calF$-complement in $\calE$.
Fix one such complement
$$
\calB=(B,Q)\in\Comp(\calA,\calE).
$$
Since both $\calA$ and $\calB$ are subalgebras of $\calE$, the algebra $\calE$ factorizes
through $\calA$ and $\calB$. Hence, by \cref{thm:universal-factorization}, this factorization determines a unique matched pair, denoted by
$$
(\calA,\calB,\olOmega^c(\calA,\calB)).
$$
We call it the \textbf{canonical matched pair} associated to the fixed $\calF$-complement $\calB$ of $\calA$ in $\calE$.

Fix the complement $B$ as a reference complement. Every vector space
complement $B'$ of $A$ in $E=A\oplus B$ is the graph of a unique linear
map $r:B\to A$, namely
$$
B'=\{r(x)+x\mid x\in B\}.
$$
Therefore, the problem is to determine for which linear maps $r:B\to A$ this graph is closed under all operations of $\calE$.  
Linear maps of this kind $r:B\to A$ will be called \textbf{deformation maps} and will be shown to correspond bijectively to $\calF$-complements of $\calA$ in $\calE$. 

By the observation concerning nullary operations in the preceding section, 
in the present factorization setting, nullary operation symbols are either absent or forced to be interpreted as the zero element in $\calE$.
Therefore, in what follows, we only consider operation symbols of positive arity.

We still need some notation.

Similarly as  before, put
$$ M_0\coloneqq A,\qquad M_1\coloneqq B. $$
Let
$$ \pi_\xi:E=A\oplus B\longrightarrow M_\xi, \qquad \xi\in\{0,1\}, $$
be the canonical projections associated to the decomposition $E=A\oplus B$.

For a linear map $r:B\to A$, write
$$ r_{0,1}\coloneqq r:M_1=B\longrightarrow M_0=A $$
for the same map. Define the graph embedding associated to $r$ by
$$ \lambda_r:B\longrightarrow E=A\oplus B, \qquad \lambda_r(x)\coloneqq r(x)+x. $$

\begin{Def}\label{Def:ccp-deformation-map}
Let $\calE=(E,H)$ be an $\calF$-algebra in $\calV$, $\calA=(A,G)$ a subalgebra of $\calE$,
and $\calB=(B,Q)$ a fixed $\calF$-complement of $\calA$ in $\calE$.
Let $(\calA,\calB,\olOmega^c(\calA,\calB))$ be the canonical matched pair associated to
$E=A\oplus B$.

For every $\omega\in\calF_n$ with $n\in\bbN_+$ and every $\xi\in\{0,1\}$,
define
$$ \omega^r_{\xi,\mathbf 1^n}
\coloneqq \pi_\xi\circ\omega^{\calE}\circ\lambda_r^n
: M_{\mathbf 1^n}\to E^n \to E \to M_\xi. $$
Thus $\omega^r_{0,\mathbf 1^n}$ and $\omega^r_{1,\mathbf 1^n}$ are respectively the $A$-component and the $B$-component of the value of $\omega^\calE$ on elements of the graph of
$r$.

A linear map $r:B\to A$ is called a \textbf{deformation map} of the canonical matched pair $(\calA,\calB,\olOmega^c(\calA,\calB))$ 
if, for every $\omega\in\calF_n$ with $n\in\bbN_+$, one has
\begin{equation*}
\omega^r_{0,\mathbf 1^n} = r_{0,1}\circ \omega^r_{1,\mathbf 1^n}.
\end{equation*}
The system of these equations is denoted by
$$ R(\olOmega^c(\calA,\calB);r), $$ and its members are called the \textbf{deformation equations}.
We denote by
$$ \M(\calA,\calB \mid \olOmega^c(\calA,\calB)) $$
the set of all deformation maps of the canonical matched pair.
\end{Def}

\begin{Thm}\label{Thm:ccp-deformation-correspondence}
Let $\calE=(E,H)$ be an $\calF$-algebra in $\calV$, $\calA=(A,G)$ a subalgebra of $\calE$, $\calB=(B,Q)$ a fixed $\calF$-complement of $\calA$ in $\calE$
and $(\calA,\calB,\olOmega^c(\calA,\calB))$ the canonical matched pair associated to $E=A\oplus B$.
\begin{enumerate}[label=(\alph*)]
\item If $r\in\M(\calA,\calB\mid\olOmega^c(\calA,\calB))$, then
$$
B_r\coloneqq\{r(x)+x\mid x\in B\}
$$
is closed under all operations of $\calE$.
Hence the induced algebra $\calB_r$ on $B_r$ is an $\calF$-complement of $\calA$ in $\calE$.
\item Conversely, every $\calF$-complement $\calB'=(B',Q')$ of $\calA$ in $\calE$ is of
the form $\calB_r$ for a unique deformation map
$r\in\M(\calA,\calB\mid\olOmega^c(\calA,\calB))$.
\end{enumerate}
\end{Thm}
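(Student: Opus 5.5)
The plan is to reduce closedness of the graph $B_r$ to the deformation equations, using multilinearity and the decomposition $E=A\oplus B$. The key observation is that $\lambda_r:B\to E$ is an injective linear map with image $B_r$. Moreover, every element $e\in E$ lies in $B_r$ if and only if $\pi_0(e)=r(\pi_1(e))$. Indeed, if $e=a+x$ with $a\in A$, $x\in B$, then $e\in B_r$ exactly when $a=r(x)$.

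For (a), fix $\omega\in\calF_n$ with $n\geq 1$ and elements $r(x_1)+x_1,\dots,r(x_n)+x_n$ of $B_r$. Their image under $\omega^\calE$ is $\omega^\calE\circ\lambda_r^n(x_1,\dots,x_n)$. By the definition of $\omega^r_{\xi,\mathbf 1^n}$, its $A$-component is $\omega^r_{0,\mathbf 1^n}(x_1,\dots,x_n)$ and its $B$-component is $\omega^r_{1,\mathbf 1^n}(x_1,\dots,x_n)$. The deformation equation says the $A$-component equals $r$ applied to the $B$-component, so by the criterion above the value lies in $B_r$.

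Nullary symbols need no check: as noted before the definition, they are interpreted as $0\in B_r$. Since $\calE\in\calV$ and the identities are universally quantified, the subspace $B_r$ with the restricted operations is an $\calF$-algebra in $\calV$, i.e.\ a subalgebra. Finally, $E=A\oplus B_r$: any $a+x$ equals $(a-r(x))+(r(x)+x)$, and $A\cap B_r=0$ because $r(x)+x\in A$ forces $x=0$. Hence $\calB_r\in\Comp(\calA,\calE)$.

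For (b), let $\calB'$ be an $\calF$-complement. Since $E=A\oplus B'$, the restriction $\pi_1|_{B'}:B'\to B$ is a linear isomorphism. Its kernel is $B'\cap A=0$. It is surjective because for $x\in B$ we can write $x=a+b'$ with $a\in A$, $b'\in B'$, and then $\pi_1(b')=x$. Define $r\coloneqq\pi_0\circ(\pi_1|_{B'})^{-1}:B\to A$; then $B'=\{r(x)+x\mid x\in B\}=B_r$. Uniqueness of $r$ holds because two graphs coincide only if the maps agree: $r(x)+x=r'(x')+x'$ forces $x=x'$ and then $r(x)=r'(x)$.

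It remains to check that $r$ is a deformation map. Since $B_r=B'$ is closed under $\omega^\calE$, the element $\omega^\calE(\lambda_r^n(x_1,\dots,x_n))$ lies in $B_r$. By the criterion, its $A$-component equals $r$ of its $B$-component, which is exactly $\omega^r_{0,\mathbf 1^n}=r_{0,1}\circ\omega^r_{1,\mathbf 1^n}$. The subalgebra structure $Q'$ is determined by $B'$, so $\calB'=\calB_r$.

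There is no real obstacle here. The only points needing care are the graph-membership criterion and the bijectivity of $\pi_1|_{B'}$, both elementary linear algebra. The canonical matched pair is used only through the notation: $\omega^r_{\xi,\mathbf 1^n}$ could be expanded via $\olOmega^c$, but that expansion is not needed for the proof.
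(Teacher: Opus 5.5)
Your proof is correct and follows essentially the same route as the paper: in both directions, closedness of the graph $B_r$ under $\omega^\calE$ is reduced to the condition that the $A$-component $\omega^r_{0,\mathbf 1^n}$ equals $r$ applied to the $B$-component $\omega^r_{1,\mathbf 1^n}$. You also correctly fill in details the paper leaves implicit, namely the graph-membership criterion, the construction of $r=\pi_0\circ(\pi_1|_{B'})^{-1}$ together with its uniqueness, the check $E=A\oplus B_r$, and the treatment of nullary symbols.
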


\begin{proof}
The deformation equations $R(\olOmega^c(\calA,\calB);r)$ are precisely the condition that the graph $B_r$ is closed under all operations.

Let $r\in\M(\calA,\calB\mid\olOmega^c(\calA,\calB))$. 
Since $E=A\oplus B$, the subspace $B_r=\{\,r(x)+x\mid x\in B\,\}$ is a vector space complement of $A$ in $E$.

Let $\omega\in\calF_n$ with $n\in\bbN_+$ and let $x_1,\ldots,x_n\in B$.
By the notations above, we have
$$
\omega^\calE(r(x_1)+x_1,\ldots,r(x_n)+x_n)
=
\omega^r_{0,\mathbf 1^n}(x_1,\ldots,x_n)
+
\omega^r_{1,\mathbf 1^n}(x_1,\ldots,x_n).
$$
Since $r$ satisfies $R(\olOmega^c(\calA,\calB);r)$, this becomes
$$
\omega^\calE(r(x_1)+x_1,\ldots,r(x_n)+x_n)
=
r_{0,1}\bigl(\omega^r_{1,\mathbf 1^n}(x_1,\ldots,x_n)\bigr)
+
\omega^r_{1,\mathbf 1^n}(x_1,\ldots,x_n)
$$
which falls into $B_r$. 
Hence $\calB_r$ is an $\calF$-complement of $\calA$ in $\calE$.

Conversely, let $\calB'=(B',Q')$ be an $\calF$-complement of $\calA$ in $\calE$.
Since $B'$ is a vector space complement of $A$ in $E$, 
there exists a unique linear map $r:B\to A$ such that
$$
B'=B_r=\{r(x)+x\mid x\in B\}.
$$

Since $\calB'$ is a subalgebra of $\calE$, the subspace $B_r$ is closed under all
$\omega^\calE$ for every $\omega\in\calF_n$ with $n\in\bbN_+$. For $x_1,\ldots,x_n\in B$,
  we have
$$
\omega^\calE(r(x_1)+x_1,\ldots,r(x_n)+x_n)
=
\omega^r_{0,\mathbf 1^n}(x_1,\ldots,x_n)
+
\omega^r_{1,\mathbf 1^n}(x_1,\ldots,x_n)\in B_r.
$$ This shows that 
$$
\omega^r_{0,\mathbf 1^n}
=
r_{0,1}\circ\omega^r_{1,\mathbf 1^n}
$$
for every $\omega\in\calF_n$ with $n\in\bbN_+$. Hence $r$ is a deformation map.
\end{proof}

Thus every $\calF$-complement of $\calA$ in $\calE$ is obtained as a deformation of the fixed complement $\calB$ via a deformation map.

To complete the classification,
it remains to identify when two deformation maps give rise to isomorphic complements. 
The subalgebra $\calB_r$ lives inside $E$, while the deformation maps all have the same domain $B$. 
For comparing different deformation maps, it is convenient to transport the algebra structure of $B_r$ back to the fixed vector space $B$.

In the rest of this section, write
$$
\olOmega^c \coloneqq \olOmega^c(\calA,\calB),\qquad \M^c(\calA,\calB) \coloneqq \M(\calA,\calB \mid \olOmega^c(\calA,\calB)).
$$

For a deformation map $r\in\M^c(\calA,\calB)$, denote by $\calB^r$ the algebra whose algebra structure is transported from the subalgebra $\calB_r\subseteq \calE$ to the fixed vector space $B$ via the graph embedding $\lambda_r$ defined above. Since its image is $B_r$, $\lambda_r$ is naturally viewed as the linear isomorphism
$$
\lambda_r:B\longrightarrow B_r.
$$
By the definition of the transported structure and the deformation equations, for every $\omega\in\calF_n$ with $n\in\bbN_+$, the
corresponding operation of $\calB^r$ is
$$
\omega^{\calB^r}=\omega^r_{1,\mathbf 1^n}:B^n\longrightarrow B.
$$

We now compare different deformation maps.

\begin{Def}\label{Def:equiv-deform-maps}
Let $r_1,r_2\in\M^c(\calA,\calB)$. We say that $r_1$ and $r_2$ are \textbf{equivalent},
and write
$$
r_1\sim r_2,
$$
if there exists a linear isomorphism
$$
\sigma:B\longrightarrow B
$$
such that, for every $\omega\in\calF_n$ with $n\in\bbN_+$, there holds
$$
\sigma\circ \omega^{r_1}_{1,\mathbf 1^n}
=
\omega^{r_2}_{1,\mathbf 1^n}\circ \sigma^{n}.
$$
Equivalently, $r_1\sim r_2$ if and only if $\calB^{r_1}$ and $\calB^{r_2}$ are
isomorphic as $\calF$-algebras.

We denote
$$
\mathcal H^2_{\mathrm{ccp}}(\calA,\calB\mid\olOmega^c)
\coloneqq
\M^c(\calA,\calB)/\sim .
$$
\end{Def}

\begin{Thm}\label{Thm:ccp-classification}
Let $\calE=(E,H)$ be an $\calF$-algebra in $\calV$, let $\calA=(A,G)$ be a subalgebra of $\calE$,
and assume that $\calA$ admits an $\calF$-complement $\calB=(B,Q)$ in $\calE$. Let
$
(\calA,\calB,\olOmega^c(\calA,\calB))
$
be the canonical matched pair associated to the factorization
$
E=A\oplus B.
$
Then there exists a bijection
$$
\mathcal H^2_{\mathrm{ccp}}(\calA,\calB\mid\olOmega^c)
\longrightarrow
\Comp(\calA,\calE)/\cong,
\qquad
[r]\longmapsto [\calB_r].
$$
Consequently,
$$
[\calE:\calA]^f
=
\left|
\mathcal H^2_{\mathrm{ccp}}(\calA,\calB\mid\olOmega^c)
\right|.
$$
\end{Thm}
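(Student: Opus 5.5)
The plan is to combine \cref{Thm:ccp-deformation-correspondence} with the observation in \cref{Def:equiv-deform-maps} that $r_1\sim r_2$ exactly when $\calB^{r_1}\cong\calB^{r_2}$. First I will define the map on representatives, $\Phi:\M^c(\calA,\calB)\to\Comp(\calA,\calE)$, $r\mapsto\calB_r$. By \cref{Thm:ccp-deformation-correspondence}(a), $\calB_r$ is an $\calF$-complement of $\calA$, so $\Phi$ is well defined. By part (b), every complement $\calB'$ equals $\calB_r$ for a unique deformation map $r$, so $\Phi$ is a bijection at the level of sets.

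The key step is to show that, for $r_1,r_2\in\M^c(\calA,\calB)$,
\[
r_1\sim r_2 \iff \calB_{r_1}\cong\calB_{r_2}.
\]
For this I will check that the graph embedding $\lambda_r:B\to B_r$ is an isomorphism of $\calF$-algebras $\calB^r\to\calB_r$. It is a linear isomorphism onto $B_r$, because $x\mapsto r(x)+x$ is injective and surjective onto the graph. It is a homomorphism because, for $x_1,\ldots,x_n\in B$,
\[
\omega^\calE(\lambda_r x_1,\ldots,\lambda_r x_n)
=\omega^r_{0,\mathbf 1^n}(x_1,\ldots,x_n)+\omega^r_{1,\mathbf 1^n}(x_1,\ldots,x_n)
=\lambda_r\bigl(\omega^r_{1,\mathbf 1^n}(x_1,\ldots,x_n)\bigr),
\]
where the second equality uses the deformation equation $\omega^r_{0,\mathbf 1^n}=r\circ\omega^r_{1,\mathbf 1^n}$. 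This computation is exactly the statement $\omega^{\calB^r}=\omega^r_{1,\mathbf 1^n}$ recorded before \cref{Def:equiv-deform-maps}. It also shows that the operation on $\calB_r$ is the restriction of $\omega^\calE$, so $\calB_r$ is the induced subalgebra.

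Given an $\calF$-isomorphism $\sigma:\calB^{r_1}\to\calB^{r_2}$, which is exactly what $r_1\sim r_2$ provides, the composite $\lambda_{r_2}\circ\sigma\circ\lambda_{r_1}^{-1}$ is an isomorphism $\calB_{r_1}\to\calB_{r_2}$. Conversely, an isomorphism $\phi:\calB_{r_1}\to\calB_{r_2}$ yields $\sigma=\lambda_{r_2}^{-1}\circ\phi\circ\lambda_{r_1}$, which is linear, bijective, and satisfies $\sigma\circ\omega^{r_1}_{1,\mathbf 1^n}=\omega^{r_2}_{1,\mathbf 1^n}\circ\sigma^n$.

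Hence $\Phi$ maps $\sim$-classes onto isomorphism classes and separates distinct classes, so $[r]\mapsto[\calB_r]$ is a well-defined bijection. The statement about $[\calE:\calA]^f$ then follows by taking cardinalities.

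There is no serious obstacle. The only point needing care is the homomorphism property of $\lambda_r$, which rests on the deformation equation, together with remembering that nullary symbols have been discarded in this setting.
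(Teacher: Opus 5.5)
Your proposal is correct and follows the paper's proof. Like the paper, you take the set bijection $r\mapsto\calB_r$ from \cref{Thm:ccp-deformation-correspondence}, use the graph isomorphisms $\lambda_{r_i}:\calB^{r_i}\to\calB_{r_i}$ to obtain $r_1\sim r_2\iff\calB_{r_1}\cong\calB_{r_2}$, and then pass to quotients; your explicit check via the deformation equation that $\lambda_r$ is a homomorphism spells out what the paper treats as the definition of the transported structure.
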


\begin{proof}
By \cref{Thm:ccp-deformation-correspondence}, the assignment
$$
r\longmapsto \calB_r
$$
defines a bijection
$$
\M^c(\calA,\calB)
\longrightarrow
\Comp(\calA,\calE).
$$
It remains to pass to isomorphism classes.

To identify the equivalence relation on deformation maps, let
$$
r_1,r_2\in\M^c(\calA,\calB).
$$
The maps
$$
\lambda_{r_i}:B\longrightarrow B_{r_i},
\qquad
\lambda_{r_i}(x)=r_i(x)+x,
\qquad
i=1,2,
$$
are linear isomorphisms transporting the algebra structures of $\calB_{r_i}$ to
$\calB^{r_i}$. Hence $\calB_{r_1}$ and $\calB_{r_2}$ are isomorphic as $\calF$-algebras
if and only if $\calB^{r_1}$ and $\calB^{r_2}$ are isomorphic as $\calF$-algebras.
By \cref{Def:equiv-deform-maps}, this is equivalent to $r_1\sim r_2$.
So we have
$$
r_1\sim r_2
\quad\Longleftrightarrow\quad
\calB^{r_1}\cong\calB^{r_2}
\quad\Longleftrightarrow\quad
\calB_{r_1}\cong\calB_{r_2}.
$$

Therefore the assignment $r\mapsto \calB_r$ is compatible with the equivalence
relations $\sim$ and $\cong$, and hence descends to a bijection
$$
\mathcal H^2_{\mathrm{ccp}}(\calA,\calB\mid\olOmega^c)
\longrightarrow
\Comp(\calA,\calE)/\cong,
\qquad
[r]\longmapsto [\calB_r].
$$
Taking cardinalities gives
$$
[\calE:\calA]^f
=
\left|
\mathcal H^2_{\mathrm{ccp}}(\calA,\calB\mid\olOmega^c)
\right|.
$$
\end{proof}

Therefore, \textbf{CCP} in this linear framework of Universal Algebra is reduced to the study of deformation maps associated to the canonical matched pair obtained from a fixed factorization of $\calE$.
In concrete algebraic settings,
the deformation equations become explicit systems of equations on the corresponding component maps.


\section{Extending structures for commutative algebras}\label{sec: commutative algebras}

\crefrange{sec: commutative algebras}{sec: pre-Lie Poisson alg} apply the general theory developed in \crefrange{sec: Universal Algebra ESP}{sec: Universal Algebra CCP} to four concrete varieties in the present linear framework. 
In these applications, we use the general results of Sections~\ref{sec: Universal Algebra ESP}--\ref{sec: Universal Algebra CCP} without repeating their arguments, and focus instead on the explicit component data and the corresponding compatibility, transformation, and deformation formulas. 
Computations are included only when they clarify the concrete form of these conditions.

We begin the applications with commutative algebras, which provide the first and simplest concrete specialization of the general theory and the underlying commutative structure common to the later examples. Extending structures for associative algebras were studied in \cite{AM16} and \chapref{AM19}{4}, where the commutative case is also treated explicitly at the level of the extending datum and its compatibility conditions. Here we place the commutative case in the framework of \crefrange{sec: Universal Algebra ESP}{sec: Universal Algebra CCP} and record the resulting classification, flag extending structures, factorization, and classifying complements without repeating the general arguments.

Here
$$ \calF=\{-\cdot-\}, $$
where $-\cdot-$ is a binary operation symbol, and the defining
identities are
$$
(x\cdot y)\cdot z \approx x\cdot(y\cdot z),
\qquad
x\cdot y\approx y\cdot x.
$$
Throughout this section, by a commutative algebra we mean a (nonunital) commutative associative algebra.

\subsection{Extending structures and their classification}

We begin with the description part of the \textbf{ESP} for commutative algebras.

Let $\calC=(C,-\cdot-)$ be a commutative algebra, $E$ a vector space containing $C$ as a subspace, and $V$ a fixed complement of $C$ in $E$, so that $E=C\oplus V$. 
With this fixed decomposition, we retain from \cref{sec: framework} the notation $\Extd(E,\calC)_V$ for the set of all commutative algebra structures on $E$ containing $\calC$ as a subalgebra.

Denote the unique binary operation symbol by $\mu$. 
For this operation, the non-pure $C$-input patterns in the general extending
datum of \cref{sec: Universal Algebra ESP} are
$(1,0)$, $(0,1)$ and $(1,1)$.
The components corresponding to $(1,0)$ are written as
$$ \mu_{0,(1,0)}(x,a)=x\rt a, \qquad \mu_{1,(1,0)}(x,a)=x\lt a. $$
Commutativity determines the other mixed pattern:
$$ \mu_{0,(0,1)}(a,x)=x\rt a, \qquad \mu_{1,(0,1)}(a,x)=x\lt a. $$
For the pure $V$-input pattern, put
$$ \mu_{0,(1,1)}=f, \qquad \mu_{1,(1,1)}=\ell. $$
Thus commutativity reduces the six binary components of a general extending
datum from \cref{sec: Universal Algebra ESP} to the four independent maps in the following
definition; in particular, the pure $V$-input components $f$ and $\ell$ must
be symmetric.

\begin{Def}\label{Def: commutative extending structure}
A \textbf{commutative extending datum} of $\calC$ through $V$ is a quadruple
$$
\Omega(\calC,V)=(\lt,\rt,f,\ell)
$$
of bilinear maps
\begin{align*}
  \lt:V\times C\to V, \qquad \rt:V\times C\to C, \qquad
  f:V\times V\to C, \qquad \ell:V\times V\to V,  
\end{align*}
where $f$ and $\ell$ are symmetric. Given such a datum, define a bilinear
multiplication on $C\times V$ by
\begin{equation}\label{eq: commutative unified product}
(a,x)*(b,y)
=
\bigl(
a\cdot b+x\rt b+y\rt a+f(x,y),
\ x\lt b+y\lt a+\ell(x,y)
\bigr).
\end{equation}
If this multiplication makes $C\times V$ a commutative algebra, then
$\Omega(\calC,V)$ is called a \textbf{commutative extending structure}. In
this case, the resulting commutative algebra
$$
\calC\ltimes V=\calC\ltimes_{\Omega(\calC,V)}V\coloneqq (C\times V,-*-)
$$
is called the associated \textbf{unified product}.
The set of all commutative extending structures is denoted by $$\CE(\calC,V).$$ 
\end{Def}

Since commutativity is already incorporated into
\cref{Def: commutative extending structure}, it remains only to impose
associativity. Decomposing the associativity condition for
\eqref{eq: commutative unified product} into its $C$- and $V$-components
gives precisely the specialization of $\Gamma(\calC,V)$ recorded in
\ref{C1}--\ref{C6} below.

\begin{Thm}\label{Thm: commutative equivalent definition}
Let $\Omega(\calC,V)=(\lt,\rt,f,\ell)$ be a commutative
extending datum. Then \eqref{eq: commutative unified product} defines a
commutative algebra structure on $C\times V$ if and only if, for all
$a,b\in C$ and $x,y,z\in V$, the following conditions hold:
\begin{Cenum}
\item \label{C1}
$
x\rt(a\cdot b)
=(x\rt a)\cdot b+(x\lt a)\rt b;
$
\item \label{C2} $(V,\lt)$ is a nonunital right module over
$\calC$, that is,
$
x\lt(a\cdot b)=(x\lt a)\lt b;
$
\item \label{C3}
$
\ell(x,y)\rt a
=x\rt(y\rt a)
+f(x,y\lt a)-f(x,y)\cdot a;
$
\item \label{C4}
$
\ell(x,y)\lt a
=\ell(x,y\lt a)+x\lt(y\rt a);
$
\item \label{C5}
$
f(\ell(x,y),z)-f(x,\ell(y,z))
=x\rt f(y,z)-z\rt f(x,y);
$
\item \label{C6}
$
\ell(\ell(x,y),z)-\ell(x,\ell(y,z))
=x\lt f(y,z)-z\lt f(x,y).
$
\end{Cenum}
\end{Thm}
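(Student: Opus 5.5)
Since $f$ and $\ell$ are symmetric and the mixed components are defined symmetrically, the multiplication \eqref{eq: commutative unified product} is automatically commutative. So the plan is to impose only associativity,
$$((a,x)*(b,y))*(c,z)=(a,x)*((b,y)*(c,z)).$$
Both sides are trilinear, so it suffices to test on homogeneous triples. Each of the three inputs lies either in $C$ (i.e.\ of the form $(a,0)$) or in $V$ (i.e.\ of the form $(0,x)$). This gives eight input patterns, and each pattern yields one $C$-component identity and one $V$-component identity. This is exactly the specialization of $\Gamma(\calC,V)$ from \cref{sec: Universal Algebra ESP}. The pattern $(C,C,C)$ is associativity of $\calC$ and gives nothing new.

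Next I would reduce the remaining seven patterns using commutativity. Write $m(u,v,w)=(u*v)*w-u*(v*w)$ for the associator. In a commutative product the associator satisfies $m(u,v,w)=-m(w,v,u)$. Moreover, $m(u,v,w)=0$ for all patterns with one $V$-entry in the first slot already forces the other one-$V$ patterns, since $m(a,x,b)=(x*a)*b-x*(a*b)+\ldots$ can be rewritten through the identities $(a*x)*b=(x*a)*b$ and $a*(x*b)=(x*b)*a$. Concretely, I would show
$$m(a,x,b)=m(x,a,b)-m(x,b,a),$$
using only commutativity (in any commutative algebra, $m(a,x,b)=(a x)b-a(x b)=(x a)b-(x b)a$). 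The same holds with roles swapped for two $V$-entries. It therefore suffices to check the patterns $(V,C,C)$, $(V,V,C)$ and $(V,V,V)$.

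The three remaining patterns give the six conditions:
\begin{itemize}
\item Pattern $(V,C,C)$: expand $((0,x)*(a,0))*(b,0)=(x\rt a,x\lt a)*(b,0)$ and $(0,x)*(a\cdot b,0)$. The $C$-component gives \ref{C1} and the $V$-component gives \ref{C2}.
\item Pattern $(V,V,C)$: compare $((0,x)*(0,y))*(a,0)=(f(x,y),\ell(x,y))*(a,0)$ with $(0,x)*(y\rt a,y\lt a)$. The components give \ref{C3} and \ref{C4}, after moving $f(x,y)\cdot a$ to the right-hand side and using the symmetry of $f$ and $\ell$ to put arguments in the displayed order.
\item Pattern $(V,V,V)$: compare $(f(x,y),\ell(x,y))*(0,z)$ with $(0,x)*(f(y,z),\ell(y,z))$. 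This gives \ref{C5} and \ref{C6}, once $z\rt f(x,y)$ and $x\rt f(y,z)$ are identified via the symmetric placement of $\rt$ in the product.
\end{itemize}

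Conversely, if \ref{C1}--\ref{C6} hold, these three patterns vanish, and the reduction above gives associativity on all patterns.

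The main obstacle is the reduction step. The conditions \ref{C1}--\ref{C6} list only one representative per pattern class, so I must verify carefully that the mixed patterns $(C,V,C)$, $(C,C,V)$, $(V,C,V)$, $(C,V,V)$ really follow from commutativity together with the three checked patterns. I would do this first and explicitly via the associator identity above, and fall back on direct expansion of, for example, the pattern $(C,V,V)$ if the identity argument leaves a gap.
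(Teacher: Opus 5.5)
Your proposal is correct and is essentially the paper's route: commutativity is automatic from the symmetry of $f$ and $\ell$, and (C1)--(C6) are exactly the $C$- and $V$-components of associativity on the representative patterns $(V,C,C)$, $(V,V,C)$ and $(V,V,V)$, which you expand correctly. Your associator identities $m(u,v,w)=-m(w,v,u)$ and $m(u,v,w)=m(v,u,w)-m(v,w,u)$ hold in any commutative algebra, and they supply the reduction of the remaining mixed patterns that the paper leaves implicit; for example, $m(x,a,y)=m(x,y,a)-m(y,x,a)$.
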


By \cref{thm:universal-description}, we obtain the following correspondence.

\begin{Thm}\label{Thm: first commutative correspondence theorem}
There is a bijection
$$
\Extd(E,\calC)_V\longleftrightarrow\CE(\calC,V).
$$
Under this bijection, $\Omega(\calC,V)=(\lt,\rt,f,\ell)$
corresponds to the multiplication
$$
(a+x)\cdot_E(b+y)
=\bigl(a\cdot b+x\rt b+y\rt a+f(x,y)\bigr)
+\bigl(x\lt b+y\lt a+\ell(x,y)\bigr),
$$
for all $a,b\in C$ and $x,y\in V$. 
Conversely, $-\cdot_E-$ corresponds to the extending structure $(\lt',\rt',f',\ell')$ whose four components are given by
\begin{equation}\label{eq: first commutative correspondence}
\begin{aligned}
x\rt' a&=p(x\cdot_E a),
&\qquad
x\lt' a&=x\cdot_E a-p(x\cdot_E a),\\
f'(x,y)&=p(x\cdot_E y),
&
\ell'(x,y)&=x\cdot_E y-p(x\cdot_E y),
\end{aligned}
\end{equation}
for all $a\in C$ and $x,y\in V$, where $p:E\to C,\ p(a+x)=a,$ is the canonical projection determined by the fixed decomposition $E=C\oplus V$.
\end{Thm}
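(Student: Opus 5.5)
The plan is to specialize \cref{thm:universal-description} to the variety of commutative algebras and then use \cref{Thm: commutative equivalent definition} to identify the extending structures that arise. That theorem gives a bijection between $\Extd(E,\calC)_V$ and the set $\calS(\calC,V)$ of extending structures of the general type. So the first step is to show that $\calS(\calC,V)$ is naturally in bijection with $\CE(\calC,V)$. A general extending datum for the single binary symbol $\mu$ consists of six bilinear maps $\mu_{\xi,\varepsilon}$, with $\xi\in\{0,1\}$ and $\varepsilon\in\{(1,0),(0,1),(1,1)\}$. I will show that the compatibility conditions $\Gamma(\calC,V)$ contain commutativity.
\begin{itemize}
\item Commutativity forces $\mu_{\xi,(0,1)}(a,x)=\mu_{\xi,(1,0)}(x,a)$.
\item It also forces $\mu_{\xi,(1,1)}$ to be symmetric.
\end{itemize}
Hence the six maps are determined by the quadruple $(\lt,\rt,f,\ell)$ named before \cref{Def: commutative extending structure}, with $f,\ell$ symmetric. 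Under this identification the general unified product formula becomes exactly \eqref{eq: commutative unified product}. The remaining condition, membership in $\calV$, is precisely that $(C\times V,*)$ is a commutative algebra, which is the definition of $\CE(\calC,V)$.

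Next I will make the bijection explicit through the identification of the internal sum $C\oplus V$ with the external product $C\times V$, $a+x\leftrightarrow(a,x)$. Starting from $(\lt,\rt,f,\ell)\in\CE(\calC,V)$, transporting $*$ along this identification gives the displayed formula for $\cdot_E$. This multiplication is commutative and associative by the definition of $\CE(\calC,V)$. It restricts to $a\cdot b$ on $C$, since every mixed term vanishes when $x=y=0$, so $\calC$ is a subalgebra.

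Conversely, given $\cdot_E\in\Extd(E,\calC)_V$, I define $(\lt',\rt',f',\ell')$ by \eqref{eq: first commutative correspondence}. These are the $C$- and $V$-components of $x\cdot_E a$ and of $x\cdot_E y$, obtained by applying $p$ and $\rmId_E-p$, so they are exactly the block components $\mu_{\xi,\varepsilon}$ from \cref{thm:universal-description}. The symmetry of $f'$ and $\ell'$, and the fact that $a\cdot_E x$ produces the same components as $x\cdot_E a$, both follow from the commutativity of $\cdot_E$.

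It remains to check that the two assignments are mutually inverse. Expanding $(a+x)\cdot_E(b+y)$ bilinearly into the four terms $a\cdot_E b$, $x\cdot_E b$, $a\cdot_E y$ and $x\cdot_E y$, and splitting each term with $p$, recovers the displayed formula with the primed maps. In the other direction, applying $p$ to the displayed formula with $a=0$, $y=0$ (respectively $a=b=0$) returns $\rt$ and $\lt$ (respectively $f$ and $\ell$). The only point needing care is checking that the commutativity reduction loses no information, i.e.\ that the $(0,1)$ components are genuinely determined rather than merely constrained; this is immediate from the formulas above. Everything else is bookkeeping, so I expect no real obstacle.
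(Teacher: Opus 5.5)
Your proposal is correct and follows essentially the paper's route. The paper obtains this theorem by specializing \cref{thm:universal-description}, after using commutativity (in the discussion before \cref{Def: commutative extending structure}) to reduce the six binary components to $(\lt,\rt,f,\ell)$ with $f,\ell$ symmetric; your explicit inverse assignments via $a+x\leftrightarrow(a,x)$ and the projection $p$, with the inverse check using that $\calC$ is a subalgebra and $\cdot_E$ is commutative, are exactly what the paper writes out in the analogous transposed Poisson case (\cref{Thm: first transposed Poisson correspondence theorem}).
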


Thus \cref{Thm: commutative equivalent definition,Thm: first commutative correspondence theorem}
identify the elements of $\Extd(E,\calC)_V$ with the commutative extending
data satisfying \ref{C1}--\ref{C6}.

We now turn to the classification part.
For this purpose, fix two commutative extending structures of $\calC$ through $V$,
$$ \Omega(\calC,V) = (\lt,\rt,f,\ell), \qquad
\Omega'(\calC,V) = (\lt',\rt',f',\ell'), $$
and denote their corresponding unified products, respectively, by
$$ \calC\ltimes V, \qquad 
\calC\ltimes'V. $$
By \cref{Lem:linear-maps-correspondence}, every linear map
$C\times V\to C\times V$ stabilizing $C$ is uniquely of the form
$$ \psi_{(r,v)}(a,x) = (a+r(x),v(x)), \qquad \forall\,a\in C,\ x\in V, $$
where $r:V\to C$ and $v:V\to V$ are linear maps. 
Specializing \eqref{Eq: F algebra map componentwisely} to the unique binary operation
yields the following.

\begin{Lem}\label{Lem: commutative transformation relations}
For linear maps $r:V\to C$ and $v:V\to V$, the map
$$
\psi_{(r,v)}:\calC\ltimes V\longrightarrow\calC\ltimes'V
$$
is a homomorphism of commutative algebras if and only if, for all
$a\in C$ and $x,y\in V$, one has
\begin{CRenum}
  \item\label{CR1}  $v(x\lt a)=v(x)\lt' a$;
  \item\label{CR2}  $r(x\lt a)=v(x)\rt' a+r(x)\cdot a-x\rt a$;
  \item\label{CR3}  $v(\ell(x,y))=\ell'(v(x),v(y))+v(x)\lt' r(y)+v(y)\lt' r(x)$;
  \item\label{CR4} $r(\ell(x,y))=f'(v(x),v(y))+v(x)\rt' r(y)+v(y)\rt' r(x)+r(x)\cdot r(y)-f(x,y)$.
\end{CRenum}
Moreover, $\psi_{(r,v)}$ is an isomorphism if and only if
$v:V\to V$ is a linear isomorphism, and it costabilizes $V$ if and only if
$v=\operatorname{id}_V$.
\end{Lem}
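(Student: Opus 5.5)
The plan is to specialize the general transformation relations \eqref{Eq: F algebra map componentwisely} to the single binary operation $\mu$, with $\psi_{0,0}=\rmId_C$, $\psi_{1,0}=0$, $\psi_{0,1}=r$ and $\psi_{1,1}=v$. Alternatively, one can compare both sides of
$$\psi_{(r,v)}\bigl((a,x)*(b,y)\bigr)=\psi_{(r,v)}(a,x)*'\psi_{(r,v)}(b,y)$$
directly, using \eqref{eq: commutative unified product}. By bilinearity it suffices to test on the input patterns $\delta\in\{(0,0),(1,0),(0,1),(1,1)\}$, that is, on pairs $(a,0),(b,0)$, then $(0,x),(b,0)$, then $(a,0),(0,y)$, and finally $(0,x),(0,y)$. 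For each pattern we take the $C$-component ($\zeta=0$) and the $V$-component ($\zeta=1$).

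First, the pattern $(0,0)$ is automatic, since $\psi$ is the identity on $C$ and both products restrict to $-\cdot-$ on $C$; its $V$-component is $0=0$.

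Second, for the pattern $(1,0)$, the left side is $\psi(x\rt b,\,x\lt b)=(x\rt b+r(x\lt b),\,v(x\lt b))$. The right side is $(r(x),v(x))*'(b,0)=(r(x)\cdot b+v(x)\rt' b,\ v(x)\lt' b)$. Comparing components gives \ref{CR1} and \ref{CR2}. The pattern $(0,1)$ yields the same two relations, because both products are commutative (the datum was built so that the $(0,1)$ components equal the $(1,0)$ ones). So it gives nothing new.

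Third, for the pattern $(1,1)$, the left side is $(f(x,y)+r(\ell(x,y)),\ v(\ell(x,y)))$. The right side $(r(x),v(x))*'(r(y),v(y))$ expands via \eqref{eq: commutative unified product} to
$$\bigl(r(x)\cdot r(y)+v(x)\rt' r(y)+v(y)\rt' r(x)+f'(v(x),v(y)),\ v(x)\lt' r(y)+v(y)\lt' r(x)+\ell'(v(x),v(y))\bigr).$$
Comparing components gives \ref{CR3} and \ref{CR4}.

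The final assertions about isomorphisms and costabilization are exactly \cref{Lem:linear-maps-correspondence}, applied with $\psi_{1,1}=v$. There is no real obstacle here. The only point needing care is bookkeeping: we must check that the $(0,1)$ pattern is redundant by commutativity, and we must track which map ($\rt$ versus $\rt'$, and so on) appears on each side.
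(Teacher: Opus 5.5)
Your proposal is correct and follows the paper's route. The paper obtains \ref{CR1}--\ref{CR4} by specializing the componentwise transformation relations \eqref{Eq: F algebra map componentwisely} to the single binary operation, with the $(0,1)$ pattern redundant by commutativity. It reads off the isomorphism and costabilization assertions from \cref{Lem:linear-maps-correspondence}, exactly as you do, and your explicit component computations check out.
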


Together with the last assertion of \cref{Lem: commutative transformation relations}, conditions \ref{CR1}--\ref{CR4} give the explicit forms of the relations $\sim$ and $\approx$ as follows.

\begin{Def}\label{Def: commutative equivalence relations}
The commutative extending structures
$\Omega(\calC,V)$ and $\Omega'(\calC,V)$ are called
\textbf{equivalent}, denoted by
$$
\Omega(\calC,V)\sim\Omega'(\calC,V),
$$
if there exist a linear map $r:V\to C$ and a linear isomorphism
$v:V\to V$ satisfying \ref{CR1}--\ref{CR4}. They are called
\textbf{cohomologous}, denoted by
$$
\Omega(\calC,V)\approx\Omega'(\calC,V),
$$
if the same conditions hold with $v=\operatorname{id}_V$. Denote
$$
\CH^2_{\sim}(\calC,V)\coloneqq \CE(\calC,V)/\sim,
\qquad
\CH^2_{\approx}(\calC,V)\coloneqq \CE(\calC,V)/\approx.
$$
\end{Def}

Together with
\cref{thm:universal-classification for sim,thm:universal-classification-equiv},
these relations yield the following classification.

\begin{Thm}\label{Thm: second commutative correspondence theorem}
There are bijections
$$
\CH^2_{\sim}(\calC,V)\longleftrightarrow\Extd'(E,\calC)_V,
\qquad
\CH^2_{\approx}(\calC,V)\longleftrightarrow\Extd''(E,\calC)_V.
$$
\end{Thm}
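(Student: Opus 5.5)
The plan is to obtain \cref{Thm: second commutative correspondence theorem} directly from the general results \cref{thm:universal-classification for sim,thm:universal-classification-equiv}. These apply to the variety of commutative algebras, with $\calF=\{\mu\}$ and $\Sigma$ consisting of associativity and commutativity; both identities are multilinear, so that variety fits the linear framework. The only work is to check that the abstract objects $\calS(\calC,V)$, $\sim$ and $\approx$ coincide with $\CE(\calC,V)$ and with the relations of \cref{Def: commutative equivalence relations}.

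First, identify $\calS(\calC,V)$ with $\CE(\calC,V)$. A general extending datum for $\mu$ consists of the six maps $\mu_{\xi,\varepsilon}$, $\xi\in\{0,1\}$, $\varepsilon\in\{(1,0),(0,1),(1,1)\}$. Commutativity of the unified product is one of the compatibility conditions, so every extending structure satisfies $\mu_{\xi,(0,1)}(a,x)=\mu_{\xi,(1,0)}(x,a)$, and its components $\mu_{\xi,(1,1)}$ are symmetric. The assignment $\Omega\mapsto(\lt,\rt,f,\ell)$ given just before \cref{Def: commutative extending structure} is therefore a bijection $\calS(\calC,V)\to\CE(\calC,V)$. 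Under it, $\calC\ltimes V$ is the product \eqref{eq: commutative unified product}. By \cref{Thm: commutative equivalent definition}, the remaining compatibility conditions (associativity) are exactly \ref{C1}--\ref{C6}, although only the bijection of sets is actually needed here.

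Second, identify the relations. By \cref{Lem:linear-maps-correspondence}, the linear maps stabilizing $C$ are exactly the maps $\psi_{(r,v)}$, with $r=\psi_{0,1}$ and $v=\psi_{1,1}$. By \cref{Lem: commutative transformation relations}, which specializes \eqref{Eq: F algebra map componentwisely}, $\psi_{(r,v)}$ is a homomorphism $\calC\ltimes V\to\calC\ltimes'V$ if and only if \ref{CR1}--\ref{CR4} hold. It is an isomorphism if and only if $v$ is bijective, and it costabilizes $V$ if and only if $v=\operatorname{id}_V$. By the discussion preceding \cref{thm:universal-classification for sim}, $\Omega\sim\Omega'$ in the general sense holds exactly when such a pair $(r,v)$ exists, which is the definition in \cref{Def: commutative equivalence relations}; the same holds for $\approx$.

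The only point needing care is that \ref{CR1}--\ref{CR4} capture all the transformation relations. The general system also contains relations for the input pattern $(0,1)$ and for the $C$-output of pure $C$-inputs. The latter are trivial because $\psi_{0,0}=\rmId_C$ and $\psi_{1,0}=0$. The pattern-$(0,1)$ relations follow from the pattern-$(1,0)$ ones by commutativity of both unified products. This is the main obstacle, though it is already contained in \cref{Lem: commutative transformation relations}, which may be assumed. Combining the two identifications, $\CH^2_{\sim}(\calC,V)=\calS(\calC,V)/\sim=\mathcal H^2_{\sim}(\calC,V)$, and similarly for $\approx$. The bijections then follow from \cref{thm:universal-classification for sim,thm:universal-classification-equiv}, read through the explicit correspondence \eqref{eq: first commutative correspondence} of \cref{Thm: first commutative correspondence theorem}.
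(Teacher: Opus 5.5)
Your proposal is correct and follows essentially the same route as the paper: it identifies $\calS(\calC,V)$ with $\CE(\calC,V)$ and reads off $\sim$ and $\approx$ from \ref{CR1}--\ref{CR4} via \cref{Lem: commutative transformation relations}. It then invokes \cref{thm:universal-classification for sim,thm:universal-classification-equiv}, spelling out the identification of the relations that the paper's one-line justification leaves implicit.
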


\subsection{Flag extending structures}\label{subsec: commutative flag extending structures}
  
We now turn to flag extending structures, for which the extension can be studied through successive codimension-one steps.

\begin{Def}\label{Def: commutative flag extending structure}
  Let $\calC$ be a commutative algebra and $V$ a finite-dimensional vector space, 
  and denote by $E=C\oplus V$ their direct sum as vector spaces.
  A commutative algebra structure $-\cdot_E-$ on $E$ extending $\calC$ is called a \textbf{flag extending structure} of $\calC$ to $E$ if there exists a flag 
  $$0=V_0\subset V_1\subset \cdots \subset V_d=V$$
  of $V$ of length $d=\dim V$ 
  (i.e. the  $V_i$'s are subspaces of $V$ with $\dim(V_i)=i$ for $0\leq i\leq d$) 
  such that for each $1\leq i\leq d$, 
  $E_i=C\oplus V_i$ is closed under $-\cdot_E-$. 

\end{Def}

Throughout the remainder of this subsection, 
let $\calC=(C,-\cdot-)$ be a commutative algebra and let $V$ be a one-dimensional vector space. Fix a nonzero vector $x\in V$, so that $V=\bbK x$.
In this case, every commutative algebra structure on $E=C\oplus V$ extending $\calC$ is automatically a flag extending structure.
Since $V$ is one-dimensional, the four components of a commutative extending datum $ \Omega(\calC,V)=(\lt,\rt,f,\ell) $ are uniquely of the form
$$
x\lt a=\Lambda(a)x, \qquad x\rt a=\Delta(a), 
\qquad f(x,x)=a_0, \qquad \ell(x,x)=k_0x,
$$
where $\Lambda:C\to\bbK$ and $\Delta:C\to C$ are linear maps,
$a_0\in C$, and $k_0\in\bbK$. Under these substitutions, the compatibility
conditions \ref{C1}--\ref{C6} reduce precisely to
\ref{CF1}--\ref{CF4}.

\begin{Def}\label{Def: commutative flag datum}
  A \textbf{commutative flag datum} of $\calC$ is a $4$-tuple
  $$ \Omega(\calC)=(\Lambda,\Delta,a_0,k_0), $$
  consisting of two linear maps
  $$ \Lambda:C\to\bbK \qquad\text{and}\qquad \Delta:C\to C, $$
  together with two elements $a_0\in C$ and $k_0\in\bbK$, 
  satisfying the following compatibility conditions for all $a,b\in C$:
  \begin{CFenum}
  \item\label{CF1} $\Lambda(a\cdot b)=\Lambda(a)\Lambda(b)$;
  \item\label{CF2} $\Lambda(\Delta(a))=0$;
  \item\label{CF3} $\Delta(a\cdot b)=\Delta(a)\cdot b+\Lambda(a)\Delta(b)$;
  \item\label{CF4} $\Delta^2(a)=k_0\Delta(a)+a_0\cdot a-\Lambda(a)a_0$.
  \end{CFenum}
  We denote the set of all commutative flag data of $\calC$ by
  $$ \CF(\calC). $$

\end{Def}

Combining the preceding identification with \cref{Thm: first commutative correspondence theorem}, we obtain the following correspondence.

\begin{Prop}\label{Prop: commutative flag correspondence property}
Let $E$ be a vector space containing $C$ as a subspace and suppose that $E=C\oplus V$.
Then there are bijections
$$
\Extd(E,\calC)_V
\longleftrightarrow
\CE(\calC,V)
\longleftrightarrow
\CF(\calC).
$$
\end{Prop}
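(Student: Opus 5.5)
The first bijection $\Extd(E,\calC)_V\leftrightarrow\CE(\calC,V)$ is \cref{Thm: first commutative correspondence theorem}. It therefore suffices to construct a bijection $\CE(\calC,V)\leftrightarrow\CF(\calC)$ for $V=\bbK x$. The plan is to show first that commutative extending data and quadruples $(\Lambda,\Delta,a_0,k_0)$ correspond bijectively with no conditions imposed. Then \ref{C1}--\ref{C6} become exactly \ref{CF1}--\ref{CF4} under this correspondence.

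For the bijection on data, bilinearity lets us define $\Lambda,\Delta,a_0,k_0$ by $x\lt a=\Lambda(a)x$, $x\rt a=\Delta(a)$, $f(x,x)=a_0$ and $\ell(x,x)=k_0x$. Conversely, any quadruple extends uniquely by bilinearity to maps $\lt,\rt,f,\ell$. Since $V$ is one-dimensional, $f$ and $\ell$ are automatically symmetric, so every quadruple gives a commutative extending datum. By bilinearity in the $V$-variables, it suffices to check \ref{C1}--\ref{C6} with all $V$-arguments equal to $x$.

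Next I would substitute into each condition in turn.
\begin{itemize}
\item \ref{C2} gives $\Lambda(ab)x=\Lambda(a)\Lambda(b)x$, which is \ref{CF1}.
\item \ref{C1} gives $\Delta(ab)=\Delta(a)b+\Lambda(a)\Delta(b)$, which is \ref{CF3}.
\item \ref{C3} gives $k_0\Delta(a)=\Delta^2(a)+\Lambda(a)a_0-a_0 a$, which rearranges to \ref{CF4}.
\item \ref{C4} gives $k_0\Lambda(a)x=k_0\Lambda(a)x+\Lambda(\Delta(a))x$, which is \ref{CF2}.
\item \ref{C5} gives $k_0a_0-k_0a_0=\Delta(a_0)-\Delta(a_0)$, so it holds automatically.
\item \ref{C6} gives $k_0^2x-k_0^2x=\Lambda(a_0)x-\Lambda(a_0)x$, so it also holds automatically.
\end{itemize}
Hence \ref{C1}--\ref{C6} are equivalent to \ref{CF1}--\ref{CF4}, and by \cref{Thm: commutative equivalent definition} $\CE(\calC,V)\leftrightarrow\CF(\calC)$. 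Composing with the first bijection proves the statement.

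The only real care is in the bookkeeping for \ref{C3} and \ref{C4}. There the argument order matters, as in $x\rt(y\rt a)$ and $x\lt(y\rt a)$, and the scalar $\Lambda(a)$ must be moved correctly through $f$ and $\ell$. The trivial collapse of \ref{C5} and \ref{C6} relies on the symmetry forced by one-dimensionality.
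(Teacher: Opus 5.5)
Your proposal is correct and follows the same route as the paper. The paper also takes the first bijection from \cref{Thm: first commutative correspondence theorem}, and gets the second by substituting $x\lt a=\Lambda(a)x$, $x\rt a=\Delta(a)$, $f(x,x)=a_0$, $\ell(x,x)=k_0x$ into \ref{C1}--\ref{C6}, which then reduce to \ref{CF1}--\ref{CF4}; your term-by-term reductions, including the automatic vanishing of \ref{C5} and \ref{C6}, are accurate and just make explicit what the paper asserts without computation.
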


Under the right-hand bijection in \cref{Prop: commutative flag correspondence property}, 
the unified product corresponding to a commutative flag datum $\Omega(\calC)=(\Lambda,\Delta,a_0,k_0)$ is given by
$$
\begin{aligned}
(a,k_1x)*(b,k_2x)
=
\bigl(
a\cdot b+k_1\Delta(b)+k_2\Delta(a)+k_1k_2a_0,\,
(k_1\Lambda(b)+k_2\Lambda(a)+k_1k_2k_0)x
\bigr).
\end{aligned}
$$

We now turn to the classification of commutative flag data. Through the
same bijection, the relations $\sim$ and $\approx$ on $\CE(\calC,V)$ induce
corresponding relations on $\CF(\calC)$.

\begin{Def}\label{Def: commutative flag equivalence relations}
  Let
  $ \Omega(\calC),\ \Omega'(\calC) \in \CF(\calC) $
  be two commutative flag data and let $\Omega(\calC,V),\ \Omega'(\calC,V)$ be their corresponding commutative extending structures in $\CE(\calC,V)$. 
  We define
  \begin{align*}
    \Omega(\calC)\sim\Omega'(\calC) \Longleftrightarrow \Omega(\calC,V)\sim\Omega'(\calC,V),\qquad
    \Omega(\calC)\approx\Omega'(\calC) \Longleftrightarrow \Omega(\calC,V)\approx\Omega'(\calC,V).
  \end{align*}

\end{Def}

For the equivalence relation $\sim$, the pair $(r,v)$ in \cref{Lem: commutative transformation relations} is required to have $v:V\to V$ a linear isomorphism.
Since $V=\bbK x$, there exist $c_0\in C$ and $p_0\in\bbK^\times$ such that
$ r(x)=c_0$ and $v(x)=p_0x $.
Hence \ref{CR1}--\ref{CR4} reduce to the following conditions.

\begin{Prop}\label{Prop: commutative flag equivalence criterion}
Given two commutative flag data
$$
\Omega(\calC)=(\Lambda,\Delta,a_0,k_0),
\qquad
\Omega'(\calC)=(\Lambda',\Delta',a_0',k_0'),
$$
we have $\Omega(\calC)\sim\Omega'(\calC)$ if and
only if $\Lambda'=\Lambda$ and there exists a pair
$(p_0,c_0)\in\bbK^{\times}\times C$ such that, for all $a\in C$,
$$
\begin{aligned}
p_0\Delta'(a)
&=\Delta(a)+\Lambda(a)c_0-a\cdot c_0,\\
p_0^2a_0'
&=a_0+k_0c_0+c_0\cdot c_0
-2\Delta(c_0)-2\Lambda(c_0)c_0,\\
p_0k_0'
&=k_0-2\Lambda(c_0).
\end{aligned}
$$
Moreover, $\Omega(\calC)\approx\Omega'(\calC)$ if and only if the above
conditions hold with $p_0=1$; equivalently, $\Lambda'=\Lambda$ and there
exists $c_0\in C$ such that, for all $a\in C$,
$$
\begin{aligned}
\Delta'(a)
&=\Delta(a)+\Lambda(a)c_0-a\cdot c_0,\\
a_0'
&=a_0+k_0c_0+c_0\cdot c_0
-2\Delta(c_0)-2\Lambda(c_0)c_0,\\
k_0'
&=k_0-2\Lambda(c_0).
\end{aligned}
$$
\end{Prop}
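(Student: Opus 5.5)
The plan is to specialize \cref{Lem: commutative transformation relations} directly to the one-dimensional case. By \cref{Def: commutative flag equivalence relations} and \cref{Def: commutative equivalence relations}, $\Omega(\calC)\sim\Omega'(\calC)$ holds exactly when there are a linear map $r:V\to C$ and a linear isomorphism $v:V\to V$ satisfying \ref{CR1}--\ref{CR4} for the corresponding extending structures. Since $V=\bbK x$, such pairs $(r,v)$ are in bijection with pairs $(c_0,p_0)\in C\times\bbK^\times$ via $r(x)=c_0$ and $v(x)=p_0x$. Each condition \ref{CR1}--\ref{CR4} is bilinear or linear in the $V$-arguments, so it suffices to evaluate it at $x$, or at $(x,x)$, with $a\in C$ arbitrary. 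I will then substitute $x\lt a=\Lambda(a)x$, $x\rt a=\Delta(a)$, $f(x,x)=a_0$, $\ell(x,x)=k_0x$, together with the primed analogues.

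The computations run in order as follows.
\begin{enumerate}[label=(\roman*)]
\item \ref{CR1} reads $p_0\Lambda(a)x=p_0\Lambda'(a)x$. Since $p_0\neq0$, this is equivalent to $\Lambda'=\Lambda$.
\item \ref{CR2} reads $\Lambda(a)c_0=p_0\Delta'(a)+c_0\cdot a-\Delta(a)$. This is the first displayed identity.
\item \ref{CR3} gives $p_0k_0x=\bigl(p_0^2k_0'+2p_0\Lambda'(c_0)\bigr)x$. Dividing by $p_0$ and using $\Lambda'=\Lambda$ yields $p_0k_0'=k_0-2\Lambda(c_0)$.
\item \ref{CR4} gives $k_0c_0=p_0^2a_0'+2p_0\Delta'(c_0)+c_0\cdot c_0-a_0$. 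Here the symmetric terms each appear twice because both inputs equal $x$.
\end{enumerate}

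The only step needing care is the last one, where $\Delta'$ must be eliminated. I will apply the identity from step (ii) at $a=c_0$, namely $p_0\Delta'(c_0)=\Delta(c_0)+\Lambda(c_0)c_0-c_0\cdot c_0$. Substituting this turns the equation from \ref{CR4} into $p_0^2a_0'=a_0+k_0c_0+c_0\cdot c_0-2\Delta(c_0)-2\Lambda(c_0)c_0$, which is the second displayed identity.

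Conversely, if $\Lambda'=\Lambda$ and the three displayed identities hold, the same computations read backwards recover \ref{CR1}--\ref{CR4}, using the first identity at $a=c_0$ to reintroduce $\Delta'(c_0)$. This gives the stated equivalence.

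For the relation $\approx$, the last assertion of \cref{Lem: commutative transformation relations} says that costabilizing $V$ means $v=\operatorname{id}_V$, that is, $p_0=1$. The second set of formulas is therefore just the specialization $p_0=1$ of the first. I expect no genuine obstacle. The main risk is bookkeeping of the factors of $2$ and of $p_0$, together with remembering that $\Lambda=\Lambda'$ has to be used inside \ref{CR3} and \ref{CR4}.
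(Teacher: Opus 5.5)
Your proposal is correct and follows the paper's argument, which simply substitutes $r(x)=c_0$, $v(x)=p_0x$ into \ref{CR1}--\ref{CR4} and reads off the conditions. Your explicit computation fills in details the paper leaves implicit, in particular eliminating $\Delta'(c_0)$ from \ref{CR4} via the first identity at $a=c_0$, and your treatment of $\approx$ through $v=\operatorname{id}_V$, i.e.\ $p_0=1$, matches the paper.
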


Combining
\cref{Prop: commutative flag correspondence property,Def: commutative flag equivalence relations,Thm: second commutative correspondence theorem}
yields the following classification.

\begin{Thm}\label{Thm: commutative flag classification}
  Let $E$ be a vector space containing $C$ as a subspace and suppose that $E=C\oplus V$.
  There are bijections
  \begin{align*}
    \CF(\calC)/\sim\ \longleftrightarrow\ \CH^2_{\sim}(\calC,V)
    \ \longleftrightarrow\ \Extd'(E,\calC)_V, \\
    \CF(\calC)/\approx\ \longleftrightarrow\ \CH^2_{\approx}(\calC,V)
    \ \longleftrightarrow\ \Extd''(E,\calC)_V.
  \end{align*}

\end{Thm}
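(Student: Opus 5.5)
The plan is to compose bijections that are already in place and to check that each one transports the relevant equivalence relation.

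First, by \cref{Prop: commutative flag correspondence property} there is a bijection $\Phi:\CF(\calC)\to\CE(\calC,V)$. It sends $(\Lambda,\Delta,a_0,k_0)$ to the datum $(\lt,\rt,f,\ell)$ given by
$$
x\lt a=\Lambda(a)x,\qquad x\rt a=\Delta(a),\qquad f(x,x)=a_0,\qquad \ell(x,x)=k_0x,
$$
extended bilinearly. Because $V=\bbK x$, the symmetric bilinear maps $f$ and $\ell$ are determined by their values on $(x,x)$, so $\Phi$ is indeed bijective. By \cref{Def: commutative flag equivalence relations}, the relations $\sim$ and $\approx$ on $\CF(\calC)$ are defined as the pullbacks along $\Phi$ of the corresponding relations on $\CE(\calC,V)$. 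A bijection that carries one equivalence relation exactly onto another descends to a bijection of quotient sets. This gives
$$
\CF(\calC)/\sim\ \longleftrightarrow\ \CE(\calC,V)/\sim=\CH^2_{\sim}(\calC,V),
$$
and the same argument gives the analogous bijection for $\approx$.

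Second, \cref{Thm: second commutative correspondence theorem} supplies the bijections
$$
\CH^2_{\sim}(\calC,V)\longleftrightarrow\Extd'(E,\calC)_V,\qquad \CH^2_{\approx}(\calC,V)\longleftrightarrow\Extd''(E,\calC)_V.
$$
These are themselves instances of \cref{thm:universal-classification for sim,thm:universal-classification-equiv}. Composing them with the first step yields both chains in the statement.

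The only point that needs care is the well-definedness of the descent in the first step. Since the relations on $\CF(\calC)$ are defined by transport along $\Phi$, this is a tautology, so there is no real obstacle. To make the classification explicit, I would also cite \cref{Prop: commutative flag equivalence criterion}, which translates the transported relations into concrete conditions on flag data. That proposition comes from substituting $r(x)=c_0$ and $v(x)=p_0x$ into \ref{CR1}--\ref{CR4}:

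\begin{itemize}
\item \ref{CR1} forces $\Lambda'=\Lambda$;
\item \ref{CR2} gives the formula for $p_0\Delta'$;
\item \ref{CR3} and \ref{CR4} evaluated at $(x,x)$ give the formulas for $k_0'$ and $a_0'$.
\end{itemize}

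The explicit form is not needed for the bijections themselves.
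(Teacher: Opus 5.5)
Your proposal is correct and follows the paper's route exactly: the paper also transports $\sim$ and $\approx$ along the bijection $\CF(\calC)\leftrightarrow\CE(\calC,V)$ of \cref{Prop: commutative flag correspondence property}, which is how \cref{Def: commutative flag equivalence relations} defines them, and then composes with \cref{Thm: second commutative correspondence theorem}. Your remark that the explicit criterion of \cref{Prop: commutative flag equivalence criterion} is not needed for the bijections themselves is also accurate.
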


Thus
\cref{Prop: commutative flag correspondence property,Thm: commutative flag classification}
give the complete description and classification of commutative extending
structures with a one-dimensional complement; this is the basic step for
studying general flag extending structures.

\subsection{Factorization problem}

We now specialize the Factorization Problem developed in \cref{sec: Universal Algebra FP} to commutative algebras.

Let $\calC=(C,-\cdot-)$ and $\calB=(B,\ell)$ be two prescribed commutative algebras, 
and let $E$ be a vector space containing $C$ and $B$ as subspaces such that $E=C\oplus B$.
Following the general notation of \cref{sec: Universal Algebra FP}, denote by
$
\Fact(\calC,\calB)
$
the commutative algebra structures on $E$ factorizing through $\calC$ and
$\calB$.

Requiring $\calB$ to be a subalgebra with its prescribed multiplication
fixes the pure $B$-components as
$$
\mu_{0,(1,1)}=0,
\qquad
\mu_{1,(1,1)}=\ell.
$$
Thus, in a general commutative extending datum
$
(\lt,\rt,f,\ell),
$
one has $f=0$, while $\ell$ is the prescribed multiplication of $\calB$. 
Hence only the two mixed maps
$$
\lt:B\times C\to B,
\qquad
\rt:B\times C\to C
$$
remain to be determined.

\begin{Def}\label{Def: commutative matched pair}
Given two bilinear maps
$$
\lt:B\times C\to B,
\qquad
\rt:B\times C\to C
$$
the system
$$
\bigl(\calC,\calB,\olOmega(\calC,\calB)\bigr),\qquad 
\olOmega(\calC,\calB) = (\lt,\rt)
$$
is called a \textbf{matched pair of commutative algebras} if, for all
$a,b\in C$ and $x,y\in B$, the following conditions hold:
\begin{CMenum}
  \item \label{CM1} 
    $ x\rt(a\cdot b)= (x\rt a)\cdot b+(x\lt a)\rt b;$
  \item \label{CM2}
    $(B,\lt)$ is a nonunital right $\calC$-module;
  \item \label{CM3}
    $\ell(x,y)\rt a=x\rt(y\rt a);$
  \item \label{CM4}
    $\ell(x,y)\lt a=\ell(x,y\lt a)+x\lt(y\rt a).$
\end{CMenum}
The set of all such matched pairs is denoted by
$$ \MP(\calC,\calB). $$

For a matched pair $\bigl(\calC,\calB,\olOmega(\calC,\calB)\bigr)$,
let $ \Omega(\calC,B)=(\lt,\rt,0,\ell) $ be the corresponding commutative
extending structure.
The unified product of $\calC$ and $B$ associated to $\Omega(\calC,B)$ is denoted by
$$
\calC\bowtie_{\olOmega(\calC,\calB)}\calB 
\coloneqq\calC\ltimes_{\Omega(\calC,B)}B
=(C\times B,-*-)
$$
and is called the \textbf{bicrossed product} associated to the matched pair $\bigl(\calC,\calB,\olOmega(\calC,\calB)\bigr)$. 
Its multiplication is given by
$$
(a,x)*(b,y)
=
\bigl(
a\cdot b+x\rt b+y\rt a,\,
x\lt b+y\lt a+\ell(x,y)
\bigr)
$$
for all $a,b\in C$ and $x,y\in B$.
\end{Def}

Indeed, by \cref{Thm: commutative equivalent definition}, for the commutative extending datum of the form $(\lt,\rt,0,\ell)$, 
conditions \ref{C1}--\ref{C4} reduce precisely to \ref{CM1}--\ref{CM4}, respectively.
Condition \ref{C5} is automatic because $f=0$, while \ref{C6} reduces to
the associativity of $\ell$, which already holds because $\calB$ is a
prescribed commutative algebra.

Therefore, specializing \cref{thm:universal-factorization} to commutative algebras yields the following result.

\begin{Thm}\label{Thm:commutative-matched-pair-description}
There is a bijection
$$
\Fact(\calC,\calB)
\longleftrightarrow
\MP(\calC,\calB).
$$

\end{Thm}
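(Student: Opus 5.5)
The plan is to obtain \cref{Thm:commutative-matched-pair-description} as the specialization of \cref{thm:universal-factorization} to the variety of commutative algebras. The type $\calF=\{-\cdot-\}$ has no nullary symbols, so the hypothesis $\omega^\calC=\omega^\calB=0$ for $\omega\in\calF_0$ holds vacuously. \Cref{thm:universal-factorization} then gives a bijection between $\Fact(\calC,\calB)$ and the set of general matched pairs in the sense of \cref{sec: Universal Algebra FP}. It remains to identify those general matched pairs with the elements of $\MP(\calC,\calB)$ in \cref{Def: commutative matched pair}.

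First I will pass through \cref{Thm: first commutative correspondence theorem} with $V=B$. This identifies $\Extd(E,\calC)_B$ with $\CE(\calC,B)$, sending a multiplication $\cdot_E$ to the quadruple $(\lt,\rt,f,\ell')$ given by \eqref{eq: first commutative correspondence}. An element of $\Extd(E,\calC)_B$ lies in $\Fact(\calC,\calB)$ exactly when $B$ is closed under $\cdot_E$ with the prescribed product. By \eqref{eq: first commutative correspondence}, this means $f=p(x\cdot_E y)=0$ and $\ell'=\ell$. So $\Fact(\calC,\calB)$ corresponds bijectively to the commutative extending structures of the form $(\lt,\rt,0,\ell)$. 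The commutative mixed components for the patterns $(1,0)$ and $(0,1)$ are determined by $(\lt,\rt)$, so such structures are in bijection with pairs $(\lt,\rt)$ for which $(\lt,\rt,0,\ell)$ satisfies \ref{C1}--\ref{C6}.

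The key step is to check, using \cref{Thm: commutative equivalent definition}, that for data $(\lt,\rt,0,\ell)$ the conditions \ref{C1}--\ref{C6} are equivalent to \ref{CM1}--\ref{CM4}. Setting $f=0$ gives the following.
\begin{itemize}
\item Conditions \ref{C1}, \ref{C2} and \ref{C4} become \ref{CM1}, \ref{CM2} and \ref{CM4} verbatim.
\item In \ref{C3} the terms $f(x,y\lt a)-f(x,y)\cdot a$ vanish, leaving \ref{CM3}.
\item Condition \ref{C5} reads $0=0$.
\item Condition \ref{C6} becomes $\ell(\ell(x,y),z)=\ell(x,\ell(y,z))$, which holds because $\calB$ is associative.
\end{itemize}
The symmetry of $\ell$ required in \cref{Def: commutative extending structure} also holds automatically, since $\calB$ is commutative. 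Hence the set of such pairs is exactly $\MP(\calC,\calB)$. Composing the bijections gives the result, and the bicrossed product multiplication is the unified product \eqref{eq: commutative unified product} with $f=0$.

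I expect no serious obstacle; the only point needing care is the bookkeeping in the previous paragraph. One must check that fixing the pure $B$-components $\mu_{0,(1,1)}=0$ and $\mu_{1,(1,1)}=\ell$ loses no condition. In particular, \ref{C5} and \ref{C6} must not impose anything on $(\lt,\rt)$ beyond the axioms of $\calB$, and this should be confirmed by inspecting the two sides of each identity.
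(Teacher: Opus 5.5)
Your proposal is correct and matches the paper's argument: with $f=0$ and $\ell$ prescribed, \ref{C1}--\ref{C4} reduce to \ref{CM1}--\ref{CM4}, \ref{C5} holds trivially, and \ref{C6} is the associativity of $\ell$, which $\calB$ already satisfies. Specializing \cref{thm:universal-factorization} then gives the bijection.
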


For later use in the \textbf{CCP}, we record explicitly the matched pair
determined by a fixed factorization.

\begin{Rem}\label{Rem: comm canonical matched pair}
  Let $\calE=(E,-\cdot_E-)$ be a commutative algebra factorizing through the prescribed commutative algebras $\calC=(C,-\cdot-)$ and $\calB=(B,\ell)$.
  Then
  $ -\cdot_E- \in \Fact(\calC,\calB), $
  and hence, by \cref{Thm:commutative-matched-pair-description}, 
  it corresponds to a unique matched pair
  $$ \bigl(\calC,\calB,\olOmega^c(\calC,\calB)\bigr),\qquad 
  \olOmega^c(\calC,\calB) = (\lt,\rt).$$
  We call this the \textbf{canonical matched pair} associated to the
  factorization of $\calE$ through $\calC$ and $\calB$. 

  More explicitly, the decomposition $E=C\oplus B$ determines the canonical projection
  $$ p:E=C\oplus B\longrightarrow C, \qquad p(a+x)=a, $$
  and the mixed maps are given, for $a\in C$ and $x\in B$, by
  $$ x\rt a\coloneqq p(x\cdot_Ea), \qquad x\lt a\coloneqq x\cdot_Ea-p(x\cdot_Ea). $$

\end{Rem}

\subsection{Classifying complements problem}

Here the commutative algebra $\calE=(E,-\cdot_E-)$ and its subalgebra $\calC=(C,-\cdot-)$ are fixed, and we describe and classify the commutative complements of $\calC$ in $\calE$.
Following \cref{sec: Universal Algebra CCP}, write $\Comp(\calC,\calE)$ for the set of all commutative complements of $\calC$ in $\calE$, and $[\calE:\calC]^f$ for the factorization index.

Assume that $\calC$ admits at least one commutative complement in
$\calE$, and fix one such reference complement
$$
\calB=(B,\ell)\in\Comp(\calC,\calE).
$$
Then $E=C\oplus B$. By \cref{Rem: comm canonical matched pair}, this
factorization determines the canonical matched pair
$$ \bigl(\calC,\calB,\olOmega^c(\calC,\calB)\bigr),\qquad 
\olOmega^c(\calC,\calB) = (\lt,\rt).$$

Recall from \cref{sec: Universal Algebra CCP} that, relative to the fixed decomposition $E=C\oplus B$, every vector space complement $B'$ of $C$ in $E$ is uniquely the graph of a linear map $r:B\to C$, namely
$$
B'=B_r\coloneqq\{r(x)+x\mid x\in B\}.
$$
Thus, the description of commutative complements reduces to determining those linear maps $r:B\to C$ for which $B_r$ is closed under $-\cdot_E-$.

Using the decomposition $E=C\oplus B$ 
and the canonical matched pair $\bigl(\calC,\calB,\olOmega^c(\calC,\calB)\bigr)$,
we have, for $x,y\in B$, 
$$
\begin{aligned}
(r(x)+x)\cdot_E(r(y)+y)
=\bigl(
r(x)\cdot r(y)+x\rt r(y)+y\rt r(x)
\bigr)
+\bigl(
\ell(x,y)+x\lt r(y)+y\lt r(x)
\bigr),
\end{aligned}
$$
where the first and second parenthesized terms are respectively the $C$- and $B$-components. 
Thus $B_r$ is closed under $-\cdot_E-$ if and only if its $C$-component is obtained by applying $r$ to its $B$-component.
This yields the following explicit criterion.

\begin{Prop}\label{Prop: commutative graph deformation criterion}
  Let $r:B\to C$ be a linear map.
  Then the graph $B_r$ is closed under $-\cdot_E-$ if and only if
  \begin{equation}\label{eq: commutative deformation equation}
  r \bigl( \ell(x,y)+x\lt r(y)+y\lt r(x) \bigr)
  =r(x)\cdot r(y)+x\rt r(y)+y\rt r(x)
  \end{equation}
  for all $x,y\in B$.

  When this condition holds,
  $$\calB_r \coloneqq \bigl(B_r,-\cdot_E-|_{B_r\times B_r} \bigr)$$
  is a commutative complement of $\calC$ in $\calE$,
  and hence $\calB_r\in \Comp(\calC,\calE)$.
\end{Prop}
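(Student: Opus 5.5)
The plan is to specialize \cref{Thm:ccp-deformation-correspondence} to the single binary operation $\mu$, so that the only work is to compute the two components $\mu^r_{0,\mathbf 1^2}$ and $\mu^r_{1,\mathbf 1^2}$ of \cref{Def:ccp-deformation-map} in terms of the canonical matched pair. For $x,y\in B$, I expand $(r(x)+x)\cdot_E(r(y)+y)$ bilinearly into four terms:
\[
r(x)\cdot_E r(y),\qquad r(x)\cdot_E y,\qquad x\cdot_E r(y),\qquad x\cdot_E y .
\]
Each term is then decomposed according to $E=C\oplus B$, using \cref{Rem: comm canonical matched pair} and the fact that $\calC$ and $\calB$ are subalgebras.

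First, $r(x)\cdot_E r(y)=r(x)\cdot r(y)$ lies in $C$, because $\calC$ is a subalgebra. Next, $x\cdot_E y=\ell(x,y)$ lies in $B$, because $\calB$ is a subalgebra; equivalently, the pure $B$-components of the matched pair are $0$ and $\ell$. For the mixed terms, the defining formulas of $\rt,\lt$ give
\[
x\cdot_E r(y)=x\rt r(y)+x\lt r(y).
\]
By commutativity of $\cdot_E$, the same formulas give $r(x)\cdot_E y=y\cdot_E r(x)=y\rt r(x)+y\lt r(x)$. This recovers the displayed expansion preceding the proposition. It identifies
\[
\mu^r_{0,\mathbf 1^2}(x,y)=r(x)\cdot r(y)+x\rt r(y)+y\rt r(x),\qquad
\mu^r_{1,\mathbf 1^2}(x,y)=\ell(x,y)+x\lt r(y)+y\lt r(x).
\]

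The deformation equation $\mu^r_{0,\mathbf 1^2}=r\circ\mu^r_{1,\mathbf 1^2}$ is then exactly \eqref{eq: commutative deformation equation}. Part (a) of \cref{Thm:ccp-deformation-correspondence} shows that if this equation holds, then $B_r$ is closed under $\cdot_E$. Conversely, part (b) shows that closure forces the equation. For a self-contained check of both directions, note that an element $c+z$ with $c\in C$, $z\in B$ lies in $B_r$ if and only if $c=r(z)$, since $B_r$ is a graph over $B$.

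For the final assertion, $B_r$ is a vector space complement of $C$, again because it is a graph. Closure makes $\calB_r$ a subalgebra, with the restricted multiplication. It is automatically commutative and associative, since these identities are inherited from $\calE$. Hence $\calB_r\in\Comp(\calC,\calE)$.

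There is no real obstacle here. The only point needing care is the commutativity step that rewrites $r(x)\cdot_E y$ as $y\cdot_E r(x)$, since the canonical maps $\rt,\lt$ are defined only on $B\times C$.
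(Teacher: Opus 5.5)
Your proposal is correct and follows the paper's argument. The paper likewise expands $(r(x)+x)\cdot_E(r(y)+y)$ via the canonical matched pair into its $C$- and $B$-components, observes that the product lies in the graph $B_r$ exactly when the $C$-component equals $r$ applied to the $B$-component, and notes that this is the specialization of the general deformation equations. Your explicit commutativity step $r(x)\cdot_E y=y\cdot_E r(x)$ only spells out a detail the paper leaves implicit.
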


Equation \eqref{eq: commutative deformation equation} is precisely the
specialization to commutative algebras of the general deformation equations
in \cref{Def:ccp-deformation-map}.

\begin{Def}\label{Def:commutative-deformation map}
A linear map $r:B\to C$ satisfying
\eqref{eq: commutative deformation equation} is called a
\textbf{deformation map} of the canonical matched pair
$(\calC,\calB,\olOmega^c(\calC,\calB))$. The set of all such
maps is denoted by
$$
\CDM(\calC,\calB\mid\olOmega^c(\calC,\calB)).
$$
\end{Def}

Specializing \cref{Thm:ccp-deformation-correspondence} to commutative algebras yields the following correspondence.

\begin{Thm}\label{Thm: comm CCP correspondence}
The map
$$
\CDM(\calC,\calB\mid\olOmega^c(\calC,\calB))
\longrightarrow
\Comp(\calC,\calE),
\qquad
r\longmapsto\calB_r,
$$
is a bijection.
\end{Thm}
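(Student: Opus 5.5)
The plan is to obtain \cref{Thm: comm CCP correspondence} as the specialization of \cref{Thm:ccp-deformation-correspondence} to the variety of commutative algebras. The only real work is to check that the general deformation equations $R(\olOmega^c(\calC,\calB);r)$ of \cref{Def:ccp-deformation-map} coincide with \eqref{eq: commutative deformation equation}. Once this is done, $\CDM(\calC,\calB\mid\olOmega^c(\calC,\calB))=\M(\calC,\calB\mid\olOmega^c(\calC,\calB))$ as sets, and the bijection is exactly the one given there.

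First I compute the two component maps $\mu^r_{0,(1,1)}$ and $\mu^r_{1,(1,1)}$ for the unique binary symbol $\mu$. By definition,
$$\mu^r_{\xi,(1,1)}(x,y)=\pi_\xi\bigl((r(x)+x)\cdot_E(r(y)+y)\bigr).$$
I expand the product bilinearly into four terms. The first is $r(x)\cdot r(y)\in C$, since $\calC$ is a subalgebra. The second and third are the mixed products $x\cdot_E r(y)$ and $r(x)\cdot_E y$. By \cref{Rem: comm canonical matched pair} and commutativity of $\cdot_E$, these decompose as $(x\rt r(y))+(x\lt r(y))$ and $(y\rt r(x))+(y\lt r(x))$. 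The fourth is $x\cdot_E y$, which equals $\ell(x,y)\in B$ because $\calB$ is a subalgebra. Collecting components reproduces the displayed formula preceding \cref{Prop: commutative graph deformation criterion}:
$$\mu^r_{0,(1,1)}(x,y)=r(x)\cdot r(y)+x\rt r(y)+y\rt r(x),\qquad \mu^r_{1,(1,1)}(x,y)=\ell(x,y)+x\lt r(y)+y\lt r(x).$$

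The general deformation equation $\mu^r_{0,(1,1)}=r\circ\mu^r_{1,(1,1)}$ is therefore literally \eqref{eq: commutative deformation equation}. Since $\calF$ contains only $\mu$, there are no further equations to check.

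Finally, I invoke \cref{Thm:ccp-deformation-correspondence}. Part (a) shows that $r\mapsto\calB_r$ lands in $\Comp(\calC,\calE)$; this is also \cref{Prop: commutative graph deformation criterion}. Part (b) supplies surjectivity together with uniqueness of $r$. For injectivity I also note directly that $B_{r_1}=B_{r_2}$ forces $r_1=r_2$, because each $r$ is recovered as the $C$-component of the graph element over $x\in B$.

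No step is a genuine obstacle. The only point needing care is the expansion of the mixed terms, which uses the commutative symmetrisation of the components $\mu_{\xi,(0,1)}$ to match the conventions of \cref{Def: commutative extending structure}.
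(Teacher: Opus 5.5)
Your proposal is correct and follows the paper's route. The paper also obtains this bijection by specializing \cref{Thm:ccp-deformation-correspondence}, after expanding $(r(x)+x)\cdot_E(r(y)+y)$ through the canonical matched pair to see that the general deformation equation for the single binary operation is exactly \eqref{eq: commutative deformation equation}; your separate injectivity check via recovering $r$ from its graph is already contained in part (b) of the general theorem.
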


We now turn to the classification of complements up
to isomorphism of commutative algebras.
For $r\in\CDM(\calC,\calB\mid\olOmega^c(\calC,\calB))$, let
$$
\lambda_r:B\longrightarrow B_r,\qquad \lambda_r(x)=r(x)+x,
$$
be the canonical linear isomorphism.
Transport the commutative algebra structure of $\calB_r$ to the fixed vector space $B$ through $\lambda_r$,
and denote the resulting commutative algebra by
$$ \calB^r=(B,\ell^r), \qquad
\ell^r(x,y) = \ell(x,y)+x\lt r(y)+y\lt r(x),
$$
for all $x,y\in B$. 
By construction,
$ \lambda_r:\calB^r\longrightarrow\calB_r $
is an isomorphism of commutative algebras.

\begin{Def}\label{Def: commutative deformation equivalence}
Two deformation maps
$r_1,r_2\in
\CDM(\calC,\calB\mid\olOmega^c(\calC,\calB))$
are called \textbf{equivalent}, denoted by
$$
r_1\sim r_2,
$$
if and only if
$
\calB^{r_1}\cong\calB^{r_2}
$
as commutative algebras.
Equivalently, there exists a linear isomorphism
$
\sigma:B\to B
$
such that
$$
\sigma\bigl(\ell^{r_1}(x,y)\bigr)
=
\ell^{r_2}(\sigma(x),\sigma(y))
$$
for all $x,y\in B$.
After expanding the transported multiplication, this condition is equivalently
$$
\sigma(\ell(x,y))-\ell(\sigma(x),\sigma(y))
=\sigma(x)\lt r_2(\sigma(y))+\sigma(y)\lt r_2(\sigma(x))
-\sigma(x\lt r_1(y))-\sigma(y\lt r_1(x)).
$$

\end{Def}

The relation $\sim$ above is an equivalence relation on
$\CDM(\calC,\calB\mid\olOmega^c(\calC,\calB))$.
We denote the corresponding quotient set by
$$
\mathcal H^2_{\mathrm{ccp}}
(\calC,\calB\mid\olOmega^c(\calC,\calB))
\coloneqq
\CDM(\calC,\calB\mid\olOmega^c(\calC,\calB))/\sim.
$$
Since $\lambda_r:\calB^r\to\calB_r$ is an isomorphism, 
specialization of \cref{Thm:ccp-classification} yields the following classification.

\begin{Thm}\label{Thm: commutative CCP classification}
There is a bijection
$$
\mathcal H^2_{\mathrm{ccp}}
(\calC,\calB\mid\olOmega^c(\calC,\calB))
\longrightarrow
\Comp(\calC,\calE)/\cong,
\qquad
[r]\longmapsto[\calB_r].
$$
In particular, the factorization index is
$$
[\calE:\calC]^f 
=
\left|
\mathcal H^2_{\mathrm{ccp}}
(\calC,\calB\mid\olOmega^c(\calC,\calB))
\right|
=
\left|
\Comp(\calC,\calE)/\cong
\right|
.
$$
\end{Thm}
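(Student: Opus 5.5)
This is a specialization of \cref{Thm:ccp-classification}, so the plan is to check that the commutative objects just introduced coincide with the general ones from \cref{sec: Universal Algebra CCP}. The bijection then comes from the general theorem with no new argument needed.

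First, I would identify the deformation maps. The type is $\calF=\{\mu\}$ with $\mu$ binary, so the only pattern to consider is $\mathbf 1^2=(1,1)$. By the expansion of $(r(x)+x)\cdot_E(r(y)+y)$ displayed before \cref{Prop: commutative graph deformation criterion}, the two components are
\[
\mu^r_{0,(1,1)}(x,y)=r(x)\cdot r(y)+x\rt r(y)+y\rt r(x),
\qquad
\mu^r_{1,(1,1)}(x,y)=\ell(x,y)+x\lt r(y)+y\lt r(x)=\ell^r(x,y).
\]
This expansion uses commutativity of $\cdot_E$, together with $p(x\cdot_E y)=0$ and $x\cdot_E y=\ell(x,y)$ for $x,y\in B$, because $\calB$ is a subalgebra. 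The general deformation equation $\mu^r_{0,(1,1)}=r\circ\mu^r_{1,(1,1)}$ is then exactly \eqref{eq: commutative deformation equation}. Hence $\CDM(\calC,\calB\mid\olOmega^c(\calC,\calB))=\M^c(\calC,\calB)$. This is consistent with \cref{Thm: comm CCP correspondence}, which already gives the bijection $r\mapsto\calB_r$ onto $\Comp(\calC,\calE)$.

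Second, I would identify the equivalence relations. The transported operation $\mu^{\calB^r}=\mu^r_{1,(1,1)}$ equals $\ell^r$. So the condition $\sigma\circ\mu^{r_1}_{1,(1,1)}=\mu^{r_2}_{1,(1,1)}\circ\sigma^2$ in \cref{Def:equiv-deform-maps} is literally the condition in \cref{Def: commutative deformation equivalence}. The expanded form given there follows by substituting the formula for $\ell^{r_i}$ and rearranging. Consequently $\mathcal H^2_{\mathrm{ccp}}(\calC,\calB\mid\olOmega^c(\calC,\calB))$ is the same quotient set as in the general theorem.

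Finally, I would conclude directly. Since $\lambda_r:\calB^r\to\calB_r$ is an isomorphism, we have $\calB_{r_1}\cong\calB_{r_2}$ if and only if $\calB^{r_1}\cong\calB^{r_2}$, which holds if and only if $r_1\sim r_2$. So the map $[r]\mapsto[\calB_r]$ is well defined and injective, and it is surjective by \cref{Thm: comm CCP correspondence}. Taking cardinalities gives the formula for the factorization index $[\calE:\calC]^f$.

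The only step needing any care is the first: confirming that the canonical matched pair in \cref{Rem: comm canonical matched pair} gives the components $x\rt a$ and $x\lt a$ that also appear when the arguments are in the order $a,x$. This holds by commutativity of $\cdot_E$. Everything else is bookkeeping.
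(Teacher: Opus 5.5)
Your proposal is correct and follows the paper's route: the paper obtains this theorem by specializing \cref{Thm:ccp-classification}, using that $\lambda_r:\calB^r\to\calB_r$ is an isomorphism, so that $r_1\sim r_2$ if and only if $\calB_{r_1}\cong\calB_{r_2}$, with surjectivity coming from \cref{Thm: comm CCP correspondence}. Your explicit check that the general deformation equations and the transported operation $\mu^r_{1,(1,1)}=\ell^r$ agree with the commutative ones is the bookkeeping the paper leaves implicit.
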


\section{Extending structures for transposed Poisson algebras}\label{sec: transposed Poisson algebras}

Throughout this section, $\bbK$ denotes a field of characteristic zero.  

In 2023, C. Bai,   R. Bai,  L. Guo,  and Y. Wu introduced the notion of transposed Poisson algebras:

\begin{Def}\defref{BBGW23}{1.1}\label{Def: transposed Poisson algebra}
  Let $L$ be a vector space equipped with two bilinear operations  
  $$-\cdot -: L \otimes L \to L \quad \mathrm{and}\quad   [-,-] : L \otimes L \to L.$$
  The triple $\calL=(L, -\cdot -, [-,-])$ is called a \textbf{transposed Poisson algebra} if $(L,-\cdot -)$ is a (nonunital) commutative algebra, 
  $(L, [-,-])$ is a Lie algebra, 
  and the two operations satisfy the following compatibility condition for any $a, b, c\in L$:
  \begin{align}\label{eq: transposed Poisson alg compatibility condition}
    2c\cdot [a, b] = [c\cdot a, b] + [a, c\cdot b]. 
  \end{align}

\end{Def}

Extending structures for related Poisson type structures were studied in several concrete cases: Jacobi and Poisson algebras \cite{AM15}, noncommutative Poisson algebras \cite{AM15g}, Lie bialgebras \cite{H23} and Gel'fand--Dorfman bialgebras \cite{WH24}.

For this variety,
$
\calF=\{-\cdot-,[-,-]\}
$
with both operation symbols of arity $2$.
Its defining identities are
\begin{gather*}
(x\cdot y)\cdot z\approx x\cdot(y\cdot z),\qquad x\cdot y\approx y\cdot x,\\
[x,y]\approx -[y,x],\qquad [x,[y,z]]+[y,[z,x]]+[z,[x,y]]\approx 0,\\
2z\cdot[x,y]\approx [z\cdot x,y]+[x,z\cdot y].
\end{gather*}

\subsection{Extending structures and their classification}\label{subsec: transposed Poisson extending structures and classification}

We begin with the description part of the \textbf{ESP} for transposed Poisson algebras.

Let $\calL=(L,-\cdot-,[-,-])$ be a transposed Poisson algebra, $E$ a vector
space containing $L$ as a subspace, and $V$ a complement of $L$ in $E$, so
that $E=L\oplus V$.
We retain from \cref{sec: framework} the notation $\Extd(E,\calL)_V$ for
the set of all transposed Poisson algebra structures on $E$ containing
$\calL$ as a subalgebra.

For the commutative product, the general binary extending datum specializes
to the four maps $(\lt,\rt,f,\ell)$ of
\cref{sec: commutative algebras}. For the Lie bracket, antisymmetry likewise
yields four maps $(\lh,\rh,\theta,\rho)$ as in \thmref{AM14}{2.2}, with
$\theta$ and $\rho$ antisymmetric. Thus the extending datum consists of the
following eight maps.

\begin{Def}\label{Def: transposed Poisson extending structure}
  A \textbf{transposed Poisson extending datum} of $\calL$ through $V$ is a system
  $$ \Omega(\calL, V) = (\lt, \rt, f, \ell, \lh, \rh, \theta, \rho)$$
  consisting of eight bilinear maps
  \begin{gather*}
    \lt : V \times L \to V, \qquad \rt : V \times L \to L, \qquad f : V \times V \to L, \qquad \ell : V \times V \to V, \\
    \lh : V \times L \to V, \qquad \rh : V \times L \to L, \qquad \theta : V \times V \to L, \qquad \rho : V \times V \to V
  \end{gather*}
  where $f$ and $\ell$ are symmetric, while $\theta$ and $\rho$ are
  antisymmetric.
  Given such a datum, define a multiplication and a bracket on $L\times V$
   by
  \begin{align*}
    (a,x)*(b,y) &\coloneqq (a\cdot b+x\rt b+y\rt a+f(x,y),\
    x\lt b+ y\lt a+ \ell (x,y)),\\
    \{(a,x),(b,y)\} &\coloneqq ([a,b]+x\rh b-y\rh a+\theta (x,y),\
    x\lh b- y\lh a+ \rho (x,y))
  \end{align*}
  for all $a,b\in L$ and $x,y\in V$. If these operations make $L\times V$
  a transposed Poisson algebra, then $\Omega(\calL,V)$ is called a
  \textbf{transposed Poisson extending structure} of $\calL$ through $V$. In
  this case, write
  $$
  \calL\ltimes V
  = \calL\ltimes_{\Omega(\calL,V)}V
  \coloneqq (L\times V,-*-,\{-,-\})
  $$
  for the resulting algebra and call it the \textbf{unified product} of
  $\calL$ and $V$ associated to $\Omega(\calL,V)$. We denote by
  $$\TE(\calL,V)$$ the set of all transposed Poisson extending structures of
  $\calL$ through $V$.
\end{Def}

Thus, to determine $\TE(\calL,V)$ explicitly, it remains to specialize the
general compatibility conditions $\Gamma(\calL,V)$ to transposed Poisson
algebras.

\begin{Thm}\label{Thm: transposed Poisson equivalent definition}
  Let $\Omega(\calL, V) = (\lt, \rt, f, \ell, \lh, \rh, \theta, \rho)$ be
  a transposed Poisson extending datum of $\calL$ through $V$. Then
  $\calL\ltimes_{\Omega(\calL,V)}V$ is a unified product if and only if the
  following compatibility conditions hold for all $a,b \in L$ and
  $x,y,z \in V$:
\begin{tPenum}
  \item \label{tP0} $(\lt, \rt, f, \ell)$ is a commutative extending structure and 
  $(\lh, \rh, \theta, \rho)$ is a Lie extending structure 
  (see \cref{Thm: commutative equivalent definition} and \thmref{AM14}{2.2});
    
  \item \label{tP1} $x\rh (a\cdot b)=2(x\lh a)\rt b+2(x\rh a)\cdot b-[x\rt b,a]-(x\lt b)\rh a$;
      
  \item \label{tP2} $x\lh (a\cdot b)=2(x\lh a)\lt b-(x\lt b)\lh a$;
      
  \item \label{tP3} $2\rho (x,y)\rt a=-2\theta (x,y)\cdot a+\theta (x\lt a,y)+\theta (x,y\lt a)+x\rh (y\rt a)-y\rh (x\rt a)$;
      
  \item \label{tP4} $2\rho (x,y)\lt a=\rho (x\lt a,y)+\rho (x,y\lt a)+x\lh(y\rt a)-y\lh (x\rt a)$;
      
  \item \label{tP5} $2x\rt [a,b]=[x\rt a,b]+[a,x\rt b]+(x\lt a)\rh b-(x\lt b)\rh a$;

  \item \label{tP6} $2x\lt [a,b]=(x\lt a)\lh b-(x\lt b)\lh a$;
        
  \item \label{tP7} $\ell (x,y)\rh a=2x\rt (y\rh a)+2f(x,y\lh a)+\theta (x\lt a,y)-y\rh (x\rt a)-[f(y,x),a]$;
      
  \item \label{tP8} $\ell (x,y)\lh a=2x\lt (y\rh a)+2\ell (x,y\lh a)+\rho (x\lt a,y)-y\lh (x\rt a)$;
      
  \item \label{tP9} $2f(x,\rho(y,z))=\theta (\ell (x,y),z)+\theta (y,\ell (x,z))+y\rh f(x,z)-z\rh f(x,y)-2x\rt \theta (y,z)$;
      
  \item \label{tP10} $2\ell (x,\rho (y,z))=\rho (\ell(x,y),z)+\rho (y,\ell (x,z))+y\lh f(x,z)-z\lh f(x,y)-2x\lt \theta (y,z)$.
\end{tPenum}

\end{Thm}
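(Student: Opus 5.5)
The identities of a transposed Poisson algebra split into three groups: those involving only the product, those involving only the bracket, and the single mixed identity \eqref{eq: transposed Poisson alg compatibility condition}. I will treat each group separately on $L\times V$.

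\textbf{Pure identities.} The product $*$ on $L\times V$ depends only on $(\lt,\rt,f,\ell)$. By \cref{Thm: commutative equivalent definition}, $*$ is commutative associative if and only if \ref{C1}--\ref{C6} hold. Likewise, $\{-,-\}$ depends only on $(\lh,\rh,\theta,\rho)$. By \thmref{AM14}{2.2}, it is a Lie bracket if and only if the Lie extending conditions hold. Together these give \ref{tP0}. Antisymmetry of $\{-,-\}$ is already built into the datum, since $\theta$ and $\rho$ are antisymmetric.

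\textbf{Mixed identity.} Set
$$T(X,Y,Z)=2Z*\{X,Y\}-\{Z*X,Y\}-\{X,Z*Y\}.$$
This expression is trilinear in $X,Y,Z\in L\times V$ and antisymmetric in $(X,Y)$. So $T=0$ everywhere if and only if it vanishes whenever each argument lies in $L$ or in $V$. Antisymmetry lets us assume that $X$ and $Y$ are of the form $(a,a)$, $(a,x)$ or $(x,y)$. Paired with $Z\in L$ or $Z\in V$, this leaves six cases:
\begin{enumerate}[label=(\roman*)]
\item $X,Y,Z\in L$: this is the identity in $\calL$ and holds automatically.
\item $X=a$, $Y=b$, $Z=x$: gives \ref{tP5} and \ref{tP6}.
\item $X=x$, $Y=a$, $Z=b$: gives \ref{tP1} and \ref{tP2}.
\item $X=x$, $Y=y$, $Z=a$: gives \ref{tP3} and \ref{tP4}.
\item $X=x$, $Y=a$, $Z=y$: gives \ref{tP7} and \ref{tP8}.
\item $X=y$, $Y=z$, $Z=x$: gives \ref{tP9} and \ref{tP10}.
\end{enumerate}
In each case (ii)--(vi), the $L$-component and the $V$-component of $T=0$ give the two listed conditions. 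These components are computed directly from the formulas for $*$ and $\{-,-\}$, using commutativity of $f,\ell$, antisymmetry of $\theta,\rho$, and $\{x,a\}=(x\rh a,\,x\lh a)$.

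For example, in case (v), expanding $T(x,a,y)$ produces $2y*\{x,a\}$, $\{y*x,a\}$ and $\{x,y*a\}$. The $V$-component reads
$$2\ell(y,x\lh a)+2y\lt(x\rh a)-\ell(x,y)\lh a+\rho(x,y\lt a)+x\lh(y\rt a)=0,$$
up to relabeling. After swapping $x\leftrightarrow y$ and using antisymmetry of $\rho$ and symmetry of $\ell$, this becomes \ref{tP8}.

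\textbf{Main obstacle.} The difficulty is bookkeeping of signs and orientations. The convention $\{a,x\}=-x\rh a$ introduces signs, and relabeling variables is needed to bring each condition into the printed normal form. Cases (v) and (vi) are the messiest because they mix $f$, $\theta$ and the actions. I will check each one by fully expanding both components and then matching term by term with the stated conditions.

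Finally, the pure product and bracket conditions are independent of the mixed identity. Hence the conjunction of \ref{tP0}--\ref{tP10} is equivalent to $L\times V$ being a transposed Poisson algebra, which proves the statement.
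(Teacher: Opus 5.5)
Your proposal is essentially the paper's own proof. You assume \ref{tP0}, reduce to the single mixed identity, and use antisymmetry in the two bracket arguments to cut down to six generator substitutions. You then read off \ref{tP1}--\ref{tP10} from the $L$- and $V$-components, with the same assignment of cases to conditions.

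One slip in your worked example for case (v) needs fixing. The term $-\{x,y*a\}$ contributes $-\rho(x,y\lt a)-x\lh(y\rt a)$ to the $V$-component, not $+\rho(x,y\lt a)+x\lh(y\rt a)$. With your signs, swapping $x\leftrightarrow y$ would give $-\rho(x\lt a,y)+y\lh(x\rt a)$, which is not \ref{tP8}. With the corrected signs you get exactly \ref{tP8}. Also, $(a,a)$ in your reduction step should read $(a,b)$.
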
 

\begin{proof}

  We already know that condition \ref{tP0} is equivalent to requiring
  $(L\times V,-*-)$ to be a commutative algebra and
  $(L\times V,\{-,-\})$ to be a Lie algebra. Hence, under \ref{tP0},
  $\calL\ltimes_{\Omega(\calL,V)}V$ is a transposed Poisson algebra if and
  only if
  $$
    2(a,x)*\{(b,y),(c,z)\}
    =
    \{(a,x)*(b,y),(c,z)\}
    +
    \{(b,y),(a,x)*(c,z)\}
  $$
  for all $a,b,c\in L$ and $x,y,z\in V$.

  Since this identity is trilinear and $L\times V$ is spanned by elements of
  the form $(a,0)$ and $(0,x)$, it suffices to check it on these generators.
  Moreover, interchanging the second and third inputs of this identity 
  changes both sides by
  a sign, so it is enough to consider the following six representative
  substitutions, listing first the $L$-component and then the $V$-component.

  \begin{enumerate}[label=(\arabic*), wide=0pt, leftmargin=*]
    \item For $((a,0),(b,0),(c,0))$, the identity holds because $\calL$ is a transposed Poisson algebra.

    \item For $((a,0),(b,0),(0,x))$, the identity is equivalent to
    \begin{align*}
      2(-a\cdot (x\rh b)-(x\lh b)\rt a,\ -(x\lh b)\lt a )
      =(&-x\rh (a\cdot b)-(x\lt a)\rh b + [b,x\rt a],\\
      &-x\lh (a\cdot b)-(x\lt a)\lh b ),
    \end{align*}
    Since $a$ and $b$ are arbitrary, exchanging their names gives precisely
    \ref{tP1} and \ref{tP2}.

    \item For $((a,0),(0,x),(0,y))$, the identity is equivalent to
    \begin{align*}
      \begin{split}
        2(a\cdot \theta (x,y)+\rho(x,y)\rt a, \rho (x,y)\lt a)
        =&(-y\rh (x\rt a)+\theta (x\lt a, y)+x\rh (y\rt a) + \theta (x,y\lt a),\\ 
        &-y\lh (x\rt a)+\rho (x\lt a, y)+ x\lh (y\rt a)+\rho(x, y\lt a)),
      \end{split}
    \end{align*}
    i.e., if and only if \ref{tP3} and \ref{tP4} hold.

    \item For $((0,x),(a,0),(b,0))$, the identity is equivalent to
    \begin{align*}
      \begin{split}
        2(x\rt[a,b], x\lt [a,b])=([x\rt a,b]+(x\lt a)\rh b+[a,x\rt b]-(x\lt b)\rh a,
        (x\lt a)\lh b-(x\lt b)\lh a),
      \end{split}
    \end{align*}
    i.e., if and only if \ref{tP5} and \ref{tP6} hold.

    \item For $((0,x),(a,0),(0,y))$, the identity is equivalent to
    \begin{align*}
      \begin{split}
        &2(-x\rt (y\rh a)-f(x,y\lh a), -x\lt (y\rh a)-\ell (x,y\lh a))\\
        =&(-y\rh (x\rt a)+\theta (x\lt a,y)+[a,f(x,y)]-\ell (x,y)\rh a,\\
        &-y\lh (x\rt a)+\rho (x\lt a,y)-\ell (x,y)\lh a),
      \end{split}
    \end{align*}
    i.e., if and only if \ref{tP7} and \ref{tP8} hold.

    \item For $((0,x),(0,y),(0,z))$, the identity is equivalent to
    \begin{align*}
      \begin{split}
        &2(x\rt \theta (y,z)+f(x,\rho (y,z)),x\lt \theta (y,z)+\ell (x,\rho (y,z)))\\
        =&(-z\rh f(x,y)+\theta (\ell (x,y),z)+y\rh f(x,z)+\theta (y,\ell (x,z)),\\
        &-z\lh f(x,y)+\rho (\ell (x,y),z)+y\lh f(x,z)+\rho (y,\ell(x,z))),
      \end{split}
    \end{align*}
    i.e., if and only if \ref{tP9} and \ref{tP10} hold.

  \end{enumerate}

\end{proof}

Combining \cref{Thm: transposed Poisson equivalent definition}
with \cref{thm:universal-description} yields the following correspondence.

\begin{Thm}\label{Thm: first transposed Poisson correspondence theorem}
There is a bijection
$$ \Extd(E,\calL)_V\longleftrightarrow\TE(\calL,V). $$
\end{Thm}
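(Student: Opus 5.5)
The bijection will be obtained by specializing \cref{thm:universal-description} to the variety of transposed Poisson algebras, with $\calF=\{-\cdot-,[-,-]\}$. That theorem already identifies $\Extd(E,\calL)_V$ with the set $\calS(\calL,V)$ of extending structures, which is also the set of general extending data satisfying $\Gamma(\calL,V)$. What remains is to show that $\calS(\calL,V)$ is in natural bijection with $\TE(\calL,V)$. In other words, I must check that the eight-map data of \cref{Def: transposed Poisson extending structure} are exactly the general extending data compatible with all the defining identities.

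First I will write down the general datum. Each binary symbol has three non-pure input patterns, $(1,0)$, $(0,1)$ and $(1,1)$, and two output components. This gives twelve component maps for the product and twelve for the bracket. Any algebra in $\calS(\calL,V)$ satisfies commutativity of $\cdot$ and antisymmetry of $[-,-]$. Evaluating these identities on mixed inputs gives the following constraints.
\begin{itemize}
\item The product components for pattern $(0,1)$ are determined by those for $(1,0)$ via $\mu_{\xi,(0,1)}(a,x)=\mu_{\xi,(1,0)}(x,a)$.
\item The pure $V$-components $f,\ell$ are symmetric.
\item The bracket components for pattern $(0,1)$ are minus those for $(1,0)$.
\item The bracket components $\theta,\rho$ are antisymmetric.
\end{itemize}
Setting $x\rt a=\mu_{0,(1,0)}(x,a)$, $x\lt a=\mu_{1,(1,0)}(x,a)$, $x\rh a=\beta_{0,(1,0)}(x,a)$ and $x\lh a=\beta_{1,(1,0)}(x,a)$, with $\beta$ the bracket, every element of $\calS(\calL,V)$ produces an eight-tuple as in \cref{Def: transposed Poisson extending structure}. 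Its operations on $L\times V$ are exactly the displayed $*$ and $\{-,-\}$. Conversely, an eight-tuple determines a unique general datum, and the two assignments are mutually inverse. This is the same mechanism as in \cref{Thm: first commutative correspondence theorem} and in \thmref{AM14}{2.2}.

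Next I will match the compatibility conditions. Under this identification, an element of $\TE(\calL,V)$ is by definition a datum for which $(L\times V,*,\{-,-\})$ is a transposed Poisson algebra, that is, a member of $\calP(\calL,V)$. \cref{Thm: transposed Poisson equivalent definition} makes this condition explicit as \ref{tP0}--\ref{tP10}, though the bijection itself does not depend on that explicit form. Composing with the bijections $\Extd(E,\calL)_V\leftrightarrow\calS(\calL,V)\leftrightarrow\calP(\calL,V)$ from \cref{thm:universal-description} then gives the claim. Concretely, a structure $H$ corresponds to the tuple
\[
x\rt a=p(x\cdot_E a),\qquad x\lt a=x\cdot_E a-p(x\cdot_E a),\qquad x\rh a=p([x,a]_E),
\]
together with the analogous formulas for the remaining maps, where $p:E\to L$ is the projection.

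The only delicate point is the first step. I must make sure that imposing symmetry and antisymmetry in the definition of the datum loses nothing. This holds because every member of $\calS(\calL,V)$ satisfies those identities automatically, and a datum of the restricted form recovers the full general datum uniquely. Once that is verified, the rest is formal.
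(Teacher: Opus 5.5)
Your argument is correct and essentially matches the paper's proof, which transports the unified product along $\phi(a,x)=a+x$ and recovers the eight maps through the projection $p$, noting that the two constructions are mutually inverse; you route this identification through \cref{thm:universal-description} and spell out why reducing the general datum via symmetry and antisymmetry loses nothing, a point the paper handles in the text before the definition. One small slip: each binary symbol contributes six component maps (three input patterns times two outputs), so there are twelve in total, not twelve per operation.
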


\begin{proof}
Associated to the fixed decomposition $E=L\oplus V$ are the canonical linear
isomorphism
$$
\phi:L\times V\to E,
\qquad
\phi(a,x)=a+x,
$$
and the canonical projection
$$
p:E\to L,
\qquad
p(a+x)=a.
$$
Given
$
\Omega(\calL,V)=(\lt,\rt,f,\ell,
\lh,\rh,\theta,\rho)
\in\TE(\calL,V),
$
the associated unified product on $L\times V$ is
$$
\calL\ltimes V=(L\times V,-*-,\{-,-\}).
$$
Transporting its operations to $E$ through $\phi$ gives an element
$(-\cdot_E-,[-,-]_E)\in\Extd(E,\calL)_V$ determined by
\begin{align*}
(a+x)\cdot_E(b+y)
&=\bigl(a\cdot b+x\rt b+y\rt a+f(x,y)\bigr)
+\bigl(x\lt b+y\lt a+\ell(x,y)\bigr),\\
[a+x,b+y]_E
&=\bigl([a,b]+x\rh b-y\rh a+\theta(x,y)\bigr)
+\bigl(x\lh b-y\lh a+\rho(x,y)\bigr),
\end{align*}
for all $a,b\in L$ and $x,y\in V$.

Conversely, given
$(-\cdot_E-,[-,-]_E)\in\Extd(E,\calL)_V$, the corresponding extending
structure $\Omega(\calL,V)\in\TE(\calL,V)$ is recovered through $p$ by the
eight component formulas
  \begin{align*}
    \begin{aligned}
      x\rt a &= p(x\cdot_{E} a), & \quad
      x\lt a &= x\cdot_{E} a-p(x\cdot_{E} a),\\
      f(x,y) &= p(x\cdot_{E} y), & \quad
      \ell(x,y) &= x\cdot_{E} y-p(x\cdot_{E} y),\\
      x\rh a &= p([x,a]_{E}), & \quad
      x\lh a &= [x,a]_{E}-p([x,a]_{E}),\\
      \theta(x,y) &= p([x,y]_{E}), & \quad
      \rho(x,y) &= [x,y]_{E}-p([x,y]_{E}).
    \end{aligned}
  \end{align*}

  These two constructions are inverse to each other.
\end{proof}

Thus,
\cref{Thm: transposed Poisson equivalent definition,Thm: first transposed Poisson correspondence theorem}
complete the description part of the \textbf{ESP} for transposed Poisson
algebras.

We now turn to the classification part.
For the remainder of this part, fix two transposed Poisson extending structures of $\calL$ through $V$,
$$
\Omega(\calL,V) = (\lt,\rt,f,\ell,\lh,\rh,\theta,\rho), \qquad
\Omega'(\calL,V) = (\lt',\rt',f',\ell',\lh',\rh',\theta',\rho')
$$
and denote their corresponding unified products, respectively, by
$$
\calL\ltimes V, \qquad
\calL\ltimes'V.
$$

By \cref{Thm: first transposed Poisson correspondence theorem}, the
relations $\sim$ and $\approx$ on $\Extd(E,\calL)_V$ induce corresponding
relations on $\TE(\calL,V)$. By
\cref{Lem:linear-maps-correspondence}, every underlying linear map
stabilizing $L$ is uniquely of the form
$$
\psi_{(r,v)}(a,x)=(a+r(x),v(x)),
\qquad
\forall\ a\in L,\ x\in V,
$$
where $r:V\to L$ and $v:V\to V$ are linear maps. 
Specializing the general transformation relations
\eqref{Eq: F algebra map componentwisely} therefore gives the following.

\begin{Lem}\label{Lem: transposed Poisson transformation relations}
  For linear maps $r:V\to L$ and $v:V\to V$, the map
  $$
  \psi_{(r,v)}:\calL\ltimes V\longrightarrow\calL\ltimes'V,
  $$
  is a homomorphism of transposed Poisson algebras if and only if the following conditions hold for all $a\in L$ and $x,y\in V$:
  \begin{tPRenum}
    \item \label{tPR0} $v(x\lt a)=v(x)\lt' a$;
    \item \label{tPR1} $r(x\lt a)=v(x)\rt' a+r(x)\cdot a-x\rt a$;
    \item \label{tPR2} $v(\ell(x,y))=\ell'(v(x),v(y))+v(x)\lt' r(y)+v(y)\lt' r(x)$;
    \item \label{tPR3} $r(\ell(x,y))=f'(v(x),v(y))+v(x)\rt' r(y)+v(y)\rt' r(x)+r(x)\cdot r(y)-f(x,y)$;
    \item \label{tPR4} $v(x\lh a)=v(x)\lh' a$;
    \item \label{tPR5} $r(x\lh a)=v(x)\rh' a+[r(x),a]-x\rh a$;
    \item \label{tPR6} $v(\rho(x,y))=\rho'(v(x),v(y))+v(x)\lh' r(y)-v(y)\lh' r(x)$;
    \item \label{tPR7} $r(\rho(x,y))=\theta'(v(x),v(y))+v(x)\rh' r(y)-v(y)\rh' r(x)+[r(x),r(y)]-\theta(x,y)$.
  \end{tPRenum}
  Moreover, $\psi_{(r,v)}$ is an isomorphism if and only if $v:V\to V$ is a linear isomorphism, 
  and it costabilizes $V$ if and only if $v=\rmId_V$.

\end{Lem}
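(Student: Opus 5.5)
Since $\calL\ltimes V$ and $\calL\ltimes'V$ carry two binary operations, $\psi_{(r,v)}$ is a homomorphism of transposed Poisson algebras if and only if it is separately a homomorphism for the products $*$, $*'$ and for the brackets $\{-,-\}$, $\{-,-\}'$. The plan is to treat these two requirements independently, specializing the general transformation relations \eqref{Eq: F algebra map componentwisely} to each operation. By \cref{Lem:linear-maps-correspondence} we have $\psi_{0,0}=\rmId_L$, $\psi_{1,0}=0$, $\psi_{0,1}=r$ and $\psi_{1,1}=v$. By bilinearity, it suffices to check the identity $\psi(u*w)=\psi(u)*'\psi(w)$, and its bracket analogue, on generators of the form $(a,0)$ and $(0,x)$.

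For the product, the check reduces to \cref{Lem: commutative transformation relations}, which we may cite directly. The pair $(a,0),(b,0)$ is automatic, because $\psi$ stabilizes $L$. By commutativity, $(a,0),(0,x)$ is equivalent to $(0,x),(a,0)$. Here $\psi((0,x)*(a,0))=(x\rt a+r(x\lt a),\,v(x\lt a))$, while
\[
\psi(0,x)*'(a,0)=(r(x),v(x))*'(a,0)=\bigl(r(x)\cdot a+v(x)\rt' a,\ v(x)\lt' a\bigr).
\]
Comparing components gives \ref{tPR0} and \ref{tPR1}. The pair $(0,x),(0,y)$ gives \ref{tPR2} and \ref{tPR3}, obtained by expanding $(r(x),v(x))*'(r(y),v(y))$. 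These are exactly \ref{CR1}--\ref{CR4}.

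For the bracket, the same computation applies, now with $\{(a,x),(b,y)\}$. The pair $(0,x),(a,0)$ gives $\psi(x\rh a,x\lh a)=(x\rh a+r(x\lh a),v(x\lh a))$, while
\[
\{(r(x),v(x)),(a,0)\}'=\bigl([r(x),a]+v(x)\rh' a,\ v(x)\lh' a\bigr).
\]
This yields \ref{tPR4} and \ref{tPR5}. The pair $(a,0),(0,x)$ adds nothing new, by antisymmetry of both brackets. The pair $(0,x),(0,y)$ yields \ref{tPR6} and \ref{tPR7}, after expanding the bracket $\{(r(x),v(x)),(r(y),v(y))\}'$. In doing so we must keep track of the signs $-y\rh a$ and $-y\lh a$ coming from the definition of the bracket on $L\times V$. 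This matches the Lie case of \thmref{AM14}{2.2}, which could also be cited.

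The final assertions on isomorphisms and costabilization follow verbatim from \cref{Lem:linear-maps-correspondence}. No step here is a genuine obstacle. The only care needed is in the sign bookkeeping of the bracket expansion in the $(0,x),(0,y)$ case, and in confirming that the compatibility identity \eqref{eq: transposed Poisson alg compatibility condition} imposes no extra condition on $\psi$. It does not, because a homomorphism is defined only by preservation of the operations, not of the identities.
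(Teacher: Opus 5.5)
Your proposal is correct and follows the paper's proof. The paper likewise splits the homomorphism condition into preservation of the product (giving \ref{tPR0}--\ref{tPR3}) and preservation of the bracket (giving \ref{tPR4}--\ref{tPR7}) via the componentwise relations \eqref{Eq: F algebra map componentwisely}, and takes the isomorphism and costabilization assertions from \cref{Lem:linear-maps-correspondence}; your generator computations, including the signs in the $(0,x),(0,y)$ bracket case, check out and simply spell out what the paper leaves implicit.
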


\begin{proof}
  By \eqref{Eq: F algebra map componentwisely},
  $\psi_{(r,v)}$ preserves the multiplication if and only if
  \ref{tPR0}--\ref{tPR3} hold, and preserves the bracket if and only if
  \ref{tPR4}--\ref{tPR7} hold. The assertions concerning isomorphisms and
  costabilization follow from \cref{Lem:linear-maps-correspondence}.
\end{proof}

By \cref{Lem: transposed Poisson transformation relations}, the equivalence
relation $\sim$ is obtained by requiring $v:V\to V$ to be a linear
isomorphism. Solving \ref{tPR0}--\ref{tPR7} for the primed components using
$v^{-1}$ yields the following explicit description.

\begin{Def}
The transposed Poisson extending structures $\Omega(\calL,V)$ and $\Omega'(\calL,V)$ are called \textbf{equivalent}, denoted by
$$ \Omega(\calL,V)\sim\Omega'(\calL,V), $$
if there exist a linear map $r:V\to L$ and a linear isomorphism $v:V\to V$ 
such that $\Omega'(\calL,V)$ is obtained from $\Omega(\calL,V)$ via $(r,v)$ as follows:
  \begin{enumerate}[label=(\alph*)]
    \item $x\lt' a=v(v^{-1}(x)\lt a)$;
    \item $x\rt' a=v^{-1}(x)\rt a+r(v^{-1}(x)\lt a)-r(v^{-1}(x))\cdot a$;
    \item $ \begin{array}{rcl} 
    f'(x,y)&=&f(v^{-1}(x),v^{-1}(y))+r(\ell(v^{-1}(x),v^{-1}(y)))+r(v^{-1}(x))\cdot r(v^{-1}(y))\\
    &&-v^{-1}(x)\rt r(v^{-1}(y))-r(v^{-1}(x)\lt r(v^{-1}(y)))\\
    &&-v^{-1}(y)\rt r(v^{-1}(x))-r(v^{-1}(y)\lt r(v^{-1}(x)))
    \end{array} $;
    \item $\ell'(x,y)=v(\ell(v^{-1}(x),v^{-1}(y)))-v(v^{-1}(x)\lt r(v^{-1}(y)))-v(v^{-1}(y)\lt r(v^{-1}(x)))$;
    \item $x\lh' a=v(v^{-1}(x)\lh a)$;
    \item $x\rh' a=r(v^{-1}(x)\lh a)+v^{-1}(x)\rh a-[r(v^{-1}(x)),a]$;
    \item $\begin{array}{rcl} 
      \theta'(x,y)&=&r(\rho(v^{-1}(x),v^{-1}(y)))+[r(v^{-1}(x)),r(v^{-1}(y))]+\theta(v^{-1}(x),v^{-1}(y))\\
      &&-r(v^{-1}(x)\lh r(v^{-1}(y)))-v^{-1}(x)\rh r(v^{-1}(y))\\
      &&+r(v^{-1}(y)\lh r(v^{-1}(x)))+v^{-1}(y)\rh r(v^{-1}(x)) \end{array} $;
    \item $\rho'(x,y)=v(\rho(v^{-1}(x),v^{-1}(y)))-v(v^{-1}(x)\lh r(v^{-1}(y)))+v(v^{-1}(y)\lh r(v^{-1}(x)))$.
  \end{enumerate}

\end{Def}

For the cohomologous relation $\approx$, the isomorphism $\psi_{(r,v)}$ is further required to costabilize $V$. 
By \cref{Lem: transposed Poisson transformation relations}, this is equivalent to $v=\rmId_V$.
Hence the preceding formulas specialize as follows.

\begin{Def}
The transposed Poisson extending structures $\Omega(\calL,V)$ and $\Omega'(\calL,V)$ are called \textbf{cohomologous}, denoted by
$$ \Omega(\calL,V)\approx\Omega'(\calL,V), $$
if $\lt=\lt'$, $\lh=\lh'$, and there exists a linear map $r:V\to L$ such that:
  \begin{enumerate}[label=(\alph*)]
    \item  $x\rt'a=x\rt a+r(x\lt a)-r(x)\cdot a$;
    \item  $f'(x,y)=f(x,y)+r(\ell(x,y))+r(x)\cdot r(y)-x\rt r(y)-r(x\lt r(y))-y\rt r(x)-r(y\lt r(x))$;
    \item  $\ell'(x,y)=\ell(x,y)-x\lt r(y)-y\lt r(x)$;
    \item  $x\rh' a=x\rh a+r(x\lh a)-[r(x),a]$;
    \item  $\begin{array}{rcl} \theta'(x,y)&=&r(\rho(x,y))+[r(x),r(y)]+\theta(x,y)-x\rh r(y)-r(x\lh r(y))\\ &&+y\rh r(x)+r(y\lh r(x)) \end{array}$;
    \item  $\rho'(x,y)=\rho(x,y)-x\lh r(y)+y\lh r(x)$.
  \end{enumerate}

\end{Def}

Both $\sim$ and $\approx$ are equivalence relations on $\TE(\calL,V)$.
We denote the corresponding quotient sets by
$$ \TTH^{2}_{\sim}(\calL,V) \coloneqq \TE(\calL,V)/\sim, \qquad 
\TTH^{2}_{\approx}(\calL,V) \coloneqq \TE(\calL,V)/\approx. $$

Together with
\cref{thm:universal-classification for sim,thm:universal-classification-equiv},
these relations yield the following classification.

\begin{Thm}\label{Thm: second transposed Poisson correspondence theorem}
There are bijections
$$
\TTH^{2}_{\sim}(\calL,V)\longleftrightarrow \Extd'(E,\calL)_V,\qquad
\TTH^{2}_{\approx}(\calL,V)\longleftrightarrow \Extd''(E,\calL)_V.
$$
\end{Thm}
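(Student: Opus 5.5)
The two bijections are obtained by specializing \cref{thm:universal-classification for sim,thm:universal-classification-equiv} to the variety of transposed Poisson algebras. The plan is to show that the relations $\sim$ and $\approx$ just defined on $\TE(\calL,V)$ are exactly the relations transported from $\Extd(E,\calL)_V$ through the bijection of \cref{Thm: first transposed Poisson correspondence theorem}. Once this is checked, the general theorems give the result.

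\textbf{Step 1.} By \cref{Def:equivalent-algebras-over-decomposition} and the discussion in \cref{subsec: UA-ESP-classification}, two elements of $\Extd(E,\calL)_V$ are equivalent if and only if there is an isomorphism of transposed Poisson algebras $\psi:\calL\ltimes V\to\calL\ltimes' V$ stabilizing $L$. They are cohomologous if, in addition, $\psi$ costabilizes $V$. By \cref{Lem:linear-maps-correspondence}, every linear map stabilizing $L$ has the form $\psi_{(r,v)}$.

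\textbf{Step 2.} \cref{Lem: transposed Poisson transformation relations} characterizes when $\psi_{(r,v)}$ is a homomorphism: exactly when \ref{tPR0}--\ref{tPR7} hold. It is an isomorphism exactly when $v$ is invertible, and it costabilizes $V$ exactly when $v=\rmId_V$.

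\textbf{Step 3.} This is the main bookkeeping step. When $v$ is invertible, we show that \ref{tPR0}--\ref{tPR7} are equivalent to formulas (a)--(h) in the definition of $\sim$. The argument is substitution:
\begin{itemize}
\item Replace $x$ by $v^{-1}(x)$ in \ref{tPR0} and \ref{tPR4}; this gives (a) and (e).
\item Solve \ref{tPR1} and \ref{tPR5} for $v(x)\rt' a$ and $v(x)\rh' a$, then substitute; this gives (b) and (f).
\item Substitute $v^{-1}(x),v^{-1}(y)$ into \ref{tPR2} and \ref{tPR6}, and use (a) and (e) to rewrite $v(x)\lt' r(y)$ as $v(x\lt r(y))$. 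This gives (d) and (h).
\item Insert the resulting expressions for $\rt'$ and $\rh'$ into \ref{tPR3} and \ref{tPR7}; this gives (c) and (g).
\end{itemize}
Every substitution can be reversed, so the two systems are equivalent. Setting $v=\rmId_V$ gives the definition of $\approx$.

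The point needing most care is (c) and (g). There the terms $v(x)\rt' r(y)$ must be expanded using (b), and this produces the $r(\cdot\lt r(\cdot))$ terms with the correct signs. I would check this part explicitly, and check the symmetric/antisymmetric pairing of the terms.

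\textbf{Step 4.} The relations $\sim$ and $\approx$ on $\TE(\calL,V)$ coincide with the transported relations. Hence $\TTH^2_{\sim}(\calL,V)=\mathcal H^2_{\sim}(\calL,V)$ and $\TTH^2_{\approx}(\calL,V)=\mathcal H^2_{\approx}(\calL,V)$ as quotient sets. In particular, both relations are equivalence relations, since they are transported from equivalence relations. \cref{thm:universal-classification for sim,thm:universal-classification-equiv} then give the two bijections.
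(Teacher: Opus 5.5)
Your proposal is correct and follows the paper's route. The paper obtains the theorem by specializing the general classification theorems through the transformation relations \ref{tPR0}--\ref{tPR7}, and it gets the explicit formulas (a)--(h) for $\sim$ (and their $v=\rmId_V$ versions for $\approx$) by solving those relations for the primed components via $v^{-1}$; your Step 3 spells out that substitution, and the signs in (c) and (g) do come out as stated once $v(x)\rt' r(y)$ and $v(x)\rh' r(y)$ are expanded using \ref{tPR1} and \ref{tPR5}.
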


\cref{Thm: second transposed Poisson correspondence theorem}
gives the classification of transposed Poisson extending structures.
The finer relation $\approx$ admits a fixed-module refinement, since it
preserves the pair $(\lt,\lh)$. For any transposed Poisson extending
structure, \ref{tP0} implies that $(V,\lt)$ is a right module over the
commutative algebra $(L,-\cdot-)$ and $(V,\lh)$ is a right module over the
Lie algebra $(L,[-,-])$, while \ref{tP6} and \ref{tP2} give the two
compatibility identities below. This leads to the following definition.

\begin{Def}\label{Def: right transposed Poisson module}
A  \textbf{right transposed Poisson $\calL$-module} or just a right $\calL$-module is a vector space $V$ equipped with two bilinear maps
$$
\lt : V\times L \to V, \qquad \lh : V\times L \to V,
$$
such that $(V,\lt)$ is a (nonunital) right  module over the commutative algebra $(L, -\cdot -)$, $(V,\lh)$ is a right module over the Lie algebra $(L, [-,-])$, and the following compatibility conditions hold for all $a,b \in L$ and $x \in V$:
\begin{align*}
  2\, x \lt [a,b] &= (x\lt a)\lh b-(x\lt b)\lh a,\\
  x \lh (a \cdot b) &= 2\, (x\lh a)\lt b- (x\lt b)\lh a.
\end{align*}
\end{Def}

\begin{Def}\label{Def: fixed right moduled transposed Poisson extending structure}
Let $(V, \lt, \lh)$ be a right transposed Poisson $\calL$-module. A \textbf{$(\lt, \lh)$-transposed Poisson extending structure} of $\calL$ through $V$ is a 6-tuple 
$$(\rt,f, \ell, \rh, \theta, \rho)$$ 
such that
$(\lt, \rt, f, \ell, \lh, \rh, \theta, \rho)$ 
is a transposed Poisson extending structure of $\calL$ through $V$. Denote by 
$$\TE_{(\lt, \lh)}(\calL,V)$$ 
the set of all $(\lt, \lh)$-transposed Poisson extending structures of $\calL$ through $V$. 
Two elements 
$$(\rt, f, \ell, \rh, \theta, \rho), \; (\rt', f', \ell', \rh', \theta', \rho') \in \TE_{(\lt, \lh)}(\calL,V)$$ 
are called \textbf{$(\lt, \lh)$-cohomologous}, denoted by
$$
(\rt, f, \ell, \rh, \theta, \rho) \approx_{(\lt,\lh)} (\rt', f', \ell', \rh', \theta', \rho'),
$$ 
if $(\lt, \rt, f, \ell, \lh, \rh, \theta, \rho) \approx (\lt, \rt', f', \ell', \lh, \rh', \theta', \rho')$.
\end{Def}

The relation $\approx_{(\lt,\lh)}$ is an equivalence relation on
$\TE_{(\lt,\lh)}(\calL,V)$. We denote the corresponding quotient set
by
$$
\TTH^{2}_\approx(\calL,(V,\lt,\lh))
\coloneqq
\TE_{(\lt,\lh)}(\calL,V)/\approx_{(\lt,\lh)}.
$$

Allowing the fixed right transposed Poisson $\calL$-module structure on $V$ to vary yields the following decomposition.

\begin{Cor}\label{Cor: transposed Poisson right-module decomposition}
  There exists a bijection
  \begin{align*}
    \TTH^{2}_\approx(\calL,V)\longrightarrow \bigsqcup_{(\lt, \lh)}\TTH^{2}_\approx(\calL,(V, \lt, \lh)),\qquad
    [\Omega(\calL,V)]_{\approx}
    \longmapsto
    [(\rt,f,\ell,\rh,\theta,\rho)]_{\approx_{(\lt,\lh)}},
  \end{align*}
  where $\Omega(\calL,V)=(\lt,\rt,f,\ell,\lh,\rh,\theta,\rho)$, 
  and the disjoint union is taken over all right transposed Poisson $\calL$-module structures $(\lt,\lh)$ on $V$.
\end{Cor}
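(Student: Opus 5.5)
The plan is to show that the map is well defined, injective and surjective, using only the explicit description of $\approx$ given in the definition of cohomologous transposed Poisson extending structures.

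\emph{Well-definedness.} Let $\Omega(\calL,V)=(\lt,\rt,f,\ell,\lh,\rh,\theta,\rho)\in\TE(\calL,V)$.
\begin{enumerate}[label=(\roman*)]
\item By \ref{tP0}, $(V,\lt)$ is a right module over $(L,-\cdot-)$ and $(V,\lh)$ is a right module over the Lie algebra $(L,[-,-])$. Condition \ref{tP6} gives the first compatibility identity of \cref{Def: right transposed Poisson module}, and \ref{tP2}, after renaming variables, gives the second. So $(V,\lt,\lh)$ is a right transposed Poisson $\calL$-module.
\item By the very definition in \cref{Def: fixed right moduled transposed Poisson extending structure}, the $6$-tuple $(\rt,f,\ell,\rh,\theta,\rho)$ then lies in $\TE_{(\lt,\lh)}(\calL,V)$.
\item If $\Omega\approx\Omega'$, the definition of $\approx$ forces $\lt=\lt'$ and $\lh=\lh'$. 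Hence both classes land in the same summand of the disjoint union, indexed by the common module $(\lt,\lh)$.
\item Within that summand, $\approx_{(\lt,\lh)}$ is by definition the restriction of $\approx$. So the two $6$-tuples are $\approx_{(\lt,\lh)}$-cohomologous, and the image class does not depend on the representative.
\end{enumerate}

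\emph{Injectivity.} Suppose $[\Omega]_\approx$ and $[\Omega']_\approx$ have the same image. Since the union is disjoint, equal images lie in the same summand, so $(\lt,\lh)=(\lt',\lh')$. Equality of the classes under $\approx_{(\lt,\lh)}$ then means, by \cref{Def: fixed right moduled transposed Poisson extending structure}, that $\Omega\approx\Omega'$.

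\emph{Surjectivity.} Take any module $(\lt,\lh)$ and any $6$-tuple in $\TE_{(\lt,\lh)}(\calL,V)$. By definition, adjoining $\lt$ and $\lh$ to the $6$-tuple gives an element of $\TE(\calL,V)$. The map sends its $\approx$-class to the given class.

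\emph{Main point.} The only step that requires care is well-definedness, specifically checking that every extending structure yields a \emph{module} in the sense of \cref{Def: right transposed Poisson module}. This amounts to matching \ref{tP2} and \ref{tP6} with the two displayed compatibility identities. The remaining steps are formal unwinding of definitions.
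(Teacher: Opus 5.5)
Your proposal is correct and follows the paper's argument. The key point in both is that $\approx$ fixes the pair $(\lt,\lh)$; the paper reads this off from \ref{tPR0} and \ref{tPR4} with $v=\rmId_V$, you from the explicit definition of $\approx$. Hence $\approx$ restricts to $\approx_{(\lt,\lh)}$ on each $\TE_{(\lt,\lh)}(\calL,V)$. You simply spell out well-definedness, injectivity and surjectivity, together with the module check via \ref{tP0}, \ref{tP2}, \ref{tP6} that the paper records just before \cref{Def: right transposed Poisson module}. Incidentally, \ref{tP2} coincides verbatim with the second compatibility identity there, so no renaming of variables is needed.
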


\begin{proof}
By \ref{tPR0} and \ref{tPR4}, the relation $\approx$ preserves the pair
$(\lt,\lh)$. Hence the relation $\approx$
restricts, for each fixed right transposed Poisson $\calL$-module structure
$(V,\lt,\lh)$, to the relation $\approx_{(\lt,\lh)}$ on
$\TE_{(\lt,\lh)}(\calL,V)$. The stated bijection follows.
\end{proof}

\subsection{Flag extending structures}

We now consider flag extending structures for transposed Poisson algebras.

\begin{Def}\label{Def: transposed Poisson flag extending structure}
Let $\calL$ be a transposed Poisson algebra and $V$ a finite-dimensional vector space,
and denote by $E=L\oplus V$ their direct sum as vector spaces.
A transposed Poisson algebra structure $(-\cdot_E-,[-,-]_E)$ on $E$ extending $\calL$ is called a \textbf{flag extending structure} of $\calL$ to $E$ if there exists a flag
$$
0=V_0\subset V_1\subset\cdots\subset V_d=V
$$
of $V$ of length $d=\dim V$
(i.e. the $V_i$'s are subspaces of $V$ with $\dim(V_i)=i$ for $0\leq i\leq d$)
such that, for each $1\leq i\leq d$, the subspace
$$
E_i\coloneqq L\oplus V_i
$$
is closed under both operations $-\cdot_E-$ and $[-,-]_E$.
\end{Def}

Throughout the remainder of this subsection,
let $\calL=(L,-\cdot-,[-,-])$ be a transposed Poisson algebra and $V$ a one-dimensional vector space.
Fix a nonzero vector $x\in V$, so that $V=\bbK x$.
In this case, every transposed Poisson algebra structure on $E=L\oplus V$ extending $\calL$ is automatically a flag extending structure.

Since $V$ is one-dimensional, the eight components of a transposed Poisson extending datum
$$
\Omega(\calL,V)=(\lt,\rt,f,\ell,\lh,\rh,\theta,\rho)
$$
are uniquely of the form
\begin{align*}
  \begin{aligned}
    x\lt a &=\Lambda(a)x, & \ x\rt a &=\Delta(a), & \
    f(x,x) &=a_0, & \ \ell(x,x) &=k_0x,\\
    x\lh a &=\lambda(a)x, & \ x\rh a &=D(a),  & \
    \theta &=0, & \ \rho &=0,
  \end{aligned}
\end{align*}
for all $a\in L$, where $ \Lambda,\lambda:L\to\bbK $ and $ \Delta,D:L\to L $ are linear maps, $a_0\in L$, and $k_0\in\bbK$.
Here $\theta$ and $\rho$ vanish because they are antisymmetric and $V$ is one-dimensional.

Substituting these expressions into \ref{tP0}--\ref{tP10} yields
\ref{tPF0}--\ref{tPF6} below: \ref{tP0} gives \ref{tPF0};
\ref{tP1}, \ref{tP2}, \ref{tP5}--\ref{tP8} give
\ref{tPF1}--\ref{tPF6}, respectively, while
\ref{tP3}, \ref{tP4}, \ref{tP9}, and \ref{tP10} hold automatically.

\begin{Def}\label{transposed Poisson flag datum}
A \textbf{transposed Poisson flag datum} of $\calL$ is a 6-tuple
  $$
\Omega(\calL)=(\Lambda,\Delta,a_0,k_0,\lambda,D),
  $$
consisting of linear maps
  $$
\Lambda,\lambda:L\to\bbK,
\qquad
\Delta,D:L\to L,
  $$
together with $a_0\in L$ and $k_0\in\bbK$, satisfying the following
compatibility conditions for all $a,b\in L$:
  \begin{tPFenum}
  \item \label{tPF0} $(\Lambda,\Delta,a_0,k_0)$ is a commutative flag
  datum of $(L,-\cdot-)$ in the sense of
  \cref{Def: commutative flag datum}, and $(\lambda,D)$ is a twisted derivation
  (see \defref{AM14}{4.2}) of the Lie algebra $(L,[-,-])$;
    \item \label{tPF1} $D(a\cdot b)=2\lambda(a)\Delta(b)+2D(a)\cdot b-[\Delta(b),a]-\Lambda(b)D(a)$;
    \item \label{tPF2} $\lambda(a\cdot b)=\lambda(a)\Lambda(b)$;
    \item \label{tPF3} $2\Delta([a,b])=[\Delta(a),b]+[a,\Delta(b)]+\Lambda(a)D(b)-\Lambda(b)D(a)$;
    \item \label{tPF4} $2\Lambda([a,b])=\Lambda(a)\lambda(b)-\Lambda(b)\lambda(a)$;
  \item \label{tPF5} $2\Delta(D(a))=D(\Delta(a))+k_0D(a)+[a_0,a]-2\lambda(a)a_0$;
  \item \label{tPF6} $2\Lambda(D(a))=\lambda(\Delta(a))-k_0\lambda(a)$.
  \end{tPFenum}
  We denote the set of all transposed Poisson flag data of $\calL$ by
  $$ \TF(\calL). $$
\end{Def}

Combining the preceding identification with
\cref{Thm: first transposed Poisson correspondence theorem}, we obtain the
following correspondence.

\begin{Prop}\label{Prop: transposed Poisson flag correspondence property}
Let $E$ be a vector space containing $L$ as a subspace and suppose that
$E=L\oplus V$. Then there are bijections
$$
\Extd(E,\calL)_V
\longleftrightarrow
\TE(\calL,V)
\longleftrightarrow
\TF(\calL).
$$
\end{Prop}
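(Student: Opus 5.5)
The first bijection $\Extd(E,\calL)_V\leftrightarrow\TE(\calL,V)$ is exactly \cref{Thm: first transposed Poisson correspondence theorem}, so the work is to build the second bijection $\TE(\calL,V)\leftrightarrow\TF(\calL)$ and then compose.

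\emph{Step 1: parametrize the data.} Fix the basis vector $x$ of $V=\bbK x$. A transposed Poisson extending datum is determined by its values on $x$. Write $x\lt a=\Lambda(a)x$, $x\rt a=\Delta(a)$, $f(x,x)=a_0$, $\ell(x,x)=k_0x$, $x\lh a=\lambda(a)x$ and $x\rh a=D(a)$. Antisymmetry of $\theta$ and $\rho$ forces $\theta(x,x)=0$ and $\rho(x,x)=0$, so $\theta=0$ and $\rho=0$. Symmetry of $f$ and $\ell$ imposes no further restriction. Bilinearity shows that the assignment
\[
\Omega(\calL,V)\mapsto(\Lambda,\Delta,a_0,k_0,\lambda,D)
\]
is a bijection from all extending data with $\theta=\rho=0$ (that is, all data) onto all $6$-tuples of linear maps and elements of the stated types. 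Its inverse is obtained by extending bilinearly.

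\emph{Step 2: match the compatibility conditions.} I will show that the datum satisfies \ref{tP0}--\ref{tP10} if and only if the tuple satisfies \ref{tPF0}--\ref{tPF6}. By multilinearity it suffices to take every $V$-argument equal to $x$.

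For \ref{tP0}, the commutative half reduces to \ref{CF1}--\ref{CF4}, exactly as in \cref{Prop: commutative flag correspondence property}. The Lie half, with $\theta=\rho=0$ in dimension one, reduces to $(\lambda,D)$ being a twisted derivation, by the one-dimensional case of \thmref{AM14}{2.2} (cf. \defref{AM14}{4.2}).

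Next I substitute into the remaining conditions:
\begin{itemize}
\item \ref{tP1} gives \ref{tPF1}, after comparing coefficients.
\item \ref{tP2} gives $\lambda(ab)x=\bigl(2\lambda(a)\Lambda(b)-\Lambda(b)\lambda(a)\bigr)x$, which is \ref{tPF2}.
\item \ref{tP5} and \ref{tP6} give \ref{tPF3} and \ref{tPF4}.
\item \ref{tP7} gives $k_0D(a)=2\Delta(D(a))+2\lambda(a)a_0-D(\Delta(a))-[a_0,a]$, which rearranges to \ref{tPF5}.
\item \ref{tP8} gives $k_0\lambda(a)=2\Lambda(D(a))+2k_0\lambda(a)-\lambda(\Delta(a))$, which is \ref{tPF6}.
\end{itemize}

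Finally, \ref{tP3}, \ref{tP4}, \ref{tP9} and \ref{tP10} become trivial. With $y=z=x$, every term either involves $\theta$ or $\rho$, or cancels in an antisymmetric pair such as $x\rh(x\rt a)-x\rh(x\rt a)$ or $y\rh f(x,z)-z\rh f(x,y)$.

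\emph{Step 3: compose.} Composing the bijection of Step 1, restricted via Step 2, with \cref{Thm: first transposed Poisson correspondence theorem} yields the chain of bijections in the statement.

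The main obstacle is Step 2. Each substitution must be carried out carefully, tracking signs and the factor $2$ from the transposed identity, and checking that each resulting condition is equivalent to, not merely implied by, the corresponding \ref{tPF1}--\ref{tPF6}. Since all scalar coefficients are read off against $x$ and all $L$-components compare directly, each reduction is an equivalence. I would verify \ref{tP7} and \ref{tP1} first, since they carry the most terms.
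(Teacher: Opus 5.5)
Your proposal is correct and follows the paper's route. You parametrize the one-dimensional datum, with $\theta=\rho=0$ by antisymmetry since $\operatorname{char}\bbK\neq 2$, substitute into \ref{tP0}--\ref{tP10} to get \ref{tPF0}--\ref{tPF6} (with \ref{tP3}, \ref{tP4}, \ref{tP9}, \ref{tP10} holding automatically), and compose with \cref{Thm: first transposed Poisson correspondence theorem}. Your explicit reductions of \ref{tP1}, \ref{tP2} and \ref{tP5}--\ref{tP8} are correct and give more detail than the paper, which only asserts them.
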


Under the right-hand bijection in
\cref{Prop: transposed Poisson flag correspondence property}, the unified
product corresponding to a transposed Poisson flag datum
$ \Omega(\calL)=(\Lambda,\Delta,a_0,k_0,\lambda,D) $
is
$$
\calL\ltimes_{\Omega(\calL,V)}V
=(L\times V,-*-,\{-,-\}),
$$
where, for all $a,b\in L$ and $k_1,k_2\in\bbK$,
\begin{align*}
(a,k_1x)*(b,k_2x)
&=\Bigl(
a\cdot b+k_1\Delta(b)+k_2\Delta(a)+k_1k_2a_0,
(k_1\Lambda(b)+k_2\Lambda(a)+k_1k_2k_0)x
\Bigr),\\
\{(a,k_1x),(b,k_2x)\}
&=\Bigl(
[a,b]+k_1D(b)-k_2D(a),
(k_1\lambda(b)-k_2\lambda(a))x
\Bigr).
\end{align*}

Through the right-hand bijection in
\cref{Prop: transposed Poisson flag correspondence property}, the relations
$\sim$ and $\approx$ on $\TE(\calL,V)$ induce corresponding relations on
$\TF(\calL)$ as follows.

\begin{Def}\label{Def: transposed Poisson flag equivalence relations}
  Let $ \Omega(\calL), \Omega'(\calL)\in\TF(\calL) $ 
  be two transposed Poisson flag data, 
  and let $ \Omega(\calL,V),\\ \Omega'(\calL,V)\in\TE(\calL,V) $
  be their corresponding transposed Poisson extending structures.
  We define
  \begin{align*}
    \Omega(\calL)\sim\Omega'(\calL) \Longleftrightarrow \Omega(\calL,V)\sim\Omega'(\calL,V),\qquad
    \Omega(\calL)\approx\Omega'(\calL) \Longleftrightarrow \Omega(\calL,V)\approx\Omega'(\calL,V).
  \end{align*}

\end{Def}

These are equivalence relations on $\TF(\calL)$. Since $V=\bbK x$, the
linear maps occurring in
\cref{Lem: transposed Poisson transformation relations} are uniquely of the
form $r(x)=c_0$ and $v(x)=p_0x$ for some
$c_0\in L$ and $p_0\in\bbK^\times$. Hence
\ref{tPR0}--\ref{tPR7} reduce to the following explicit description of
$\sim$ and $\approx$ on $\TF(\calL)$.

\begin{Prop}\label{Prop: transposed Poisson flag equivalence criterion}
Given two transposed Poisson flag data
$$
\Omega(\calL)=(\Lambda,\Delta,a_0,k_0,\lambda,D),
\qquad
\Omega'(\calL)=(\Lambda',\Delta',a'_0,k'_0,\lambda',D'),
$$
we have $\Omega(\calL)\sim\Omega'(\calL)$ if and only if
$
\Lambda'=\Lambda,\
\lambda'=\lambda,
$
and there exists $(p_0,c_0)\in\bbK^\times\times L$ such that, for every
$a\in L$,
  \begin{align*}
p_0\Delta'(a)
&=\Delta(a)+\Lambda(a)c_0-a\cdot c_0,\\
p_0^2a'_0
&=a_0+k_0c_0+c_0\cdot c_0
-2\Delta(c_0)-2\Lambda(c_0)c_0,\\
p_0k'_0
&=k_0-2\Lambda(c_0),\\
p_0D'(a)
&=D(a)+\lambda(a)c_0+[a,c_0].
  \end{align*}
Moreover, $\Omega(\calL)\approx\Omega'(\calL)$ if and only if the above
conditions hold with $p_0=1$; equivalently,
$
\Lambda'=\Lambda,
\
\lambda'=\lambda,
$
and there exists $c_0\in L$ such that, for every $a\in L$,
\begin{align*}
\Delta'(a)
&=\Delta(a)+\Lambda(a)c_0-a\cdot c_0,\\
a'_0
&=a_0+k_0c_0+c_0\cdot c_0
-2\Delta(c_0)-2\Lambda(c_0)c_0,\\
k'_0
&=k_0-2\Lambda(c_0),\\
D'(a)
&=D(a)+\lambda(a)c_0+[a,c_0].
\end{align*}
\end{Prop}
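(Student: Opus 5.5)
The plan is to specialize \cref{Lem: transposed Poisson transformation relations} to the one-dimensional setting, in the same way \cref{Prop: commutative flag equivalence criterion} specializes \cref{Lem: commutative transformation relations}. By \cref{Def: transposed Poisson flag equivalence relations}, $\Omega(\calL)\sim\Omega'(\calL)$ means that there are linear maps $r:V\to L$ and $v:V\to V$, with $v$ invertible, such that $\psi_{(r,v)}:\calL\ltimes V\to\calL\ltimes' V$ satisfies \ref{tPR0}--\ref{tPR7}. Since $V=\bbK x$, such a pair is exactly $r(x)=c_0$, $v(x)=p_0x$ with $c_0\in L$ and $p_0\in\bbK$. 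Invertibility of $v$ is $p_0\neq 0$, and $v=\rmId_V$ is $p_0=1$. By bilinearity, every relation only needs checking on the single basis vector $x$ (for both $V$-inputs). I then substitute the flag forms $x\lt a=\Lambda(a)x$, $x\rt a=\Delta(a)$, $f(x,x)=a_0$, $\ell(x,x)=k_0x$, $x\lh a=\lambda(a)x$, $x\rh a=D(a)$, $\theta=\rho=0$, and the primed analogues.

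The relations go as follows.
\begin{enumerate}[label=(\roman*)]
\item \ref{tPR0} and \ref{tPR4} become $p_0\Lambda(a)x=p_0\Lambda'(a)x$ and $p_0\lambda(a)x=p_0\lambda'(a)x$. Since $p_0\neq0$, these are equivalent to $\Lambda'=\Lambda$ and $\lambda'=\lambda$.
\item \ref{tPR1} reads $\Lambda(a)c_0=p_0\Delta'(a)+c_0\cdot a-\Delta(a)$. This is the first displayed formula, using commutativity.
\item \ref{tPR5} reads $\lambda(a)c_0=p_0D'(a)+[c_0,a]-D(a)$. This gives the last formula, using antisymmetry of the bracket.
\item \ref{tPR2} gives $p_0k_0x=p_0^2k_0'x+2p_0\Lambda'(c_0)x$. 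Dividing by $p_0$ and using $\Lambda'=\Lambda$ yields $p_0k_0'=k_0-2\Lambda(c_0)$.
\item \ref{tPR6} and \ref{tPR7} become trivial identities. Both sides involve $\rho,\rho',\theta,\theta'$ evaluated at $(x,x)$, hence vanish, together with terms $v(x)\lh' r(x)-v(x)\lh' r(x)$ and $[c_0,c_0]$ that cancel.
\end{enumerate}

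The one step needing a small computation is \ref{tPR3}. It reads
$$
k_0c_0=p_0^2a_0'+2p_0\Delta'(c_0)+c_0\cdot c_0-a_0 .
$$
Here I eliminate $p_0\Delta'(c_0)$ using the \ref{tPR1} formula at $a=c_0$, namely $p_0\Delta'(c_0)=\Delta(c_0)+\Lambda(c_0)c_0-c_0\cdot c_0$. This yields $p_0^2a_0'=a_0+k_0c_0+c_0\cdot c_0-2\Delta(c_0)-2\Lambda(c_0)c_0$. This is the only place where the equations are coupled, so I regard it as the (mild) main obstacle. One also has to check the converse direction: given the displayed conditions, the \ref{tPR1} and \ref{tPR3} forms are recovered by reversing the same substitution.

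Finally, the cohomologous case follows by setting $p_0=1$, which by \cref{Lem: transposed Poisson transformation relations} is exactly the costabilization condition $v=\rmId_V$.
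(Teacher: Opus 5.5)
Your proposal is correct and follows the same route as the paper: substitute $r(x)=c_0$, $v(x)=p_0x$ and the one-dimensional flag forms into \ref{tPR0}--\ref{tPR7}. From this you read off $\Lambda'=\Lambda$ and $\lambda'=\lambda$ from \ref{tPR0} and \ref{tPR4}, obtain the four displayed formulas from \ref{tPR1}--\ref{tPR3} and \ref{tPR5}, observe that \ref{tPR6} and \ref{tPR7} are automatic, and set $p_0=1$ for $\approx$. Beyond the paper's brief sketch, you spell out the one coupled step: \ref{tPR3} must be combined with \ref{tPR1} at $a=c_0$ to eliminate $p_0\Delta'(c_0)$, and this is what yields the stated formula for $p_0^2a_0'$.
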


\begin{proof}
Substituting $r(x)=c_0$ and $v(x)=p_0x$ into
\ref{tPR0} and \ref{tPR4} gives
$\Lambda'=\Lambda$ and $\lambda'=\lambda$.
Relations \ref{tPR1}--\ref{tPR3} and \ref{tPR5} give the displayed formulas,
while \ref{tPR6} and \ref{tPR7} are automatic since
$\theta,\rho,\theta',\rho'$ vanish in the one-dimensional case.
For $\approx$, we further have $v=\rmId_V$, hence $p_0=1$.
\end{proof}

Combining
\cref{Prop: transposed Poisson flag correspondence property,Def: transposed Poisson flag equivalence relations,Thm: second transposed Poisson correspondence theorem}
yields the following classification.

\begin{Thm}\label{Thm: transposed Poisson flag classification}
  Let $E$ be a vector space containing $L$ as a subspace and suppose that
  $E=L\oplus V$. Then there are bijections
  \begin{align*}
  \TF(\calL)/\sim
  &\longleftrightarrow
  \TTH^2_{\sim}(\calL,V)
  \longleftrightarrow
  \Extd'(E,\calL)_V,\\
  \TF(\calL)/\approx
  &\longleftrightarrow
  \TTH^2_{\approx}(\calL,V)
  \longleftrightarrow
  \Extd''(E,\calL)_V.
\end{align*}
\end{Thm}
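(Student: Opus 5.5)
The statement is \cref{Thm: transposed Poisson flag classification}. I plan to obtain it by composing two bijections and checking that each respects the relevant equivalence relations; no new computation should be needed.

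First, the right-hand bijections $\TTH^2_{\sim}(\calL,V)\leftrightarrow\Extd'(E,\calL)_V$ and $\TTH^2_{\approx}(\calL,V)\leftrightarrow\Extd''(E,\calL)_V$ are exactly \cref{Thm: second transposed Poisson correspondence theorem}. I will quote that result directly.

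Second, for the left-hand bijections I start from the bijection $\TF(\calL)\leftrightarrow\TE(\calL,V)$ in \cref{Prop: transposed Poisson flag correspondence property}. It sends $(\Lambda,\Delta,a_0,k_0,\lambda,D)$ to the datum with
\[
x\lt a=\Lambda(a)x,\quad x\rt a=\Delta(a),\quad f(x,x)=a_0,\quad \ell(x,x)=k_0x,\quad x\lh a=\lambda(a)x,\quad x\rh a=D(a),
\]
and with $\theta=\rho=0$. Its inverse reads off these components. The point to check is that this assignment really is a bijection onto $\TE(\calL,V)$, and this has two parts:
\begin{itemize}
\item Every extending datum through the one-dimensional space $V=\bbK x$ has this form. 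This holds because $\theta$ and $\rho$ are antisymmetric and therefore vanish on $V\times V$.
\item Under the substitution, conditions \ref{tP0}--\ref{tP10} are equivalent to \ref{tPF0}--\ref{tPF6}, as the paper records before the definition of $\TF(\calL)$.
\end{itemize}
I expect the main obstacle to be the second part, specifically the claim that \ref{tP3}, \ref{tP4}, \ref{tP9} and \ref{tP10} hold automatically. For \ref{tP3} and \ref{tP4}, both sides vanish when $\theta=\rho=0$, since the remaining terms $x\rh(x\rt a)-x\rh(x\rt a)$ and $x\lh(x\rt a)-x\lh(x\rt a)$ cancel. For \ref{tP9} and \ref{tP10} with $x=y=z$, the terms $x\rh f(x,x)-x\rh f(x,x)$ and $x\lh f(x,x)-x\lh f(x,x)$ likewise cancel. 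I will also verify that the Lie part of \ref{tP0} is the twisted-derivation condition on $(\lambda,D)$ in the sense of \cite{AM14}.

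Third, the equivalence relations on $\TF(\calL)$ are, by \cref{Def: transposed Poisson flag equivalence relations}, defined as pullbacks of $\sim$ and $\approx$ along this bijection. A bijection of sets that carries one equivalence relation exactly onto another descends to a bijection of the quotients. This gives $\TF(\calL)/\sim\ \leftrightarrow\ \TTH^2_{\sim}(\calL,V)$ and $\TF(\calL)/\approx\ \leftrightarrow\ \TTH^2_{\approx}(\calL,V)$.

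Composing these with the first step yields both chains in the statement. \cref{Prop: transposed Poisson flag equivalence criterion} then makes the quotient relations on $\TF(\calL)$ explicit, although the bijections themselves do not depend on it.
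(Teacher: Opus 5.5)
Your proposal is correct and is essentially the paper's argument: the paper obtains the theorem by combining \cref{Prop: transposed Poisson flag correspondence property}, \cref{Def: transposed Poisson flag equivalence relations}, which defines the relations on $\TF(\calL)$ as pullbacks, and \cref{Thm: second transposed Poisson correspondence theorem}, exactly as you compose them. Your additional check that \ref{tP3}, \ref{tP4}, \ref{tP9}, \ref{tP10} become vacuous is correct, since by multilinearity it suffices to take $x=y=z$, but it only re-proves the already-stated \cref{Prop: transposed Poisson flag correspondence property}, which you can simply cite.
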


Finally, we specialize the fixed-module refinement of
\cref{subsec: transposed Poisson extending structures and classification}
to the one-dimensional case. 

\begin{Prop}\label{Prop: one-dimensional right transposed Poisson modules}
  Right transposed Poisson $\calL$-module structures
  $(\lt,\lh)$ on $V$ are in bijection with pairs of linear
  maps $\Lambda,\lambda:L\to\bbK$ satisfying, for all $a,b\in L$,
  \begin{align*}
    \begin{aligned}
      \Lambda(a\cdot b)&=\Lambda(a)\Lambda(b),&
      \lambda(a\cdot b)&=\lambda(a)\Lambda(b),\\
      2\Lambda([a,b])&=\Lambda(a)\lambda(b)-\Lambda(b)\lambda(a),&
      \lambda([a,b])&=0.
    \end{aligned}
  \end{align*}
  The correspondence is given by
  $$
  x\lt a=\Lambda(a)x,
  \qquad
  x\lh a=\lambda(a)x
  $$
  for all $a\in L$.
\end{Prop}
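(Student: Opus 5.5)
Since $V=\bbK x$ is one-dimensional, I would first note that every bilinear map $\lt:V\times L\to V$ is uniquely of the form $x\lt a=\Lambda(a)x$ for a linear functional $\Lambda:L\to\bbK$. Likewise, every $\lh$ is uniquely $x\lh a=\lambda(a)x$. This gives a bijection between pairs $(\lt,\lh)$ of bilinear maps and pairs $(\Lambda,\lambda)$ of linear functionals. It then remains to translate each axiom of \cref{Def: right transposed Poisson module} into a condition on $(\Lambda,\lambda)$. Since all axioms are linear in the single basis vector $x$, it suffices to evaluate them at $x$ and compare coefficients of $x$.

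The translations are as follows.
\begin{itemize}
\item The right module condition for the commutative algebra, $x\lt(a\cdot b)=(x\lt a)\lt b$, becomes $\Lambda(a\cdot b)=\Lambda(a)\Lambda(b)$. This is exactly \ref{CF1}, and also what \ref{C2} reduces to.
\item The Lie right module condition, $x\lh[a,b]=(x\lh a)\lh b-(x\lh b)\lh a$, becomes $\lambda([a,b])=\lambda(a)\lambda(b)-\lambda(b)\lambda(a)=0$. This matches the $\lambda$-part of a twisted derivation in \defref{AM14}{4.2}, restricted to the module part.
\item The first compatibility identity, $2x\lt[a,b]=(x\lt a)\lh b-(x\lt b)\lh a$, gives $2\Lambda([a,b])=\Lambda(a)\lambda(b)-\Lambda(b)\lambda(a)$, in agreement with \ref{tPF4}.
\item The second compatibility identity, $x\lh(a\cdot b)=2(x\lh a)\lt b-(x\lt b)\lh a$, gives $\lambda(a\cdot b)=2\lambda(a)\Lambda(b)-\Lambda(b)\lambda(a)=\lambda(a)\Lambda(b)$, in agreement with \ref{tPF2}.
\end{itemize}

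Conversely, given functionals $(\Lambda,\lambda)$ satisfying the four displayed equations, define $\lt$ and $\lh$ by the stated formulas. Reversing each computation shows that all module axioms hold on $x$, and hence on all of $V$ by linearity. So the two assignments are mutually inverse bijections.

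There is no real obstacle; the only point requiring care is the sign and ordering convention for a right Lie module. I would fix this as the one from the Lie extending structure \thmref{AM14}{2.2}, namely the $V$-component condition on $(0,x),(a,0),(b,0)$ in the Jacobi identity of the unified bracket. I would then check that it indeed yields $\lambda([a,b])=0$ rather than some other scalar relation. Since the right-hand side is a commutator of scalars, any consistent convention gives zero, so the conclusion is robust.
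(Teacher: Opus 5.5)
Your proposal is correct and takes essentially the paper's approach. You parametrize $\lt$ and $\lh$ by the functionals $\Lambda,\lambda$ using $V=\bbK x$, then reduce the commutative module axiom, the Lie module axiom, and the two compatibility identities of \cref{Def: right transposed Poisson module} to the four scalar identities; your explicit computations, such as $2\lambda(a)\Lambda(b)-\Lambda(b)\lambda(a)=\lambda(a)\Lambda(b)$ and the vanishing of the scalar commutator $\lambda(a)\lambda(b)-\lambda(b)\lambda(a)$, fill in details the paper's one-line proof leaves to the reader.
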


\begin{proof}
Writing $x\lt a=\Lambda(a)x$ and $x\lh a=\lambda(a)x$,
the commutative module condition, the Lie module condition, and the two compatibility conditions in \cref{Def: right transposed Poisson module}
reduce precisely to the four displayed identities.
\end{proof}

By \cref{Prop: one-dimensional right transposed Poisson modules},
fixing a right transposed Poisson $\calL$-module structure on $V$ is equivalent to fixing a pair $(\Lambda,\lambda)$ satisfying the four conditions above.
We now express the corresponding fixed-module refinement in terms of transposed Poisson flag data.

\begin{Def}\label{Def: fixed one-dimensional transposed Poisson flag data}
  Fix a pair $(\Lambda,\lambda)$ satisfying the four conditions in \cref{Prop: one-dimensional right transposed Poisson modules}. 
  We denote by
  $$ \TF_{(\Lambda,\lambda)}(\calL) $$
  the set of all quadruples $(\Delta,a_0,k_0,D)$ such that
  $ (\Lambda,\Delta,a_0,k_0,\lambda,D) $
  is a transposed Poisson flag datum of $\calL$. 

  For
  $
  (\Delta,a_0,k_0,D),\
  (\Delta',a'_0,k'_0,D')
  \in\TF_{(\Lambda,\lambda)}(\calL), 
  $
  we define
    \begin{align*}
  (\Delta,a_0,k_0,D)
  \approx_{(\Lambda,\lambda)}
  (\Delta',a'_0,k'_0,D')
    \end{align*}
  if and only if the corresponding transposed Poisson flag data
  $
  (\Lambda,\Delta,a_0,k_0,\lambda,D)$ and
  $(\Lambda,\Delta',a'_0,k'_0,\lambda,D')
  $
  are cohomologous under the relation $\approx$ of
  \cref{Def: transposed Poisson flag equivalence relations}.
\end{Def}

By \cref{Prop: transposed Poisson flag equivalence criterion}, $\approx_{(\Lambda,\lambda)}$ is an
equivalence relation on $\TF_{(\Lambda,\lambda)}(\calL)$.

Notice that $k_0$ remains part of the datum in
$ \TF_{(\Lambda,\lambda)}(\calL) $ and is not fixed. 
Indeed,
\cref{Prop: transposed Poisson flag equivalence criterion} gives
$ k'_0=k_0-2\Lambda(c_0), $
so $k_0$ is not preserved in general.

By
\cref{Prop: one-dimensional right transposed Poisson modules}
and the right-hand bijection in
\cref{Prop: transposed Poisson flag correspondence property},
fixing $(\Lambda,\lambda)$ is equivalent to fixing the corresponding right
transposed Poisson $\calL$-module structure $(V,\lt,\lh)$.
Moreover, by
\cref{Def: fixed right moduled transposed Poisson extending structure,Def: fixed one-dimensional transposed Poisson flag data},
the corresponding cohomologous relations agree. Hence
$$
\TTH^2_{\approx}\bigl(\calL,(V,\lt,\lh)\bigr)
\longleftrightarrow
\TF_{(\Lambda,\lambda)}(\calL)/\approx_{(\Lambda,\lambda)}.
$$

Allowing $(\Lambda,\lambda)$ to vary and applying
\cref{Cor: transposed Poisson right-module decomposition}
gives the following decomposition.

\begin{Cor}\label{Cor: transposed Poisson flag fixed module decomposition}
  There is a bijection
  $$
  \TTH^2_{\approx}(\calL,V)
  \longleftrightarrow
  \bigsqcup_{(\Lambda,\lambda)}
  \TF_{(\Lambda,\lambda)}(\calL)
  /\approx_{(\Lambda,\lambda)},
  $$
  where the disjoint union is taken over all pairs $(\Lambda,\lambda)$
  satisfying the four conditions in
  \cref{Prop: one-dimensional right transposed Poisson modules}.
\end{Cor}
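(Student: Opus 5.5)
The plan is to compose two bijections that are already in place. The first is the decomposition of \cref{Cor: transposed Poisson right-module decomposition},
$$
\TTH^2_{\approx}(\calL,V)\longrightarrow\bigsqcup_{(\lt,\lh)}\TTH^2_{\approx}(\calL,(V,\lt,\lh)).
$$
This disjoint union runs over all right transposed Poisson $\calL$-module structures on $V$. The second is, for each fixed module structure, the identification
$$
\TTH^2_{\approx}\bigl(\calL,(V,\lt,\lh)\bigr)\longleftrightarrow\TF_{(\Lambda,\lambda)}(\calL)/\approx_{(\Lambda,\lambda)}
$$
recorded just before the statement.

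The first step is to reindex the disjoint union. By \cref{Prop: one-dimensional right transposed Poisson modules}, module structures $(\lt,\lh)$ on $V=\bbK x$ correspond bijectively to pairs $(\Lambda,\lambda)$ satisfying the four displayed identities, via $x\lt a=\Lambda(a)x$ and $x\lh a=\lambda(a)x$. The index sets of the two disjoint unions therefore match.

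The second step is to check the summand-wise identification carefully. Consider the right-hand bijection $\TE(\calL,V)\leftrightarrow\TF(\calL)$ of \cref{Prop: transposed Poisson flag correspondence property}. A $6$-tuple $(\rt,f,\ell,\rh,\theta,\rho)\in\TE_{(\lt,\lh)}(\calL,V)$ has $\theta=\rho=0$, since $V$ is one-dimensional. It then corresponds exactly to a quadruple $(\Delta,a_0,k_0,D)$ with $(\Lambda,\Delta,a_0,k_0,\lambda,D)\in\TF(\calL)$, that is, to an element of $\TF_{(\Lambda,\lambda)}(\calL)$.

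Under this correspondence, $\approx_{(\lt,\lh)}$ is by \cref{Def: fixed right moduled transposed Poisson extending structure} the restriction of $\approx$ on $\TE(\calL,V)$. Likewise, $\approx_{(\Lambda,\lambda)}$ is by \cref{Def: fixed one-dimensional transposed Poisson flag data} and \cref{Def: transposed Poisson flag equivalence relations} the pullback of the same relation $\approx$. The two relations therefore agree, and the bijection descends to quotients.

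The last step is to compose these bijections. The result is the map
$$
[\Omega(\calL,V)]_\approx\longmapsto[(\Delta,a_0,k_0,D)]_{\approx_{(\Lambda,\lambda)}},
$$
placed in the $(\Lambda,\lambda)$-summand read off from $\lt$ and $\lh$. Well-definedness uses that $\approx$ preserves $\Lambda$ and $\lambda$, which is given by \cref{Prop: transposed Poisson flag equivalence criterion} (or \ref{tPR0} and \ref{tPR4}). That same proposition gives injectivity and surjectivity directly, because cohomologous flag data lie in the same summand and are related there by the same $c_0$.

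The only point needing care, rather than a real obstacle, is this compatibility of $\approx$ with the fixed pair $(\Lambda,\lambda)$. In particular, $k_0$ is not fixed within a summand, so it must not be treated as part of the index. Everything else is bookkeeping of bijections already established.
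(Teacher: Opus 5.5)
Your proposal is correct and follows essentially the same route as the paper: the paper's proof composes \cref{Cor: transposed Poisson right-module decomposition} with the fixed-module identification $\TTH^2_{\approx}(\calL,(V,\lt,\lh))\leftrightarrow\TF_{(\Lambda,\lambda)}(\calL)/\approx_{(\Lambda,\lambda)}$ and reindexes via \cref{Prop: one-dimensional right transposed Poisson modules}. You spell out in more detail why the two cohomologous relations agree summand by summand.
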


\begin{proof}
This follows from
\cref{Cor: transposed Poisson right-module decomposition,Prop: one-dimensional right transposed Poisson modules}
together with the preceding fixed-module identification.
\end{proof}

\subsection{Factorization problem}

We now turn to the Factorization Problem for transposed Poisson algebras.

Let $\calL=(L,-\cdot-,[-,-])$ and $\calB=(B,\ell,\rho)$ be two prescribed transposed Poisson algebras, 
and let $E=L\oplus B$ be the direct sum of their underlying vector spaces.
We retain from \cref{sec: Universal Algebra FP} the notation $\Fact(\calL,\calB)$ for the transposed Poisson algebra structures on $E$ that factorize through $\calL$ and $\calB$.

Since $\calB$ is required to remain a subalgebra with its prescribed multiplication $\ell$ and bracket $\rho$, in a general transposed Poisson extending datum
$
(\lt,\rt,f,\ell,\lh,\rh,\theta,\rho),
$
we must have
$f=0$ and $\theta=0$,
while $\ell$ and $\rho$ are fixed as the prescribed operations of $\calB$. Hence only the four mixed maps
$$
\lt:B\times L\to B,\qquad
\rt:B\times L\to L,\qquad
\lh:B\times L\to B,\qquad
\rh:B\times L\to L
$$
remain to be determined.
Specializing \ref{tP0}--\ref{tP10} to this situation leads to the following definition.

\begin{Def}\label{Def: transposed Poisson matched pair}
Given four bilinear maps
$$
\lt:B\times L\to B,\qquad \rt:B\times L\to L,\qquad
\lh:B\times L\to B,\qquad \rh:B\times L\to L,
$$
the system
$$
\bigl(\calL,\calB,\olOmega(\calL,\calB)\bigr),\qquad \olOmega(\calL,\calB)=(\lt,\rt,\lh,\rh)
$$
is called a \textbf{matched pair of transposed Poisson algebras} if
$$ ((L,-\cdot-),(B,\ell),\lt,\rt) $$
is a matched pair of commutative algebras in the sense of
\cref{Def: commutative matched pair},
$$ ((L,[-,-]),(B,\rho), \lh,\rh) $$
is a matched pair of Lie algebras (see \thmref{LW90}{3.9} and \thmref{S90}{4.1}), 
and the following conditions hold for all $a,b\in L$ and $x,y\in B$:
\begin{tPMenum}
  \item\label{tPM1}
  $x\rh(a\cdot b)=2(x\lh a)\rt b+2(x\rh a)\cdot b-[x\rt b,a]-(x\lt b)\rh a.$
  
  \item\label{tPM2}
  $x\lh(a\cdot b) =2(x\lh a)\lt b -(x\lt b)\lh a.$

  \item\label{tPM3}
  $2\rho(x,y)\rt a =x\rh(y\rt a) -y\rh(x\rt a).$

  \item\label{tPM4}
  $2\rho(x,y)\lt a =\rho(x\lt a,y) +\rho(x,y\lt a) +x\lh(y\rt a) -y\lh(x\rt a).$

  \item\label{tPM5}
  $2x\rt[a,b] =[x\rt a,b] +[a,x\rt b] +(x\lt a)\rh b -(x\lt b)\rh a.$

  \item\label{tPM6}
  $2x\lt[a,b] =(x\lt a)\lh b -(x\lt b)\lh a.$

  \item\label{tPM7}
  $\ell(x,y)\rh a =2x\rt(y\rh a) -y\rh(x\rt a).$

  \item\label{tPM8}
  $\ell(x,y)\lh a =2x\lt(y\rh a) +2\ell(x,y\lh a) +\rho(x\lt a,y) -y\lh(x\rt a).$
\end{tPMenum}
The set of all such matched pairs is denoted by
$$ \MP(\calL,\calB). $$

For a matched pair $\bigl(\calL,\calB,\olOmega(\calL,\calB)\bigr)$,
let $ \Omega(\calL,B)=(\lt,\rt,0,\ell,\lh,\rh,0,\rho) $ be the corresponding transposed Poisson extending structure.
The unified product of $\calL$ and $B$ associated to $\Omega(\calL,B)$ is denoted by
$$
\calL\bowtie_{\olOmega(\calL,\calB)}\calB 
\coloneqq\calL\ltimes_{\Omega(\calL,B)}B
=(L\times B,-*-,\{-,-\})
$$
and is called the \textbf{bicrossed product} associated to the matched pair $\bigl(\calL,\calB,\olOmega(\calL,\calB)\bigr)$. 
Its multiplication and bracket are given by
\begin{align*}
  (a,x)*(b,y) &= \bigl( a\cdot b+x\rt b+y\rt a, x\lt b+y\lt a+\ell(x,y) \bigr),\\
  \{(a,x),(b,y)\} &= \bigl( [a,b]+x\rh b-y\rh a, x\lh b-y\lh a+\rho(x,y) \bigr)
\end{align*}
for all $a,b\in L$ and $x,y\in B$.
\end{Def}

Applying \cref{Thm: transposed Poisson equivalent definition} to
$ \Omega(\calL,B)=(\lt,\rt,0,\ell,\lh,\rh,0,\rho) $
gives precisely the conditions in the preceding definition. Condition
\ref{tP0} gives the requirements that
$ ((L,-\cdot-),(B,\ell),\lt,\rt) $
be a matched pair of commutative algebras and that
$ ((L,[-,-]),(B,\rho),\lh,\rh) $
be a matched pair of Lie algebras. Conditions
\ref{tP1}--\ref{tP8} reduce respectively to
\ref{tPM1}--\ref{tPM8}. Condition \ref{tP9} becomes automatic, whereas
\ref{tP10} reduces to
$$ 2\ell(x,\rho(y,z)) = \rho(\ell(x,y),z)+\rho(y,\ell(x,z)), $$
which is precisely the compatibility condition
\eqref{eq: transposed Poisson alg compatibility condition} for $\calB$.
Thus
\cref{Thm: first transposed Poisson correspondence theorem}
restricts to the following factorization correspondence.

\begin{Thm}\label{Thm: transposed Poisson factorization}
There is a bijection
$$
\Fact(\calL,\calB)\longleftrightarrow\MP(\calL,\calB).
$$
\end{Thm}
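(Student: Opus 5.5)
The theorem is the transposed Poisson specialization of \cref{thm:universal-factorization}, so I would obtain it from the correspondence of \cref{Thm: first transposed Poisson correspondence theorem} together with the reduction of \ref{tP0}--\ref{tP10} recorded just before the statement. First, regard $B$ as the fixed vector space complement of $L$ in $E=L\oplus B$. By \cref{Thm: first transposed Poisson correspondence theorem}, elements $(-\cdot_E-,[-,-]_E)\in\Extd(E,\calL)_B$ correspond bijectively to transposed Poisson extending structures $(\lt,\rt,f,\ell',\lh,\rh,\theta,\rho')\in\TE(\calL,B)$. This correspondence is given by the eight component formulas through the projection $p:E\to L$. Since $\Fact(\calL,\calB)\subseteq\Extd(E,\calL)_B$, it remains to identify which extending structures correspond to structures in which $\calB$ is also a subalgebra.

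Next, I would check that $B$ is closed under both operations with its prescribed operations $\ell,\rho$ exactly when the pure $B$-input components are fixed. For $x,y\in B$ the $L$-component of $x\cdot_E y$ is $f(x,y)$ and its $B$-component is $\ell'(x,y)$, and similarly for the bracket. So the requirement is
$$f=0,\qquad \ell'=\ell,\qquad \theta=0,\qquad \rho'=\rho.$$
Hence $\Fact(\calL,\calB)$ is in bijection with the set of extending structures of the form $(\lt,\rt,0,\ell,\lh,\rh,0,\rho)$ satisfying \ref{tP0}--\ref{tP10}. These are determined by the four mixed maps $(\lt,\rt,\lh,\rh)$.

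The main step is to verify that, for data of this form, \ref{tP0}--\ref{tP10} are equivalent to the matched pair conditions of \cref{Def: transposed Poisson matched pair}.

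1. \ref{tP0} splits into two parts. The first is the commutative extending structure $(\lt,\rt,0,\ell)$, which by the discussion after \cref{Def: commutative matched pair} is exactly a commutative matched pair; there \ref{C5} is vacuous and \ref{C6} is the associativity of $\ell$. The second is the Lie extending structure $(\lh,\rh,0,\rho)$, which analogously (by \thmref{AM14}{2.2} with $\theta=0$) is exactly a Lie matched pair, its remaining condition being the Jacobi identity of $\rho$.
2. Setting $f=\theta=0$ in \ref{tP1}--\ref{tP8} gives \ref{tPM1}--\ref{tPM8} term by term. For instance, \ref{tP3} loses the $\theta$-terms and becomes \ref{tPM3}, and \ref{tP7} loses the $f$- and $\theta$-terms and becomes \ref{tPM7}.
3. \ref{tP9} becomes $0=0$.
4. \ref{tP10} becomes $2\ell(x,\rho(y,z))=\rho(\ell(x,y),z)+\rho(y,\ell(x,z))$. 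This is the identity \eqref{eq: transposed Poisson alg compatibility condition} in $\calB$, which holds because $\calB$ is a transposed Poisson algebra.

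The only delicate point is this bookkeeping: the conditions that involve only $\ell,\rho$ must be exactly the axioms of $\calB$, and so hold automatically rather than imposing constraints on the mixed maps. This is where the hypothesis that $\calB$ is a prescribed transposed Poisson algebra is used.

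Finally, composing the bijections gives $\Fact(\calL,\calB)\leftrightarrow\MP(\calL,\calB)$. A factorizing structure is sent to its mixed components $x\rt a=p(x\cdot_E a)$, $x\lt a=x\cdot_E a-p(x\cdot_E a)$, $x\rh a=p([x,a]_E)$ and $x\lh a=[x,a]_E-p([x,a]_E)$. A matched pair is sent to the transport of the bicrossed product $\calL\bowtie\calB$ to $E$. These maps are mutually inverse because the underlying constructions in \cref{Thm: first transposed Poisson correspondence theorem} are. Equivalently, the whole argument is the restriction of \cref{thm:universal-factorization} to this variety, with the compatibility conditions made explicit by \cref{Thm: transposed Poisson equivalent definition}.
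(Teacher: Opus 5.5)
Your proposal is correct and follows the paper's own route. It restricts the correspondence of \cref{Thm: first transposed Poisson correspondence theorem} to data with $f=\theta=0$ and prescribed $\ell,\rho$, and checks that \ref{tP0}--\ref{tP10} reduce to the matched pair conditions, with \ref{tP9} vacuous and \ref{tP10} becoming the compatibility identity of $\calB$; your remarks on closure of $B$ and on \ref{C6} and the Jacobi condition only spell out details the paper states briefly.
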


For later use in the \textbf{CCP}, we record explicitly the matched pair determined by a fixed factorization.

\begin{Rem}\label{Rem: transposed Poisson canonical matched pair}
Let
$
\calE=(E,-\cdot_E-,[-,-]_E)
$
be a transposed Poisson algebra factorizing through the prescribed
transposed Poisson algebras $\calL$ and $\calB$, with $E=L\oplus B$.
By \cref{Thm: transposed Poisson factorization}, this factorization determines
a unique matched pair
$$
\bigl(\calL,\calB,\olOmega^c(\calL,\calB)\bigr),
\qquad
\olOmega^c(\calL,\calB)=(\lt,\rt,\lh,\rh),
$$
called the \textbf{canonical matched pair} associated to the factorization.

More explicitly, let
$ p:E=L\oplus B\to L $
be the canonical projection. Then, for all $a\in L$ and $x\in B$,
\begin{align*}
  \begin{aligned}
    x\lt a &= x\cdot_E a-p(x\cdot_E a), &
    \qquad x\rt a &= p(x\cdot_E a), \\
    x\lh a &= [x,a]_E-p([x,a]_E), &
    \qquad x\rh a &= p([x,a]_E).
  \end{aligned}
\end{align*}
\end{Rem}

\subsection{Classifying complements problem}

Here the transposed Poisson algebra $\calE=(E,-\cdot_E-,[-,-]_E)$ 
and its subalgebra $\calL=(L,-\cdot-,[-,-])$ are fixed, 
and we describe and classify the transposed Poisson complements of $\calL$ in $\calE$.
Following \cref{sec: Universal Algebra CCP}, 
write $\Comp(\calL,\calE)$ for the set of all transposed Poisson complements of $\calL$ in $\calE$, and $[\calE:\calL]^f$ for the factorization index.

Assume that $\Comp(\calL,\calE)\neq \varnothing$ and fix one reference complement
$$
\calB=(B,\ell,\rho)\in\Comp(\calL,\calE).
$$
Then $E=L\oplus B$. By
\cref{Rem: transposed Poisson canonical matched pair}, this factorization
determines the canonical matched pair
$$
\bigl(\calL,\calB,\olOmega^c(\calL,\calB)\bigr),
\qquad
\olOmega^c(\calL,\calB)=(\lt,\rt,\lh,\rh).
$$

Recall from \cref{sec: Universal Algebra CCP} that, relative to the fixed
decomposition $E=L\oplus B$, every vector space complement $B'$ of $L$ in
$E$ is uniquely of the form
$$
B'=B_r\coloneqq\{r(x)+x\mid x\in B\}
$$
for a linear map $r:B\to L$.

For $x,y\in B$, the canonical matched pair $\bigl(\calL,\calB,\olOmega^c(\calL,\calB)\bigr)$ gives
\begin{align*}
(r(x)+x)\cdot_E(r(y)+y)
&=\bigl(r(x)\cdot r(y)+x\rt r(y)+y\rt r(x)\bigr)
 +\bigl(\ell(x,y)+x\lt r(y)+y\lt r(x)\bigr),\\
[r(x)+x,r(y)+y]_E
&=\bigl([r(x),r(y)]+x\rh r(y)-y\rh r(x)\bigr)
 +\bigl(\rho(x,y)+x\lh r(y)-y\lh r(x)\bigr).
\end{align*}
Since the two parenthesized terms are respectively the $L$- and
$B$-components, $B_r$ is closed under both operations precisely when the
$L$-component in each line is obtained by applying $r$ to the corresponding
$B$-component.

\begin{Prop}\label{Prop: transposed Poisson graph deformation criterion}
  Let $r:B\to L$ be a linear map.
  Then the graph $B_r$ is closed under both $-\cdot_E-$ and $[-,-]_E$ if and only if the following two concrete deformation equations hold for all $x,y\in B$:
  \begin{tPDenum}
    \item\label{eq: transposed Poisson deformation multiplication}
    $r\bigl(\ell(x,y)+x\lt r(y)+y\lt r(x)\bigr)
    =r(x)\cdot r(y)+x\rt r(y)+y\rt r(x).$

    \item\label{eq: transposed Poisson deformation bracket}
    $r\bigl(\rho(x,y)+x\lh r(y)-y\lh r(x)\bigr)
    =[r(x),r(y)]+x\rh r(y)-y\rh r(x).$
  \end{tPDenum}
  When these conditions hold,
  $$ \calB_r\coloneqq \bigl(B_r,-\cdot_E-|_{B_r\times B_r},[-,-]_E|_{B_r\times B_r}\bigr) $$
  is a transposed Poisson complement of $\calL$ in $\calE$,
  and hence $\calB_r\in\Comp(\calL,\calE)$.

\end{Prop}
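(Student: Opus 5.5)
The plan is to specialize the general argument of \cref{Thm:ccp-deformation-correspondence} to the two binary operations, using the explicit component formulas displayed just before the statement. Concretely, every element of $B_r$ has the form $r(z)+z$ with $z\in B$, and this $z$ is unique because $E=L\oplus B$. Hence an element $u+w\in E$, with $u\in L$ and $w\in B$, lies in $B_r$ if and only if $u=r(w)$.

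First we expand the product $(r(x)+x)\cdot_E(r(y)+y)$ by bilinearity. The four pieces are $r(x)\cdot r(y)\in L$, $x\cdot_E r(y)$, $r(x)\cdot_E y$ and $x\cdot_E y$. By \cref{Rem: transposed Poisson canonical matched pair}, we have $x\cdot_E a=x\rt a+x\lt a$. Commutativity gives $a\cdot_E y=y\rt a+y\lt a$. Since $\calB$ is a subalgebra, $x\cdot_E y=\ell(x,y)\in B$.

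The bracket $[r(x)+x,r(y)+y]_E$ is expanded the same way. Here we use $[x,a]_E=x\rh a+x\lh a$, and antisymmetry gives $[a,y]_E=-(y\rh a)-(y\lh a)$. We also use $[x,y]_E=\rho(x,y)\in B$. This reproduces the two displayed decompositions into $L$- and $B$-components.

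Next we apply the membership criterion. By bilinearity, $B_r$ is closed under $-\cdot_E-$ if and only if the product of every pair of spanning elements $r(x)+x,\ r(y)+y$ lies in $B_r$. By the criterion, this holds exactly when the $L$-component equals $r$ of the $B$-component, which is \ref{eq: transposed Poisson deformation multiplication}. In the same way, closure under $[-,-]_E$ is equivalent to \ref{eq: transposed Poisson deformation bracket}. Both directions of the equivalence come out of this step at once.

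Finally, assume both conditions hold. Then $B_r$ is a subspace closed under both operations, so the restricted operations satisfy all identities of the variety, since these hold in $\calE$. Thus $\calB_r$ is a transposed Poisson subalgebra. It is a vector space complement of $L$ because the map $x\mapsto r(x)+x$ is injective and $E=L\oplus B$. Therefore $\calB_r\in\Comp(\calL,\calE)$.

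This is also the specialization of \cref{Def:ccp-deformation-map} and \cref{Thm:ccp-deformation-correspondence}(a). The only delicate point is keeping the signs straight in the bracket expansion, in particular $[r(x),y]_E=-y\rh r(x)-y\lh r(x)$.
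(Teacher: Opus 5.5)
Your proposal is correct and is essentially the paper's argument. The paper also expands the product and the bracket of graph elements through the canonical matched pair, then uses uniqueness of the decomposition $E=L\oplus B$ to require that the $L$-component equal $r$ of the $B$-component; this is the transposed Poisson specialization of \cref{Thm:ccp-deformation-correspondence}. One small point on the complement claim: what is really used is that $r(x)+x\in L$ forces $x=0$ and that $a+x=(a-r(x))+(r(x)+x)$, not injectivity of $x\mapsto r(x)+x$ alone.
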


The two concrete deformation equations above are precisely the specialization to transposed Poisson algebras of the general deformation equations in \cref{Def:ccp-deformation-map}.

\begin{Def}\label{Def: transposed Poisson deformation map}
  A linear map $r:B\to L$ satisfying
  \ref{eq: transposed Poisson deformation multiplication} and
  \ref{eq: transposed Poisson deformation bracket} is called a
  \textbf{deformation map} of the canonical matched pair
  $(\calL,\calB,\olOmega^c(\calL,\calB))$. 
  The set of all such maps is denoted by
  $$ \TDM(\calL,\calB\mid\olOmega^c(\calL,\calB)). $$
\end{Def}

Together with the preceding description of vector space complements, \cref{Prop: transposed Poisson graph deformation criterion} yields the following bijection.

\begin{Thm}\label{Thm: transposed Poisson complements correspondence}
  The assignment
  $$
  \TDM\bigl(\calL,\calB\mid\olOmega^c(\calL,\calB)\bigr)
  \longrightarrow
  \Comp(\calL,\calE),
  \qquad
  r\longmapsto \calB_r
  $$
  is a bijection.

\end{Thm}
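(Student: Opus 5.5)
The plan is to specialize \cref{Thm:ccp-deformation-correspondence} to the variety of transposed Poisson algebras, with \cref{Prop: transposed Poisson graph deformation criterion} supplying the concrete form of the deformation equations. The argument has three steps: well-definedness, injectivity and surjectivity.

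\textbf{Well-definedness.} Let $r\in\TDM(\calL,\calB\mid\olOmega^c(\calL,\calB))$. Then $r$ satisfies \ref{eq: transposed Poisson deformation multiplication} and \ref{eq: transposed Poisson deformation bracket}. By \cref{Prop: transposed Poisson graph deformation criterion}, the graph $B_r$ is closed under $-\cdot_E-$ and $[-,-]_E$. Since $\calE$ is a transposed Poisson algebra and the defining identities are preserved under restriction, $\calB_r$ is a transposed Poisson subalgebra of $\calE$. It remains to check that $B_r$ is a vector space complement of $L$.
\begin{itemize}
\item If $r(x)+x\in L$, then its $B$-component is $x=0$, so $L\cap B_r=0$.
\item Any $a+x\in E$ can be written as $\bigl(a-r(x)\bigr)+\bigl(r(x)+x\bigr)$, so $L+B_r=E$.
\end{itemize}
Hence $E=L\oplus B_r$ and $\calB_r\in\Comp(\calL,\calE)$.

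\textbf{Injectivity.} Suppose $B_{r_1}=B_{r_2}$ and fix $x\in B$. Then $r_1(x)+x\in B_{r_2}$, so $r_1(x)+x=r_2(y)+y$ for some $y\in B$. Comparing $B$-components gives $y=x$, and comparing $L$-components gives $r_1(x)=r_2(x)$. Thus $r_1=r_2$.

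\textbf{Surjectivity.} Let $\calB'=(B',Q')\in\Comp(\calL,\calE)$, and let $p_B:E\to B$ be the projection along $L$. The restriction $p_B|_{B'}$ is injective because $B'\cap L=0$. It is surjective because $E=L\oplus B'$: each $x\in B$ can be written as $x=a+b'$ with $a\in L$, $b'\in B'$, and then $p_B(b')=x$. Set
\[
r\coloneqq p_L\circ(p_B|_{B'})^{-1}:B\to L,
\]
where $p_L=p$ is the projection onto $L$. Then $B'=B_r$. Since $\calB'$ is a subalgebra, $B_r$ is closed under both operations, and the converse direction of \cref{Prop: transposed Poisson graph deformation criterion} shows that $r$ satisfies \ref{eq: transposed Poisson deformation multiplication} and \ref{eq: transposed Poisson deformation bracket}. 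So $r$ is a deformation map with $\calB_r=\calB'$. The subalgebra structure is determined by the subspace, so equality of subspaces gives equality in $\Comp(\calL,\calE)$.

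All of this follows the general \cref{Thm:ccp-deformation-correspondence}. The only point that needs checking is that \ref{eq: transposed Poisson deformation multiplication} and \ref{eq: transposed Poisson deformation bracket} are exactly the general deformation equations $\omega^r_{0,\mathbf 1^2}=r\circ\omega^r_{1,\mathbf 1^2}$ for $\omega=-\cdot-$ and $\omega=[-,-]$. This is the expansion displayed just before \cref{Prop: transposed Poisson graph deformation criterion}, which uses the canonical matched pair of \cref{Rem: transposed Poisson canonical matched pair}, including the sign $-y\rh r(x)$ coming from antisymmetry of the bracket. I therefore expect no real obstacle.
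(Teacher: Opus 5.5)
Your proposal is correct and follows the same route as the paper. The paper's proof combines the fact that every vector space complement of $L$ in $E=L\oplus B$ is the graph $B_r$ of a unique linear map $r:B\to L$ with \cref{Prop: transposed Poisson graph deformation criterion}; your well-definedness, injectivity and surjectivity steps just write out that graph parametrization, which the paper takes as known from \cref{sec: Universal Algebra CCP}.
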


\begin{proof}
  Every vector space complement of $L$ in $E=L\oplus B$ is uniquely of the
  form $B_r$ for a linear map $r:B\to L$. By
  \cref{Prop: transposed Poisson graph deformation criterion}, $B_r$ defines
  a transposed Poisson complement if and only if $r$ satisfies
  \ref{eq: transposed Poisson deformation multiplication} and
  \ref{eq: transposed Poisson deformation bracket}, equivalently,
  $r\in\TDM(\calL,\calB\mid\olOmega^c(\calL,\calB))$.

\end{proof}

Thus,
\cref{Thm: transposed Poisson complements correspondence}
completes the description part of the \textbf{CCP} for transposed Poisson
algebras. For the classification up to isomorphism of transposed Poisson
algebras, we transport the
complements associated to different deformation maps to the fixed vector
space $B$.

For $r\in\TDM(\calL,\calB\mid\olOmega^c(\calL,\calB))$, let
$$ \lambda_r:B\longrightarrow B_r, \qquad \lambda_r(x)=r(x)+x, $$
be the canonical linear isomorphism. Transport the transposed Poisson algebra
structure of $\calB_r$ back to the fixed vector space $B$ via $\lambda_r$, and
denote the resulting transposed Poisson algebra by
$$ \calB^r=(B,\ell^r,\rho^r), $$
where, for all $x,y\in B$,
\begin{align*}
\ell^r(x,y)
&=
\ell(x,y)
+x\lt r(y)
+y\lt r(x),\\
\rho^r(x,y)
&=
\rho(x,y)
+x\lh r(y)
-y\lh r(x).
\end{align*}
By construction,
$
\lambda_r:
\calB^r
\to
\calB_r
$
is an isomorphism of transposed Poisson algebras.

\begin{Def}\label{Def: transposed Poisson deformation equivalence}
  Two deformation maps
  $ r_1,r_2\in\TDM(\calL,\calB\mid\olOmega^c(\calL,\calB)) $
  are called \textbf{equivalent}, and we write 
  $$ r_1\sim r_2, $$ 
  if and only if
  $ \calB^{r_1}\cong\calB^{r_2} $ as transposed Poisson algebras. 
  Equivalently, there exists a linear isomorphism $\sigma:B\to B$ such that, for all $x,y\in B$,
    \begin{align*}
  \sigma(\ell^{r_1}(x,y))
  =\ell^{r_2}(\sigma(x),\sigma(y)),\qquad
  \sigma(\rho^{r_1}(x,y))
  =\rho^{r_2}(\sigma(x),\sigma(y)).
    \end{align*}
  After expanding the transported operations, these conditions are
  equivalently the following two identities:
  \begin{align*}
    \sigma(\ell(x,y))-\ell(\sigma(x),\sigma(y))
    &=\sigma(x)\lt r_2(\sigma(y))+\sigma(y)\lt r_2(\sigma(x))
    -\sigma(x\lt r_1(y))-\sigma(y\lt r_1(x)),\\
    \sigma(\rho(x,y))-\rho(\sigma(x),\sigma(y))
    &=\sigma(x)\lh r_2(\sigma(y))-\sigma(y)\lh r_2(\sigma(x))
    -\sigma(x\lh r_1(y))+\sigma(y\lh r_1(x)).
  \end{align*}

\end{Def}

The relation $\sim$ above is an equivalence relation on
$ \TDM(\calL,\calB\mid\olOmega^c(\calL,\calB)). $
We denote the corresponding quotient set by
$$
\mathcal H^2_{\mathrm{ccp}}
(\calL,\calB\mid\olOmega^c(\calL,\calB))
\coloneqq
\TDM(\calL,\calB\mid\olOmega^c(\calL,\calB))/\sim.
$$
Together with
\cref{Thm: transposed Poisson complements correspondence},
this yields the following classification.

\begin{Thm}\label{Thm: transposed Poisson CCP classification}
The assignment
$$
\mathcal H^2_{\mathrm{ccp}}
(\calL,\calB\mid\olOmega^c(\calL,\calB))
\longrightarrow
\Comp(\calL,\calE)/\cong,
\qquad
[r]\longmapsto[\calB_r],
$$
is a bijection. 
Consequently, the factorization index is
$$
[\calE:\calL]^f
=
\left|
\Comp(\calL,\calE)/\cong
\right|
=
\left|
\mathcal H^2_{\mathrm{ccp}}
(\calL,\calB\mid\olOmega^c(\calL,\calB))
\right|.
$$
\end{Thm}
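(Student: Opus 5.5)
The plan is to deduce the statement from the general classification \cref{Thm:ccp-classification}, or equivalently to run its short argument again in the transposed Poisson setting. By \cref{Thm: transposed Poisson complements correspondence}, the assignment $r\mapsto\calB_r$ is already a bijection
$$
\TDM(\calL,\calB\mid\olOmega^c(\calL,\calB))\longrightarrow\Comp(\calL,\calE).
$$
So it remains to show that this bijection carries the relation $\sim$ of \cref{Def: transposed Poisson deformation equivalence} exactly onto the relation $\cong$ of isomorphism of transposed Poisson algebras on $\Comp(\calL,\calE)$. A bijection of sets that matches two equivalence relations in both directions descends to a bijection of the quotient sets. In particular, $[r]\mapsto[\calB_r]$ is then well defined and injective, and it is surjective because $r\mapsto\calB_r$ is.

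The key step is the chain
$$
r_1\sim r_2\iff\calB^{r_1}\cong\calB^{r_2}\iff\calB_{r_1}\cong\calB_{r_2}.
$$
The first equivalence is the definition of $\sim$. For the second, I will use that each $\lambda_{r_i}:\calB^{r_i}\to\calB_{r_i}$ is an isomorphism of transposed Poisson algebras. Composing with $\lambda_{r_2}$ and $\lambda_{r_1}^{-1}$ turns an isomorphism $\sigma:\calB^{r_1}\to\calB^{r_2}$ into one between $\calB_{r_1}$ and $\calB_{r_2}$, and conversely.

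The one point needing care is that the transported operations on $\calB^{r}$ are indeed $\ell^r$ and $\rho^r$, so that $\lambda_r$ is a homomorphism. Take $x,y\in B$ and look at the $B$-components of $\lambda_r(x)\cdot_E\lambda_r(y)$ and $[\lambda_r(x),\lambda_r(y)]_E$, as computed before \cref{Prop: transposed Poisson graph deformation criterion}. These $B$-components are exactly $\ell^r(x,y)$ and $\rho^r(x,y)$. By \ref{eq: transposed Poisson deformation multiplication} and \ref{eq: transposed Poisson deformation bracket}, the corresponding $L$-components equal $r$ applied to them. Hence
$$
\lambda_r(x)\cdot_E\lambda_r(y)=\lambda_r(\ell^r(x,y)),
\qquad
[\lambda_r(x),\lambda_r(y)]_E=\lambda_r(\rho^r(x,y)).
$$
This is the only place where the deformation equations are used, and I expect it to be the main (though routine) obstacle.

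Finally, the expanded identities in \cref{Def: transposed Poisson deformation equivalence} follow by substituting the formulas for $\ell^{r_i}$ and $\rho^{r_i}$. Taking cardinalities of the quotient sets then gives the stated formula for the factorization index $[\calE:\calL]^f$.
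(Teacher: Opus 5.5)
Your proposal is correct and follows essentially the same route as the paper's proof: the bijection $r\mapsto\calB_r$ from \cref{Thm: transposed Poisson complements correspondence}, the chain $r_1\sim r_2\iff\calB^{r_1}\cong\calB^{r_2}\iff\calB_{r_1}\cong\calB_{r_2}$ via the isomorphisms $\lambda_{r_i}$, descent to the quotient sets, and then taking cardinalities. Your explicit check that $\lambda_r$ carries $\ell^r,\rho^r$ to the restricted operations on $B_r$ is a useful detail; the paper simply asserts it ``by construction.''
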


\begin{proof}
By \cref{Thm: transposed Poisson complements correspondence}, deformation maps correspond
bijectively to complement algebras via $r\mapsto\calB_r$. For $i=1,2$, the
graph map gives an isomorphism
$\lambda_{r_i}:\calB^{r_i}\to\calB_{r_i}$, and hence
$$
r_1\sim r_2
\quad\Longleftrightarrow\quad
\calB^{r_1}\cong\calB^{r_2}
\quad\Longleftrightarrow\quad
\calB_{r_1}\cong\calB_{r_2}.
$$
Therefore the correspondence descends to the stated quotient sets. Taking
cardinalities gives the formula for $[\calE:\calL]^f$.
\end{proof}

Thus,
\cref{Thm: transposed Poisson complements correspondence,Thm: transposed Poisson CCP classification}
complete the \textbf{CCP} for transposed Poisson algebras.

\section{Extending structures for differential commutative algebras}\label{sec: differential commutative algebras}

This section is devoted to extending structures for differential commutative algebras. Besides being a basic example fitting into the linear framework of Universal Algebra, it is also closely related to the pre-Lie Poisson algebras studied in the next section: as shown in \cite{BBGW23}, if $(C,-\cdot-,d)$ is a differential commutative algebra, then $(C,-\cdot-,-\circ-)$ is a pre-Lie Poisson algebra, where $x\circ y\coloneqq x\cdot d(y)$ for all $x,y\in C$. Extending structures for algebras endowed with an additional operator were also considered in the literature, for instance for Rota--Baxter Lie algebras \cite{PZ24}.

\begin{Def}\label{Def: comm alg with d}
  Let $(C,-\cdot-)$ be a commutative algebra and $d:C\to C$ a derivation
  of $(C,-\cdot-)$, i.e., a linear map satisfying
  $$ d(a\cdot b)=d(a)\cdot b+a\cdot d(b) $$ 
  for all $a,b\in C$. 
  The triple $(C,-\cdot-,d)$ is called a \textbf{differential commutative algebra}.
\end{Def}

For this variety, the type is
$ \calF=\{-\cdot-,d\}, $
where $-\cdot-$ is a binary operation symbol and $d$ is a unary
operation symbol. 
The defining identities are
\begin{gather*}
  (x\cdot y)\cdot z\approx x\cdot(y\cdot z),\qquad
  x\cdot y\approx y\cdot x,\\
  d(x\cdot y)\approx d(x)\cdot y+x\cdot d(y).
\end{gather*}
Linearity of $d$ is not an additional identity in $\Sigma$: it follows from the standing multilinearity assumption on the fundamental operations.

\subsection{Extending structures and their classification}\label{subsec: differential commutative extending structures and classification}

We begin with the description part of the \textbf{ESP} for differential
commutative algebras.
Let $\calC=(C,-\cdot-,d)$ be a differential commutative algebra, $E$ a
vector space containing $C$ as a subspace, and $V$ a complement of $C$ in
$E$, so that $E=C\oplus V$.
We retain from \cref{sec: framework} the notation $\Extd(E,\calC)_V$ for
the set of all differential commutative algebra structures on $E$
containing $\calC$ as a subalgebra.

For the binary operation $-\cdot-$, the extending datum is the
commutative datum $(\lt,\rt,f,\ell)$ of
\cref{sec: commutative algebras}. The unary operation $d$ contributes two
additional linear components $\mu:V\to C$ and $\nu:V\to V$.
This leads to the following notion.

\begin{Def}\label{Def: diff comm extending structure}
A \textbf{differential commutative extending datum}
of $\calC$ through $V$ is a system
$$
\Omega(\calC,V)=(\lt,\rt,f,\ell,\mu,\nu),
$$
where $(\lt,\rt,f,\ell)$ is a commutative extending
datum of $(C,-\cdot-)$ through $V$, and
  $$
\mu:V\to C,
\qquad
\nu:V\to V
  $$
are linear maps. On $C\times V$, define a bilinear multiplication
$-*-$ and a linear map $D$ by
\begin{align*}
(a,x)*(b,y)
&=
\bigl(
a\cdot b+x\rt b+y\rt a+f(x,y),
x\lt b+y\lt a+\ell(x,y)
\bigr),\\
D(a,x)&=\bigl(d(a)+\mu(x),\nu(x)\bigr)
\end{align*}
for all $a,b\in C$ and $x,y\in V$. If these operations make $C\times V$
a differential commutative algebra, then $\Omega(\calC,V)$ is called a
\textbf{differential commutative extending structure} of $\calC$ through $V$. The resulting algebra
$$
\calC\ltimes V
=\calC\ltimes_{\Omega(\calC,V)}V
\coloneqq(C\times V,-*-,D)
$$
is called the \textbf{unified product} of $\calC$ and $V$ associated to $\Omega(\calC,V)$. We denote the set of all such extending
structures by
$$
\DCE(\calC,V). 
$$
\end{Def}

By \cref{Thm: commutative equivalent definition}, the multiplication $-*-$
defines a commutative algebra structure precisely when $(\lt,\rt,f,\ell)$
is a commutative extending structure. It remains to determine when $D$ is
a derivation of this multiplication.

\begin{Thm}\label{Thm: diff comm equivalent definition}
  Let
  $\Omega(\calC,V)=(\lt,\rt,f,\ell,\mu,\nu)$ be
  a differential commutative extending datum. Then
  $(C\times V,-*-,D)$ is a differential commutative algebra if and
  only if the following conditions hold for all $a\in C$ and $x,y\in V$:
  \par
  \begin{DCenum}
    \item \label{DC0} $(\lt,\rt,f,\ell)$ is a commutative extending structure of $(C,-\cdot-)$ through $V$ (see \cref{Thm: commutative equivalent definition});
    \item \label{DC1} $d(y\rt a)+\mu(y\lt a)=y\rt d(a)+a\cdot \mu(y)+\nu(y)\rt a$;
    \item \label{DC2} $\nu(y\lt a)=y\lt d(a)+\nu(y)\lt a$;
    \item \label{DC3} $d(f(x,y))+\mu(\ell(x,y))=x\rt \mu(y)+y\rt \mu(x)+f(x,\nu(y))+f(\nu(x),y)$;
    \item \label{DC4} $\nu(\ell(x,y))=x\lt \mu(y)+y\lt \mu(x)+\ell(x,\nu(y))+\ell(\nu(x),y)$.
  \end{DCenum}
\end{Thm}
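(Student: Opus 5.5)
By \cref{Thm: commutative equivalent definition}, $(C\times V,-*-)$ is a commutative algebra if and only if \ref{DC0} holds. So I will assume \ref{DC0} throughout and determine when the linear map $D$ is a derivation of $-*-$, that is, when
$$
D\bigl((a,x)*(b,y)\bigr)=D(a,x)*(b,y)+(a,x)*D(b,y)
$$
holds for all $a,b\in C$ and $x,y\in V$. Both sides are bilinear in the pair of inputs, and $C\times V$ is spanned by the elements $(a,0)$ and $(0,x)$. Hence it suffices to check the identity on generators. Both sides are also symmetric under exchanging the two inputs, because $-*-$ is commutative under \ref{DC0}. So only three substitutions are needed: $((a,0),(b,0))$, $((0,y),(a,0))$ and $((0,x),(0,y))$.

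For $((a,0),(b,0))$ both sides lie in $C$, and the identity is exactly the derivation property of $d$. It therefore holds automatically because $\calC$ is a differential commutative algebra.

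For $((0,y),(a,0))$ the product is $(0,y)*(a,0)=(y\rt a,\,y\lt a)$, so the left-hand side is
$$
\bigl(d(y\rt a)+\mu(y\lt a),\ \nu(y\lt a)\bigr).
$$
On the right-hand side, $D(0,y)=(\mu(y),\nu(y))$ and $D(a,0)=(d(a),0)$. This gives
$$
(\mu(y),\nu(y))*(a,0)=\bigl(\mu(y)\cdot a+\nu(y)\rt a,\ \nu(y)\lt a\bigr),\qquad
(0,y)*(d(a),0)=\bigl(y\rt d(a),\ y\lt d(a)\bigr).
$$
Comparing $C$-components gives \ref{DC1}, using commutativity to write $\mu(y)\cdot a=a\cdot\mu(y)$. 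Comparing $V$-components gives \ref{DC2}.

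For $((0,x),(0,y))$ the product is $(f(x,y),\ell(x,y))$, so the left-hand side is
$$
\bigl(d(f(x,y))+\mu(\ell(x,y)),\ \nu(\ell(x,y))\bigr).
$$
For the right-hand side I expand $(\mu(x),\nu(x))*(0,y)$ and $(0,x)*(\mu(y),\nu(y))$ using the unified product formula. The $C$-component is $y\rt\mu(x)+f(\nu(x),y)+x\rt\mu(y)+f(x,\nu(y))$, and the $V$-component is $y\lt\mu(x)+\ell(\nu(x),y)+x\lt\mu(y)+\ell(x,\nu(y))$. Matching components yields \ref{DC3} and \ref{DC4}.

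Conversely, if \ref{DC0}--\ref{DC4} hold, then the derivation identity holds on all generator pairs. By bilinearity and symmetry it then holds everywhere, which proves the equivalence.

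The only step requiring care is keeping the order of arguments consistent with the convention in \eqref{eq: commutative unified product}. Terms such as $\nu(y)\rt a$ versus $a\cdot\mu(y)$ must be placed in the correct component with the correct argument order. Beyond that, the verification is routine bookkeeping.
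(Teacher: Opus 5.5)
Your proposal is correct and follows essentially the same route as the paper: you reduce to (DC0) via the commutative case, then check the derivation identity on the three generator pairs up to symmetry and read off (DC1)--(DC4) from the $C$- and $V$-components. Your component computations check out, and using $((0,y),(a,0))$ instead of the paper's $((a,0),(0,y))$ is harmless because $-*-$ is commutative.
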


\begin{proof}
By \cref{Thm: commutative equivalent definition},
$(C\times V,-*-)$ is a commutative algebra if and only if \ref{DC0} holds.
Under this condition, $D$ is a derivation if and only if
$$
D\bigl((a,x)*(b,y)\bigr)
=
D(a,x)*(b,y)+(a,x)*D(b,y)
$$
for all $(a,x),(b,y)\in C\times V$.

By bilinearity and commutativity, it suffices to check this identity on
$((a,0),(b,0))$, $((a,0),(0,y))$, and $((0,x),(0,y))$.
The first case holds because $d$ is a derivation of $(C,-\cdot-)$.
Comparing the $C$- and $V$-components in the second case gives
\ref{DC1} and \ref{DC2}, while the third gives
\ref{DC3} and \ref{DC4}.
\end{proof}

Combining
\cref{Thm: diff comm equivalent definition}
with \cref{thm:universal-description}
yields the following correspondence.

\begin{Thm}\label{Thm: first diff comm correspondence theorem}
There is a bijection
$$
\Extd(E,\calC)_V
\longleftrightarrow
\DCE(\calC,V).
$$
\end{Thm}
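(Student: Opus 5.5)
The plan is to mirror the proof of \cref{Thm: first transposed Poisson correspondence theorem}: specialize \cref{thm:universal-description} to the type $\calF=\{-\cdot-,d\}$ and make the two mutually inverse maps explicit. I will use the canonical linear isomorphism $\phi:C\times V\to E$, $\phi(a,x)=a+x$, and the projection $p:E\to C$, $p(a+x)=a$, both determined by the fixed decomposition $E=C\oplus V$.

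In one direction, let $\Omega(\calC,V)=(\lt,\rt,f,\ell,\mu,\nu)\in\DCE(\calC,V)$. By definition, the unified product $(C\times V,-*-,D)$ is a differential commutative algebra. Transporting $-*-$ and $D$ through $\phi$ gives a multiplication $-\cdot_E-$ and a linear map $d_E$ on $E$. The multiplication is given by the formula of \cref{Thm: first commutative correspondence theorem}, and
$$d_E(a+x)=\bigl(d(a)+\mu(x)\bigr)+\nu(x).$$
Since $\phi$ is an isomorphism of the transported structures, $(E,-\cdot_E-,d_E)$ is again a differential commutative algebra. Its restriction to $C$ gives back $-\cdot-$ and $d$, so $\calC$ is a subalgebra. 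Hence $(-\cdot_E-,d_E)\in\Extd(E,\calC)_V$.

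In the other direction, start from $(-\cdot_E-,d_E)\in\Extd(E,\calC)_V$. Recover $(\lt,\rt,f,\ell)$ by the formulas \eqref{eq: first commutative correspondence}, and set
$$\mu(x)=p(d_E(x)),\qquad \nu(x)=d_E(x)-p(d_E(x)).$$
Since $C$ is closed under $d_E$ and $d_E|_C=d$, the $D$-operation built from this datum agrees with $d_E$ after transport through $\phi$. Likewise $-*-$ agrees with $-\cdot_E-$. So the unified product is isomorphic via $\phi$ to a differential commutative algebra, and the datum lies in $\DCE(\calC,V)$. Equivalently, it satisfies \ref{DC0}--\ref{DC4} by \cref{Thm: diff comm equivalent definition}.

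It then remains to check that the two constructions are inverse to each other. This follows from the uniqueness of the decomposition $e=a+x$: every component map is read off from, and determines, the corresponding $C$- or $V$-component of the operations. There is no genuine obstacle here. The only point needing care is the nullary/unary bookkeeping, namely that the pure-$C$ component of $d_E$ is forced to equal $d$ and that the $C\to V$ component of $d_E$ vanishes. Both facts are exactly the subalgebra condition on $\calC$, which is why $D$ carries only the two free components $\mu$ and $\nu$.
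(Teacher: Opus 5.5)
Your proposal is correct and is essentially the paper's argument. The paper specializes \cref{thm:universal-description}, records $d_E(a+x)=d(a)+\mu(x)+\nu(x)$, recovers $\mu(x)=p(d_E(x))$ and $\nu(x)=d_E(x)-p(d_E(x))$, and lets \cref{Thm: first commutative correspondence theorem} handle the four multiplicative components; you do the same, with the transport through $\phi$ spelled out more fully. One small correction: this type has no nullary symbols, so only the unary bookkeeping matters.
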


\begin{proof}
By \cref{thm:universal-description}, it remains only to record the
components corresponding to the unary operation. Under the stated
correspondence,
$$
d_E(a+x)=d(a)+\mu(x)+\nu(x)
$$
for all $a\in C$ and $x\in V$. Conversely, if
$(-\cdot_E-,d_E)\in\Extd(E,\calC)_V$ and
$p:E=C\oplus V\to C$ is the canonical projection, then
$$
\mu(x)=p(d_E(x)),
\qquad
\nu(x)=d_E(x)-p(d_E(x))
$$
for all $x\in V$. Together with the four commutative components, these
constructions are inverse.
\end{proof}

Thus,
\cref{Thm: diff comm equivalent definition,Thm: first diff comm correspondence theorem}
complete the description part of the \textbf{ESP} for differential
commutative algebras.

We now turn to the classification part.
By the general results of \cref{subsec: UA-ESP-classification},
it remains only to specialize the transformation relations
\eqref{Eq: F algebra map componentwisely} to differential commutative
extending structures.

For this purpose, fix two differential commutative extending structures
of $\calC$ through $V$,
$$
\Omega(\calC,V)=(\lt,\rt,f,\ell,\mu,\nu),
\qquad
\Omega'(\calC,V)=(\lt',\rt',f',\ell',\mu',\nu'),
$$
and denote their corresponding unified products, respectively, by
$$
\calC\ltimes V,
\qquad
\calC\ltimes'V.
$$

By \cref{Lem:linear-maps-correspondence}, every linear map
$C\times V\to C\times V$ stabilizing $C$ is uniquely of the form
$$
\psi_{(r,v)}(a,x)=(a+r(x),v(x)),
\qquad \forall\,a\in C,\ x\in V,
$$
where $r:V\to C$ and $v:V\to V$ are linear maps.
For the binary operation, the transformation relations are those for
commutative extending structures given in
\cref{Lem: commutative transformation relations}.
It remains to impose compatibility with the unary operation.

\begin{Lem}\label{Lem: differential commutative transformation relations}
For linear maps $r:V\to C$ and $v:V\to V$, the map
$$
\psi_{(r,v)}:\calC\ltimes V\longrightarrow\calC\ltimes'V
$$
is a homomorphism of differential commutative algebras if and only if,
for all $a\in C$ and $x,y\in V$, the following relations hold:
\begin{DCRenum}
  \item\label{DCR1} $v(x\lt a)=v(x)\lt' a$;
  \item\label{DCR2} $r(x\lt a)=v(x)\rt' a+r(x)\cdot a-x\rt a$;
  \item\label{DCR3} $v(\ell(x,y))=\ell'(v(x),v(y))+v(x)\lt' r(y)+v(y)\lt' r(x)$;
  \item\label{DCR4} $r(\ell(x,y))=f'(v(x),v(y))+v(x)\rt' r(y)+v(y)\rt' r(x)+r(x)\cdot r(y)-f(x,y)$;
  \item\label{DCR5} $\mu(x)+r(\nu(x))=d(r(x))+\mu'(v(x))$;
  \item\label{DCR6} $v(\nu(x))=\nu'(v(x))$.
\end{DCRenum}
Moreover, $\psi_{(r,v)}$ is an isomorphism if and only if
$v:V\to V$ is a linear isomorphism, and it costabilizes $V$ if and only if
$v=\operatorname{id}_V$.
\end{Lem}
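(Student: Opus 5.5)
The plan is to specialize the general transformation relations \eqref{Eq: F algebra map componentwisely} to the type $\calF=\{-\cdot-,d\}$. This splits the homomorphism condition for $\psi_{(r,v)}$ into two independent parts: compatibility with the binary operation and compatibility with the unary one. Since \eqref{Eq: psi is a map of F algebras} is imposed separately for each $\omega\in\calF$, we have the following reduction.
\begin{itemize}
\item[] For $\omega=\mu$, the multiplication, the conditions coincide verbatim with those of \cref{Lem: commutative transformation relations}. The unified product $(C\times V,-*-)$ is exactly the commutative unified product attached to $(\lt,\rt,f,\ell)$. This immediately yields \ref{DCR1}--\ref{DCR4}, which are \ref{CR1}--\ref{CR4}.
\end{itemize}
No new computation is needed for this part.

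For the unary operation $d$, I would verify $\psi\circ D=D'\circ\psi$ directly. By linearity it suffices to check this on the generators $(a,0)$ and $(0,x)$, i.e.\ on the input patterns $\delta=(0)$ and $\delta=(1)$ in \eqref{Eq: F algebra map componentwisely}.

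On $(a,0)$, both sides give $(d(a),0)$, because $\psi$ stabilizes $C$ and $D,D'$ both restrict to $d$ on $C$. This case is therefore automatic.

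On $(0,x)$, the left side is
\[
\psi(\mu(x),\nu(x))=\bigl(\mu(x)+r(\nu(x)),\,v(\nu(x))\bigr).
\]
The right side is
\[
D'(r(x),v(x))=\bigl(d(r(x))+\mu'(v(x)),\,\nu'(v(x))\bigr).
\]
Comparing the $C$-components gives \ref{DCR5}, and comparing the $V$-components gives \ref{DCR6}.

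The final assertions about isomorphisms and costabilization follow directly from \cref{Lem:linear-maps-correspondence}. The only step requiring care is the unary case, and it is a one-line computation, so I do not expect any real obstacle beyond keeping track of which component each term lands in.
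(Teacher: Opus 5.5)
Your proposal is correct and follows the paper's proof essentially step for step: the binary part is delegated to \cref{Lem: commutative transformation relations}, the identity $\psi_{(r,v)}\circ D=D'\circ\psi_{(r,v)}$ is automatic on $(a,0)$ and yields \ref{DCR5}--\ref{DCR6} on $(0,x)$, and the final assertions come from \cref{Lem:linear-maps-correspondence}. The only blemish is notational: you write $\mu$ for the multiplication symbol, which clashes with the component $\mu:V\to C$ of $D$ in this section.
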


\begin{proof}
By \cref{Lem: commutative transformation relations},
$\psi_{(r,v)}$ preserves the binary operation if and only if
\ref{DCR1}--\ref{DCR4} hold. It remains to require
$$
\psi_{(r,v)}\circ D=D'\circ\psi_{(r,v)}.
$$
This identity holds automatically on $(a,0)$. Evaluating it on $(0,x)$ gives
$$
\bigl(\mu(x)+r(\nu(x)),v(\nu(x))\bigr)
=
\bigl(d(r(x))+\mu'(v(x)),\nu'(v(x))\bigr),
$$
which is equivalent to \ref{DCR5} and \ref{DCR6}.

The assertions concerning isomorphisms and costabilization follow from
\cref{Lem:linear-maps-correspondence}.
\end{proof}

The relations \ref{DCR1}--\ref{DCR6}, together with the last assertion of
\cref{Lem: differential commutative transformation relations}, give the
explicit forms of the relations $\sim$ and $\approx$.

\begin{Def}\label{Def: equivalent diff comm extending structure}
The differential commutative extending structures
$\Omega(\calC,V)$ and $\Omega'(\calC,V)$ are called
\textbf{equivalent}, denoted by
$$
\Omega(\calC,V)\sim\Omega'(\calC,V),
$$
if there exist a linear map $r:V\to C$ and a linear isomorphism
$v:V\to V$ satisfying \ref{DCR1}--\ref{DCR6}.
\end{Def}

Taking $v=\operatorname{id}_V$ gives the cohomologous relation.

\begin{Def}\label{Def: cohomologous diff comm extending structure}
The differential commutative extending structures
$\Omega(\calC,V)$ and $\Omega'(\calC,V)$ are called
\textbf{cohomologous}, denoted by
$$
\Omega(\calC,V)\approx\Omega'(\calC,V),
$$
if there exists a linear map $r:V\to C$ such that
\ref{DCR1}--\ref{DCR6} hold with $v=\operatorname{id}_V$.
In particular, two cohomologous differential commutative extending
structures have the same pair $(\lt,\nu)$.
\end{Def}

The relations $\sim$ and $\approx$ are equivalence relations on
$\DCE(\calC,V)$. We denote the corresponding quotient sets by
$$
\DCH^{2}_{\sim}(\calC,V)
\coloneqq\DCE(\calC,V)/\sim,
\qquad
\DCH^{2}_{\approx}(\calC,V)
\coloneqq\DCE(\calC,V)/\approx.
$$

By
\cref{thm:universal-classification for sim,thm:universal-classification-equiv},
these relations yield the following classification.

\begin{Thm}\label{Thm: second diff comm correspondence theorem}
There are bijections
$$
\DCH^{2}_{\sim}(\calC,V)
\longleftrightarrow
\Extd'(E,\calC)_V,
\qquad
\DCH^{2}_{\approx}(\calC,V)
\longleftrightarrow
\Extd''(E,\calC)_V.
$$
\end{Thm}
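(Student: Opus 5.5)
The plan is to deduce the statement directly from the general classification results, \cref{thm:universal-classification for sim,thm:universal-classification-equiv}, once the abstract relations on $\calS(\calC,V)$ have been identified with the explicit relations on $\DCE(\calC,V)$.

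First, \cref{Thm: first diff comm correspondence theorem} identifies $\DCE(\calC,V)$ with $\Extd(E,\calC)_V$. This bijection is the specialization of the correspondence of \cref{thm:universal-description} between $\calS(\calC,V)$ and $\Extd(E,\calC)_V$. Its inverse sends an extending structure to the operations on $E$ transported from the unified product along $\phi(a,x)=a+x$.

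The main step is to show that, under this bijection, the relation $\sim$ of \cref{Def: equivalent diff comm extending structure} coincides with the relation induced from $\sim$ on $\Extd(E,\calC)_V$, and similarly for $\approx$. By the discussion preceding \cref{thm:universal-classification for sim}, $H_{\Omega}\sim H_{\Omega'}$ holds precisely when there is an isomorphism of $\calF$-algebras $\calC\ltimes V\to\calC\ltimes' V$ stabilizing $C$. By \cref{Lem:linear-maps-correspondence}, every linear map stabilizing $C$ has the form $\psi_{(r,v)}$. \cref{Lem: differential commutative transformation relations} then says that $\psi_{(r,v)}$ is a homomorphism exactly when \ref{DCR1}--\ref{DCR6} hold, that it is an isomorphism exactly when $v$ is bijective, and that it costabilizes $V$ exactly when $v=\operatorname{id}_V$. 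Hence the two relations agree on the nose, in both the $\sim$ and the $\approx$ cases.

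It follows that $\DCH^2_{\sim}(\calC,V)=\calS(\calC,V)/\sim=\mathcal H^2_{\sim}(\calC,V)$ and $\DCH^2_{\approx}(\calC,V)=\mathcal H^2_{\approx}(\calC,V)$. Composing with the bijections of \cref{thm:universal-classification for sim,thm:universal-classification-equiv} gives the two stated bijections.

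The only delicate point is confirming that \cref{Lem: differential commutative transformation relations} captures all components of \eqref{Eq: F algebra map componentwisely}. For the binary operation this is the content of \cref{Lem: commutative transformation relations}. For the unary $d$, the input $\delta=0$ is automatic because $\psi$ fixes $C$ and $D|_C=d$, while the input $\delta=1$ yields exactly \ref{DCR5} and \ref{DCR6}. Beyond this check, the argument is purely formal.
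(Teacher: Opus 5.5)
Your proposal is correct and follows the paper's route. The paper likewise identifies the explicit relations defined via \ref{DCR1}--\ref{DCR6} (through \cref{Lem: differential commutative transformation relations}) with the relations induced from $\Extd(E,\calC)_V$, and then invokes \cref{thm:universal-classification for sim,thm:universal-classification-equiv}; your check that the unary operation contributes exactly \ref{DCR5} and \ref{DCR6}, with the $C$-input case automatic, correctly spells out what the paper leaves implicit.
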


Thus $\DCH^{2}_{\sim}(\calC,V)$ and
$\DCH^{2}_{\approx}(\calC,V)$ classify the differential commutative algebra
structures on $E$ extending $\calC$, respectively, up to isomorphisms
stabilizing $C$ and up to those additionally costabilizing the fixed
complement $V$.

\subsection{Flag extending structures}\label{subsec: differential commutative flag extending structures}

We now turn to flag extending structures.

\begin{Def}\label{Def: diff comm flag extending structure}
  Let $\calC=(C,-\cdot-,d)$ be a differential commutative algebra and
  $V$ a finite-dimensional vector space, and denote by $E=C\oplus V$
  their direct sum as vector spaces. A differential commutative algebra
  structure $(-\cdot_E-,d_E)$ on $E$ extending $\calC$ is called a
  \textbf{flag extending structure} of $\calC$ to $E$ if there exists a
  flag of subspaces
  $$
  0=V_0\subset V_1\subset\cdots\subset V_d=V,
  \qquad
  \dim V_i=i\quad (0\leq i\leq d),
  \qquad
  d=\dim V,
  $$
  such that, for each $1\leq i\leq d$, the subspace
  $
  E_i\coloneqq C\oplus V_i
  $
  is closed under $-\cdot_E-$ and $d_E$.
\end{Def}

As in \cref{sec: commutative algebras,sec: transposed Poisson algebras},
we now consider the one-dimensional case. Throughout the remainder of this
subsection, let $\calC=(C,-\cdot-,d)$ be a differential commutative algebra
and let $V$ be a one-dimensional vector space. Fix a nonzero vector
$x\in V$, so that $V=\bbK x$.
Then a differential commutative extending datum of $\calC$ through $V$ is
uniquely determined by
\begin{equation*}
\begin{gathered}
x\lt a=\Lambda(a)x,
\qquad x\rt a=\Delta(a),
\qquad f(x,x)=a_0,
\qquad \ell(x,x)=k_0x,\\
\mu(x)=b_0,
\qquad \nu(x)=m_0x,
\end{gathered}
\end{equation*}
for all $a\in C$, where $\Lambda:C\to\bbK$ and $\Delta:C\to C$ are
linear maps, $a_0,b_0\in C$, and $k_0,m_0\in\bbK$.

Under this identification, \ref{DC0} is equivalent to requiring
$(\Lambda,\Delta,a_0,k_0)$ to be a commutative flag datum, while
\ref{DC1}--\ref{DC4} reduce to the four additional conditions below.

\begin{Def}\label{Def: diff comm flag datum}
A \textbf{differential commutative flag datum} of $\calC$ is a 6-tuple
$$
\Omega(\calC)=(\Lambda,\Delta,a_0,k_0,b_0,m_0),
$$
where $(\Lambda,\Delta,a_0,k_0)$ is a commutative flag datum of
$(C,-\cdot-)$ in the sense of \cref{Def: commutative flag datum},
$b_0\in C$, and $m_0\in\bbK$, satisfying, for all $a\in C$,
\begin{DCFenum}
  \item \label{DCF1} $\Lambda(d(a))=0$;
  \item \label{DCF2} $d(\Delta(a))+\Lambda(a)b_0=\Delta(d(a))+a\cdot b_0+m_0\Delta(a)$;
  \item \label{DCF3} $m_0k_0+2\Lambda(b_0)=0$;
  \item \label{DCF4} $d(a_0)+k_0b_0=2\Delta(b_0)+2m_0a_0$.
\end{DCFenum}
We denote the set of all differential commutative flag data of $\calC$ by
$$
\DCF(\calC).
$$
\end{Def}

The preceding identification, together with
\cref{Thm: first diff comm correspondence theorem}, yields the following
correspondence.

\begin{Prop}\label{Prop: diff comm flag correspondence property}
Let $E$ be a vector space containing $C$ as a subspace and suppose that
$E=C\oplus V$.
There are bijections
$$
\Extd(E,\calC)_V
\longleftrightarrow
\DCE(\calC,V)
\longleftrightarrow
\DCF(\calC).
$$
\end{Prop}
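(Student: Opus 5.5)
The plan is to compose two bijections: the left one is \cref{Thm: first diff comm correspondence theorem}, so only the right one needs an argument. Since $V=\bbK x$ is one-dimensional, every bilinear map $V\times C\to V$, $V\times C\to C$, $V\times V\to C$, $V\times V\to V$ and every linear map $V\to C$, $V\to V$ is determined by its value on $x$. A differential commutative extending datum $(\lt,\rt,f,\ell,\mu,\nu)$ is therefore the same thing as a 6-tuple $(\Lambda,\Delta,a_0,k_0,b_0,m_0)$ via $x\lt a=\Lambda(a)x$, $x\rt a=\Delta(a)$, $f(x,x)=a_0$, $\ell(x,x)=k_0x$, $\mu(x)=b_0$, $\nu(x)=m_0x$. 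Symmetry of $f$ and $\ell$ is automatic on $V\times V$, and both directions of this assignment are clearly mutually inverse. It remains to check that under this dictionary the conditions \ref{DC0}--\ref{DC4} of \cref{Thm: diff comm equivalent definition} are equivalent to the requirement that $(\Lambda,\Delta,a_0,k_0)\in\CF(\calC)$ together with \ref{DCF1}--\ref{DCF4}.

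For \ref{DC0}, I will invoke \cref{Prop: commutative flag correspondence property}: a commutative extending datum through $\bbK x$ is a commutative extending structure exactly when $(\Lambda,\Delta,a_0,k_0)$ satisfies \ref{CF1}--\ref{CF4}. For the remaining conditions, by multilinearity it suffices to take $x=y$ equal to the fixed basis vector, and then substitute directly.
\begin{itemize}
\item \ref{DC2} becomes $m_0\Lambda(a)x=\Lambda(d(a))x+m_0\Lambda(a)x$, which is \ref{DCF1}.
\item \ref{DC1} becomes $d(\Delta(a))+\Lambda(a)b_0=\Delta(d(a))+a\cdot b_0+m_0\Delta(a)$, which is \ref{DCF2}.
\item \ref{DC4} becomes $m_0k_0x=2\Lambda(b_0)x+2m_0k_0x$, i.e. $m_0k_0+2\Lambda(b_0)=0$, which is \ref{DCF3}.
\item \ref{DC3} becomes $d(a_0)+k_0b_0=2\Delta(b_0)+2m_0a_0$, which is \ref{DCF4}.
\end{itemize}
Each step is reversible, so the data sets coincide.

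The only point requiring care, and hence the main obstacle, is the sign and coefficient bookkeeping in \ref{DC3} and \ref{DC4}, where the two symmetric terms $f(x,\nu(y))+f(\nu(x),y)$ and $x\rt\mu(y)+y\rt\mu(x)$ each collapse to a factor $2$. One must check that these factors match \ref{DCF3} and \ref{DCF4} exactly, including the cancellation of one copy of $m_0k_0$ in \ref{DC4}. Composing the resulting bijection $\DCE(\calC,V)\leftrightarrow\DCF(\calC)$ with \cref{Thm: first diff comm correspondence theorem} then gives the stated chain of bijections.
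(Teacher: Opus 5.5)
Your proposal is correct and follows the same route the paper indicates. The left bijection is \cref{Thm: first diff comm correspondence theorem}, and the right one comes from the one-dimensional dictionary, under which \ref{DC0} becomes the commutative flag condition and \ref{DC2}, \ref{DC1}, \ref{DC4}, \ref{DC3} become \ref{DCF1}--\ref{DCF4}, respectively; your substitutions, including the factors $2$ and the cancellation of one copy of $m_0k_0$ in \ref{DC4}, are correct.
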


Under the right-hand bijection in
\cref{Prop: diff comm flag correspondence property}, the unified product
corresponding to a differential commutative flag datum
$\Omega(\calC)=(\Lambda,\Delta,a_0,k_0,b_0,m_0)$ is given, for
$a,b\in C$ and $k_1,k_2\in\bbK$, by
\begin{align*}
(a,k_1x)*(b,k_2x)
&=
\bigl(
a\cdot b+k_1\Delta(b)+k_2\Delta(a)+k_1k_2a_0,\,
(k_1\Lambda(b)+k_2\Lambda(a)+k_1k_2k_0)x
\bigr),\\
D(a,k_1x)
&=
\bigl(d(a)+k_1b_0,\,
k_1m_0x\bigr).
\end{align*}

This completes the description of differential commutative flag extending
structures with a one-dimensional complement. We now turn to their
classification.
Through the right-hand bijection in
\cref{Prop: diff comm flag correspondence property}, the relations
$\sim$ and $\approx$ on $\DCE(\calC,V)$ induce corresponding relations
on $\DCF(\calC)$.

\begin{Def}\label{Def: diff comm flag equivalence relations}

Let $\Omega(\calC),\Omega'(\calC)\in\DCF(\calC)$, and let
$\Omega(\calC,V),\Omega'(\calC,V)\in\DCE(\calC,V)$ be the corresponding
differential commutative extending structures. We define
$$
\Omega(\calC)\sim\Omega'(\calC)
\Longleftrightarrow
\Omega(\calC,V)\sim\Omega'(\calC,V),\qquad
\Omega(\calC)\approx\Omega'(\calC)
\Longleftrightarrow
\Omega(\calC,V)\approx\Omega'(\calC,V).
$$

\end{Def}

Since $V=\bbK x$, the linear map $r:V\to C$ and the linear isomorphism
$v:V\to V$ occurring in
\cref{Lem: differential commutative transformation relations}
are uniquely determined by
$
r(x)=c_0$ and $ v(x)=p_0x,
$
for some $c_0\in C$ and $p_0\in\bbK^\times$.
Hence \ref{DCR1}--\ref{DCR6} reduce to the following conditions.

\begin{Prop}\label{Prop: diff comm flag equivalence relations}
Given two differential commutative flag data of $\calC$
$$
\Omega(\calC)=(\Lambda,\Delta,a_0,k_0,b_0,m_0),
\qquad
\Omega'(\calC)=(\Lambda',\Delta',a'_0,k'_0,b'_0,m'_0),
$$
we have $ \Omega(\calC)\sim\Omega'(\calC) $
if and only if $\Lambda'=\Lambda$, $m'_0=m_0$, and there exists
$(p_0,c_0)\in\bbK^\times\times C$ such that, for all $a\in C$,
\begin{align*}
p_0\Delta'(a)
&=\Delta(a)+\Lambda(a)c_0-a\cdot c_0,\\
p_0^2a'_0
&=a_0+k_0c_0+c_0\cdot c_0
  -2\Delta(c_0)-2\Lambda(c_0)c_0,\\
p_0k'_0
&=k_0-2\Lambda(c_0),\\
p_0b'_0
&=b_0+m_0c_0-d(c_0).
\end{align*}
Moreover,
$
\Omega(\calC)\approx\Omega'(\calC)
$
if and only if the above conditions hold with $p_0=1$; equivalently,
$\Lambda'=\Lambda$, $m'_0=m_0$, and there exists $c_0\in C$ such that,
for all $a\in C$,
\begin{align*}
\Delta'(a)
&=\Delta(a)+\Lambda(a)c_0-a\cdot c_0,\\
a'_0
&=a_0+k_0c_0+c_0\cdot c_0
  -2\Delta(c_0)-2\Lambda(c_0)c_0,\\
k'_0
&=k_0-2\Lambda(c_0),\\
b'_0
&=b_0+m_0c_0-d(c_0).
\end{align*}
\end{Prop}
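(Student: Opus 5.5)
The plan is to specialize \cref{Lem: differential commutative transformation relations} to the one-dimensional complement $V=\bbK x$. As recorded just before the statement, every pair $(r,v)$ with $v$ a linear isomorphism is of the form $r(x)=c_0$ and $v(x)=p_0x$ with $c_0\in C$ and $p_0\in\bbK^\times$. The flag data $\Omega(\calC)$ and $\Omega'(\calC)$ correspond to the extending structures given by $x\lt a=\Lambda(a)x$, $x\rt a=\Delta(a)$, $f(x,x)=a_0$, $\ell(x,x)=k_0x$, $\mu(x)=b_0$, $\nu(x)=m_0x$, and the analogous primed formulas. By \cref{Def: diff comm flag equivalence relations} together with \cref{Def: equivalent diff comm extending structure}, $\Omega(\calC)\sim\Omega'(\calC)$ holds if and only if some such $(c_0,p_0)$ satisfies \ref{DCR1}--\ref{DCR6}. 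By bilinearity it is enough to evaluate these relations on the basis vector $x$, that is, with $x=y$.

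First I treat the multiplicative part. Relations \ref{DCR1}--\ref{DCR4} are literally \ref{CR1}--\ref{CR4} of \cref{Lem: commutative transformation relations}. They involve only the commutative flag data $(\Lambda,\Delta,a_0,k_0)$ and $(\Lambda',\Delta',a'_0,k'_0)$ and the same pair $(c_0,p_0)$. So I will invoke the computation behind \cref{Prop: commutative flag equivalence criterion} instead of redoing it. That computation shows these four relations are equivalent to $\Lambda'=\Lambda$ together with the first three displayed identities, namely those for $p_0\Delta'$, $p_0^2a'_0$ and $p_0k'_0$.

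Next I treat the unary operation. Evaluating \ref{DCR6} at $x$ gives $p_0m_0x=\nu'(p_0x)=p_0m'_0x$. Since $p_0\neq0$, this is equivalent to $m'_0=m_0$. Evaluating \ref{DCR5} at $x$ gives $b_0+r(m_0x)=d(c_0)+\mu'(p_0x)$, that is,
\[
b_0+m_0c_0=d(c_0)+p_0b'_0,
\]
which rearranges to the fourth displayed identity $p_0b'_0=b_0+m_0c_0-d(c_0)$. Combining the two parts gives the stated criterion for $\sim$.

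For $\approx$, \cref{Def: cohomologous diff comm extending structure} imposes the additional requirement $v=\rmId_V$, which means exactly $p_0=1$. Substituting $p_0=1$ into the criterion yields the second list of conditions.

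I expect no real obstacle. The only point needing care is the bookkeeping in \ref{DCR4} with $x=y$: there the cross terms $v(x)\rt' r(y)+v(y)\rt' r(x)$ and $v(x)\lt' r(y)+v(y)\lt' r(x)$ each double. Expressing $\Delta'(c_0)$ and $\Lambda'(c_0)$ through the unprimed data via the first relation produces the coefficients $2$ in the formulas for $a'_0$ and $k'_0$. This step is already handled by \cref{Prop: commutative flag equivalence criterion}, so it suffices to cite it.
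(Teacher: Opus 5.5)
Your proposal is correct and follows the paper's own route: substitute $r(x)=c_0$ and $v(x)=p_0x$ into \ref{DCR1}--\ref{DCR6}, reuse the commutative flag computation for \ref{DCR1}--\ref{DCR4}, read off $m'_0=m_0$ from \ref{DCR6} and $p_0b'_0=b_0+m_0c_0-d(c_0)$ from \ref{DCR5}, and set $p_0=1$ for $\approx$. The one point needing care is that the $a'_0$ formula uses the $\Delta'$ relation at $a=c_0$; this step is reversible because that relation holds for all $a$ and $p_0\neq 0$, so your criterion is a genuine equivalence, and you handle this correctly.
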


By the right-hand bijection in
\cref{Prop: diff comm flag correspondence property} and the relations
defined in \cref{Def: diff comm flag equivalence relations}, the
corresponding quotient sets of differential commutative flag data and
differential commutative extending structures are in bijection.
Applying \cref{Thm: second diff comm correspondence theorem} yields the
following classification.

\begin{Thm}\label{Thm: diff comm flag classification}

Let $E$ be a vector space containing $C$ as a subspace and suppose that
$E=C\oplus V$. Then there are bijections
\begin{align*}
\DCF(\calC)/\sim
&\longleftrightarrow
\DCH^{2}_{\sim}(\calC,V)
\longleftrightarrow
\Extd'(E,\calC)_V,\\
\DCF(\calC)/\approx
&\longleftrightarrow
\DCH^{2}_{\approx}(\calC,V)
\longleftrightarrow
\Extd''(E,\calC)_V.
\end{align*}

\end{Thm}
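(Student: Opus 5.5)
The statement to prove is \cref{Thm: diff comm flag classification}. The plan is to compose two bijections and check that the first one respects the relevant equivalence relations.

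\textbf{Step 1: the right-hand bijections.} The bijections
\[
\DCH^{2}_{\sim}(\calC,V)\longleftrightarrow\Extd'(E,\calC)_V,
\qquad
\DCH^{2}_{\approx}(\calC,V)\longleftrightarrow\Extd''(E,\calC)_V
\]
are exactly \cref{Thm: second diff comm correspondence theorem}. That theorem is the specialization of \cref{thm:universal-classification for sim,thm:universal-classification-equiv}, with $V=\bbK x$ one-dimensional. Nothing new is needed here.

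\textbf{Step 2: the left-hand bijections.} Start from the right-hand bijection
\[
\Phi:\DCF(\calC)\to\DCE(\calC,V)
\]
of \cref{Prop: diff comm flag correspondence property}, given by
\[
x\lt a=\Lambda(a)x,\quad x\rt a=\Delta(a),\quad f(x,x)=a_0,\quad \ell(x,x)=k_0x,\quad \mu(x)=b_0,\quad \nu(x)=m_0x .
\]
Extend by bilinearity and linearity. This is a bijection because:
\begin{itemize}
\item every bilinear or linear map on $\bbK x$ is determined by its value at $x$;
\item symmetry of $f$ and $\ell$ imposes no constraint in dimension one;
\item under this substitution, \ref{DC0}--\ref{DC4} become the commutative flag conditions \ref{CF1}--\ref{CF4} together with \ref{DCF1}--\ref{DCF4}.
\end{itemize}

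By \cref{Def: diff comm flag equivalence relations}, $\sim$ and $\approx$ on $\DCF(\calC)$ are defined by pulling back the corresponding relations on $\DCE(\calC,V)$ along $\Phi$. So $\Phi$ preserves and reflects both relations by construction. It therefore descends to bijections
\[
\DCF(\calC)/\sim\;\to\;\DCE(\calC,V)/\sim=\DCH^2_\sim(\calC,V),
\qquad
\DCF(\calC)/\approx\;\to\;\DCH^2_\approx(\calC,V).
\]
The induced maps are well defined and injective because $x\sim y\iff\Phi(x)\sim\Phi(y)$, and surjective because $\Phi$ is surjective.

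\textbf{Step 3: composition.} Composing Step 2 with Step 1 gives both chains of bijections in the statement.

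\textbf{Main obstacle.} The only substantive point is the verification in Step 2 that \ref{DC0}--\ref{DC4} reduce precisely to the flag conditions, so that $\Phi$ really lands in, and exhausts, $\DCE(\calC,V)$. I would check this by substituting into each condition with $y=x$ (and $x=y$).
\begin{itemize}
\item \ref{DC2} gives
\[
m_0\Lambda(a)=\Lambda(d(a))+m_0\Lambda(a),
\]
which is \ref{DCF1}.
\item \ref{DC1} gives \ref{DCF2} directly.
\item \ref{DC4} gives
\[
m_0k_0=2\Lambda(b_0)+2m_0k_0,
\]
which is \ref{DCF3}.
\item \ref{DC3} gives
\[
d(a_0)+k_0b_0=2\Delta(b_0)+2m_0a_0,
\]
which is \ref{DCF4}.
\end{itemize}
The remaining work is bookkeeping. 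The explicit description in \cref{Prop: diff comm flag equivalence relations} is not needed for the bijection itself; it only makes the relations on $\DCF(\calC)$ concrete.
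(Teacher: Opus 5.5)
Your proposal is correct and follows the paper's own argument. Because the relations on $\DCF(\calC)$ are pulled back along the bijection of \cref{Prop: diff comm flag correspondence property}, that bijection descends to the quotients, and composing with \cref{Thm: second diff comm correspondence theorem} gives both chains. Your explicit check that \ref{DC0}--\ref{DC4} reduce to the commutative flag conditions together with \ref{DCF1}--\ref{DCF4} is accurate, and it supplies the identification the paper only asserts before \cref{Def: diff comm flag datum}.
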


Thus
\cref{Prop: diff comm flag correspondence property,Thm: diff comm flag classification}
give the complete description and classification of differential commutative
extending structures with a one-dimensional complement; this is the basic
step for studying general flag extending structures.

\subsection{Factorization problem}
\label{subsec: differential commutative factorization problem}

We now turn to the Factorization Problem for differential commutative
algebras.

Let
$
\calC=(C,-\cdot-,d),
\
\calB=(B,\ell,\nu)
$
be two prescribed differential commutative algebras, and let
$E=C\oplus B$ be the direct sum of their underlying vector spaces.
Following the general notation of \cref{sec: Universal Algebra FP}, denote by
$\Fact(\calC,\calB)$ the differential commutative algebra structures on
$E$ factorizing through $\calC$ and $\calB$.

Since $\calB$ is required to remain a subalgebra with its prescribed multiplication $\ell$ and derivation $\nu$, in a general differential commutative extending datum
$
(\lt,\rt,f,\ell,\mu,\nu),
$
we must have
$
f=0$ and $ \mu=0,
$
while $\ell$ and $\nu$ are fixed as the prescribed operations of $\calB$. Hence only the two mixed maps
$$
\lt:B\times C\to B,
\qquad
\rt:B\times C\to C
$$
remain to be determined.
Under this specialization, \ref{DC0} requires
$((C,-\cdot-),(B,\ell),\lt,\rt)$ to be a matched pair of commutative
algebras, \ref{DC1} and \ref{DC2} give the two additional conditions
below, while \ref{DC3} is automatic and \ref{DC4} is the derivation
identity for $\nu$.

\begin{Def}\label{Def: diff comm matched pair}
Given two bilinear maps
$$
\lt:B\times C\to B,
\qquad
\rt:B\times C\to C,
$$
the system
$$
(\calC,\calB,\olOmega(\calC,\calB)),
\qquad
\olOmega(\calC,\calB)=(\lt,\rt),
$$
is called a \textbf{matched pair of differential commutative algebras}
if
$$
((C,-\cdot-),(B,\ell),\lt,\rt)
$$
is a matched pair of commutative algebras in the sense of
\cref{Def: commutative matched pair}, and, for all $a\in C$ and $y\in B$,
\begin{DCMenum}
  \item\label{DCM1}
  $d(y\rt a)=y\rt d(a)+\nu(y)\rt a$;
  \item\label{DCM2}
  $\nu(y\lt a)=y\lt d(a)+\nu(y)\lt a$.
\end{DCMenum}
We denote the set of all such matched pairs by
$$
\MP(\calC,\calB).
$$

For a matched pair
$\bigl(\calC,\calB,\olOmega(\calC,\calB)\bigr)$,
let
$
\Omega(\calC,B)=(\lt,\rt,0,\ell,0,\nu)
$
be the corresponding differential commutative extending structure.
The unified product of $\calC$ and $B$ associated to $\Omega(\calC,B)$ is denoted by
$$
\calC\bowtie_{\olOmega(\calC,\calB)}\calB
\coloneqq
\calC\ltimes_{\Omega(\calC,B)}B
=
(C\times B,-*-,D)
$$
and is called the \textbf{bicrossed product} associated to the matched pair 
$\bigl(\calC,\calB,\olOmega(\calC,\calB)\bigr)$.
Its operations are
\begin{align*}
(a,x)*(b,y)
&=
\bigl(
a\cdot b+x\rt b+y\rt a,\,
x\lt b+y\lt a+\ell(x,y)
\bigr),\\
D(a,x)&=\bigl(d(a),\nu(x)\bigr)
\end{align*}
for all $a,b\in C$ and $x,y\in B$.
\end{Def}

Thus,
\cref{Thm: first diff comm correspondence theorem}
restricts to the following factorization correspondence.

\begin{Thm}\label{Thm: diff comm factorization}
There is a bijection
$$
\Fact(\calC,\calB)
\longleftrightarrow
\MP(\calC,\calB).
$$

\end{Thm}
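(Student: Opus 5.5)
The plan is to derive the bijection as a restriction of the correspondence of \cref{Thm: first diff comm correspondence theorem}, in the same way \cref{thm:universal-factorization} is obtained from \cref{thm:universal-description}. Regard $B$ as the fixed complement of $C$ in $E=C\oplus B$. By \cref{Thm: first diff comm correspondence theorem}, differential commutative structures $(-\cdot_E-,d_E)$ on $E$ extending $\calC$ correspond bijectively to data $(\lt,\rt,f,\ell',\mu,\nu')\in\DCE(\calC,B)$. The component formulas in that proof give $f(x,y)=p(x\cdot_E y)$, $\ell'(x,y)=x\cdot_E y-p(x\cdot_E y)$, $\mu(x)=p(d_E(x))$ and $\nu'(x)=d_E(x)-p(d_E(x))$. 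Hence $\calB=(B,\ell,\nu)$ is a subalgebra with its prescribed operations if and only if $f=0$, $\mu=0$, $\ell'=\ell$ and $\nu'=\nu$. So $\Fact(\calC,\calB)$ corresponds to the set of extending structures of the form $(\lt,\rt,0,\ell,0,\nu)$.

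It then remains to check that a datum $(\lt,\rt,0,\ell,0,\nu)$ lies in $\DCE(\calC,B)$ exactly when $(\lt,\rt)$ satisfies \cref{Def: diff comm matched pair}. I will apply \cref{Thm: diff comm equivalent definition} to this datum and treat each condition in turn.

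\begin{enumerate}[label=(\roman*)]
\item For \ref{DC0}, apply \cref{Thm: commutative equivalent definition} with $f=0$. Conditions \ref{C1}--\ref{C4} become \ref{CM1}--\ref{CM4}. Condition \ref{C5} is vacuous, and \ref{C6} is the associativity of $\ell$, which holds because $\calB$ is commutative. This is exactly the observation already made after \cref{Def: commutative matched pair}, so \ref{DC0} is equivalent to $((C,-\cdot-),(B,\ell),\lt,\rt)$ being a matched pair of commutative algebras.
\item With $\mu=0$, conditions \ref{DC1} and \ref{DC2} read $d(y\rt a)=y\rt d(a)+\nu(y)\rt a$ and $\nu(y\lt a)=y\lt d(a)+\nu(y)\lt a$. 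These are \ref{DCM1} and \ref{DCM2}.
\item Since $f=0$ and $\mu=0$, every term of \ref{DC3} vanishes, so it holds automatically.
\item Condition \ref{DC4} reduces to $\nu(\ell(x,y))=\ell(x,\nu(y))+\ell(\nu(x),y)$. This is the derivation property of $\nu$ on $\calB$, which holds by hypothesis.
\end{enumerate}

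Combining the two steps, the bijection of \cref{Thm: first diff comm correspondence theorem} restricts to a bijection $\Fact(\calC,\calB)\leftrightarrow\MP(\calC,\calB)$, given by $(\lt,\rt)\mapsto\calC\bowtie\calB$. Nullary operations do not appear in this type, so the hypothesis of \cref{thm:universal-factorization} on them is vacuous.

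No step is a real obstacle. The only care needed is in (i)–(iv): confirming that, once $f$ and $\mu$ vanish, the leftover conditions are either already among the matched-pair axioms or are consequences of $\calB$ being a differential commutative algebra. I will also check that the reductions in (i) introduce no hidden sign or symmetry issues; since $\ell$ is symmetric, rewriting \ref{C4} as \ref{CM4} is direct.
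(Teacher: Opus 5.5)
Your proposal is correct and takes essentially the paper's route. The paper also restricts the bijection of \cref{Thm: first diff comm correspondence theorem} to data with $f=0$, $\mu=0$ and $(\ell,\nu)$ prescribed, then observes that \ref{DC0} becomes the commutative matched-pair condition, \ref{DC1}--\ref{DC2} become \ref{DCM1}--\ref{DCM2}, \ref{DC3} holds automatically, and \ref{DC4} is the derivation identity for $\nu$; your write-up just makes explicit the component checks the paper states in the paragraph before \cref{Def: diff comm matched pair}.
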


For later use in the \textbf{CCP}, we record explicitly the matched pair
determined by a fixed factorization.

\begin{Rem}\label{Rem: diff comm canonical matched pair}
Let $\calE=(E,-\cdot_E-,d_E)$ be a differential commutative algebra
factorizing through the prescribed differential commutative algebras
$\calC$ and $\calB$, with $E=C\oplus B$. By
\cref{Thm: diff comm factorization}, this factorization determines a
unique matched pair
$$
\bigl(\calC,\calB,\olOmega^c(\calC,\calB)\bigr),
\qquad
\olOmega^c(\calC,\calB)=(\lt,\rt),
$$
called the \textbf{canonical matched pair} associated to the
factorization.

More explicitly, let
$
p:E=C\oplus B\to C
$
be the canonical projection. Then, for all $a\in C$ and $x\in B$,
$$
x\rt a=p(x\cdot_E a),
\qquad
x\lt a=x\cdot_E a-p(x\cdot_E a).
$$
\end{Rem}

\subsection{Classifying complements problem}\label{subsec: differential commutative classifying complements problem}

Here the differential commutative algebra
$\calE=(E,-\cdot_E-,d_E)$ and its subalgebra
$\calC=(C,-\cdot-,d)$ are fixed, and we describe and classify the
differential commutative complements of $\calC$ in $\calE$.
Following \cref{sec: Universal Algebra CCP}, 
write $\Comp(\calC,\calE)$ for the set of all differential commutative complements of $\calC$ in $\calE$, and $[\calE:\calC]^f$ for the factorization index.

From now on, assume that
$\Comp(\calC,\calE)\neq\varnothing$, and fix a reference complement
$$
\calB=(B,\ell,\nu)\in\Comp(\calC,\calE).
$$
Then $E=C\oplus B$, and this factorization determines, by
\cref{Rem: diff comm canonical matched pair}, the canonical matched pair
$$
\bigl(\calC,\calB,\olOmega^c(\calC,\calB)\bigr),
\qquad
\olOmega^c(\calC,\calB)=(\lt,\rt).
$$

Since $E=C\oplus B$, every vector space complement of $C$ in $E$ is
uniquely of the form
$$
B_r\coloneqq\{r(x)+x\mid x\in B\}
$$
for a linear map $r:B\to C$.
By \cref{Prop: commutative graph deformation criterion}, $B_r$ is closed
under $-\cdot_E-$ if and only if
\begin{equation}\label{eq: diff comm deformation multiplication}
r\bigl(\ell(x,y)+x\lt r(y)+y\lt r(x)\bigr)
=
r(x)\cdot r(y)+x\rt r(y)+y\rt r(x)
\end{equation}
for all $x,y\in B$. Moreover,
$d_E(r(x)+x)=d(r(x))+\nu(x)$, so $B_r$ is also closed under $d_E$ if and
only if
\begin{equation}\label{eq: diff comm deformation derivation}
d(r(x))=r(\nu(x)).
\end{equation}

When these conditions hold,
$$
\calB_r
\coloneqq
\left(
B_r,\,
-\cdot_E-\big|_{B_r\times B_r},\,
d_E\big|_{B_r}
\right)
$$
is a differential commutative complement of $\calC$ in $\calE$.

\begin{Def}\label{Def: diff comm deformation map}
A linear map $r:B\to C$ is called a \textbf{deformation map} of the
canonical matched pair
$\bigl(\calC,\calB,\olOmega^c(\calC,\calB)\bigr)$
if it satisfies
\eqref{eq: diff comm deformation multiplication} and
\eqref{eq: diff comm deformation derivation}. We denote the set of all such
maps by
$$
\DDM(\calC,\calB\mid\olOmega^c(\calC,\calB)).
$$
\end{Def}

The graph description above yields the following correspondence.

\begin{Thm}\label{Thm: diff comm CCP correspondence}
The assignment
$$
\DDM(\calC,\calB\mid\olOmega^c(\calC,\calB))
\longrightarrow
\Comp(\calC,\calE),
\qquad
r\longmapsto\calB_r
$$
is a bijection.
\end{Thm}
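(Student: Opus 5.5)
The argument follows the pattern of \cref{Thm: comm CCP correspondence} and \cref{Thm: transposed Poisson complements correspondence}, or equivalently the general \cref{Thm:ccp-deformation-correspondence}. First, because $E=C\oplus B$, every vector space complement of $C$ in $E$ is the graph $B_r=\{r(x)+x\mid x\in B\}$ of a \emph{unique} linear map $r:B\to C$. Indeed, for a complement $B'$, each $x\in B$ lifts uniquely to an element of $B'$ of the form $r(x)+x$, and $r$ is linear. Conversely, for any linear $r$ the graph $B_r$ meets $C$ trivially and, together with $C$, spans $E$. So the assignment $r\mapsto B_r$ is a bijection between $\operatorname{Hom}_{\bbK}(B,C)$ and the set of vector space complements of $C$ in $E$.

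Second, since the operations of $\calE$ are fixed, a complement $B'$ carries a subalgebra structure (necessarily the restricted one) exactly when $B'$ is closed under $-\cdot_E-$ and under $d_E$. It therefore suffices to show that $B_r$ is closed under both operations if and only if $r$ satisfies \eqref{eq: diff comm deformation multiplication} and \eqref{eq: diff comm deformation derivation}.

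For the product, apply \cref{Prop: commutative graph deformation criterion} to the underlying commutative algebras with the canonical matched pair $(\lt,\rt)$. This is legitimate because the commutative parts of the canonical matched pairs coincide by \cref{Rem: diff comm canonical matched pair}.

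For the derivation, the key observation is that $B$ is a subalgebra, so $d_E(x)=\nu(x)\in B$ and $d_E(a)=d(a)\in C$. Hence $d_E(r(x)+x)=d(r(x))+\nu(x)$, whose $C$-component is $d(r(x))$ and whose $B$-component is $\nu(x)$. An element of $E$ lies in $B_r$ exactly when its $C$-component equals $r$ applied to its $B$-component, so closure under $d_E$ is precisely $d(r(x))=r(\nu(x))$ for all $x\in B$.

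Combining the two steps, $r\mapsto\calB_r$ maps $\DDM(\calC,\calB\mid\olOmega^c(\calC,\calB))$ into $\Comp(\calC,\calE)$. It is injective because the graph determines $r$, and surjective because every complement algebra is some $B_r$ with $r$ satisfying both equations. No step is a genuine obstacle. The only point needing care is the component decomposition of $d_E(r(x)+x)$, which relies on the reference complement $\calB$ being closed under $d_E$, so that $\mu=0$ in the extending datum, as noted before \cref{Def: diff comm matched pair}.
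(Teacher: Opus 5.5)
Your proposal is correct and follows the paper's argument. The paper likewise combines the unique graph description $B_r$ of vector space complements with \cref{Prop: commutative graph deformation criterion} for the product, and with the computation $d_E(r(x)+x)=d(r(x))+\nu(x)$ for the derivation, which together give exactly \eqref{eq: diff comm deformation multiplication} and \eqref{eq: diff comm deformation derivation}.
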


Thus the description part of the \textbf{CCP} for differential commutative
algebras is complete. We now turn to the classification of complements up to
isomorphism of differential commutative algebras.

For
$r\in\DDM(\calC,\calB\mid\olOmega^c(\calC,\calB))$, let
$$
\lambda_r:B\longrightarrow B_r,
\qquad
\lambda_r(x)=r(x)+x,
$$
be the canonical linear isomorphism. Transport the differential commutative
algebra $\calB_r$ to the fixed vector space $B$ through $\lambda_r$, and
denote the resulting algebra by
$$
\calB^r=(B,\ell^r,\nu),
\qquad
\ell^r(x,y)=\ell(x,y)+x\lt r(y)+y\lt r(x).
$$
Then
$
\lambda_r:\calB^r\longrightarrow\calB_r
$
is an isomorphism of differential commutative algebras.

This leads to the following equivalence relation on deformation maps.

\begin{Def}\label{Def: diff comm deformation equivalence}
Two deformation maps
$r_1,r_2\in
\DDM(\calC,\calB\mid\olOmega^c(\calC,\calB))$
are called \textbf{equivalent}, denoted by
$$
r_1\sim r_2,
$$
if and only if
$
\calB^{r_1}\cong\calB^{r_2}
$
as differential commutative algebras.
Equivalently, there exists a linear isomorphism
$\sigma:B\to B$ such that, for all $x,y\in B$,
$$
\sigma(\ell^{r_1}(x,y))
=\ell^{r_2}(\sigma(x),\sigma(y)),\qquad
\sigma(\nu(x))=\nu (\sigma(x)).
$$
\end{Def}

The relation $\sim$ is an equivalence relation on
$\DDM(\calC,\calB\mid\olOmega^c(\calC,\calB))$.
We denote the corresponding quotient set by
$$
\mathcal H^2_{\mathrm{ccp}}
(\calC,\calB\mid\olOmega^c(\calC,\calB))
\coloneqq
\DDM(\calC,\calB\mid\olOmega^c(\calC,\calB))/\sim.
$$

Applying \cref{Thm:ccp-classification} gives the following classification.

\begin{Thm}\label{Thm: diff comm CCP classification}
There is a bijection
$$
\mathcal H^2_{\mathrm{ccp}}
(\calC,\calB\mid\olOmega^c(\calC,\calB))
\longrightarrow
\Comp(\calC,\calE)/\cong,
\qquad
[r]\longmapsto[\calB_r].
$$
Consequently, the factorization index is given by
$$
[\calE:\calC]^f
=
\left|
\mathcal H^2_{\mathrm{ccp}}
(\calC,\calB\mid\olOmega^c(\calC,\calB))
\right|
=
\left|
\Comp(\calC,\calE)/\cong
\right|.
$$
\end{Thm}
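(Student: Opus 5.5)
The plan is to specialize \cref{Thm:ccp-classification} to the differential commutative setting, exactly as in the commutative and transposed Poisson cases. The first step is to check that the map $r\mapsto\calB_r$ from $\DDM(\calC,\calB\mid\olOmega^c(\calC,\calB))$ to $\Comp(\calC,\calE)$ is a bijection. This is \cref{Thm: diff comm CCP correspondence}. It follows from \cref{Thm:ccp-deformation-correspondence} once we note that the general deformation equations for the binary symbol $-\cdot-$ and the unary symbol $d$ are exactly \eqref{eq: diff comm deformation multiplication} and \eqref{eq: diff comm deformation derivation}.

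The binary equation is the content of \cref{Prop: commutative graph deformation criterion}. For the unary one, we use that $B$ is a subalgebra, so the $C$-component of $d_E(x)$ vanishes. Hence
$$
d_E(r(x)+x)=d(r(x))+\nu(x),
$$
which has $C$-component $d(r(x))$ and $B$-component $\nu(x)$. The general deformation equation $\omega^r_{0,\mathbf 1^1}=r\circ\omega^r_{1,\mathbf 1^1}$ therefore reads $d(r(x))=r(\nu(x))$.

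The second step is to pass to isomorphism classes. For each deformation map $r$, the graph map $\lambda_r:\calB^r\to\calB_r$ is an isomorphism of differential commutative algebras. Its multiplicativity for $\ell^r$ is the commutative computation already recorded. Its compatibility with the derivation is the identity
$$
d_E(\lambda_r(x))=d(r(x))+\nu(x)=r(\nu(x))+\nu(x)=\lambda_r(\nu(x)),
$$
where the middle equality uses \eqref{eq: diff comm deformation derivation}. This is the one point needing care: it justifies that the transported derivation on $B$ is $\nu$ itself, unchanged by $r$. It also shows that the unary components $\omega^{r}_{1,\mathbf 1^1}$ appearing in \cref{Def:equiv-deform-maps} are all equal to $\nu$.

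Consequently, the relation $\sim$ of \cref{Def: diff comm deformation equivalence} coincides with the relation of \cref{Def:equiv-deform-maps}. Both ask for a linear isomorphism $\sigma$ intertwining $\ell^{r_1}$ with $\ell^{r_2}$ and $\nu$ with itself. We then obtain the chain
$$
r_1\sim r_2
\Longleftrightarrow
\calB^{r_1}\cong\calB^{r_2}
\Longleftrightarrow
\calB_{r_1}\cong\calB_{r_2},
$$
so the bijection of the first step descends to the quotient sets. Taking cardinalities gives the formula for $[\calE:\calC]^f$.
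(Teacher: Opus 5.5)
Your proposal is correct and follows the paper's route. The paper also obtains this theorem by specializing \cref{Thm:ccp-classification}, using the bijection of \cref{Thm: diff comm CCP correspondence} together with the fact that $\lambda_r:\calB^r\to\calB_r$ is an isomorphism of differential commutative algebras. Your check that $d_E\circ\lambda_r=\lambda_r\circ\nu$, via $d(r(x))=r(\nu(x))$, supplies the detail the paper leaves implicit when it states that the transported derivation on $B$ is $\nu$.
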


Thus,
\cref{Thm: diff comm CCP correspondence,Thm: diff comm CCP classification}
complete the \textbf{CCP} for differential commutative algebras.

We conclude this section by recording the compatibility with the construction
leading to pre-Lie Poisson algebras.

\begin{Rem}\label{Rem: PLP from DCA}
Let $\calC=(C,-\cdot-,d)$ be a differential commutative algebra
and let
$\Omega(\calC,V)=(\lt,\rt,f,\ell,\mu,\nu)$ be an extending structure.
By \cite{BBGW23},
$$
\calC_{\circ}=(C,-\cdot-,-\circ-),
\qquad
a\circ b\coloneqq a\cdot d(b),
$$
is a pre-Lie Poisson algebra.
The construction of \cite{BBGW23} lifts to extending structures as follows:
$$
\Omega(\calC_{\circ},V)\coloneqq(\lt,\rt,f,\ell,\prec,\succ,\brt,\blt,g,\kappa),
$$
where
\begin{equation*}
\begin{gathered}
x\succ a\coloneqq x\rt d(a),
\qquad
x\prec a\coloneqq x\lt d(a),
\qquad
a\blt x\coloneqq a\cdot\mu(x)+\nu(x)\rt a,
\qquad
a\brt x\coloneqq \nu(x)\lt a,\\
g(x,y)\coloneqq x\rt\mu(y)+f(x,\nu(y)),
\qquad
\kappa(x,y)\coloneqq x\lt\mu(y)+\ell(x,\nu(y)),
\end{gathered}
\end{equation*}
for all $a\in C$ and $x,y\in V$. 
Then $\Omega(\calC_{\circ},V)$ is a pre-Lie Poisson extending structure, and
its unified product is 
the pre-Lie Poisson algebra obtained from
$$
\calC\ltimes V=(C\times V,-*-,D)
$$
by setting $u\circ v\coloneqq u*D(v)$. 
Indeed, \cref{Thm: diff comm equivalent definition} makes $\calC\ltimes V$ a
differential commutative algebra, and decomposition of this product into its
$C$- and $V$-components yields exactly the six displayed formulas.
\end{Rem}

\bigskip

\section{Extending structures for pre-Lie Poisson algebras}\label{sec: pre-Lie Poisson alg}

This section applies the preceding framework to pre-Lie Poisson algebras,
introduced in \cite{BBGW23} as a common generalization of Novikov--Poisson,
pre-Lie commutative, and differential Novikov--Poisson algebras. We record
only the computations specific to this variety. Extending structures for
another notion of pre-Poisson algebra were studied in \cite{ZLS25}. In that
setting, the first operation is a Zinbiel product, whereas in the present
setting it is commutative and associative. The former is
related to Poisson algebras by symmetrization, while the latter is related to
transposed Poisson algebras by the commutator.

We first recall the definition of pre-Lie Poisson algebras.

\begin{Def}\defref{BBGW23}{3.8}\label{Def: PLP algebra specialization}
  Let $L$ be a vector space equipped with two bilinear operations:
  $$
  -\cdot-:L\otimes L\to L
  \qquad\text{and}\qquad
  -\circ-:L\otimes L\to L
  $$
  The triple $\calL=(L,-\cdot-,-\circ-)$ is called a
  \textbf{pre-Lie Poisson algebra} if $(L,-\cdot-)$ is a
  (nonunital) commutative associative algebra,
  $(L,-\circ-)$ is a pre-Lie algebra, i.e., for all $a,b,c\in L$
  \begin{align}
    (a\circ b)\circ c-a\circ (b\circ c)=(b\circ a)\circ c-b\circ (a\circ c),\label{eq: pre-Lie identity}
  \end{align}
  and the following compatibility conditions hold for all $a,b,c\in L$:
  \begin{align}
    (a\cdot b)\circ c &= a\cdot (b\circ c),\label{eq: PLP axiom specialization 1}\\
    (a\circ b)\cdot c-(b\circ a)\cdot c &= a\circ (b\cdot c)-b\circ (a\cdot c).\label{eq: PLP axiom specialization 2}
  \end{align}
\end{Def}

The following two constructions from \cite{BBGW23} relate pre-Lie Poisson
algebras to the differential commutative and transposed Poisson algebras
considered in 
Section~\ref{sec: differential commutative algebras} and Section~\ref{sec: transposed Poisson algebras}, respectively.

\begin{Rem}\label{Rem: PLP relations specialization}
  (a) By \cite{BBGW23}, if
  $\calC=(C,-\cdot-,d)$ is a differential commutative algebra,
  then
  $\calC_{\circ}=(C,-\cdot-,-\circ-)$ is a pre-Lie
  Poisson algebra for
  $$
  a\circ b\coloneqq a\cdot d(b),\qquad a,b\in C.
  $$
  This is the construction of \cref{Rem: PLP from DCA}.

  (b) Assume $\operatorname{char}(\bbK)=0$.
  By \propref{BBGW23}{3.10}, if $\calL=(L,-\cdot-,-\circ-)$ is a pre-Lie Poisson algebra, then $(L,-\cdot-,[-,-])$ is a transposed Poisson algebra for
  $$
  [a,b]\coloneqq a\circ b-b\circ a,\qquad a,b\in L.
  $$
  This gives the connection with \cref{sec: transposed Poisson algebras}.
\end{Rem}

\subsection{Extending structures and their classification}\label{subsec: PLP extending structures and classification}

Let $\calL=(L,-\cdot-,-\circ-)$ be a pre-Lie Poisson algebra, $E$ a vector
space containing $L$ as a subspace, and $V$ a complement of $L$ in $E$, so
that $E=L\oplus V$. With this fixed decomposition, we retain from
\cref{sec: framework} the notation $\Extd(E,\calL)_V$ for the set of all
pre-Lie Poisson algebra structures on $E$ containing $\calL$ as a subalgebra.

Since the pre-Lie product $-\circ-$ is neither commutative nor
antisymmetric, its mixed components on $V\times L$ and $L\times V$ must be
specified separately. Together with the four maps from the commutative
extending datum, this gives the following ten-component datum.

\begin{Def}\label{Def: PLP extending datum specialization}
  A \textbf{pre-Lie Poisson extending datum} of $\calL$ through $V$ is a system
  $$
  \Omega(\calL,V)=(\lt,\rt,f,\ell,\prec,\succ,\brt,\blt,g,\kappa)
  $$
  consisting of ten bilinear maps
  \begin{gather*}
    \lt : V \times L \to V, \quad \rt : V \times L \to L, \quad f : V \times V \to L, \quad \ell : V \times V \to V, \\
    \prec : V \times L \to V, \quad \succ : V \times L \to L, \quad \brt : L \times V \to V, \quad \blt : L \times V \to L,\\
    \quad  g : V \times V \to L, \quad \kappa : V \times V \to V
  \end{gather*}
  where $f$ and $\ell$ are symmetric.

  For a pre-Lie Poisson extending datum of $\calL$ through $V$ as above, define two bilinear operations on the vector space $L\times V$ by
  \begin{gather*}
    (a,x)*(b,y)\coloneqq (a\cdot b+x\rt b+y\rt a+f(x,y),\
    x\lt b+ y\lt a+ \ell (x,y)),\\
    (a,x)\bullet(b,y)\coloneqq (a\circ b+x\succ b+a\blt y+g(x,y),\
    x\prec b+ a\brt y+ \kappa (x,y)),
  \end{gather*}
  for all $a,b\in L$ and $x,y\in V$. If these operations make
  $L\times V$ into a pre-Lie Poisson algebra, then $\Omega(\calL,V)$ is called a
  \textbf{pre-Lie Poisson extending structure} of $\calL$ through $V$.
  In this case, the resulting pre-Lie Poisson algebra
  $$
  \calL\ltimes V
  =\calL\ltimes_{\Omega(\calL,V)}V
  \coloneqq(L\times V,-*-,-\bullet-)
  $$
  is called the \textbf{unified product} of $\calL$ and $V$ associated to
  $\Omega(\calL,V)$. We denote the set of all such pre-Lie Poisson extending structures by
  $$
  \mathcal{PLE}(\calL,V).
  $$
\end{Def}

It remains to determine the additional conditions imposed by the pre-Lie identity \eqref{eq: pre-Lie identity} and the compatibilities \eqref{eq: PLP axiom specialization 1} and \eqref{eq: PLP axiom specialization 2}.

\begin{Thm}\label{Thm: PLP compatibility specialization}
  Let 
  $\Omega(\calL,V)=(\lt,\rt,f,\ell,\prec,\succ,\brt,\blt,g,\kappa)$ be a pre-Lie
  Poisson extending datum of $\calL$ through $V$. Then
  the two operations $-*-$ and $-\bullet-$ defined in
  \cref{Def: PLP extending datum specialization} make $L\times V$ into a unified product if and only if the following compatibility
  conditions hold for all $a,b\in L$ and $x,y,z\in V$:
  \begin{PLPenum}[start=0]
    \item\label{PLP0} $(\lt, \rt, f, \ell)$ is a commutative extending structure of $(L,-\cdot-)$ through $V$ (see \cref{Thm: commutative equivalent definition});
    \item\label{PLP1} $(a\circ b)\blt x-a\circ(b\blt x)-a\blt(b\brt x)
    =(b\circ a)\blt x-b\circ(a\blt x)-b\blt(a\brt x)$, and
    $(a\circ b)\brt x-a\brt(b\brt x)
    =(b\circ a)\brt x-b\brt(a\brt x)$;
    \item\label{PLP2} $(a\blt x)\circ b+(a\brt x)\succ b-a\circ(x\succ b)-a\blt(x\prec b)
    =(x\succ a)\circ b+(x\prec a)\succ b-x\succ(a\circ b)$, and
    $(a\brt x)\prec b-a\brt(x\prec b)
    =(x\prec a)\prec b-x\prec(a\circ b)$;
    \item\label{PLP3} $(a\blt x)\blt y+g(a\brt x,y)-a\circ g(x,y)-a\blt\kappa(x,y)
    =(x\succ a)\blt y+g(x\prec a,y)-x\succ(a\blt y)-g(x,a\brt y)$, and
    $(a\blt x)\brt y+\kappa(a\brt x,y)-a\brt\kappa(x,y)
    =(x\succ a)\brt y+\kappa(x\prec a,y)-x\prec(a\blt y)-\kappa(x,a\brt y)$;
    \item\label{PLP4} $g(x,y)\circ a+\kappa(x,y)\succ a-x\succ(y\succ a)-g(x,y\prec a)
    =g(y,x)\circ a+\kappa(y,x)\succ a-y\succ(x\succ a)-g(y,x\prec a)$, and
    $\kappa(x,y)\prec a-x\prec(y\succ a)-\kappa(x,y\prec a)
    =\kappa(y,x)\prec a-y\prec(x\succ a)-\kappa(y,x\prec a)$;
    \item\label{PLP5} $g(x,y)\blt z+g(\kappa(x,y),z)-x\succ g(y,z)-g(x,\kappa(y,z))
    =g(y,x)\blt z+g(\kappa(y,x),z)-y\succ g(x,z)-g(y,\kappa(x,z))$, and
    $g(x,y)\brt z+\kappa(\kappa(x,y),z)-x\prec g(y,z)-\kappa(x,\kappa(y,z))
    =g(y,x)\brt z+\kappa(\kappa(y,x),z)-y\prec g(x,z)-\kappa(y,\kappa(x,z))$;
    \item\label{PLP6} $(a\cdot b)\blt x
    =a\cdot(b\blt x)+(b\brt x)\rt a$, and
    $(a\cdot b)\brt x
    =(b\brt x)\lt a$;
    \item\label{PLP7} $a\cdot(x\succ b)+(x\prec b)\rt a
    =(x\rt a)\circ b+(x\lt a)\succ b
    =x\rt(a\circ b)$, and
    $(x\prec b)\lt a
    =(x\lt a)\prec b
    =x\lt(a\circ b)$;
    \item\label{PLP8} $a\cdot g(x,y)+\kappa(x,y)\rt a
    =(x\rt a)\blt y+g(x\lt a,y)
    =x\rt(a\blt y)+f(x,a\brt y)$, and
    $\kappa(x,y)\lt a
    =(x\rt a)\brt y+\kappa(x\lt a,y)
    =x\lt(a\blt y)+\ell(x,a\brt y)$;
    \item\label{PLP9} $f(x,y)\circ a+\ell(x,y)\succ a
    =x\rt(y\succ a)+f(x,y\prec a)$, and
    $\ell(x,y)\prec a
    =x\lt(y\succ a)+\ell(x,y\prec a)$;
    \item\label{PLP10} $f(x,y)\blt z+g(\ell(x,y),z)
    =x\rt g(y,z)+f(x,\kappa(y,z))$, and
    $f(x,y)\brt z+\kappa(\ell(x,y),z)
    =x\lt g(y,z)+\ell(x,\kappa(y,z))$;
    \item\label{PLP11} $x\succ(a\cdot b)-a\circ(x\rt b)-a\blt(x\lt b)
    =(x\succ a)\cdot b+(x\prec a)\rt b-(a\blt x)\cdot b-(a\brt x)\rt b$, and
    $x\prec(a\cdot b)-a\brt(x\lt b)
    =(x\prec a)\lt b-(a\brt x)\lt b$;
    \item\label{PLP12} $x\succ(y\rt a)+g(x,y\lt a)-a\circ f(x,y)-a\blt\ell(x,y)
    =y\rt(x\succ a)+f(x\prec a,y)-y\rt(a\blt x)-f(a\brt x,y)$, and
    $x\prec(y\rt a)+\kappa(x,y\lt a)-a\brt\ell(x,y)
    =y\lt(x\succ a)+\ell(x\prec a,y)-y\lt(a\blt x)-\ell(a\brt x,y)$;
    \item\label{PLP13} $z\rt g(x,y)+f(\kappa(x,y),z)-z\rt g(y,x)-f(\kappa(y,x),z)
    =x\succ f(y,z)+g(x,\ell(y,z))-y\succ f(x,z)-g(y,\ell(x,z))$, and
    $z\lt g(x,y)+\ell(\kappa(x,y),z)-z\lt g(y,x)-\ell(\kappa(y,x),z)
    =x\prec f(y,z)+\kappa(x,\ell(y,z))-y\prec f(x,z)-\kappa(y,\ell(x,z))$.
  \end{PLPenum}

\end{Thm}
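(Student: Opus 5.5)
The plan follows the proofs of \cref{Thm: transposed Poisson equivalent definition} and \cref{Thm: diff comm equivalent definition}. By \cref{Thm: commutative equivalent definition}, $(L\times V,-*-)$ is a commutative associative algebra if and only if \ref{PLP0} holds. So, assuming \ref{PLP0}, it remains to show that the pre-Lie identity \eqref{eq: pre-Lie identity} for $-\bullet-$ and the compatibilities \eqref{eq: PLP axiom specialization 1} and \eqref{eq: PLP axiom specialization 2} for $(-*-,-\bullet-)$ on $L\times V$ are together equivalent to \ref{PLP1}--\ref{PLP13}.

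All three identities are trilinear, and $L\times V$ is spanned by elements $(a,0)$ and $(0,x)$. It therefore suffices to check each identity on triples of such generators and compare the $L$- and $V$-components. On pure $L$-inputs everything holds because $\calL$ is pre-Lie Poisson.

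The pre-Lie identity is symmetric in its first two arguments, so I only need the input patterns $(L,L,V)$, $(L,V,L)$ (this covers $(V,L,L)$), $(L,V,V)$ (this covers $(V,L,V)$), $(V,V,L)$ and $(V,V,V)$. Expanding $\bullet$ via its six components gives \ref{PLP1}, \ref{PLP2}, \ref{PLP3}, \ref{PLP4} and \ref{PLP5} respectively, each as an $L$-component and a $V$-component equation.

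For \eqref{eq: PLP axiom specialization 1}, $(u*v)\bullet w=u*(v\bullet w)$, commutativity of $*$ lets me swap $u$ and $v$ on the left. The patterns are as follows.
\begin{itemize}
\item $(L,L,V)$ gives \ref{PLP6}.
\item $(L,V,L)$ and $(V,L,L)$ give the chained equalities in \ref{PLP7}, since both equal $(a\cdot x)\bullet b$ but are computed on the right in two ways.
\item $(L,V,V)$ and $(V,L,V)$ give \ref{PLP8}.
\item $(V,V,L)$ gives \ref{PLP9}.
\item $(V,V,V)$ gives \ref{PLP10}.
\end{itemize}
The expression $(x*y)\bullet z$ is symmetric in $x,y$ but $x*(y\bullet z)$ is not. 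That asymmetry is exactly why \ref{PLP8} appears as a triple equality, and it must be tracked carefully.

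For \eqref{eq: PLP axiom specialization 2}, both sides are antisymmetric in $(u,v)$, so I check the patterns $(L,V,L)$ (equivalently $(V,L,L)$), $(L,L,V)$, $(V,V,L)$, $(L,V,V)$ and $(V,V,V)$. Some of these are automatic or redundant given the earlier conditions. The surviving ones should reduce to \ref{PLP11}, \ref{PLP12} and \ref{PLP13}. In particular, the $(L,L,V)$ and $(L,V,V)$ patterns should follow from \eqref{eq: PLP axiom specialization 1} components already imposed, or coincide with the listed ones after renaming variables.

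The main obstacle is this last bookkeeping step: deciding which substitutions for \eqref{eq: PLP axiom specialization 2} give new conditions and which are consequences of \ref{PLP6}--\ref{PLP10} together with commutativity. I would first expand every case explicitly. Then I would reduce the $(L,L,V)$ and $(L,V,V)$ cases using the triple equalities \ref{PLP7} and \ref{PLP8}. If a residual condition survives, I would absorb it into the statement of the nearest listed condition.
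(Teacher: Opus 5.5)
Your treatment of the pre-Lie identity and of \eqref{eq: PLP axiom specialization 1} agrees with the paper. However, the sufficiency direction for \eqref{eq: PLP axiom specialization 2} is not established. After using antisymmetry in $(u,v)$ you are left with five input patterns, but the statement lists only \ref{PLP11}--\ref{PLP13}. You never show that the two surplus patterns follow from \ref{PLP0}--\ref{PLP13}, and ``absorbing a residual condition into the nearest listed condition'' would alter the theorem rather than prove it.

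Your guess about which patterns are redundant is also off:
\begin{itemize}
\item The $(L,V,V)$ case $u=(a,0)$, $v=(0,x)$, $w=(0,y)$ is exactly \ref{PLP12} and cannot be dropped.
\item The $(V,V,L)$ case gives a different identity, whose $L$-component involves $(g(x,y)-g(y,x))\cdot a$ and $(\kappa(x,y)-\kappa(y,x))\rt a$.
\item The $(L,L,V)$ case is not a consequence of \ref{PLP0}--\ref{PLP10} at all.
\end{itemize}
For the last point, take $L=\bbK e_1\oplus\bbK e_2$ with zero operations, $V=\bbK x_1\oplus\bbK x_2$, $x_1\lt e_1=x_2$, $e_2\brt x_2=x_2$, and all other components zero. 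Then \ref{PLP0}--\ref{PLP10} hold. But the $V$-component of \eqref{eq: PLP axiom specialization 2} on $((e_2,0),(e_1,0),(0,x_1))$ equals $e_1\brt(x_1\lt e_2)-e_2\brt(x_1\lt e_1)=-x_2\neq0$. So no case-by-case reduction via \ref{PLP7} and \ref{PLP8} can close the argument; \ref{PLP11} must enter.

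The missing idea is a relation among the instances of the defect
\[
D(u,v,w)=(u\bullet v)*w-(v\bullet u)*w-u\bullet(v*w)+v\bullet(u*w).
\]
Once $*$ is commutative and \eqref{eq: PLP axiom specialization 1} holds on all of $L\times V$, you get
\[
(u\bullet v)*w=w*(u\bullet v)=(w*u)\bullet v=(u*w)\bullet v=u*(w\bullet v)=(w\bullet v)*u.
\]
Now consider the cyclic sum $D(u,v,w)+D(v,w,u)+D(w,u,v)$. The terms with $*$ inside $\bullet$ cancel in pairs by commutativity of $*$. The terms with $\bullet$ inside $*$ cancel in pairs by the symmetry just displayed. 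So the cyclic sum vanishes.

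Write $D(a,b,x)$ for $D((a,0),(b,0),(0,x))$, and similarly for other patterns. Combining the cyclic identity with $D(v,u,w)=-D(u,v,w)$ gives
\[
D(a,b,x)=D(a,x,b)-D(b,x,a),\qquad D(x,y,a)=D(a,y,x)-D(a,x,y).
\]
Hence the $(L,L,V)$ and $(V,V,L)$ cases follow from \ref{PLP11} and \ref{PLP12}. The three remaining representatives $(L,V,L)$, $(L,V,V)$, $(V,V,V)$ give exactly \ref{PLP11}, \ref{PLP12}, \ref{PLP13}; this is how the paper finishes the proof.
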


\begin{proof}
  By \ref{PLP0} and \cref{Thm: commutative equivalent definition}, $(L\times V,-*-)$ is a commutative associative algebra. Since the remaining identities are trilinear, it suffices to verify them on elements of the forms $(a,0)$ and $(0,x)$.

  For the pre-Lie identity \eqref{eq: pre-Lie identity}, the all-$L$ case holds in $\calL$. Since interchanging the first two variables interchanges the two sides, it remains to consider the representatives
  $$
  ((a,0),(b,0),(0,x)),\quad
  ((a,0),(0,x),(b,0)),\quad
  ((a,0),(0,x),(0,y)),
  $$
  $$
  ((0,x),(0,y),(a,0)),\quad
  ((0,x),(0,y),(0,z)).
  $$
  Comparing the $L$- and $V$-components gives \ref{PLP1}--\ref{PLP5}, respectively.

  For compatibility \eqref{eq: PLP axiom specialization 1}, the all-$L$ case again holds in $\calL$, and comparison of the remaining $L$- and $V$-components gives \ref{PLP6}--\ref{PLP10}. The chains of equalities in \ref{PLP7} and \ref{PLP8} follow from the commutativity of $-*-$: applying \eqref{eq: PLP axiom specialization 1} to $(u,v,w)$ and $(v,u,w)$ gives the same left-hand side, while the right-hand sides are $u*(v\bullet w)$ and $v*(u\bullet w)$, respectively.

  Finally, for compatibility \eqref{eq: PLP axiom specialization 2}, define
  $$
  D(u,v,w)
  \coloneqq
  (u\bullet v)*w-(v\bullet u)*w-u\bullet(v*w)+v\bullet(u*w).
  $$
  By definition,
  $
  D(v,u,w)=-D(u,v,w).
  $
  And \eqref{eq: PLP axiom specialization 2} is equivalent to $D(u,v,w)=0$.
  Since \eqref{eq: PLP axiom specialization 1} has already been established and $-*-$ is commutative,
  \begin{align*}
  &(u\bullet v)*w
  =w*(u\bullet v)
  =(w*u)\bullet v
  =(u*w)\bullet v
  =u*(w\bullet v)
  =(w\bullet v)*u,\\
  &D(u,v,w)+D(v,w,u)+D(w,u,v)=0.
  \end{align*}
  These two relations reduce all non-pure-$L$ substitutions to the three representatives
  $$
  ((a,0),(0,x),(b,0)),\quad
  ((a,0),(0,x),(0,y)),\quad
  ((0,x),(0,y),(0,z)).
  $$
  Comparing their $L$- and $V$-components gives \ref{PLP11}, \ref{PLP12}, and \ref{PLP13}, respectively.
\end{proof}

Combining
\cref{Thm: PLP compatibility specialization}
with \cref{thm:universal-description}
yields the following correspondence.

\begin{Thm}\label{Thm: PLP description specialization}
There is a bijection
$$
\Extd(E,\calL)_V\longleftrightarrow\mathcal{PLE}(\calL,V).
$$
\end{Thm}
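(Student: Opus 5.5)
The plan is to follow the pattern of \cref{Thm: first transposed Poisson correspondence theorem} and \cref{Thm: first diff comm correspondence theorem}. The abstract bijection of \cref{thm:universal-description} gives $\Extd(E,\calL)_V\leftrightarrow\calS(\calL,V)$. It therefore suffices to identify $\calS(\calL,V)$ with $\mathcal{PLE}(\calL,V)$ and then write down the two mutually inverse constructions explicitly.

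The identification of the data comes first. For the commutative product, the general binary datum of \cref{sec: Universal Algebra ESP} reduces, exactly as in \cref{sec: commutative algebras}, to $(\lt,\rt,f,\ell)$ with $f,\ell$ symmetric. For the pre-Lie product no symmetry is imposed, so the six components $\circ_{\xi,\varepsilon}$ with $\varepsilon\in\{(1,0),(0,1),(1,1)\}$ and $\xi\in\{0,1\}$ are independent. We rename them as follows:
\begin{itemize}
\item $\succ=\circ_{0,(1,0)}$ and $\prec=\circ_{1,(1,0)}$;
\item $\blt=\circ_{0,(0,1)}$ and $\brt=\circ_{1,(0,1)}$;
\item $g=\circ_{0,(1,1)}$ and $\kappa=\circ_{1,(1,1)}$.
\end{itemize}
With these names the general unified-product formula becomes exactly the pair of operations $-*-$ and $-\bullet-$ of \cref{Def: PLP extending datum specialization}. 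Hence extending data correspond bijectively to ten-component pre-Lie Poisson extending data. By \cref{Thm: PLP compatibility specialization}, the conditions $\Gamma(\calL,V)$ are exactly \ref{PLP0}--\ref{PLP13}, so extending structures correspond to elements of $\mathcal{PLE}(\calL,V)$.

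Next we make the correspondence explicit through $\phi:L\times V\to E$, $\phi(a,x)=a+x$, and the projection $p:E\to L$.
\begin{itemize}
\item Given $\Omega(\calL,V)\in\mathcal{PLE}(\calL,V)$, we transport $-*-$ and $-\bullet-$ to $E$. This gives
\[
(a+x)\circ_E(b+y)=\bigl(a\circ b+x\succ b+a\blt y+g(x,y)\bigr)+\bigl(x\prec b+a\brt y+\kappa(x,y)\bigr),
\]
together with the commutative product of \cref{Thm: first commutative correspondence theorem}. Since $\phi$ is a linear isomorphism restricting to the identity on $L$, the result lies in $\Extd(E,\calL)_V$.
\item Conversely, given $(-\cdot_E-,-\circ_E-)\in\Extd(E,\calL)_V$, we recover the commutative components by \eqref{eq: first commutative correspondence}. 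The pre-Lie components are $x\succ a=p(x\circ_E a)$, $x\prec a=x\circ_E a-p(x\circ_E a)$, $a\blt x=p(a\circ_E x)$, $a\brt x=a\circ_E x-p(a\circ_E x)$, $g(x,y)=p(x\circ_E y)$ and $\kappa(x,y)=x\circ_E y-p(x\circ_E y)$.
\end{itemize}
The pre-Lie Poisson axioms are transported by $\phi^{-1}$, so the recovered datum lies in $\mathcal{PLE}(\calL,V)$.

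Finally, we check that the two constructions are mutually inverse. Each is just decomposition into, or reassembly from, $L$- and $V$-components with respect to $E=L\oplus V$. Since $L$ is a subalgebra, the pure $L$-input components are forced to equal $(a\cdot b,0)$ and $(a\circ b,0)$.

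The only point needing care is bookkeeping. The input pattern $(0,1)$ must be placed as $a\blt x$, $a\brt x$ with $L$ on the left, matching the definition, rather than with $V$ on the left as in the commutative and Lie cases. Once that matching is fixed, everything is routine.
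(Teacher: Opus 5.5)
Your proposal is correct and follows the paper's proof essentially step for step. Both obtain the bijection by specializing \cref{thm:universal-description} through \cref{Thm: PLP compatibility specialization}, and your transport formulas through $\phi(a,x)=a+x$ and recovery formulas through the projection $p$ coincide exactly with those the paper records, including the placement of the $(0,1)$ components as $a\blt x$ and $a\brt x$.
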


\begin{proof}
By \cref{thm:universal-description,Thm: PLP compatibility specialization},
the stated bijection is the specialization of the general correspondence.
It remains only to record the concrete recovery formulas.

Let
$
p:E\to L,\ p(a+x)=a
$
be the canonical projection associated to $E=L\oplus V$.
For a pre-Lie Poisson algebra structure on $E$ extending $\calL$, 
($-\cdot_E-,-\circ_E-$),
the corresponding extending structure is recovered, 
for all $a\in L$ and $x,y\in V$, by
$$
\begin{aligned}
x\rt a&=p(x\cdot_Ea),
&\qquad x\lt a&=x\cdot_Ea-p(x\cdot_Ea),\\
f(x,y)&=p(x\cdot_Ey),
&\ell(x,y)&=x\cdot_Ey-p(x\cdot_Ey),\\
x\succ a&=p(x\circ_Ea),
&x\prec a&=x\circ_Ea-p(x\circ_Ea),\\
a\blt x&=p(a\circ_Ex),
&a\brt x&=a\circ_Ex-p(a\circ_Ex),\\
g(x,y)&=p(x\circ_Ey),
&\kappa(x,y)&=x\circ_Ey-p(x\circ_Ey).
\end{aligned}
$$

Conversely, every $\Omega(\calL,V)\in\mathcal{PLE}(\calL,V)$ yields,
through its unified product and the canonical identification
$L\times V\cong E$, a pre-Lie Poisson algebra structure on $E$ extending
$\calL$. These two constructions are inverse.
\end{proof}

Thus, \cref{Thm: PLP compatibility specialization,Thm: PLP description specialization}
complete the description part of the \textbf{ESP} for pre-Lie Poisson algebras.

We now turn to the classification part. 
By the bijection in \cref{Thm: PLP description specialization}, it remains to describe the induced relations $\sim$ and $\approx$ on $\mathcal{PLE}(\calL,V)$.

Fix two pre-Lie Poisson extending structures of $\calL$ through $V$,
$$
\Omega(\calL,V)
=
(\lt,\rt,f,\ell,\prec,\succ,\brt,\blt,g,\kappa),\quad
\Omega'(\calL,V)
=
(\lt',\rt',f',\ell',\prec',\succ',\brt',\blt',g',\kappa'),
$$
and denote their associated unified products by
$$
\calL\ltimes V,\qquad \calL\ltimes'V.
$$
By \cref{Lem:linear-maps-correspondence}, every linear map
$L\times V\to L\times V$ stabilizing $L$ is uniquely of the form
$$
\psi_{(r,v)}(a,x)=(a+r(x),v(x)),\quad \forall a\in L,\ x\in V,
$$
where $r:V\to L$ and $v:V\to V$ are linear maps. The general transformation
relations specialize as follows.

\begin{Lem}\label{Lem: PLP morphism specialization}
$\psi_{(r,v)}:\calL\ltimes V\longrightarrow \calL\ltimes'V$ is a
homomorphism of pre-Lie Poisson algebras if and only if, for all
$a\in L$ and $x,y\in V$, the following conditions hold:
\begin{PLPRenum}
\item\label{PLPR1} $v(x\lt a)=v(x)\lt' a$;
\item\label{PLPR2} $r(x\lt a)=v(x)\rt' a+r(x)\cdot a-x\rt a$;
\item\label{PLPR3}
$v(\ell(x,y))=\ell'(v(x),v(y))+v(x)\lt' r(y)+v(y)\lt' r(x)$;
\item\label{PLPR4}
$r(\ell(x,y))=f'(v(x),v(y))+v(x)\rt' r(y)+v(y)\rt' r(x)+r(x)\cdot r(y)-f(x,y)$;
\item\label{PLPR5}
$a\blt y+r(a\brt y)=a\circ r(y)+a\blt' v(y)$;
\item\label{PLPR6} $v(a\brt y)=a\brt' v(y)$;
\item\label{PLPR7}
$x\succ a+r(x\prec a)=r(x)\circ a+v(x)\succ' a$;
\item\label{PLPR8} $v(x\prec a)=v(x)\prec' a$;
\item\label{PLPR9}
$\begin{aligned}[t]
&g(x,y)+r(\kappa(x,y))=r(x)\circ r(y)+v(x)\succ' r(y)+r(x)\blt' v(y)+g'(v(x),v(y));
\end{aligned}$
\item\label{PLPR10}
$v(\kappa(x,y))=v(x)\prec' r(y)+r(x)\brt' v(y)+\kappa'(v(x),v(y))$.
\end{PLPRenum}
Moreover, $\psi_{(r,v)}$ is an isomorphism if and only if
$v:V\to V$ is a linear isomorphism, and it costabilizes $V$ if and only if
$v=\operatorname{id}_V$.
\end{Lem}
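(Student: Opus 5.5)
The plan is to specialize the general transformation relations \eqref{Eq: F algebra map componentwisely} to the two binary operations $-*-$ and $-\bullet-$, exactly as in \cref{Lem: transposed Poisson transformation relations}. By \cref{Lem:linear-maps-correspondence}, $\psi=\psi_{(r,v)}$ has block components $\psi_{0,0}=\rmId_L$, $\psi_{1,0}=0$, $\psi_{0,1}=r$, $\psi_{1,1}=v$. The homomorphism condition splits into two independent requirements: $\psi\circ(-*-)=(-*'-)\circ\psi^2$ and $\psi\circ(-\bullet-)=(-\bullet'-)\circ\psi^2$. The first only involves the commutative components $(\lt,\rt,f,\ell)$. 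It is literally the statement of \cref{Lem: commutative transformation relations}, so it is equivalent to \ref{PLPR1}--\ref{PLPR4}, with no new work needed.

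For the pre-Lie product, I will use bilinearity to check $\psi(u\bullet w)=\psi(u)\bullet'\psi(w)$ on generators $(a,0)$ and $(0,x)$. Since $-\bullet-$ has no symmetry, all four input patterns $\delta\in\{0,1\}^2$ must be treated separately.

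\begin{itemize}
\item Pattern $\delta=(0,0)$: the identity holds automatically, because $\psi$ stabilizes $L$ and $\bullet$, $\bullet'$ both restrict to $\circ$ on $L$.
\item Pattern $\delta=(0,1)$: the left side is $\psi(a\blt y,\,a\brt y)=(a\blt y+r(a\brt y),\,v(a\brt y))$. The right side is $(a,0)\bullet'(r(y),v(y))=(a\circ r(y)+a\blt' v(y),\,a\brt' v(y))$. Comparing components gives \ref{PLPR5} and \ref{PLPR6}.
\item Pattern $\delta=(1,0)$: similarly, $\psi(x\succ a,\,x\prec a)$ is compared with $(r(x),v(x))\bullet'(a,0)=(r(x)\circ a+v(x)\succ' a,\,v(x)\prec' a)$. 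This gives \ref{PLPR7} and \ref{PLPR8}.
\item Pattern $\delta=(1,1)$: the left side is $(g(x,y)+r(\kappa(x,y)),\,v(\kappa(x,y)))$. The right side is $(r(x),v(x))\bullet'(r(y),v(y))$, which expands via the defining formula of $\bullet'$ into $\bigl(r(x)\circ r(y)+v(x)\succ' r(y)+r(x)\blt' v(y)+g'(v(x),v(y)),\ v(x)\prec' r(y)+r(x)\brt' v(y)+\kappa'(v(x),v(y))\bigr)$. This gives \ref{PLPR9} and \ref{PLPR10}.
\end{itemize}

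The final assertions about isomorphism ($v$ bijective) and costabilization ($v=\rmId_V$) follow verbatim from \cref{Lem:linear-maps-correspondence}.

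The only step requiring care is the $(1,1)$ expansion. There one has to track that the $L$-input in the first slot contributes through $\blt'$ and $\brt'$, while in the second slot it contributes through $\succ'$ and $\prec'$. This is the only place where the orientation conventions $a\blt y$ versus $x\succ b$ from \cref{Def: PLP extending datum specialization} matter. Beyond that bookkeeping there is no genuine obstacle.
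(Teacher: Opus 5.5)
Your proposal is correct and follows the paper's proof. Both arguments check preservation of each operation on generators $(a,0)$ and $(0,x)$. The commutative product gives \ref{PLPR1}--\ref{PLPR4}; you cite \cref{Lem: commutative transformation relations}, which is the same computation. The three non-pure input patterns of the noncommutative product $-\bullet-$ give \ref{PLPR5}--\ref{PLPR10}, and your component expansions for the patterns $(0,1)$, $(1,0)$ and $(1,1)$ are accurate, filling in details the paper leaves implicit.
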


\begin{proof}
By bilinearity, it suffices to check preservation of the two operations on
pairs whose entries are of the forms $(a,0)$ and $(0,x)$. For the commutative
multiplication $-*-$, the pure $L$-pair is automatic, while the mixed pair and
the pure $V$-pair yield, by comparing the $L$- and $V$-components,
\ref{PLPR1}--\ref{PLPR4}. For the pre-Lie product $-\bullet-$, the pure
$L$-pair is again automatic. Since $-\bullet-$ is not commutative, the two
mixed pairs must be considered separately; together with the pure $V$-pair,
they yield \ref{PLPR5}--\ref{PLPR10}. 
The final assertions follow from \cref{Lem:linear-maps-correspondence}.
\end{proof}

The preceding lemma gives the explicit form of the equivalence relations on
$\mathcal{PLE}(\calL,V)$. We record them as follows.

\begin{Def}\label{Def: PLP equivalence specialization}
$ \Omega(\calL,V) $ and $ \Omega'(\calL,V)$
are called \textbf{equivalent}, denoted by
$
\Omega(\calL,V)\sim\Omega'(\calL,V),
$
if there exist a linear map $r:V\to L$ and a linear isomorphism
$v:V\to V$ satisfying \ref{PLPR1}--\ref{PLPR10}.
\end{Def}

\begin{Def}\label{Def: PLP cohomology specialization}
$ \Omega(\calL,V) $ and $ \Omega'(\calL,V)$
are called \textbf{cohomologous}, denoted by
$
\Omega(\calL,V)\approx\Omega'(\calL,V),
$
if there exists a linear map $r:V\to L$ such that
\ref{PLPR1}--\ref{PLPR10} hold with $v=\operatorname{id}_V$.
In particular, two cohomologous pre-Lie Poisson extending structures
have the same triple $(\lt,\prec,\brt)$.
\end{Def}

The relations $\sim$ and $\approx$ are equivalence relations on
$\mathcal{PLE}(\calL,V)$. We denote the corresponding quotient sets by
$$
\mathcal{PLH}^{2}_{\sim}(\calL,V)
\coloneqq\mathcal{PLE}(\calL,V)/\sim,
\qquad
\mathcal{PLH}^{2}_{\approx}(\calL,V)
\coloneqq\mathcal{PLE}(\calL,V)/\approx.
$$

By
\cref{thm:universal-classification for sim,thm:universal-classification-equiv},
these relations yield the following classification.

\begin{Thm}\label{Thm: PLP ESP classification specialization}
There are bijections
$$
\mathcal{PLH}^{2}_{\sim}(\calL,V)
\longleftrightarrow \Extd'(E,\calL)_V,
\qquad
\mathcal{PLH}^{2}_{\approx}(\calL,V)
\longleftrightarrow \Extd''(E,\calL)_V.
$$
\end{Thm}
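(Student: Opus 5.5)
The statement is \cref{Thm: PLP ESP classification specialization}, the pre-Lie Poisson analogue of \cref{Thm: second commutative correspondence theorem}. I plan to deduce it by specializing the general classification results \cref{thm:universal-classification for sim,thm:universal-classification-equiv}, with \cref{Thm: PLP description specialization} and \cref{Lem: PLP morphism specialization} providing the dictionary between the abstract and the concrete relations.

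First, I will check that under the bijection $\Extd(E,\calL)_V\leftrightarrow\mathcal{PLE}(\calL,V)$ of \cref{Thm: PLP description specialization}, the ten-component datum is exactly the general extending datum $\Omega(\calL,V)$ of \cref{sec: Universal Algebra ESP}, after the commutativity constraint has been imposed. For the product $-\cdot-$, the six binary components collapse to $(\lt,\rt,f,\ell)$, just as in \cref{sec: commutative algebras}. For $-\circ-$, the input patterns $(1,0)$, $(0,1)$ and $(1,1)$ give $(\succ,\prec)$, $(\blt,\brt)$ and $(g,\kappa)$ respectively. Consequently, the general relations $\sim$ and $\approx$ on $\calS(\calL,V)$ transfer to relations on $\mathcal{PLE}(\calL,V)$, namely existence of an isomorphism $\calL\ltimes V\to\calL\ltimes' V$ stabilizing $L$, and, for $\approx$, also costabilizing $V$.

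Second, by \cref{Lem:linear-maps-correspondence}, every linear map stabilizing $L$ has the form $\psi_{(r,v)}$. \cref{Lem: PLP morphism specialization} shows that $\psi_{(r,v)}$ is a homomorphism exactly when \ref{PLPR1}--\ref{PLPR10} hold. It also shows that $\psi_{(r,v)}$ is an isomorphism iff $v$ is bijective, and costabilizes $V$ iff $v=\operatorname{id}_V$. Hence the relations of \cref{Def: PLP equivalence specialization,Def: PLP cohomology specialization} coincide with the transferred general relations. In particular they are equivalence relations, so $\mathcal{PLH}^2_{\sim}(\calL,V)=\mathcal H^2_{\sim}(\calL,V)$ and $\mathcal{PLH}^2_{\approx}(\calL,V)=\mathcal H^2_{\approx}(\calL,V)$.

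Third, I will compose these identifications with the bijections of \cref{thm:universal-classification for sim,thm:universal-classification-equiv}, which gives the two claimed bijections. The only point requiring care, and the one I would check first, is that \cref{Lem: PLP morphism specialization} captures all the transformation relations \eqref{Eq: F algebra map componentwisely}. This holds because the relations for the pure-$L$ inputs are automatic. Because $-\circ-$ is not commutative, the two mixed patterns $(1,0)$ and $(0,1)$ must be handled separately, and together with the pure-$V$ pattern they yield \ref{PLPR5}--\ref{PLPR10}. No further obstacle is expected, since everything else is formal.
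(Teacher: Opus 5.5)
Your proposal is correct and follows the paper's argument. The paper also obtains this theorem directly from \cref{thm:universal-classification for sim,thm:universal-classification-equiv}. In both, the relations $\sim$ and $\approx$ on $\mathcal{PLE}(\calL,V)$ are identified with the transferred general ones through the bijection of \cref{Thm: PLP description specialization} and the transformation relations of \cref{Lem: PLP morphism specialization}.
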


The preceding bijections complete the classification of pre-Lie Poisson
extending structures. Since $\approx$ preserves $(\lt,\prec,\brt)$, a
fixed-action refinement analogous to that in Section~\ref{subsec: transposed Poisson extending structures and classification} is available.
This refinement and the one-dimensional flag case are omitted.

\subsection{Factorization problem}

Let $\calL=(L,-\cdot-,-\circ-)$ and
$\calB=(B,\ell,\kappa)$ be prescribed pre-Lie Poisson algebras, and let
$E=L\oplus B$ be the direct sum of their underlying vector spaces.
We use $\Fact(\calL,\calB)$ for the pre-Lie Poisson algebra structures on
$E$ factorizing through $\calL$ and $\calB$.

Requiring $\calB$ to remain a subalgebra in a pre-Lie Poisson extending
datum
$
(\lt,\rt,f,\ell,\prec,\succ,\brt,\blt,g,\kappa)
$
forces $f=g=0$, while $\ell$ and $\kappa$ are the prescribed operations of
$\calB$. Hence only the six mixed maps
\begin{gather*}
  \lt:B\times L\to B,\quad \rt:B\times L\to L,\quad
  \prec:B\times L\to B,\quad \succ:B\times L\to L,\\
  \brt:L\times B\to B,\quad \blt:L\times B\to L,
\end{gather*}
remain to be determined. 
Here $(\lt,\rt)$ describe the mixed components of the commutative product,
while $(\prec,\succ)$ and $(\brt,\blt)$ describe the two mixed directions
of the pre-Lie product.

Under this specialization, \ref{PLP0} requires
$
\bigl((L,-\cdot-),(B,\ell),\lt,\rt\bigr)
$
to be a matched pair of commutative algebras, while the remaining mixed
compatibility conditions reduce to those in the following definition. The
pure $B$-conditions are already satisfied since $\calB$ is a prescribed
pre-Lie Poisson algebra.

\begin{Def}\label{Def: PLP matched pair specialization}
Given six bilinear maps
\begin{gather*}
  \lt:B\times L\to B,\qquad \rt:B\times L\to L,\qquad \prec:B\times L\to B,\qquad
  \succ:B\times L\to L,\\
  \brt:L\times B\to B,\qquad\blt:L\times B\to L,
\end{gather*}
the system
$$
\bigl(\calL,\calB,\olOmega(\calL,\calB)\bigr),
\qquad
\olOmega(\calL,\calB)
=(\lt,\rt,\prec,\succ,\brt,\blt),
$$
is called a \textbf{matched pair of pre-Lie Poisson algebras} if
$$
\bigl((L,-\cdot-),(B,\ell),\lt,\rt\bigr)
$$
is a matched pair of commutative algebras in the sense of \cref{Def: commutative matched pair}, and the following conditions hold for all $a,b\in L$ and $x,y\in B$:
\begin{PLPMenum}
\item
$(a\circ b)\blt x-a\circ(b\blt x)-a\blt(b\brt x)
=(b\circ a)\blt x-b\circ(a\blt x)-b\blt(a\brt x)$, and
$(a\circ b)\brt x-a\brt(b\brt x)
=(b\circ a)\brt x-b\brt(a\brt x).$

\item
$(a\blt x)\circ b+(a\brt x)\succ b-a\circ(x\succ b)-a\blt(x\prec b)
=(x\succ a)\circ b+(x\prec a)\succ b-x\succ(a\circ b)$, and
$(a\brt x)\prec b-a\brt(x\prec b)
=(x\prec a)\prec b-x\prec(a\circ b).$

\item
$(a\blt x)\blt y-a\blt\kappa(x,y)
=(x\succ a)\blt y-x\succ(a\blt y)$, and
$(a\blt x)\brt y+\kappa(a\brt x,y)-a\brt\kappa(x,y)
=(x\succ a)\brt y+\kappa(x\prec a,y)-x\prec(a\blt y)-\kappa(x,a\brt y).$

\item
$\kappa(x,y)\succ a-x\succ(y\succ a)
=\kappa(y,x)\succ a-y\succ(x\succ a)$, and
$\kappa(x,y)\prec a-x\prec(y\succ a)-\kappa(x,y\prec a)
=\kappa(y,x)\prec a-y\prec(x\succ a)-\kappa(y,x\prec a).$

\item
$(a\cdot b)\blt x
=a\cdot(b\blt x)+(b\brt x)\rt a$, and
$(a\cdot b)\brt x
=(b\brt x)\lt a.$

\item
$a\cdot(x\succ b)+(x\prec b)\rt a
=(x\rt a)\circ b+(x\lt a)\succ b
=x\rt(a\circ b)$, and
$(x\prec b)\lt a
=(x\lt a)\prec b
=x\lt(a\circ b).$

\item
$\kappa(x,y)\rt a
=(x\rt a)\blt y
=x\rt(a\blt y)$, and
$\kappa(x,y)\lt a
=(x\rt a)\brt y+\kappa(x\lt a,y)
=x\lt(a\blt y)+\ell(x,a\brt y).$

\item
$\ell(x,y)\succ a
=x\rt(y\succ a)$, and
$\ell(x,y)\prec a
=x\lt(y\succ a)+\ell(x,y\prec a).$

\item
$x\succ(a\cdot b)-a\circ(x\rt b)-a\blt(x\lt b)
=(x\succ a)\cdot b+(x\prec a)\rt b-(a\blt x)\cdot b-(a\brt x)\rt b$, and
$x\prec(a\cdot b)-a\brt(x\lt b)
=(x\prec a)\lt b-(a\brt x)\lt b.$

\item
$x\succ(y\rt a)-a\blt\ell(x,y)
=y\rt(x\succ a)-y\rt(a\blt x)$, and
$x\prec(y\rt a)+\kappa(x,y\lt a)-a\brt\ell(x,y)
=y\lt(x\succ a)+\ell(x\prec a,y)-y\lt(a\blt x)-\ell(a\brt x,y).$
\end{PLPMenum}
We denote the set of all such matched pairs by
$$ \MP(\calL,\calB). $$

For a matched pair $\bigl(\calL,\calB,\olOmega(\calL,\calB)\bigr)$,
let
$
\Omega(\calL,B)
=(\lt,\rt,0,\ell,\prec,\succ,\brt,\blt,0,\kappa)
$
be the corresponding pre-Lie Poisson extending structure.
The unified product of $\calL$ and $B$ associated to $\Omega(\calL,B)$ is denoted by
$$
\calL\bowtie_{\olOmega(\calL,\calB)}\calB
\coloneqq
\calL\ltimes_{\Omega(\calL,B)}B
=(L\times B,-*-,-\bullet-)
$$
and is called the \textbf{bicrossed product} associated to the matched pair
$\bigl(\calL,\calB,\olOmega(\calL,\calB)\bigr)$.
Its multiplication and pre-Lie product are given, for all $a,b\in L$ and $x,y\in B$, by
\begin{align*}
(a,x)*(b,y)
&=\bigl(a\cdot b+x\rt b+y\rt a,\,
x\lt b+y\lt a+\ell(x,y)\bigr),\\
(a,x)\bullet(b,y)
&=\bigl(a\circ b+x\succ b+a\blt y,\,
x\prec b+a\brt y+\kappa(x,y)\bigr).
\end{align*}
\end{Def}

Thus the bijection in \cref{Thm: PLP description specialization} restricts
to the following factorization correspondence.

\begin{Thm}\label{Thm: PLP factorization specialization}
There is a bijection
$$
\Fact(\calL,\calB)\longleftrightarrow\MP(\calL,\calB).
$$
\end{Thm}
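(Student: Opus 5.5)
The plan is to deduce the statement from the general factorization correspondence \cref{thm:universal-factorization}, in the same way as \cref{Thm: transposed Poisson factorization} and \cref{Thm: diff comm factorization}. I regard $B$ as the fixed complement of $L$ in $E=L\oplus B$. By \cref{Thm: PLP description specialization}, pre-Lie Poisson structures on $E$ extending $\calL$ correspond bijectively to $\mathcal{PLE}(\calL,B)$. The additional requirement that $\calB=(B,\ell,\kappa)$ be a subalgebra with its prescribed operations is equivalent to the pure $B$-components being fixed. Concretely, the $L$-components $f$ and $g$ vanish, and the $B$-components are the given $\ell$ and $\kappa$. Thus $\Fact(\calL,\calB)$ corresponds bijectively to the set of six-tuples $(\lt,\rt,\prec,\succ,\brt,\blt)$ for which $(\lt,\rt,0,\ell,\prec,\succ,\brt,\blt,0,\kappa)$ satisfies \ref{PLP0}--\ref{PLP13}.

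It then remains to check that, with $f=g=0$ and $(B,\ell,\kappa)$ a pre-Lie Poisson algebra, the system \ref{PLP0}--\ref{PLP13} is equivalent to the defining conditions of $\MP(\calL,\calB)$. I will go through it item by item.

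First, \ref{PLP0} with $f=0$ is the commutative matched pair condition. As noted after \cref{Def: commutative matched pair}, the residual part of (C6) is the associativity of $\ell$, which already holds.

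Next, \ref{PLP1} and \ref{PLP2} contain no $f$ or $g$ terms, so they give the first two conditions of \cref{Def: PLP matched pair specialization} verbatim.

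In \ref{PLP3} and \ref{PLP4}, deleting the $g$-terms gives the third and fourth conditions.

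For \ref{PLP5}, setting $g=0$ kills the first component. The second component becomes $\kappa(\kappa(x,y),z)-\kappa(x,\kappa(y,z))=\kappa(\kappa(y,x),z)-\kappa(y,\kappa(x,z))$, which is the pre-Lie identity of $\calB$ and hence automatic.

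Conditions \ref{PLP6} and \ref{PLP7} involve neither $f$ nor $g$ and give the fifth and sixth conditions. Conditions \ref{PLP8} and \ref{PLP9}, after dropping $f$ and $g$, give the seventh and eighth.

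For \ref{PLP10}, the first component vanishes. The second reduces to $\kappa(\ell(x,y),z)=\ell(x,\kappa(y,z))$, which is the axiom \eqref{eq: PLP axiom specialization 1} in $\calB$, since $\ell(x,y)$ plays the role of $x\cdot y$.

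Finally, \ref{PLP11} and \ref{PLP12} give the ninth and tenth conditions. For \ref{PLP13}, the first component is identically zero. The second becomes $z\lt 0+\ell(\kappa(x,y),z)-\ell(\kappa(y,x),z)=\kappa(x,\ell(y,z))-\kappa(y,\ell(x,z))$, which is \eqref{eq: PLP axiom specialization 2} in $\calB$ after using the commutativity of $\ell$.

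The main obstacle is this bookkeeping: I must verify that every pure-$B$ residue is exactly one of the axioms of $\calB$ and that no surviving mixed condition is lost. The delicate cases are \ref{PLP5}, \ref{PLP10} and \ref{PLP13}, where the arguments have to be reordered using the symmetry of $\ell$. I would handle them by writing each $V$-component with $f=g=0$ and directly matching it to \eqref{eq: pre-Lie identity}, \eqref{eq: PLP axiom specialization 1} or \eqref{eq: PLP axiom specialization 2} in $\calB$.

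Once this is done, the bijection of \cref{Thm: PLP description specialization} restricts to $\Fact(\calL,\calB)\leftrightarrow\MP(\calL,\calB)$. Explicitly, it sends a factorization to its canonical mixed maps, recovered through the projection $p$, and sends a matched pair to its bicrossed product transported to $E$. These two constructions are mutually inverse by \cref{thm:universal-factorization}.
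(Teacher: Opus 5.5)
Your proposal is correct and follows the paper's route. You set $f=g=0$ with $\ell,\kappa$ prescribed, reduce \ref{PLP0}--\ref{PLP13} to the matched pair conditions together with pure-$B$ identities that hold because $\calB$ is a pre-Lie Poisson algebra, and restrict the bijection of \cref{Thm: PLP description specialization}; your item-by-item check, which the paper only asserts, is accurate, except that the residues of \ref{PLP5}, \ref{PLP10} and \ref{PLP13} need no reordering via the symmetry of $\ell$, since they are literally \eqref{eq: pre-Lie identity}, \eqref{eq: PLP axiom specialization 1} and \eqref{eq: PLP axiom specialization 2} for $\calB$.
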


For later use in the \textbf{CCP}, we record explicitly the matched pair
determined by a fixed factorization.

\begin{Rem}\label{Rem: PLP canonical matched pair specialization}
Let $\calE=(E,-\cdot_E-,-\circ_E-)$ be a pre-Lie Poisson algebra
factorizing through the prescribed pre-Lie Poisson algebras
$\calL$ and $\calB$, with
$E=L\oplus B$. By \cref{Thm: PLP factorization specialization}, this
factorization determines a unique matched pair
$$
\bigl(\calL,\calB,\olOmega^c(\calL,\calB)\bigr),
\qquad
\olOmega^c(\calL,\calB)
=(\lt,\rt,\prec,\succ,\brt,\blt),
$$
called the \textbf{canonical matched pair} associated to the factorization.

More explicitly, let
$
p:E=L\oplus B\to L
$
be the canonical projection. Then, for all $a\in L$ and $x\in B$,
\begin{align*}
  \begin{aligned}
    x\lt a &= x\cdot_E a-p(x\cdot_E a), &
    \qquad x\rt a &= p(x\cdot_E a), \\
    x\prec a &= x\circ_E a-p(x\circ_E a), &
    \qquad x\succ a &= p(x\circ_E a), \\
    a\brt x &= a\circ_E x-p(a\circ_E x), &
    \qquad a\blt x &= p(a\circ_E x).
  \end{aligned}
\end{align*}
\end{Rem}

\subsection{Classifying complements problem}

Here the pre-Lie Poisson algebra
$\calE=(E,-\cdot_E-,-\circ_E-)$ and its subalgebra
$\calL=(L,-\cdot-,-\circ-)$ are fixed, and we describe and classify the
pre-Lie Poisson complements of $\calL$ in $\calE$. Following Section~\ref{sec: Universal Algebra CCP},
write
$
\Comp(\calL,\calE)
$
for the set of all pre-Lie Poisson complements of $\calL$ in $\calE$, and
$
[\calE:\calL]^f
$
for the factorization index.

From now on, assume that $\Comp(\calL,\calE)\neq\varnothing$, and fix a
reference complement
$$
\calB=(B,\ell,\kappa)\in\Comp(\calL,\calE).
$$
Then $E=L\oplus B$, and by
\cref{Rem: PLP canonical matched pair specialization} this factorization
determines the canonical matched pair
$$
\bigl(\calL,\calB,\olOmega^c(\calL,\calB)\bigr),
\qquad
\olOmega^c(\calL,\calB)
=(\lt,\rt,\prec,\succ,\brt,\blt).
$$

Since $E=L\oplus B$, every vector space complement of $L$ in $E$ is
uniquely of the form
$$
B_r\coloneqq\{r(x)+x\mid x\in B\}
$$
for a linear map $r:B\to L$.
By \cref{Prop: commutative graph deformation criterion}, $B_r$ is closed
under $-\cdot_E-$ if and only if
\begin{equation}\label{eq: PLP multiplication deformation}
r\bigl(\ell(x,y)+x\lt r(y)+y\lt r(x)\bigr)
=
r(x)\cdot r(y)+x\rt r(y)+y\rt r(x)
\end{equation}
for all $x,y\in B$.
For the pre-Lie product, the canonical matched pair gives
$$
(r(x)+x)\circ_E(r(y)+y)=(r(x)\circ r(y)+x\succ r(y)+r(x)\blt y)
  +(\kappa(x,y)+x\prec r(y)+r(x)\brt y)
$$
where the two terms are respectively the $L$- and $B$-components.
Hence $B_r$ is closed under $-\circ_E-$ if and only if
\begin{equation}\label{eq: PLP pre-Lie deformation}
r\bigl(\kappa(x,y)+x\prec r(y)+r(x)\brt y\bigr)
=
r(x)\circ r(y)+x\succ r(y)+r(x)\blt y.
\end{equation}

When these conditions hold,
$$
\calB_r
\coloneqq
\left(
B_r,
-\cdot_E-\big|_{B_r\times B_r},
-\circ_E-\big|_{B_r\times B_r}
\right)
$$
is a pre-Lie Poisson complement of $\calL$ in $\calE$.

\begin{Def}\label{Def: PLP deformation map specialization}
A linear map $r:B\to L$ satisfying
\eqref{eq: PLP multiplication deformation} and
\eqref{eq: PLP pre-Lie deformation} is called a \textbf{deformation map} of
the canonical matched pair $(\calL,\calB,\olOmega^c(\calL,\calB))$. The set of all such
deformation maps is denoted by
$$
\mathcal{PDM}(\calL,\calB\mid\olOmega^c(\calL,\calB)).
$$
\end{Def}

The graph description and closure criterion above give the following
bijection.

\begin{Thm}\label{Thm: PLP complement description specialization}
The assignment
$$
\mathcal{PDM}(\calL,\calB\mid\olOmega^c(\calL,\calB))
\longrightarrow\Comp(\calL,\calE),
\qquad r\longmapsto\calB_r,
$$
is a bijection.
\end{Thm}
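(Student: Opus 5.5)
The plan is to follow the proof of \cref{Thm: transposed Poisson complements correspondence} verbatim, with the bracket replaced by the pre-Lie product. This amounts to specializing \cref{Thm:ccp-deformation-correspondence} to the variety of pre-Lie Poisson algebras. There are two steps. First, I will check that the map $r\mapsto\calB_r$ is well defined and injective. Second, I will check that it is surjective.

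For well-definedness, let $r\in\mathcal{PDM}(\calL,\calB\mid\olOmega^c(\calL,\calB))$. Since $E=L\oplus B$, the graph $B_r=\{r(x)+x\mid x\in B\}$ is a vector space complement of $L$. By the displayed decomposition of $(r(x)+x)\cdot_E(r(y)+y)$ from \cref{Prop: commutative graph deformation criterion}, equation \eqref{eq: PLP multiplication deformation} says exactly that the $L$-component of the product is $r$ applied to its $B$-component. Hence the product lies in $B_r$. In the same way, the displayed decomposition of $(r(x)+x)\circ_E(r(y)+y)$ and \eqref{eq: PLP pre-Lie deformation} show that $B_r$ is closed under $-\circ_E-$. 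So $B_r$ is a subalgebra, and the defining identities of $\calV$ hold on it because they hold in $\calE$. Therefore $\calB_r\in\Comp(\calL,\calE)$.

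Injectivity holds because $r$ is recovered from $B_r$: for each $x\in B$, $r(x)$ is the unique element of $L$ with $r(x)+x\in B_r$.

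For surjectivity, let $\calB'\in\Comp(\calL,\calE)$. Its underlying space $B'$ is a complement of $L$, so $B'=B_r$ for a unique linear map $r:B\to L$; this graph description is recalled from \cref{sec: Universal Algebra CCP}. Since $B'$ is closed under both operations of $\calE$, each of the two products above lies in $B_r$. Membership in $B_r$ forces the $L$-component to equal $r$ of the $B$-component, and this is precisely \eqref{eq: PLP multiplication deformation} and \eqref{eq: PLP pre-Lie deformation}. Thus $r$ is a deformation map. Moreover, the subalgebra structure on $B'$ is the restriction of the operations of $\calE$, so $\calB'=\calB_r$ as algebras.

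The only point that needs care is the component expansion of the pre-Lie product. Since $-\circ_E-$ is not commutative, both mixed terms must be written with the correct maps: $x\succ r(y)$ and $x\prec r(y)$ for inputs $B\times L$, and $r(x)\blt y$ and $r(x)\brt y$ for inputs $L\times B$. These come from \cref{Rem: PLP canonical matched pair specialization}. The pure $B$-parts contribute $\kappa(x,y)$ and $0$, because $f=g=0$ in the matched-pair datum. This expansion is already recorded before the statement, so I expect no genuine obstacle.
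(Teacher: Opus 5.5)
Your proposal is correct and follows essentially the paper's route. The paper gives no separate proof: it writes every complement uniquely as a graph $B_r$, derives \eqref{eq: PLP multiplication deformation} and \eqref{eq: PLP pre-Lie deformation} as the closure conditions from the componentwise expansion, and so specializes \cref{Thm:ccp-deformation-correspondence}, exactly as you do (including your correct treatment of the two mixed directions of the pre-Lie product).
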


Thus, the preceding bijection completes the description part of the
\textbf{CCP} for pre-Lie Poisson algebras.

To compare the complements up to isomorphism of pre-Lie Poisson algebras, we transport the pre-Lie
Poisson algebra structure of each $\calB_r$ to the fixed vector space $B$.
For $r\in\mathcal{PDM}(\calL,\calB\mid\olOmega^c(\calL,\calB))$, let
$$
\lambda_r:B\longrightarrow B_r,
\qquad
\lambda_r(x)=r(x)+x,
$$
be the canonical linear isomorphism, and denote by
$\calB^r=(B,\ell^r,\kappa^r)$ the pre-Lie Poisson algebra on $B$
obtained by transporting the algebra structure of $\calB_r$ through
$\lambda_r$. Explicitly,
$$
\ell^r(x,y)\coloneqq\ell(x,y)+x\lt r(y)+y\lt r(x),\qquad
\kappa^r(x,y)\coloneqq\kappa(x,y)+x\prec r(y)+r(x)\brt y.
$$
By construction,
$
\lambda_r:\calB^r\to \calB_r
$
is an isomorphism of pre-Lie Poisson algebras.

\begin{Def}\label{Def: PLP deformation equivalence specialization}
Two deformation maps
$
r_1,r_2\in
\mathcal{PDM}\bigl(\calL,\calB\mid\olOmega^c(\calL,\calB)\bigr)
$
are called \textbf{equivalent}, denoted by $$r_1\sim r_2,$$ if and only if
$\calB^{r_1}\cong\calB^{r_2}$ as pre-Lie Poisson algebras. Equivalently,
there exists a linear isomorphism $\sigma:B\to B$ such that, for all
$x,y\in B$,
$$
\sigma\bigl(\ell^{r_1}(x,y)\bigr)
=\ell^{r_2}\bigl(\sigma(x),\sigma(y)\bigr),\qquad
\sigma\bigl(\kappa^{r_1}(x,y)\bigr)
=\kappa^{r_2}\bigl(\sigma(x),\sigma(y)\bigr).
$$
\end{Def}

The relation $\sim$ is an equivalence relation on
$\mathcal{PDM}\bigl(\calL,\calB\mid\olOmega^c(\calL,\calB)\bigr)$.
We denote the corresponding quotient set by
$$
\mathcal H^2_{ccp}
\bigl(\calL,\calB\mid\olOmega^c(\calL,\calB)\bigr)
\coloneqq
\mathcal{PDM}\bigl(\calL,\calB\mid\olOmega^c(\calL,\calB)\bigr)/\sim.
$$

Applying \cref{Thm:ccp-classification} gives the following classification.

\begin{Thm}\label{Thm: PLP complement classification specialization}
There is a bijection
$$
\mathcal H^2_{ccp}
\bigl(\calL,\calB\mid\olOmega^c(\calL,\calB)\bigr)
\longrightarrow
\Comp(\calL,\calE)/\cong,
\qquad
[r]\longmapsto[\calB_r].
$$
Consequently, the factorization index is
$$
[\calE:\calL]^f
=
\left|
\mathcal H^2_{ccp}
\bigl(\calL,\calB\mid\olOmega^c(\calL,\calB)\bigr)
\right|
=
|\Comp(\calL,\calE)/\cong|.
$$
\end{Thm}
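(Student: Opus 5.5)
This is the specialization of \cref{Thm:ccp-classification} to the variety of pre-Lie Poisson algebras, so the plan is to reproduce its two-step structure in the concrete notation.

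First, \cref{Thm: PLP complement description specialization} gives a bijection $r\mapsto\calB_r$ from $\mathcal{PDM}(\calL,\calB\mid\olOmega^c(\calL,\calB))$ onto $\Comp(\calL,\calE)$. It remains to show that this bijection intertwines the relation $\sim$ on deformation maps with isomorphism of complements.

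For this, I would check that the transported structure $\calB^r=(B,\ell^r,\kappa^r)$ is exactly what $\lambda_r$ pulls back from $\calB_r$. For $x,y\in B$, the displayed decomposition of $(r(x)+x)\circ_E(r(y)+y)$ has $B$-component $\kappa(x,y)+x\prec r(y)+r(x)\brt y=\kappa^r(x,y)$. By \eqref{eq: PLP pre-Lie deformation}, its $L$-component equals $r(\kappa^r(x,y))$. Hence
\[
\lambda_r(x)\circ_E\lambda_r(y)=\lambda_r\bigl(\kappa^r(x,y)\bigr).
\]
The same argument for the commutative product, using \eqref{eq: PLP multiplication deformation}, gives
\[
\lambda_r(x)\cdot_E\lambda_r(y)=\lambda_r\bigl(\ell^r(x,y)\bigr).
\]
Since $\lambda_r:B\to B_r$ is a linear isomorphism, it is therefore an isomorphism of pre-Lie Poisson algebras $\calB^r\to\calB_r$.

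Consequently, for $r_1,r_2$ we have $\calB_{r_1}\cong\calB_{r_2}$ if and only if $\calB^{r_1}\cong\calB^{r_2}$, which by \cref{Def: PLP deformation equivalence specialization} is exactly $r_1\sim r_2$. In the forward direction one composes $\lambda_{r_2}^{-1}\circ\varphi\circ\lambda_{r_1}$ with an isomorphism $\varphi:\calB_{r_1}\to\calB_{r_2}$, and in the reverse direction one composes $\lambda_{r_2}\circ\sigma\circ\lambda_{r_1}^{-1}$. So the map $[r]\mapsto[\calB_r]$ is well defined and injective, and it is surjective by \cref{Thm: PLP complement description specialization}. Taking cardinalities gives the formula for $[\calE:\calL]^f$.

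There is no genuine obstacle here. The only point needing care is to verify, from the canonical matched pair of \cref{Rem: PLP canonical matched pair specialization}, that the $B$-component of the pre-Lie product on the graph is $\kappa^r$, with $r(x)\brt y$ rather than an antisymmetrized term, since $-\circ-$ is not symmetric.
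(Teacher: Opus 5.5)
Your proposal is correct and follows the paper's argument. The paper obtains this theorem by specializing \cref{Thm:ccp-classification}, whose proof has exactly your two steps: the bijection $r\mapsto\calB_r$ from \cref{Thm: PLP complement description specialization}, followed by the chain $r_1\sim r_2\iff\calB^{r_1}\cong\calB^{r_2}\iff\calB_{r_1}\cong\calB_{r_2}$ through the isomorphisms $\lambda_{r_i}$; your explicit check that $\lambda_r$ carries $\ell^r,\kappa^r$ to the restricted operations (via the two deformation equations) just fills in what the paper states ``by construction.''
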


Thus,
\cref{Thm: PLP complement description specialization,Thm: PLP complement classification specialization}
complete the \textbf{CCP} for pre-Lie Poisson algebras.

We conclude this section by recording the compatibility with the transposed Poisson
construction from \cref{Rem: PLP relations specialization}(b).

\begin{Rem}\label{Rem: tP from PLP specialization}
Assume $\operatorname{char}(\bbK)=0$, and let
$\calL=(L,-\cdot-,-\circ-)$ be a pre-Lie Poisson
algebra. By
\cref{Rem: PLP relations specialization}(b), the commutator
$$
[a,b]\coloneqq a\circ b-b\circ a
$$
makes $\calL_{\mathrm{tP}}=(L,-\cdot-,[-,-])$ a transposed
Poisson algebra. Let
$$
\Omega(\calL,V)=(\lt,\rt,f,\ell,\prec,\succ,\brt,\blt,g,\kappa)
$$
be a pre-Lie Poisson extending structure of $\calL$ through $V$. Define
\begin{align*}
  \begin{aligned}
    x\rh a &= x\succ a-a\blt x, &
    \qquad x\lh a &= x\prec a-a\brt x, \\
    \theta(x,y) &= g(x,y)-g(y,x), &
    \qquad \rho(x,y) &= \kappa(x,y)-\kappa(y,x).
  \end{aligned}
\end{align*}
Then
$$
\Omega(\calL_{\mathrm{tP}},V)
\coloneqq(\lt,\rt,f,\ell,\lh,\rh,\theta,\rho)
$$
is a transposed Poisson extending structure of $\calL_{\mathrm{tP}}$
through $V$ in the sense of \cref{sec: transposed Poisson algebras}. Indeed, the
commutator of the pre-Lie product in $\calL\ltimes_{\Omega(\calL,V)}V$ is
\begin{align*}
\{(a,x),(b,y)\}
=\bigl([a,b]+x\rh b-y\rh a+\theta(x,y),\ x\lh b-y\lh a+\rho(x,y)\bigr).
\end{align*}
This is exactly the bracket of the unified product associated to $\Omega(\calL_{\mathrm{tP}},V)$.
Hence taking commutators is compatible with the extending structure theories of this section and \cref{sec: transposed Poisson algebras}
at the level of extending structures and their associated unified products.
\end{Rem}

\textbf{Acknowledgements}
The    authors   were supported by  the National Key R  $\&$ D Program of China (No. 2024YFA1013803)
   and    by Shanghai Key Laboratory of PMMP (No. 22DZ2229014).

 The first author lectured on the topic of this paper in the 2026 Workshop on Rota-Baxter Algebras and Related Topics.  We are grateful to the organizers as well as the audience for giving this opportunity and providing useful remarks.

The results of this paper have been obtained by hand, hence   the authors take full responsibility of the correctness of the results of this paper. AI tools have been used only for checking the details and for revising  of the text.

\textbf{Conflict of Interest}

None of the authors has any conflict of interest in the conceptualization or publication of this
work.

\textbf{Data availability}

Data sharing is not applicable to this article as no new data were created or analyzed in this study.

\bigskip


\end{document}